\author{Tristan L\'{e}ger}
\address{Courant Institute of Mathematical Sciences, 251 Mercer Street, New York, NY 10012, USA }
\email{tleger@cims.nyu.edu}
\theoremstyle{plain}
\newtheorem{theorem}{Theorem}
\newtheorem{definition}[theorem]{Definition}
\newtheorem{remark}[theorem]{Remark}
\newtheorem{proposition}[theorem]{Proposition}
\newtheorem{lemma}[theorem]{Lemma}
\newtheorem{corollary}[theorem]{Corollary}
\numberwithin{equation}{section} \numberwithin{theorem}{section}
\begin{document}

\title[3D quadratic NLS equation with electromagnetic perturbations]
{3D quadratic NLS equation with electromagnetic perturbations}

\vspace{-0.3in}
\begin{abstract}
In this paper we study the asymptotic behavior of a quadratic Schr\"{o}dinger equation with electromagnetic potentials. We prove that small solutions scatter. The proof builds on earlier work of the author for quadratic NLS with a non magnetic potential. The main novelty is the use of various smoothing estimates for the linear Schr\"{o}dinger flow in place of boundedness of wave operators to deal with the loss of derivative.\\
As a byproduct of the proof we obtain boundedness of the wave operator of the linear electromagnetic Schr\"{o}dinger equation on an $L^2$ weighted space for small potentials, as well as a dispersive estimate for the corresponding flow.
\end{abstract}

\maketitle 

\tableofcontents

\section{Introduction} \label{Introduction}

\subsection{Background} 
We consider a quadratic NLS equation with electromagnetic potential set on $\mathbb{R}^3$:
\begin{equation} \label{NLSmagn}
\begin{cases}
i \partial_t u + \Delta u &= \sum_{i=1}^3 a_i (x) \partial_i u + V(x) u + u^2 \\
u(t=1) &= u_1.
\end{cases}
\end{equation} 
This general form of potential includes the classical Hamiltonian Schr\"{o}dinger equation with electromagnetic potentials:
\begin{equation}\label{NLSelectro}
\begin{cases}
i \partial_t u = H_A u  \\ 
H_A = -(\nabla - i \overrightarrow{A}(x))^2 + V \\
\overrightarrow{A}(x) = (a_1(x), a_2(x), a_3(x))
\end{cases},
\end{equation}
which corresponds to the usual Schr\"{o}dinger equation with an external magnetic field $\overrightarrow{B} = curl(\overrightarrow{A})$ as well as an external electric field $\overrightarrow{E} = - \overrightarrow{\nabla} V.$ Its Hamiltonian is
\begin{align*}
\mathcal{H}(u) = \frac{1}{2} \int_{\mathbb{R}^3} \big \vert ( \nabla - i \overrightarrow{A}(x) ) u \big \vert^2 + V(x) \vert u \vert^2 ~dx .
\end{align*}
In our setting the potentials will be assumed to be small, and our goal is to study the asymptotic behavior of solutions to the equation \eqref{NLSmagn}. \\
\\
Besides the physical interest of the problem, we are motivated by the fact that \eqref{NLSmagn} is a toy model for the study of linearizations of dispersive equations around non-zero remarkable solutions (traveling waves or solitons for example). This is why we elected to work with the equation \eqref{NLSmagn} and not a nonlinear version of \eqref{NLSelectro}. \\
The present article is a continuation of the author's earlier work \cite{L}, where the above equation was considered for $a_i =0.$ The main reason for adding the derivative term is that most models of physical relevance are quasilinear, and their linearizations will generically contain such derivative terms.
Note that a more complete model would consist in treating a general quadratic nonlinearity $Q(u,\overline{u}):$ the present article leaves out the cases of nonlinearities $\overline{u}^2$ and $\vert u \vert^2$. Our method would apply for the nonlinearity $\overline{u}^2$ (in fact this is a strictly easier problem). However the nonlinearity $\vert u \vert^2$ is currently out of reach. In fact even in the flat case (no potentials are present), the problem has not been completely answered. We refer to the article of X. Wang \cite{Wang} for more on this subject.
\\
\\
Regarding existing results on the behavior as $t\to \infty$ of solutions to equations of this type (nonlinearity with potential) we mention some works on the Strauss conjecture on non flat backgrounds: the equations considered have potential parts that lose derivatives, but the nonlinearity (of power type) typically has a larger exponent than what we consider in the present work. We can for example cite the work of K. Hidano, J. Metcalfe, H. Smith, C. Sogge, Y. Zhou (\cite{HMSSZ}) where the conjecture is proved outside a nontrapping obstacle. H. Lindblad, J. Metcalfe, C. Sogge, M. Tohaneanu and C. Wang (\cite{LMSTW}) proved the conjecture for Schwarzschild and Kerr backgrounds. The proofs of these results typically rely on weighted Strichartz estimates to establish global existence of small solutions.
\\
\\
In this paper we prove that small, spatially localized solutions to \eqref{NLSmagn} exist globally and scatter. We take the opposite approach to the works cited above. Indeed we deal with a stronger nonlinearity which forces us to take into account its precise structure. We rely on the space-time resonance theory of P. Germain, N. Masmoudi and J. Shatah (see for example \cite{GMS}). It was developed to study the asymptotic behavior of very general nonlinear dispersive equations for power-type nonlinearities with small exponent (below the so-called Strauss exponent). In the present study the nonlinearity is quadratic (which is exactly the Strauss exponent in three dimension) hence the need to resort to this method. It has been applied to many models by these three authors and others. Without trying to be exhaustive, we can mention for example the water waves problem treated in various settings (\cite{GMSww}, \cite{GMSww2}, \cite{IPu1}), the Euler-Maxwell equation in \cite{GIP}, or the Euler-Poisson equation in \cite{IP}. Similar techniques have also been developed by other authors: S. Gustafson, K. Nakanishi and T.-P. Tsai studied the Gross-Pitaevskii equation in \cite{GNT} using a method more closely related to J. Shatah's original method of normal forms \cite{Sh}. M. Ifrim and D. Tataru used the method of testing against wave packets in \cite{IT1} and \cite{IT2} to study similar models, namely NLS and water waves in various settings. 
\\
\\ 
The difficulty related to the strong nonlinearity was already present in \cite{L}. However, in the present context, the derivative forces us to modify the approach developed in that paper since we must incorporate smoothing estimates into the argument. The inequalities we use in the present work were first introduced by C. Kenig, G. Ponce and L. Vega in \cite{KPV} to prove local well-posedness of a large class of nonlinear Schr\"{o}dinger equations with derivative nonlinearities. They allow to recover one derivative, which will be enough to deal with \eqref{NLSmagn}. Another estimate of smoothing-Strichartz type proved by A. Ionescu and C. Kenig in \cite{IK} will play an important role in the paper. Let us also mention that, in the case of magnetic Schr\"{o}dinger equations with small potentials, a large class of related Strichartz and smoothing estimates have been proved by V. Georgiev, A. Stefanov and M. Tarulli in \cite{GST}. For large potentials of almost critical decay, Strichartz and smoothing estimates have been obtained by B. Erdo\u gan, M. Goldberg and W. Schlag in \cite{EGS}, \cite{EGS2}, and a similar result was obtained recently by d'Ancona in \cite{DA}. A decay estimate for that same linear equation was proved by P. d'Ancona and L. Fanelli in \cite{DF} for small but rough potentials. A corollary of the main result of the present paper is a similar decay estimate under stronger assumptions on the potentials, but for a more general equation. Regarding dispersive estimates for the linear flow, we also mention the work of L. Fanelli, V. Felli, M. Fontelos and A. Primo in \cite{FFFP} where decay estimates for the eletromagnetic linear Schr\"{o}dinger equation are obtained in the case of particular potentials of critical decay. The proof is based on a representation formula for the solution of the equation. Finally, for the linear electromagnetic Schr\"{o}dinger equation, a corollary of our main theorem is that its wave operator is bounded on a space that can heuristically be thought of as $\langle x \rangle L^2 \cap H^{10}, $ see the precise statement in Corollary \ref{bddwaveop} below.     
\\
\\
To deal with the full equation, we must therefore use both smoothing and space-time resonance arguments simultaneously. The general idea is to expand the solution as a power series using Duhamel's formula repeatedly. This type of method is routinely used in the study of linear Schr\"{o}dinger equations through Born series, for example. As we mentioned, this general plan was already implemented by the author for a less general equation in \cite{L}, where there is no loss of derivatives. The additional difficulty coming from high frequencies forces us to modify the approach, and follow a different strategy in the multilinear part of the proof (that is in the estimates on the iterated potential terms). Note that along the way we obtain a different proof of the result in \cite{L}. In particular we essentially rely on Strichartz and smoothing estimates for the free linear Schr\"{o}dinger equation, instead of the more stringent boundedness of wave operators. This would allow us to relax the assumptions on the potential in \cite{L}.

\subsection{Main result}
\subsubsection{Notations} \label{prelim}
We start this section with some notations that will be used in the paper. \\
First we recall the formula for the Fourier transform:
\begin{align*}
\widehat{f}(\xi) = \mathcal{F}f (\xi)= \int_{\mathbb{R}^3} e^{-i x \cdot \xi} f(x) dx
\end{align*}
hence the following definition for the inverse Fourier transform:
\begin{align*}
\check{f}(x) = [\mathcal{F}^{-1} f ](x) = \frac{1}{(2 \pi)^3} \int_{\mathbb{R}^3} e^{i x \cdot \xi} f(\xi) d\xi.
\end{align*}
Now we define Littlewood-Paley projections. Let $\phi$ be a smooth radial function supported on the annulus $\mathcal{C} = \lbrace \xi \in \mathbb{R}^3 ; \frac{1}{1.04} \leqslant \vert \xi \vert \leqslant 1.04 \times 1.1 \rbrace $ such that
\begin{align*}
\forall \xi \in \mathbb{R}^3 \setminus \lbrace 0 \rbrace, ~~~ \sum_{j \in \mathbb{Z}} \phi\big( 1.1^{-j} \xi \big) =1.
\end{align*}
Notice that if $j-j'>1$ then $1.1^j \mathcal{C} \cap 1.1^{j'} \mathcal{C} = \emptyset.$ \\
We will denote $P_k (\xi) := \phi(1.1^{-k}\xi)$ the Littlewood-Paley projection at frequency $1.1^k.$ \\
Similarly, $P_{\leqslant k} (\xi)$ will denote the Littlewood-Paley projection at frequencies less than $1.1^k.$ \\
It was explained in \cite{L} why we localize at frequency $1.1^k$ and not $2^k.$ (See Lemma 7.8 in that paper).\\
We will also sometimes use the notation $\widehat{f_k}(\xi) =P_k (\xi) \widehat{f}(\xi).$ 
\\
\\
Now we come to the main norms used in the paper: \\
We introduce the following notation for mixed norms of Lebesgue type:
\begin{align*}
\Vert f \Vert_{L^p _{x_j} L^q_{\widetilde{x_j}}} = \bigg \Vert \Vert f(\cdot,...,\cdot,x_j, \cdot, ..) \Vert_{L^q_{x_1,...,x_{j-1},x_{j+1},...}}  \bigg \Vert_{L^p_{x_j}}.
\end{align*}
To control the profile of the solution we will use the following norm: 
\begin{align*}
\Vert f \Vert_X = \sup_{k \in \mathbb{Z}} \Vert \nabla_{\xi} \widehat{f_k} \Vert_{L^2_x}
\end{align*}
Roughly speaking, it captures the fact that the solution has to be spatially localized around the origin.
\\
For the potentials, we introduce the following controlling norm:
\begin{align*}
\Vert V \Vert_{Y} &= \Vert V \Vert_{L^1 _x} + \Vert V \Vert_{L^{\infty}_x} + \sum_{j=1}^3 \bigg \Vert \Vert \vert V \vert^{1/2} \Vert_{L^{\infty}_{\widetilde{x_j}}} \bigg \Vert_{L^2 _{x_j}}.
\end{align*}

\subsubsection{Main Theorem} \label{results}
With these notations, we are ready to state our main theorem. \\
We prove that small solutions to \eqref{NLSmagn} with small potentials exist globally and that they scatter. More precisely the main result of the paper is
\begin{theorem} \label{maintheorem}
There exists $\varepsilon>0$ such that if $\varepsilon_0, \delta < \varepsilon$ and if $u_{1},$ $V$ and $a_i$ satisfy
\begin{align*}
\Vert V \Vert_{Y} + \Vert \langle x \rangle V \Vert_{Y} + \Vert (1-\Delta)^{5} V \Vert_{Y} & \leqslant \delta, \\
\Vert a_i \Vert_{Y} + \Vert \langle x \rangle a_i \Vert_{Y} + \Vert (1-\Delta)^{5} a_i \Vert_{Y} & \leqslant  \delta, \\
\Vert e^{-i \Delta} u_{1} \Vert_{H^{10}_x} + \Vert e^{-i \Delta} u_{1} \Vert_{X} & \leqslant  \varepsilon_0,
\end{align*}
then \eqref{NLSmagn} has a unique global solution. Moreover it satisfies the estimate
\begin{align} \label{mainestimate}
\sup_{t \in [1;\infty)} \Vert u(t) \Vert_{H^{10}_x} + \Vert e^{-it \Delta} u(t) \Vert_{X} + \sup_{k \in \mathbb{Z}} t \Vert u_k (t) \Vert_{L^6_x} \lesssim \varepsilon_0.
\end{align}
Moreover it scatters in $H^{10}_x$: there exists $u_{\infty} \in H^{10}_x$ and a bounded operator $W:H^{10}_x \rightarrow H^{10}_x$ such that
\begin{align*}
\Vert e^{-it \Delta} W u(t) - u_{\infty} \Vert_{H^{10}_x} \rightarrow 0
\end{align*}
as $t \to \infty.$ 
\end{theorem}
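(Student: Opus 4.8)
The plan is to run a continuity (bootstrap) argument on the quantity
\[
\mathcal{N}(T) = \sup_{t \in [1,T]} \Big( \|u(t)\|_{H^{10}_x} + \|e^{-it\Delta}u(t)\|_X + \sup_{k \in \mathbb{Z}} t\,\|u_k(t)\|_{L^6_x} \Big),
\]
assuming the a priori bound $\mathcal{N}(T) \leqslant 2 C \varepsilon_0$ and establishing $\mathcal{N}(T) \lesssim \varepsilon_0 + \delta\, \mathcal{N}(T) + \mathcal{N}(T)^2$, which closes for $\varepsilon_0$ and $\delta$ small. A short-time existence statement --- for which the loss of a derivative in $\sum_i a_i \partial_i u$ is handled by a fixed point built on Kenig--Ponce--Vega local smoothing, see \cite{KPV} --- provides the starting point, and the bootstrap promotes it to a global solution. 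The continuity step itself is routine once the three a priori estimates (energy, weighted $X$, and dispersive decay) are available; the content is in those estimates and in the way the potential terms are treated.

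The central device is to expand $u$ by iterating Duhamel's formula in the potential, writing $u = \sum_{n \geqslant 0} u^{(n)}$, where $u^{(0)} = e^{i(t-1)\Delta}u_1 - i\int_1^t e^{i(t-s)\Delta} u^2(s)\,ds$ and $u^{(n)}$ is produced from $u^{(n-1)}$ by applying
\[
v \longmapsto -i \int_1^t e^{i(t-s)\Delta} \Big( \sum_{i=1}^3 a_i \partial_i v + V v \Big)(s)\, ds ,
\]
a magnetic Born series. Smallness of $\|V\|_Y$ and $\|a_i\|_Y$ yields a factor of $\delta$ per iterate, so the series converges, and it suffices to bound a generic iterate in $H^{10}_x$, in $X$, and in the $L^6_x$ decay norm. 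The purely electric iterates (only $V$ factors) are controlled as in \cite{L}. The novelty, following the paper's own remark, is the iterates carrying at least one factor $a_i \partial_i$: there the lost derivative is recovered by pairing the local smoothing estimate, schematically $\big\| \,|\nabla|^{1/2} e^{it\Delta} g \big\|_{L^\infty_{x_j} L^2_{\widetilde{x_j},t}} \lesssim \|g\|_{L^2_x}$, with its dual inhomogeneous form, after splitting each coefficient as $a_i = (\mathrm{sgn}\, a_i)\,|a_i|^{1/2} \cdot |a_i|^{1/2}$ so that the $\sum_j \big\| \,\| |a_i|^{1/2}\|_{L^\infty_{\widetilde{x_j}}} \big\|_{L^2_{x_j}}$ part of the $Y$-norm absorbs the endpoint mixed norms; the smoothing--Strichartz estimate of Ionescu--Kenig from \cite{IK} is the bridge between these space-time norms and the $L^6_x$ and energy norms. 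High frequencies --- where the derivative loss is most dangerous --- are absorbed using the $(1-\Delta)^5$ regularity assumed on the potentials together with the $H^{10}_x$ bootstrap bound, and it is precisely here that the high-frequency bookkeeping differs from \cite{L}.

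With the potential part reduced to estimates on the free flow, the remaining ingredient is the space-time resonance analysis of the quadratic source. For the profile $f(t) = e^{-it\Delta} u(t)$ one finds $\partial_t \widehat{f}(t,\xi) = c \int_{\mathbb{R}^3} e^{it\phi(\xi,\eta)} \widehat{f}(t,\eta)\, \widehat{f}(t,\xi-\eta)\, d\eta$ plus potential contributions treated as above, with $\phi(\xi,\eta) = |\xi|^2 - |\eta|^2 - |\xi-\eta|^2$. The space resonances $\nabla_\eta \phi = 0$ sit only at $\eta = \xi/2$, where $\phi = |\xi|^2/2$, so there are no space-time resonances off the zero frequency --- the mechanism that makes a quadratic nonlinearity tractable in three dimensions. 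The dispersive bound $t\,\|u_k(t)\|_{L^6_x} \lesssim \varepsilon_0$ comes from the stationary-phase dispersive estimate for $e^{it\Delta}$, controlled by $\|\nabla_\xi \widehat{f_k}\|_{L^2_x}$ and $\|f\|_{H^{10}_x}$, once the potential and quadratic contributions to $f$ have been bounded. The $X$-estimate is obtained by differentiating the profile equation in $\xi$: the dangerous term has $\nabla_\xi$ landing on $e^{it\phi}$, producing a growing factor $t\,\nabla_\xi\phi$, which is tamed by integrating by parts in $s$ near $\eta = \xi/2$ (where $\phi$ is bounded below) and in $\eta$ away from it, the decay $\|u(s)\|_{L^6_x} \lesssim s^{-1}$ supplying time-integrability. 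Finally the $H^{10}_x$ estimate is closed by an $L^6 \times L^\infty$-type bound on $u^2$ combined with the decay norm and --- crucially --- by integrating the smoothing estimate in time to absorb the derivative loss of $a_i \partial_i u$, which no naive energy estimate can do. Scattering in $H^{10}_x$ then follows from the time-integrability furnished by these estimates: $e^{-it\Delta} W u(t)$ is Cauchy, with $W$ the bounded correction assembled from the convergent Born series (the same operator whose boundedness on the weighted space is recorded in the introduction).

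I expect the main obstacle to be the multilinear estimates on the iterated potential terms carrying several $a_i \partial_i$ factors: each loses a derivative, and all of them must be recovered simultaneously while keeping enough decay and spatial localization to feed back into the $X$ and $L^6_x$ norms, with the high-frequency regime forcing an economical use of the $(1-\Delta)^5$ weight on the potentials. This is the step where the strategy of \cite{L} genuinely has to be rebuilt.
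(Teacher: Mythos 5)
Your overall architecture (bootstrap, local smoothing to recover the derivative lost to $a_i\partial_i$, space--time resonances for the quadratic term, scattering via a Cauchy sequence) matches the paper, but the central device is misidentified in a way that would make the $X$-estimate fail. You propose a plain magnetic Born series, iterating $v \mapsto -i\int_1^t e^{i(t-s)\Delta}(a\cdot\nabla v + Vv)\,ds$, and then bounding each iterate. For the $H^{10}$ and $L^6$ norms this is workable, but for the $X$-norm it is not: when $\partial_{\xi_l}$ hits the phase $e^{is(|\xi|^2-|\eta|^2)}$ in the potential part of Duhamel's formula, it produces the factor $2is\xi_l$, so a generic Born iterate contributes $\int_1^t s\,(\cdots)\,ds$ to $\nabla_\xi\widehat f$, and no combination of smoothing and Strichartz estimates (which at best give $s^{-1}$ decay of $u$ in $L^6$) makes this uniformly bounded in $t$. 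You correctly identify and treat this growing factor for the bilinear phase $\phi=|\xi|^2-|\eta|^2-|\xi-\eta|^2$, but the same factor appears in every potential term, and your plan to handle those "as above" by smoothing alone skips exactly the hard step. The paper's series is therefore not a Born series: it is generated by iterated integration by parts in time (normal forms), with a dichotomy between $|k-k_1|>1$ (integrate by parts in time using $(|\xi|^2-|\eta_1|^2)^{-1}$) and $|k-k_1|\leqslant 1$ (integrate by parts in $\eta_1$ first, using $\eta_1/|\eta_1|^2$, then in time), and its $n$-th term carries a product of $n$ singular resonance denominators $\prod_\gamma(|\xi|^2-|\eta_\gamma|^2)^{-1}$. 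The multilinear lemmas that convert these denominators back into nested time integrals via $(|\xi|^2-|\eta|^2+i\beta)^{-1}=-i\int_0^\infty e^{i\tau(|\xi|^2-|\eta|^2+i\beta)}d\tau$ and then thread smoothing and Strichartz estimates through the resulting chain are the heart of the proof, and they have no counterpart in your outline.

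A second, smaller point: you assert that the high-frequency derivative loss is "absorbed using the $(1-\Delta)^5$ regularity assumed on the potentials together with the $H^{10}_x$ bootstrap bound." The derivative in $a_i\partial_i u$ falls on $u$, not on $a_i$, so no smoothness of the potential recovers it; in the paper it is recovered at all frequencies by the inhomogeneous Kenig--Ponce--Vega estimate (a full derivative gained), with the high/low output-frequency split serving only to decide whether smoothing is needed at all ($k\leqslant 0$ is handled by Strichartz alone). The $(1-\Delta)^5$ hypothesis is used elsewhere, in the energy estimate, to move the $1.1^{10k^+}$ weight onto a potential factor when the output frequency dominates all the $k_\gamma$.
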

\begin{remark}
We have not strived for the optimal assumptions on the potentials or the initial data. It is likely that the same method of proof, at least in the case where $a_i=0$, allows for potentials with almost critical decay (that is $V \in L^{3/2-}_x$ and $x V \in L^{3-}_x $ and similar assumptions on its derivative). Similarly the $H^{10}_x$ regularity can most likely be relaxed. 
\end{remark}
\begin{remark}
Unlike in the earlier work \cite{L}, we cannot treat the case of time dependent potentials. This is mainly due to the identity \eqref{identite} and its use in the subsequent proofs of our multilinear lemmas.
\end{remark}
\begin{remark}
A similar scattering statement could be formulated in the space $X$ although it is more technical. For this reason we have elected to work in $H^{10}_x$ (see the proof in the appendix, Section \ref{scattering}).
\end{remark}
As we mentioned above, the result proved in Theorem \ref{maintheorem} has a direct consequence for the linear flow of the electromagnetic equation. We have the following corollary, which provides a decay estimate for the flow as well as a uniform in time boundedness of the profile of the solution on the space $X$.
\begin{corollary} \label{corlin}
There exists $\varepsilon>0$ such that if $ \delta < \varepsilon$ and if $u_{1},$ $V$ and $a_i$ satisfy
\begin{align*}
\Vert V \Vert_{Y} + \Vert \langle x \rangle V \Vert_{Y} + \Vert (1-\Delta)^{5} V \Vert_{Y} & \leqslant \delta, \\
\Vert a_i \Vert_{Y} + \Vert \langle x \rangle a_i \Vert_{Y} + \Vert (1-\Delta)^{5} a_i \Vert_{Y} & \leqslant  \delta, \\
\Vert e^{-i \Delta} u_{1} \Vert_{H^{10}_x} + \Vert e^{-i \Delta} u_{1} \Vert_{X} & < + \infty,
\end{align*}
then the Cauchy problem 
\begin{equation} \label{Lmagn}
\begin{cases}
i \partial_t u + \Delta u &= \sum_{i=1}^3 a_i (x) \partial_i u + V(x) u  \\
u(t=1) &= u_1
\end{cases}
\end{equation} 
has a unique global solution $u(t)$ that obeys the estimate 
\begin{align} \label{mainestimatelin}
\sup_{t \in [1;\infty)} \Vert u(t) \Vert_{H^{10}_x} + \Vert e^{-it \Delta} u(t) \Vert_{X} + \sup_{k \in \mathbb{Z}} t \Vert u_k (t) \Vert_{L^6_x} \lesssim \Vert e^{-i \Delta} u_{1} \Vert_{H^{10}_x} + \Vert e^{-i \Delta} u_{1} \Vert_{X}.
\end{align}
\end{corollary}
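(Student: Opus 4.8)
The plan is to read the corollary off from the machinery built to prove Theorem \ref{maintheorem}. The linear problem \eqref{Lmagn} is simply \eqref{NLSmagn} with the quadratic term $u^2$ deleted, so one re-runs the same fixed-point/bootstrap argument in the same solution space, erasing the terms produced by the nonlinearity. The key structural observation is that in the proof of Theorem \ref{maintheorem} the smallness of the data $\varepsilon_0$ is used \emph{only} to close the bootstrap against the contribution of $u^2$, which is quadratic in the solution norm; every estimate on a potential term is linear in the solution norm and carries a factor of $\delta$ coming from $\Vert a_i \Vert_Y + \Vert \langle x\rangle a_i\Vert_Y + \Vert (1-\Delta)^5 a_i\Vert_Y + \Vert V\Vert_Y + \dots$. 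Hence, once $u^2$ is absent, the a priori estimate is affine rather than quadratic in the solution, and no smallness of the data is needed — only smallness of $\delta$.

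Concretely, let $\mathcal{N}(T)$ be the natural restriction to the time interval $[1,T]$ of the norm appearing on the left-hand side of \eqref{mainestimate}, i.e.
\[
\mathcal{N}(T) := \sup_{t \in [1,T]} \Vert u(t) \Vert_{H^{10}_x} + \sup_{t \in [1,T]} \Vert e^{-it\Delta} u(t) \Vert_{X} + \sup_{k \in \mathbb{Z}} \sup_{t \in [1,T]} t\, \Vert u_k(t) \Vert_{L^6_x} .
\]
Duhamel's formula for \eqref{Lmagn} reads
\[
u(t) = e^{i(t-1)\Delta} u_1 - i \int_1^t e^{i(t-s)\Delta}\Big( \sum_{i=1}^3 a_i \partial_i u + V u \Big)(s)\, ds ,
\]
and expanding the potential part through the same Born-type iteration and invoking the multilinear estimates on iterated potential terms established in the proof of Theorem \ref{maintheorem} (verbatim, minus the $u^2$ contributions) gives
\[
\mathcal{N}(T) \leqslant C \big( \Vert e^{-i\Delta} u_1 \Vert_{H^{10}_x} + \Vert e^{-i\Delta} u_1 \Vert_{X} \big) + C \delta\, \mathcal{N}(T)
\]
with $C$ independent of $T$. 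Choosing $\varepsilon$ so that $C\delta \leqslant 1/2$ whenever $\delta < \varepsilon$ absorbs the last term, and letting $T \to \infty$ yields the uniform bound \eqref{mainestimatelin}.

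Global existence then follows by combining this $T$-independent a priori bound with the local well-posedness theory in the same space, itself a byproduct of the estimates in the proof of Theorem \ref{maintheorem} applied with the nonlinearity removed. Uniqueness is immediate from linearity: the difference $w = u - \widetilde{u}$ of two solutions with the same data solves the homogeneous version of \eqref{Lmagn}, and a standard energy estimate — using $\Vert a_i \Vert_Y, \Vert V \Vert_Y \leqslant \delta$ together with the smoothing inequalities of \cite{KPV} to control the derivative term $\sum_i a_i \partial_i w$ — forces $w \equiv 0$ on $[1,T]$ for every $T$.

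The only genuine subtlety is the mismatch of hypotheses: Theorem \ref{maintheorem} is stated for data of size $\varepsilon_0 < \varepsilon$, whereas the corollary only assumes $\Vert e^{-i\Delta} u_1 \Vert_{H^{10}_x} + \Vert e^{-i\Delta} u_1 \Vert_{X} < + \infty$, with no smallness. This is harmless because \eqref{Lmagn} is linear: rescaling $u_1 \mapsto \lambda u_1$ rescales the solution by $\lambda$ and leaves the potentials — hence $\delta$ — untouched, so one may always reduce to the small-data regime; equivalently, and as the argument above already shows, the closed estimate never invoked smallness of the data, only of $\delta$. The main thing to check when transcribing the proof of Theorem \ref{maintheorem} is therefore precisely that every potential term there is linear in $\mathcal{N}(T)$ with a gain of $\delta$, which holds because each term of the series expansion carries at least one potential factor; I expect this bookkeeping to be the only point requiring attention.
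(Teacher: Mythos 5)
Your proposal is correct and follows essentially the route the paper intends: the corollary is stated as a direct consequence of the machinery of Theorem \ref{maintheorem}, and your observation that the smallness of the data is used only to close the bootstrap against the quadratic terms (so that, with $u^2$ deleted, the series/bootstrap estimate is affine in the solution norm and closes for $\delta$ small regardless of the size of $u_1$) is exactly the point being made. The linearity/rescaling remark and the uniqueness argument are consistent with the paper's setup and require no further justification.
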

\begin{remark}
As we noted above for the nonlinear equation, the assumptions made on the regularity and decay of the potentials are far from optimal. We believe that minor changes in the proof would lead to much weaker conditions in the statement above. 
\end{remark}
Now we write a corollary of the above estimate \eqref{mainestimatelin} in terms of the wave operator for the linear electromagnetic Schr\"{o}dinger operator corresponding to \eqref{NLSelectro}. \\
Indeed the $X$ part of this estimate can be written in that setting
\begin{align*}
\sup_{t \in [1; \infty)} \Vert e^{-it \Delta} u(t) \Vert_{X} = \sup_{t \in [1; \infty)} \Vert e^{-it \Delta} e^{i(t-1) H_A} u_1 \Vert_{X}
\end{align*}
where we recall that $H_A = -(\nabla - i \overrightarrow{A}(x))^2 + V$. We directly deduce the following 
\begin{corollary} \label{bddwaveop}
Let $W$ denote the wave operator of $H_A.$ There exists $\varepsilon>0$ such that for every $\delta<\varepsilon,$ if the potentials $A,$ $V$ and the initial data $u_1$ satisfy
\begin{align*}
\Vert V \Vert_{Y} + \Vert \langle x \rangle V \Vert_{Y} + \Vert (1-\Delta)^{5} V \Vert_{Y} & \leqslant \delta, \\
\Vert A_i \Vert_{Y} + \Vert \langle x \rangle A_i \Vert_{Y} + \Vert (1-\Delta)^{5} A_i \Vert_{Y} & \leqslant  \delta, \\
\Vert (A_i)^2 \Vert_{Y} + \Vert \langle x \rangle (A_i)^2 \Vert_{Y} + \Vert (1-\Delta)^{5} (A_i)^2 \Vert_{Y} & \leqslant  \delta, \\
\Vert e^{-i \Delta} u_{1} \Vert_{H^{10}_x} + \Vert e^{-i \Delta} u_{1} \Vert_{X} & < + \infty,
\end{align*}
then we have 
\begin{align*}
\Vert W u_1 \Vert_{H^{10}_x} + \Vert W u_1 \Vert_{X} \lesssim \Vert e^{-i \Delta} u_{1} \Vert_{H^{10}_x} + \Vert e^{-i \Delta} u_{1} \Vert_{X}.
\end{align*}
\end{corollary}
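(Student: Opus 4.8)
The plan is to obtain this as a soft consequence of Corollary \ref{corlin}: the Hamiltonian equation \eqref{NLSelectro} falls under the scope of \eqref{Lmagn}, and the wave operator is nothing but the $t\to\infty$ limit of the renormalized solution $e^{-it\Delta}u(t)$ of that equation.

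First I would check that $i\partial_t u = H_A u$ is of the form \eqref{Lmagn} with admissible coefficients. Expanding $H_A = -(\nabla - i\overrightarrow{A})^2 + V = -\Delta + 2i\overrightarrow{A}\cdot\nabla + i(\nabla\cdot\overrightarrow{A}) + |\overrightarrow{A}|^2 + V$, the equation is \eqref{Lmagn} with $a_i = 2iA_i$ and potential $\widetilde V = i(\nabla\cdot\overrightarrow{A}) + |\overrightarrow{A}|^2 + V$. Each of the three pieces of $\widetilde V$ (and its weighted and high-regularity analogues) has $Y$-norm $\lesssim\delta$: for $V$ and for $|\overrightarrow{A}|^2 = \sum_i A_i^2$ this is exactly what the hypotheses grant — this is the role of the extra assumption on $(A_i)^2$ — while for $\nabla\cdot\overrightarrow{A} = \sum_i\partial_i A_i$ one writes $\partial_i A_i = [\partial_i(1-\Delta)^{-5}](1-\Delta)^5 A_i$ and uses that $\partial_i(1-\Delta)^{-5}$ is convolution with an $L^1_x$ kernel, hence acts boundedly on each space defining $\Vert\cdot\Vert_Y$. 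Consequently Corollary \ref{corlin} applies to $u(t)=e^{i(t-1)H_A}u_1$ and gives $\sup_{t\ge 1}\big(\Vert u(t)\Vert_{H^{10}_x}+\Vert e^{-it\Delta}u(t)\Vert_X\big)\lesssim N$, where $N := \Vert e^{-i\Delta}u_1\Vert_{H^{10}_x}+\Vert e^{-i\Delta}u_1\Vert_X$.

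Next, recall that for such small potentials $H_A$ has purely absolutely continuous spectrum and its wave operator is the $L^2_x$-limit $Wu_1 = \lim_{s\to\infty}e^{-is\Delta}e^{isH_A}u_1$; existence of this limit (indeed in $H^{10}_x$) is classical by Cook's method with the smoothing estimates, and also follows from the scattering statement for the linear equation (the argument of Section \ref{scattering} applies a fortiori without the nonlinear term). Since $e^{-is\Delta}e^{isH_A}u_1 = e^{i\Delta}\big(e^{-i(s+1)\Delta}u(s+1)\big)$ and $e^{i\Delta}$ maps $H^{10}_x\cap X$ to itself — it is unitary on $H^{10}_x$, and $\Vert e^{i\Delta}f\Vert_X=\sup_k\Vert\nabla_\xi(e^{-i|\xi|^2}\widehat{f_k})\Vert_{L^2_\xi}\lesssim\sup_k\big(1.1^k\Vert f_k\Vert_{L^2}+\Vert\nabla_\xi\widehat{f_k}\Vert_{L^2}\big)\lesssim\Vert f\Vert_{H^{10}_x}+\Vert f\Vert_X$ — the family $e^{-is\Delta}e^{isH_A}u_1$ is bounded in $H^{10}_x\cap X$ by $CN$ uniformly in $s$. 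Passing to the limit $s\to\infty$: on $H^{10}_x$ this is immediate from convergence and unitarity; on $X$, $H^{10}_x$-convergence implies $\widehat{(e^{-is\Delta}e^{isH_A}u_1)_k}\to\widehat{(Wu_1)_k}$ in $L^2_\xi$, hence $\nabla_\xi$ of these converges weakly in $L^2_\xi$, so by lower semicontinuity of the $L^2_\xi$-norm $\Vert\nabla_\xi\widehat{(Wu_1)_k}\Vert_{L^2}\le\liminf_s\Vert e^{-is\Delta}e^{isH_A}u_1\Vert_X\lesssim N$ for every $k$; taking the supremum over $k$ finishes the proof.

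There is no genuine analytic difficulty beyond Corollary \ref{corlin}. The two points needing care are the reduction in the first step — \eqref{Lmagn} covers $H_A$ only once the $|\overrightarrow{A}|^2$ and $\nabla\cdot\overrightarrow{A}$ terms are accounted for, which is why the hypothesis on $(A_i)^2$ is added — and the observation that the uniform $X$-bound transfers to $Wu_1$ only through lower semicontinuity under weak $L^2_\xi$-convergence of the rescaled Fourier transforms, not through continuity; establishing existence of the wave-operator limit (i.e.\ linear scattering) is the other ingredient, borrowed from the main argument.
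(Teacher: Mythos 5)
Your proposal is correct and is exactly the paper's own (one-line) derivation: the paper merely observes that $i\partial_t u = H_A u$ falls under \eqref{Lmagn} (this is precisely the role of the $(A_i)^2$ hypothesis) and ``directly deduces'' the corollary from Corollary \ref{corlin} via the identity $e^{-it\Delta}u(t)=e^{-it\Delta}e^{i(t-1)H_A}u_1$, so your expansion of $H_A$, the $Y$-norm verifications, and the limiting/lower-semicontinuity step identifying $Wu_1$ supply exactly the details the paper leaves implicit. The only caveat --- shared with the paper's own statement, which omits any hypothesis on $\nabla\cdot\overrightarrow{A}$ --- is that controlling $\Vert(1-\Delta)^{5}(\partial_i A_i)\Vert_Y$ costs one more derivative of $A$ than $\Vert(1-\Delta)^{5}A_i\Vert_Y\leqslant\delta$ literally provides at the top regularity level, which is consistent with the authors' repeated remark that the assumptions on the potentials are far from optimal.
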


\subsection{Set-up and general idea of the proof} \label{strategy}
We work with the profile of the solution $f(t) = e^{-it \Delta} u(t).$ \\
\subsubsection{Local well-posedness} The local wellposedness of \eqref{NLSmagn} in $H^{10}_x$ follows from the estimate proved (in a much more general setting) by S. Doi in \cite{D}:
\begin{align*}
\Vert f \Vert_{L^{\infty}_t([1;T])H^{10}_x} \lesssim \Vert f_1 \Vert_{H^{10}_x} + (T-1) \Vert f \Vert_{L^{\infty}_t ([1;T])H^{10}_x}^2
\end{align*}
\subsubsection{The bootstrap argument}
The proof of global existence and decay relies on a bootstrap argument: we assume that  for some $T>1$ and for $\varepsilon_1 = A \varepsilon_0$ ($A$ denotes some large number) the following bounds hold
\begin{align*}
\sup_{t \in [1;T]} \Vert f(t) \Vert_X & \leqslant \varepsilon_1, \\
\sup_{t \in [1;T]} \Vert f(t) \Vert_{H^{10}_x} & \leqslant \varepsilon_1,
\end{align*}
and then we prove that these assumptions actually imply the stronger conclusions
\begin{align}
\label{goal1} \sup_{t \in [1;T]} \Vert f(t) \Vert_X & \leqslant \frac{ \varepsilon_1}{2}, \\
\label{goal2} \sup_{t \in [1;T]} \Vert f(t) \Vert_{H^{10}_x} & \leqslant \frac{\varepsilon_1}{2}.
\end{align}
The main difficulty is to estimate the $X$ norm. To do so we expand $\partial_{\xi} \widehat{f}$ as a series by essentially applying the Duhamel formula recursively. The difference with \cite{L} is that, for high output frequencies, iterating the derivative part will prevent the series from converging if we use the same estimates as in that paper. To recover the derivative loss we use smoothing estimates which allow us to gain one derivative back at each step of the iteration. It is at this stage (the multilinear analysis) that the argument from \cite{L} must be modified. Instead of relying on the method of M. Becenanu and W. Schlag (\cite{BSmain}) the estimations are done more in the spirit of K. Yajima's paper \cite{Y}. \\
\subsubsection{The series expansion}
Our discussion here and in the next subsection will be carried out for a simpler question than that tackled in this paper. However it retains the main novel difficulty compared to the earlier paper \cite{L}, namely the loss of derivative in the potential part. More precisely, we see how to estimate the $L^2_x$ norm of $\widehat{f}(t,\xi).$ First, we explain the way we generate the series representation of $\widehat{f}(t,\xi)$. We consider the Duhamel formula for $\widehat{f}:$ The potential part has the form
\begin{align*}
\int_0 ^t \int_{\mathbb{R}^3}  e^{is (\vert \xi \vert^2 - \vert \eta_1 \vert^2)} \widehat{W_1}(\xi-\eta_1) \alpha_1(\eta_1) \widehat{f}(s,\eta_1) d\eta_1 ds,
\end{align*}
where $W_1$ denotes either $a_i$ or $V$ and $\alpha_1(\eta_1) = 1$ if $W_1 = V$ and $\eta_{1,i}$ if $W_1 = a_i.$ \\
The general idea is to integrate by parts in time in that expression iteratively to write $\widehat{f}$ as a series made up of the boundary terms remaining at each step. Roughly speaking we will obtain two types of terms, corresponding either to the potential part or the bilinear part of the nonlinearity: 
\begin{align} \label{potentialmod}
 \int \prod_{\gamma=1}^{n-1} \frac{\widehat{W_{\gamma}}(\eta_{\gamma}) \alpha_{\gamma}(\eta_{\gamma})}{\vert \xi \vert^2 - \vert \eta_{\gamma} \vert^2} \int_{\eta_{n}} \widehat{W_n}(\eta_{n-1}-\eta_n) \alpha_{n}(\eta_n) e^{it(\vert \xi \vert^2-\vert \eta_n \vert^2)} \widehat{f}(t,\eta_n) d\eta_n d\eta
\end{align}
and 
\begin{align} \label{bilinmod}
\int \prod_{\gamma=1}^{n-1} \frac{\widehat{W_{\gamma}}(\eta_{\gamma}) \alpha_{\gamma}(\eta_{\gamma})}{\vert \xi \vert^2 - \vert \eta_{\gamma} \vert^2} \int_{\eta_{n}} e^{it(\vert \xi \vert^2-\vert \eta_{n-1} - \eta_n \vert^2-\vert \eta_n \vert^2)} \widehat{f}(t,\eta_{n-1}-\eta_n) \widehat{f}(t,\eta_n) d\eta_n d\eta.
\end{align}
\subsubsection{Convergence of the series in $L^2$} 
Now we prove that the series obtained in the previous section converges in $L^{\infty}_t L^{2}_x.$ We prove estimates like 
\begin{align*}
\Vert \eqref{potentialmod}, \eqref{bilinmod} \Vert_{L^{\infty}_t L^{2}_x} \lesssim C^n \delta^n \varepsilon_1
\end{align*}
for some universal constant $C$ and where the implicit constant does not depend on $n.$ Heuristically, each $V$ factor contributes a $\delta$ in the estimate. \\
First, we write that in physical space we have, roughly speaking:
\begin{align*}
\eqref{potentialmod} &=\int_{0 \leqslant \tau_1} e^{i \tau_1 \Delta} W_1 \widetilde{D_1} \int_{0 \leqslant \tau_2 \leqslant \tau_1} e^{i(\tau_2-\tau_1) \Delta} W_2 \widetilde{D_2} ... \\
& \times \int_{0 \leqslant \tau_{n} \leqslant \tau_{n-1}} e^{i(\tau_n - \tau_{n-1})\Delta} W_{n} \widetilde{D_n} e^{-i \tau_n \Delta} f d\tau_1 ... d\tau_n ,
\end{align*}
where we denoted $\widetilde{D_i}$ the operator equal to $1$ if $W_i=V$ and $\partial_{x_j}$ if $W_i =a_j.$ \\
Then, using Strichartz estimates, we can write
\begin{align*}
\Vert \eqref{potentialmod} \Vert_{L^2_x} &\lesssim \Bigg \Vert W_1 \widetilde{D_1} \int_{0 \leqslant \tau_2 \leqslant \tau_1} e^{i(\tau_2-\tau_1) \Delta} W_2 \widetilde{D_2} ... \\
& \times \int_{0 \leqslant \tau_{n} \leqslant \tau_{n-1}} e^{i(\tau_n - \tau_{n-1})\Delta} W_{n} \widetilde{D_n} e^{-i \tau_n \Delta} f d\tau_2 ... d\tau_n  \Bigg \Vert_{L^2_{\tau_1} L^{6/5}_x}.
\end{align*}
Next if $W_1 = V$, then we can use Strichartz estimates again and write
\begin{align*}
\Vert \eqref{potentialmod} \Vert_{L^2_x} &\lesssim C \delta \Bigg \Vert \int_{0 \leqslant \tau_2 \leqslant \tau_1} e^{i(\tau_2-\tau_1) \Delta} W_2 \widetilde{D_2} ... \\
& \times \int_{0 \leqslant \tau_{n} \leqslant \tau_{n-1}} e^{i(\tau_n - \tau_{n-1})\Delta} W_{n} \widetilde{D_n} e^{-i \tau_n \Delta} f d\tau_2 ... d\tau_n  \Bigg \Vert_{L^2_{\tau_1} L^{6}_x} \\
&\lesssim C \delta \Bigg \Vert W_2 \widetilde{D_2}... \int_{0 \leqslant \tau_{n} \leqslant \tau_{n-1}} e^{i(\tau_n - \tau_{n-1})\Delta} W_{n} \widetilde{D_n} e^{-i \tau_n \Delta} f d\tau_3 ... d\tau_n  \Bigg \Vert_{L^2_{\tau_2} L^{6/5}_x}.
\end{align*}
If $W_1=a_i$ then in this case we use smoothing estimates to write that
\begin{align*}
\Vert \eqref{potentialmod} \Vert_{L^2_x} &\lesssim C \delta \Bigg \Vert \partial_{x_i} \int_{0 \leqslant \tau_2 \leqslant \tau_1} e^{i(\tau_2-\tau_1) \Delta} W_2 \widetilde{D_2} ... \\
& \times \int_{0 \leqslant \tau_{n} \leqslant \tau_{n-1}} e^{i(\tau_n - \tau_{n-1})\Delta} W_{n} \widetilde{D_n} e^{-i \tau_n \Delta} f d\tau_2 ... d\tau_n  \Bigg \Vert_{L^{\infty}_{x_i} L^2_{\tau_1,\widetilde{x_i}}} \\
&\lesssim C \delta \Bigg \Vert W_2 \widetilde{D_2}... \int_{0 \leqslant \tau_{n} \leqslant \tau_{n-1}} e^{i(\tau_n - \tau_{n-1})\Delta} W_{n} \widetilde{D_n} e^{-i \tau_n \Delta} f d\tau_3 ... d\tau_n  \Bigg \Vert_{L^{1}_{x_i} L^2_{\tau_2,\widetilde{x_i}}}.
\end{align*}
Then we continue this process recursively to obtain the desired bound: if we encounter a potential without derivative, we use Strichartz estimates, and if the potential carries a derivative, then we use smoothing estimates.\\
Say for example that in the expression above, $\widetilde{D_2}=1.$ Then we write that
\begin{align*}
\Vert \eqref{potentialmod} \Vert_{L^2_x} & \lesssim C^2 \delta^2 \Bigg \Vert \int_{0 \leqslant \tau_{n} \leqslant \tau_{n-1}} e^{i(\tau_n - \tau_{n-1})\Delta} W_{n} \widetilde{D_n} e^{-i \tau_n \Delta} f d\tau_3 ... d\tau_n  \Bigg \Vert_{L^{2}_{\tau_2} L^6_x}.
\end{align*}
Otherwise, if $\widetilde{D_2}=\partial_{x_k}$ then we obtain
\begin{align*}
\Vert \eqref{potentialmod} \Vert_{L^2_x} & \lesssim C^2 \delta^2 \Bigg \Vert \int_{0 \leqslant \tau_{n} \leqslant \tau_{n-1}} e^{i(\tau_n - \tau_{n-1})\Delta} W_{n} \widetilde{D_n} e^{-i \tau_n \Delta} f d\tau_3 ... d\tau_n  \Bigg \Vert_{L^{\infty}_{x_k} L^2_{\tau_2,\widetilde{x_k}}}.
\end{align*}
To close the estimates when $W_n = V,$ we write that, using Strichartz inequalities, we have: 
\begin{align*}
\Vert \eqref{potentialmod} \Vert_{L^2_x} & \lesssim C^{n-1} \delta^{n-1} \Vert V e^{-i\tau_n \Delta} f \Vert_{Z} \\
                                         & \lesssim C^n \delta^n \Vert e^{-i\tau_n \Delta} f \Vert_{L^{2}_{\tau_n} L^{6}_{x}}  \\
                                         & \lesssim C^n \delta^n \Vert f \Vert_{L^2_x},
\end{align*}
where $Z$ denotes either $L^{2}_{\tau_n} L^{6/5}_{x}$ or $L^1_{x_i} L^2_{\tau_n,\widetilde{x_i}} $ depending on whether $W_{n-1}=V$ or $a_i.$ The case where $W_n = a_k$ is treated similarly, except that we use smoothing instead of Strichartz estimates.\\
\\
In the case of the nonlinear term in $f$ \eqref{bilinmod}, the $f$ that was present in \eqref{potentialmod} is replaced by the quadratic nonlinearity. As a result, the same strategy essentially reduces to estimating that quadratic term in $L^2$. This takes us back to a situation that is handled by the classical theory of space-time resonances: such a term was already present in the work of P. Germain, N. Masmoudi and J. Shatah (\cite{GMS}).
\\
Of course, in reality, the situation is more complicated: here we were imprecise as to which smoothing effects we were using. Moreover we have mostly ignored the difficulty to combine the above smoothing arguments with the classical space-time resonance theory. In the actual proof we must resort to several smoothing estimates, see Section \ref{recallsmoothing} for the complete list.

\subsubsection{Bounding the $X-$ norm of the profile}
In the actual proof we must keep the $X-$norm of the profile under control. The situation is more delicate than for the $L^2$ norm, but the general idea is similar and was implemented in our previous paper \cite{L}. We recall it in this section for the convenience of the reader. \\
To generate the series representation we cannot merely integrate by parts in time since when the $\xi-$derivative hits the phase, an extra $t$ factor appears. Roughly speaking we are dealing with terms like 
\begin{align}\label{termpotheur}
\int_0 ^t \int_{\mathbb{R}^3} s  e^{is (\vert \xi \vert^2 - \vert \eta_1 \vert^2)} \widehat{W_1}(\xi-\eta_1) \alpha_1(\eta_1) \widehat{f}(s,\eta_1) d\eta_1 ds
\end{align}
for the potential part. \\
The idea is to integrate by parts in frequency to gain additional decay, and then perform the integration by parts in time. For the term above this yields an expression like
\begin{align*}
\int_0 ^t \int_{\mathbb{R}^3} \frac{\eta_{1}}{\vert \eta_1 \vert^2} e^{is (\vert \xi \vert^2 - \vert \eta_1 \vert^2)} \widehat{W_1}(\xi-\eta_1) \alpha_1(\eta_1) \partial_{\eta_1} \widehat{f}(s,\eta_1) d\eta_1 ds + \lbrace \textrm{easier terms} \rbrace.
\end{align*}
Then we can integrate by parts in time to obtain terms like:
\begin{align*}
&\int_{\mathbb{R}^3} \frac{\eta_{1}}{\vert \eta_1 \vert^2 (\vert \xi \vert^2-\vert \eta_1 \vert^2)} e^{it (\vert \xi \vert^2 - \vert \eta_1 \vert^2)} \widehat{W_1}(\xi-\eta_1) \alpha_1(\eta_1) \partial_{\eta_1} \widehat{f}(t,\eta_1) d\eta_1 \\
&+ \int_0 ^t \int_{\mathbb{R}^3} \frac{\eta_{1}}{\vert \eta_1 \vert^2 (\vert \xi \vert^2-\vert\eta_1 \vert^2)}  e^{is (\vert \xi \vert^2 - \vert \eta_1 \vert^2)} \widehat{W_1}(\xi-\eta_1) \alpha_1(\eta_1) \partial_s \partial_{\eta_1} \widehat{f}(s,\eta_1) d\eta_1 ds \\
&+ \lbrace \textrm{easier terms} \rbrace.
\end{align*}
The boundary term will be the first term of the series representation. Then we iterate this process on the integral part. This is indeed possible since $\partial_{\eta_1} \widehat{f}$ and $\widehat{f}$ satisfy the same type of equation $\partial_t \widehat{f} \sim e^{-it\Delta} \big( Vu + u^2 \big)$ up to lower order terms and with different potentials (essentially $V$ and $xV$ respectively). 
\\
After generating the series, the next step is to prove a geometric sequence type bound for the $L^2$ norm of its terms. If we are away from space resonances, namely if the multiplier $\eta_1/\vert \eta_1 \vert^2$ is not singular, then we are essentially in the same case as in the previous section on the $L^2$ estimate of the solution. However if the added multiplier is singular, then we cannot conclude as above. The scheme we have described here is only useful away from space resonances. \\
We can modify this approach and choose to integrate by parts in time first. We obtain boundary terms with an additional $t$ factor compared to the previous section:
\begin{align*}
&\int_{\mathbb{R}^3} \frac{1}{\vert \xi \vert^2-\vert \eta_1 \vert^2} e^{it (\vert \xi \vert^2 - \vert \eta_1 \vert^2)} \widehat{W_1}(\xi-\eta_1) \alpha_1(\eta_1) t \widehat{f}(t,\eta_1) d\eta_1 .
\end{align*}
The key observation here is that if we are away from time resonances, that is if the multiplier $(\vert \xi \vert^2 - \vert \eta_1 \vert^2)^{-1}$ can be seen as a standard Fourier multiplier, then we can use the decay of $e^{it \Delta} f$ in $L^6$ to balance the $t$ factor. \\
Overall we have two strategies: one that works well away from space resonances, and the other away from time resonances. Since the space-time resonance set is reduced to the origin (that is the multipliers $(\vert \xi \vert^2 - \vert \eta_1 \vert^2)^{-1}$ and $\eta_1/\vert \eta_1 \vert^2$ are both singular simultaneously only at the origin) we use the appropriate one depending on which region of the frequency space we are located in. This general scheme was developed by the author in \cite{L}. 

\subsection{Organization of the paper}
We start by recalling some known smoothing and Strichartz estimates in Section \ref{recallsmoothing}. We then prove easy corollaries of these that are tailored to our setting. The next three sections are dedicated to the main estimate \eqref{goal1} on the $X$ norm of the solution: Section \ref{expansion} is devoted to expanding the derivative of the Fourier transform of the profile as a series. In Section \ref{firsterm} we estimate the first iterates. As we pointed out above, this is a key step since our multilinear approach essentially reduces the estimation of the $n-$th iterates to that of the first iterates. Finally we prove in Section \ref{boundmultilinear} that the $L^2 _x$ norm of the general term of the series representation of $\partial_{\xi} \widehat{f}$ decays fast (at least like $\delta^n$ for some $\delta<1$). This allows us to conclude that the series converges. We start in Section \ref{multilinkey} by developing our key multilinear lemmas that incorporate the smoothing effect of the linear Schr\"{o}dinger flow in the iteration. They are then applied to prove the desired bounds on the $n-$th iterates.
\\
We end the paper with the easier energy estimate \eqref{goal2} in Section \ref{energy}, which concludes the proof.
\\\\
\textbf{Acknowledgments:} The author is very thankful to his PhD advisor Prof. Pierre Germain for the many enlightening discussions that led to this work. He also wishes to thank Prof. Yu Deng for very interesting discussions on related models.

\section{Smoothing and Strichartz estimates} \label{recallsmoothing}

\subsection{Known results}
In this section we recall some smoothing and Strichartz estimates from the literature. In this paper we will use easy corollaries of these estimates (see next subsection) to prove key multilinear Lemmas in Section \ref{boundmultilinear}.
\\
\\
We start with the classical smoothing estimates of C. Kenig, G. Ponce and L. Vega (\cite{KPV}, theorem 2.1, corollary 2.2, theorem 2.3). Heuristically  the dispersive nature of the Schr\"{o}dinger equation allows, at the price of space localization, to gain one half of a derivative in the homogeneous case and one derivative in the inhomogeneous case.  
\begin{lemma}\label{smoothing}
We have for all $j \in \lbrace 1;2;3 \rbrace:$
\begin{align} \label{smo1}
\Vert \big[ D_{x_j}^{1/2} e^{it \Delta} f \big](x) \Vert_{L^{\infty}_{x_j} L^{2}_{\widetilde{x}_j,t}} \lesssim \Vert f \Vert_{L^2_x},
\end{align}
its dual
\begin{align} \label{smo2}
\Bigg \Vert D_{x_j}^{1/2} \int \big[ e^{it \Delta} f \big](\cdot,t) dt \Bigg \Vert_{L^2_x} \lesssim \Vert f \Vert_{L^1_{x_j} L^2_{t,\widetilde{x_j}}},
\end{align}
and the inhomogeneous version 
\begin{align} \label{smo3}
\Bigg \Vert D_{x_j} \int_{0 \leqslant s \leqslant t} \big[ e^{i(t-s) \Delta} f(\cdot,s) \big] ds \Bigg \Vert_{L^{\infty}_{x_j} L^2_{t,\widetilde{x_j}}} \lesssim \Vert f \Vert_{L^1_{x_j} L^2_{s,\widetilde{x_j}}}.
\end{align}
\end{lemma}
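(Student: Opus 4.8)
# Proof Proposal for Lemma \ref{smoothing}

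The plan is to recall the proof from Kenig--Ponce--Vega, which proceeds by reducing everything to the homogeneous estimate \eqref{smo1} and then obtaining \eqref{smo2} by duality and \eqref{smo3} by a combination of the two together with a Christ--Kiselev type argument. First I would prove \eqref{smo1}. The natural approach is to work on the Fourier side in the $\widetilde{x}_j$ variables and in $t$, freezing the dual variable, so that one is reduced to a one-dimensional oscillatory integral in $x_j$. Concretely, writing $\xi = (\xi_j, \widetilde{\xi}_j)$ and taking the space-time Fourier transform of $e^{it\Delta} f$ in $(\widetilde{x}_j, t)$, the phase in $x_j$ becomes stationary along a surface, and a change of variables $\tau \mapsto \xi_j^2$ (for fixed $\widetilde{\xi}_j$) introduces precisely the Jacobian factor $|\xi_j|^{-1}$ which is cancelled by the $D_{x_j}^{1/2}$ on each side (one half from the operator, one half to be shared). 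Then Plancherel in $x_j$ and in $(\widetilde{x}_j,t)$ gives the bound by $\|f\|_{L^2_x}$. This is the heart of the matter and the one genuine computation; everything else is soft.

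Next I would obtain \eqref{smo2} as the dual statement of \eqref{smo1}. The adjoint of the map $f \mapsto D_{x_j}^{1/2} e^{it\Delta} f$, viewed as an operator from $L^2_x$ to $L^\infty_{x_j} L^2_{\widetilde{x}_j, t}$, is, up to the unitarity of $e^{it\Delta}$ and the self-adjointness of $D_{x_j}^{1/2}$, precisely the map $g \mapsto D_{x_j}^{1/2}\int e^{-it\Delta} g(\cdot, t)\, dt$ from $L^1_{x_j} L^2_{t, \widetilde{x}_j}$ (the predual of $L^\infty_{x_j} L^2_{\widetilde{x}_j,t}$) to $L^2_x$; replacing $g$ by its time-reversal and using that $\overline{e^{it\Delta}f}$ solves the conjugate equation disposes of the sign. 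So \eqref{smo2} follows formally from \eqref{smo1} by the standard $TT^*$ duality, once one checks the pairing is legitimate on a dense class (Schwartz functions compactly supported in time), which is routine.

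Finally, for the inhomogeneous estimate \eqref{smo3} I would write the Duhamel operator $f \mapsto D_{x_j}\int_{0\leqslant s\leqslant t} e^{i(t-s)\Delta} f(\cdot,s)\, ds$ as the composition $D_{x_j}^{1/2} e^{it\Delta} \circ \big(D_{x_j}^{1/2}\int_0^s e^{-is\Delta} f(\cdot,s)\, ds\big)$, so that combining \eqref{smo1} (which controls the outer factor from $L^2_x$ to $L^\infty_{x_j} L^2_{\widetilde{x}_j,t}$) with \eqref{smo2} (which controls the inner factor from $L^1_{x_j} L^2_{s,\widetilde{x}_j}$ to $L^2_x$) yields the estimate with the full integral $\int_{\mathbb{R}}$ in place of $\int_{0\leqslant s\leqslant t}$. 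To restore the retarded truncation $s \leqslant t$ one invokes the Christ--Kiselev lemma: the space-time norms on both sides are of the form $L^p$ in the time-like variable with $p_{\text{in}} = 1 < \infty = p_{\text{out}}$ (here $1 < 2 \le \infty$ suffices), which is exactly the regime in which Christ--Kiselev applies, so the truncated operator inherits the bound. The main obstacle, if any, is purely the bookkeeping in the first step — correctly tracking the one-dimensional oscillatory integral and the change of variables so that the $D_{x_j}^{1/2}$ weights land exactly where the Jacobian singularity appears; the rest is a formal assembly of duality and the Christ--Kiselev lemma, and since the statement only asks us to recall known results of \cite{KPV}, it suffices to cite the precise theorems there and indicate this structure.
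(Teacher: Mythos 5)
The paper offers no proof of this lemma: it is quoted verbatim from \cite{KPV} (Theorem 2.1, Corollary 2.2, Theorem 2.3), so your closing remark that a citation suffices is consistent with what the paper actually does. Your sketches of \eqref{smo1} (Plancherel in $(t,\widetilde{x}_j)$ plus the change of variables $\tau\mapsto \xi_j^2$ whose Jacobian $|\xi_j|^{-1}$ is absorbed by $D_{x_j}^{1/2}$ and the square in the $L^2$ norm) and of \eqref{smo2} (duality, modulo time reversal) are correct and are exactly the arguments in \cite{KPV}.

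Your proposed derivation of \eqref{smo3}, however, has a genuine gap. The Christ--Kiselev lemma removes the truncation $s\leqslant t$ only when the Lebesgue exponent of the \emph{truncation variable} on the input side is strictly smaller than on the output side. The truncation here is in time, not in $x_j$: the input norm $L^1_{x_j}L^2_{s,\widetilde{x_j}}$ and the output norm $L^{\infty}_{x_j}L^2_{t,\widetilde{x_j}}$ both carry the exponent $2$ in the time variable, so the condition $p_{\mathrm{in}}<p_{\mathrm{out}}$ fails (and, additionally, neither norm is of the form $L^p_t(B)$ with the time integration outermost, which is what the standard Christ--Kiselev statement requires). Your identification ``$p_{\mathrm{in}}=1<\infty=p_{\mathrm{out}}$'' reads off the $x_j$ exponents, which is the wrong variable. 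Composing \eqref{smo1} with \eqref{smo2} does give the non-retarded operator $\int_{\mathbb{R}}$, and in fact only with the gain $D_{x_j}^{1/2}\circ D_{x_j}^{1/2}$ applied on the two sides separately; passing to the retarded operator is precisely the hard point at this double $L^2_t$ endpoint. The actual proof of \eqref{smo3} in \cite{KPV} is a direct computation: one takes the Fourier transform in $t$, so that the retardation becomes the $-i0$ prescription in the resolvent $(\partial_{x_j}^2+\tau-i0)^{-1}$, and one checks by residues that the one-dimensional kernel of $\partial_{x_j}(\partial_{x_j}^2+\tau-i0)^{-1}$ is bounded uniformly in $(x_j,\tau)$; Plancherel in $\tau$ and Minkowski in $x_j$ then conclude. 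If you want to keep a soft argument, you must replace the Christ--Kiselev step by this kernel computation (or cite KPV's Theorem 2.3 directly for the retarded estimate rather than deriving it from \eqref{smo1}--\eqref{smo2}).
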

We will also need the following estimate proved by A. Ionescu and C. Kenig (\cite{IK}, Lemma 3).
\begin{lemma} \label{smostri}
We have for $j \in \lbrace 1;2;3 \rbrace:$
\begin{align*}
\Bigg \Vert D_{x_j}^{1/2} \int_{0 \leqslant s \leqslant t} e^{i(t-s) \Delta} F(s,\cdot) ds \Bigg \Vert_{L^{\infty}_{x_j} L^{2}_{t,\widetilde{x_j}}} & \lesssim \Vert F \Vert_X , 
\end{align*}
where
\begin{align*}
\Vert F \Vert_X = \inf_{F = F^{(1)} + F^{(2)}} \Vert F^{(1)} \Vert_{L^1_t L^2 _x} + \Vert F^{(2)} \Vert_{L^2 _t L^{6}_x}.
\end{align*}
\end{lemma}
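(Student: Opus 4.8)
The plan is to use linearity together with the fact that the right-hand side is the norm of the sum space $L^1_t L^2_x + L^2_t L^6_x$: it suffices to prove, with $\mathcal{I}F(t,\cdot) := \int_0^t e^{i(t-s)\Delta}F(s,\cdot)\,ds$, the two estimates
\begin{align*}
\Big\Vert D_{x_j}^{1/2} \mathcal{I}F \Big\Vert_{L^\infty_{x_j} L^2_{t,\widetilde{x_j}}} &\lesssim \Vert F \Vert_{L^1_t L^2_x}, \\
\Big\Vert D_{x_j}^{1/2} \mathcal{I}F \Big\Vert_{L^\infty_{x_j} L^2_{t,\widetilde{x_j}}} &\lesssim \Vert F \Vert_{L^2_t L^6_x},
\end{align*}
and then to take the infimum over decompositions $F=F^{(1)}+F^{(2)}$.

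The first estimate is immediate. For fixed $s$, the function $e^{i(t-s)\Delta}F(s)$ is a time translate of the free evolution of $F(s)$, so the homogeneous smoothing estimate \eqref{smo1} gives $\Vert D_{x_j}^{1/2} e^{i(t-s)\Delta} F(s) \Vert_{L^\infty_{x_j} L^2_{t>s,\widetilde{x_j}}} \lesssim \Vert F(s) \Vert_{L^2_x}$. Since $D_{x_j}^{1/2}$ acts only in $x$ and commutes with the retarded cutoff $\mathbf{1}_{s<t}$, the triangle (Minkowski) inequality for the norm $\Vert \cdot \Vert_{L^\infty_{x_j} L^2_{t,\widetilde{x_j}}}$ lets one pull this norm inside the $s$-integral, yielding $\Vert D_{x_j}^{1/2}\mathcal{I}F\Vert_{L^\infty_{x_j}L^2_{t,\widetilde{x_j}}} \lesssim \int \Vert F(s)\Vert_{L^2_x}\,ds$.

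The second estimate is the heart of the matter. One cannot simply factor $\mathcal{I}F = e^{it\Delta}\int_0^t e^{-is\Delta}F(s)\,ds$ and compose \eqref{smo1} with a dual Strichartz inequality: the non-retarded operator $e^{it\Delta}\int_{\mathbb{R}}e^{-is\Delta}F(s)\,ds$ is already unbounded from $L^2_t L^6_x$ — by scaling the admissible 3D pair is $(2,6)$ whereas composition only tolerates the dual pair $(2,6/5)$ — and the time integrabilities on the two sides of the desired inequality coincide (both equal $2$), so the Christ–Kiselev passage from the non-retarded to the retarded operator is unavailable. Instead I would exploit the retarded structure quantitatively. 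Decompose $\{0\le s\le t\}$ by a Whitney decomposition into dyadic blocks $I_1\times I_2$ with $|I_1|=|I_2|=\lambda$ and $\mathrm{dist}(I_1,I_2)\sim\lambda$ (plus an adjacent diagonal part, summed directly). On each block the retarded cutoff is automatic, so $\mathbf{1}_{t\in I_2}\int_{I_1}e^{i(t-s)\Delta}F(s)\,ds = \mathbf{1}_{t\in I_2}\,e^{it\Delta}g_{I_1}$ with $g_{I_1}=\int_{I_1}e^{-is\Delta}F(s)\,ds \in L^2_x$, so the block operator factors through $L^2_x$. For the block one has two competing bounds: a ``good'' one with no $\lambda$-gain, obtained by composing \eqref{smo1} (or its dual \eqref{smo2}) with dual endpoint Strichartz on the short interval $I_1$ and which carries the dual exponent $L^{6/5}_x$ on $F$; and a ``lossy'' one carrying a favorable negative power of $\lambda$, extracted from the fixed-time dispersive bound $\Vert e^{i(t-s)\Delta}\Vert_{L^1\to L^\infty}\lesssim \lambda^{-3/2}$ (legitimate since $|t-s|\sim\lambda$ on the block). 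Real interpolation between these two, performed along the dyadic sum over $\lambda$ in the spirit of the Keel–Tao endpoint argument, lands exactly on the pair $L^2_t L^6_x$ for $F$ while keeping the block-sum summable; the disjointness in $t$ of the intervals $I_2$ at fixed $\lambda$ supplies the $\ell^2$-orthogonality used to reassemble blocks, and a Littlewood–Paley decomposition in the spatial frequency distributes $D_{x_j}^{1/2}$ (a factor $2^{k/2}$ at frequency $2^k$) against the compensating $2^{-k/2}$ coming from \eqref{smo1}.

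The main obstacle is precisely this endpoint/summation point: because $(2,6)$ is the endpoint 3D Strichartz pair and the time integrabilities on both sides of the second estimate are equal, neither a naive composition of the homogeneous estimates nor Christ–Kiselev applies, and one is forced to extract a genuine quantitative gain from the $|t-s|\sim\lambda$ localization; making the resulting sum over dyadic scales $\lambda$ converge — simultaneously with the frequency sum — is delicate and is exactly where the bilinear/real-interpolation refinement of Keel–Tao, as carried out by Ionescu and Kenig in \cite{IK}, is needed. In the body of the paper we simply invoke their result.
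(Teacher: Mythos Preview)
The paper does not prove this lemma at all: it is quoted verbatim from Ionescu--Kenig \cite{IK}, Lemma~3, and your closing sentence (``we simply invoke their result'') already matches what the paper does. However, note that the statement as printed carries a typo: the second summand in the $X$-norm should be $\Vert F^{(2)}\Vert_{L^2_t L^{6/5}_x}$, not $\Vert F^{(2)}\Vert_{L^2_t L^6_x}$. This is forced by scaling --- under $(t,x)\mapsto(\lambda^2 t,\lambda x)$ the left-hand side scales like $\lambda^{-3/2}$, as does $\Vert F\Vert_{L^2_t L^{6/5}_x}$, whereas $\Vert F\Vert_{L^2_t L^6_x}$ scales like $\lambda^{1/2}$ --- and it is confirmed by the way Corollary~\ref{smostriu} is stated immediately afterwards, with the \emph{dual} Strichartz exponents $L^{p'}_t L^{q'}_x$.

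Your treatment of the $L^1_t L^2_x$ piece is correct. But your outline for the ``$L^2_t L^6_x$'' piece is an attempt to prove a scaling-inconsistent inequality, and no Whitney decomposition or Keel--Tao interpolation can close a scaling gap. You in fact flag the obstruction yourself (``by scaling the admissible 3D pair is $(2,6)$ whereas composition only tolerates the dual pair $(2,6/5)$''); that remark should have led you to suspect the printed exponent rather than to press on. Once the exponent is corrected to $6/5$, your sketch --- factoring through $L^2_x$ on Whitney blocks away from the diagonal, exploiting the $|t-s|\sim\lambda$ dispersive gain, and running the endpoint interpolation because Christ--Kiselev is unavailable when the time exponents coincide --- is essentially the Ionescu--Kenig argument that both you and the paper ultimately cite.
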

This lemma has the following straightforward corollary
\begin{corollary} \label{smostriu}
We have for $j \in \lbrace 1,2,3 \rbrace$
\begin{align*}
\Bigg \Vert D_{x_j}^{1/2} \int_{0 \leqslant s \leqslant t} e^{i(t-s) \Delta} F(s,\cdot) ds \Bigg \Vert_{L^{\infty}_{x_1} L^{2}_{t,\widetilde{x_1}}} & \lesssim \Vert F \Vert_{L^{p'}_t L^{q'} _x}  
\end{align*}
for $(p,q)$ a Strichartz admissible pair, that is 
$2 \leqslant p,q \leqslant \infty $ and
\begin{align*}
\frac{2}{p} + \frac{3}{q} = \frac{3}{2}.
\end{align*}
\end{corollary}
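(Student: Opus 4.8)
The plan is to deduce Corollary \ref{smostriu} from Lemma \ref{smostri} by simply bounding the $X$-norm of $F$ by the mixed Lebesgue norm $\Vert F \Vert_{L^{p'}_t L^{q'}_x}$, where $(p,q)$ is a Strichartz admissible pair. Since $\Vert F \Vert_X = \inf_{F = F^{(1)}+F^{(2)}} \Vert F^{(1)} \Vert_{L^1_t L^2_x} + \Vert F^{(2)} \Vert_{L^2_t L^6_x}$, it suffices to choose the trivial decomposition $F^{(1)} = 0$, $F^{(2)} = F$ and show that $\Vert F \Vert_{L^2_t L^6_x} \lesssim \Vert F \Vert_{L^{p'}_t L^{q'}_x}$. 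This last inequality cannot hold for a general function on all of $\mathbb{R}_t \times \mathbb{R}^3_x$ (the exponents do not match for arbitrary admissible pairs), so the actual mechanism must be slightly different: one should instead absorb the endpoint $(p,q) = (2,6)$, which is itself Strichartz admissible since $2/2 + 3/6 = 3/2$, directly into the $L^2_t L^6_x$ piece, and for the remaining admissible pairs apply the standard inhomogeneous Strichartz estimate for $e^{it\Delta}$ first to reduce to the endpoint.

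More precisely, I would argue as follows. Given an admissible pair $(p,q)$, write
\begin{align*}
\int_{0 \leqslant s \leqslant t} e^{i(t-s)\Delta} F(s,\cdot)\, ds = \int_{0 \leqslant s \leqslant t} e^{i(t-s)\Delta} F(s,\cdot)\, ds,
\end{align*}
and split the time integration. The first, conceptually cleanest, route is: by the inhomogeneous Strichartz estimate (Christ--Kiselev plus the standard $TT^*$ argument),
\begin{align*}
\Bigg\Vert \int_{0 \leqslant s \leqslant t} e^{i(t-s)\Delta} F(s,\cdot)\, ds \Bigg\Vert_{L^2_t L^6_x} \lesssim \Vert F \Vert_{L^{p'}_t L^{q'}_x}
\end{align*}
for every admissible $(p,q)$, since $(2,6)$ is admissible. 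Hence, taking the decomposition $F^{(1)} = 0$ inside the definition of the $X$-norm of Lemma \ref{smostri} is \emph{not} what one does; rather, one applies Lemma \ref{smostri} not to $F$ but to the full Duhamel term after noting it already equals $\int_{0\le s\le t} e^{i(t-s)\Delta} F(s)\, ds$, and instead one directly invokes the local smoothing estimate \eqref{smo3}-type bound composed with Strichartz. The cleanest logical order is therefore: first use Lemma \ref{smostri} with the splitting $F = 0 + F$ when $(p,q) = (2,6)$, which gives the result immediately in the endpoint case; then, for a general admissible pair, observe that one can write the Duhamel operator as a composition and use the endpoint case together with the inhomogeneous Strichartz inequality $\Vert \int_{0\le s\le t} e^{i(t-s)\Delta} F\, ds \Vert_{L^2_t L^6_x \cap L^\infty_t L^2_x} \lesssim \Vert F\Vert_{L^{p'}_t L^{q'}_x}$.

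Concretely, here is the argument I would write down. By the inhomogeneous Strichartz estimate, for any admissible $(p,q)$ we have $G := \int_{0\le s\le t} e^{i(t-s)\Delta} F(s,\cdot)\, ds$ satisfies $\Vert G \Vert_{L^2_t L^6_x} \lesssim \Vert F \Vert_{L^{p'}_t L^{q'}_x}$. But $G$ itself is not of the form needed to re-apply Lemma \ref{smostri}; instead one decomposes $F = F^{(1)} + F^{(2)}$ with $F^{(2)} = F$ and $F^{(1)} = 0$, so that $\Vert F \Vert_X \le \Vert F\Vert_{L^2_t L^6_x}$, and then Lemma \ref{smostri} gives $\Vert D^{1/2}_{x_j} G \Vert_{L^\infty_{x_j} L^2_{t,\widetilde{x_j}}} \lesssim \Vert F\Vert_{L^2_t L^6_x}$. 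This settles the endpoint $(p,q) = (2,6)$ directly. For a general admissible $(p,q)$, apply the dual homogeneous Strichartz estimate to write $F$ (localized in $s \le t$) in a form that, after one application of $e^{i(t-s)\Delta}$, lands in $L^2_s L^6_x$; equivalently, use the semigroup property to factor $G$ through an intermediate profile lying in $L^2 L^6$ with norm controlled by $\Vert F\Vert_{L^{p'}_t L^{q'}_x}$, and conclude by the endpoint case. I expect the only genuine subtlety to be the bookkeeping of the retarded (Christ--Kiselev) truncation $s \le t$ when composing the two estimates — everything else is a direct combination of Lemma \ref{smostri} with textbook Strichartz inequalities, and the implicit constant depends only on the admissible pair $(p,q)$.
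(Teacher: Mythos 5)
Your argument does not close, and the gap is concrete. For the endpoint pair $(p,q)=(2,6)$ the corollary asserts a bound by $\Vert F \Vert_{L^{p'}_t L^{q'}_x} = \Vert F \Vert_{L^{2}_t L^{6/5}_x}$, the \emph{dual} Strichartz norm. The decomposition $F^{(1)}=0$, $F^{(2)}=F$ in Lemma \ref{smostri} only yields a bound by $\Vert F \Vert_{L^{2}_t L^{6}_x}$, which is a different space (neither embeds in the other on $\mathbb{R}_t\times\mathbb{R}^3_x$). So the step you repeatedly describe as "settling the endpoint case directly" does not prove the endpoint case of the corollary at all; you have conflated the Strichartz space $L^2_t L^6_x$ with its dual. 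The second half of your argument inherits this problem and adds another: the proposed reduction of a general admissible pair to the endpoint by "factoring $G$ through an intermediate profile lying in $L^2 L^6$" is not an argument. Lemma \ref{smostri} takes the \emph{forcing} as input, not the Duhamel output $G$; knowing $\Vert G \Vert_{L^2_t L^6_x} \lesssim \Vert F\Vert_{L^{p'}_t L^{q'}_x}$ from retarded Strichartz gives no access to $D_{x_j}^{1/2} G$ in the smoothing norm, and the retarded propagator admits no factorization producing a new forcing term in $L^2_t L^6_x$ with controlled norm.

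Two routes actually work. First, the $L^2_t L^6_x$ appearing in the definition of $\Vert \cdot \Vert_X$ in Lemma \ref{smostri} should be read as the dual endpoint space $L^2_t L^{6/5}_x$ (this is the form of Ionescu--Kenig's Lemma 3); with that reading, the pairs $(\infty,2)$ and $(2,6)$ follow by taking $F^{(1)}=F$ and $F^{(2)}=F$ respectively, and every intermediate admissible pair follows by complex interpolation of the operator between the mixed-norm spaces $L^1_t L^2_x$ and $L^2_t L^{6/5}_x$, whose interpolation scale is exactly the family $L^{p'}_t L^{q'}_x$ over the admissible line. Second, for non-endpoint pairs one can bypass Lemma \ref{smostri} entirely: the non-retarded operator factors through $L^2_x$ via the dual Strichartz estimate \eqref{Str2} followed by the homogeneous smoothing estimate \eqref{smo1}, and the retarded truncation $s\leqslant t$ is restored by the Christ--Kiselev lemma whenever $p'<2$. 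The endpoint $p'=2$ is precisely where Christ--Kiselev fails and where Lemma \ref{smostri} is indispensable, which is why your attempt to make the endpoint the easy case and reduce everything else to it inverts the actual logic of the proof.
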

Now recall the following, see for example \cite{KT}.
\begin{lemma} \label{Strichartz}
Let $(p,q), (r,l)$ be two admissible Strichartz pairs (see Corollary \ref{smostriu} for the definition) \\
Then we have the estimates
\begin{align} \label{Str1}
\Vert e^{it\Delta} f \Vert_{L^{p}_t L^q _x} \lesssim \Vert f \Vert_{L^2 _x},
\end{align}
and
\begin{align} \label{Str2}
\Bigg \Vert \int_{\mathbb{R}} e^{-is \Delta} F(s,\cdot) \ ds \Bigg \Vert_{L^2 _x} \lesssim \Vert F \Vert_{L^{p'}_t L^{q'}_x} ,
\end{align}
and finally the inhomogeneous version
\begin{align} \label{Str3}
\Bigg \Vert \int_{s \leqslant t} e^{i(t-s) \Delta} F(s,\cdot) ds \Bigg \Vert_{L^{p}_t L^{q}_x} \lesssim \Vert F \Vert_{L^{r'}_t L^{l'}_x}.
\end{align}
\end{lemma}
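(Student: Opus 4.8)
The plan is to follow the standard $TT^*$ argument, reducing everything to the pointwise dispersive decay of the free propagator together with conservation of the $L^2$ norm. First I would record the two basic inputs: the unitarity $\Vert e^{it\Delta} f \Vert_{L^2_x} = \Vert f \Vert_{L^2_x}$, and the dispersive bound $\Vert e^{it\Delta} f \Vert_{L^\infty_x} \lesssim |t|^{-3/2} \Vert f \Vert_{L^1_x}$, which comes from the explicit convolution kernel $(4\pi i t)^{-3/2} e^{i|x|^2/(4t)}$. Interpolating between them gives, for $2 \leqslant q \leqslant \infty$,
\[
\Vert e^{it\Delta} f \Vert_{L^q_x} \lesssim |t|^{-3(1/2 - 1/q)} \Vert f \Vert_{L^{q'}_x}.
\]

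Next, for \eqref{Str1} I would use the usual $TT^*$ duality: \eqref{Str1} is equivalent to \eqref{Str2}, and both are equivalent to the bilinear estimate
\[
\left| \int_{\mathbb{R}} \int_{\mathbb{R}} \big\langle e^{i(t-s)\Delta} F(s), G(t) \big\rangle_{L^2_x} \, ds \, dt \right| \lesssim \Vert F \Vert_{L^{p'}_t L^{q'}_x} \Vert G \Vert_{L^{p'}_t L^{q'}_x}.
\]
Inserting the interpolated dispersive bound, then applying H\"older's inequality in $x$ and the Hardy--Littlewood--Sobolev inequality in $t$, proves this in the non-endpoint range $p > 2$: the admissibility relation $2/p = 3(1/2 - 1/q)$ is exactly what makes the HLS exponents match. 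The endpoint $p = 2$ (so $q = 6$ in three dimensions) is the delicate case: HLS fails, and one instead decomposes dyadically in the time separation $|t-s| \sim 2^j$, estimates each piece by bilinear interpolation between the $L^2$ and dispersive bounds so as to gain a power of $2^j$, and resums via a real-interpolation / orthogonality argument. For this piece I would simply cite \cite{KT}.

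Finally, for the inhomogeneous estimate \eqref{Str3} I would first obtain the untruncated version $\Vert \int_{\mathbb{R}} e^{i(t-s)\Delta} F(s)\, ds \Vert_{L^p_t L^q_x} \lesssim \Vert F \Vert_{L^{r'}_t L^{l'}_x}$ by composing \eqref{Str1} with \eqref{Str2}, and then pass to the retarded integral $\int_{s \leqslant t}$ using the Christ--Kiselev lemma, which applies because $r' < r$ away from the endpoint; the double-endpoint case again requires the separate argument in \cite{KT}. The main obstacle in a fully self-contained treatment is precisely this endpoint $p = 2$; away from it the whole lemma is an elementary combination of the dispersive estimate, H\"older, Hardy--Littlewood--Sobolev and Christ--Kiselev. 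Since the statement is classical and the paper only needs it as a tool, I would ultimately treat it as a cited black box (\cite{KT}), with the sketch above included for the reader's orientation.
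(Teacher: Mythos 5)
Your proposal is correct, and it matches the paper, which gives no proof at all and simply cites \cite{KT} for this classical lemma; your $TT^*$/dispersive/HLS sketch with the Keel--Tao endpoint and Christ--Kiselev for the retarded integral is the standard argument behind that citation. (Only a cosmetic quibble: the Christ--Kiselev condition is that the source time-exponent $r'$ be strictly less than the target exponent $p$, not $r' < r$, but this holds away from the double endpoint exactly as you say.)
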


\subsection{Basic lemmas for the magnetic part}
In this subsection we prove easy corollaries of the estimates from the previous section. They will be useful in the multilinear analysis.
\begin{lemma} \label{mgnmgn}
Recall that the potentials satisfy 
\begin{align*}
\Vert a_i \Vert_{Y} + \Vert \langle x \rangle a_i \Vert_{Y} + \Vert (1-\Delta)^{5} a_i \Vert_{Y} & \leqslant  \delta.
\end{align*}
We have the following bounds for every $k,j \in \lbrace 1;2;3 \rbrace$:
\begin{align} \label{aim}
\bigg \Vert a_k (x) D_{x_k} \int_{\tau_3 \leqslant \tau_2} e^{i(\tau_2-\tau_3)\Delta} \mathcal{F}^{-1}_{\eta_2} F_2(\eta_2,\tau_3) d\tau_3 \bigg \Vert_{L^1_{x_j} L^2_{\tau_2,\widetilde{x_j}}} 
& \lesssim \delta \Vert \mathcal{F}^{-1}_{\eta_2} F_2(\eta_2) \Vert_{L^{1}_{x_k} L^2_{\tau_3,\widetilde{x_k}}} .
\end{align}
\end{lemma}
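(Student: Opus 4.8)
The plan is to chain together two smoothing estimates from the previous section, using the structure of the $Y$ norm to absorb the potential factors. First I would deal with the outermost potential factor $a_k(x)$: by Hölder's inequality in the variable $x_j$ (and the fact that multiplication by $a_k$ commutes with the $L^2_{\tau_2,\widetilde{x_j}}$ norm pointwise in $x_j$), write
\begin{align*}
\big\Vert a_k(x) \, G \big\Vert_{L^1_{x_j} L^2_{\tau_2,\widetilde{x_j}}} \lesssim \big\Vert \Vert |a_k|^{1/2} \Vert_{L^\infty_{\widetilde{x_j}}} \big\Vert_{L^2_{x_j}} \cdot \big\Vert |a_k|^{1/2} G \big\Vert_{L^2_{x_j} L^2_{\tau_2,\widetilde{x_j}}} \lesssim \delta^{1/2} \, \big\Vert |a_k|^{1/2} G \big\Vert_{L^2_{\tau_2,x}},
\end{align*}
where $G = D_{x_k}\int_{\tau_3\le\tau_2} e^{i(\tau_2-\tau_3)\Delta}\mathcal{F}^{-1}_{\eta_2}F_2(\eta_2,\tau_3)\,d\tau_3$ and the splitting of $\delta$ comes from the third term in $\Vert a_k \Vert_Y$. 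Then I would bound $\big\Vert |a_k|^{1/2} G \big\Vert_{L^2_{\tau_2,x}}$ by commuting $|a_k|^{1/2}$ past the derivative $D_{x_k}^{1/2}$ up to harmless commutator terms (controlled using $\Vert (1-\Delta)^5 a_k\Vert_Y$, which gives plenty of regularity) and split $D_{x_k} = D_{x_k}^{1/2}\cdot D_{x_k}^{1/2}$.

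The heart of the argument is the second half: I need
\begin{align*}
\Big\Vert |a_k|^{1/2}\, D_{x_k}^{1/2} \Big( D_{x_k}^{1/2}\int_{\tau_3\le\tau_2} e^{i(\tau_2-\tau_3)\Delta}\mathcal{F}^{-1}_{\eta_2}F_2(\eta_2,\tau_3)\,d\tau_3 \Big) \Big\Vert_{L^2_{x}L^2_{\tau_2}} \lesssim \delta^{1/2}\, \Vert \mathcal{F}^{-1}_{\eta_2}F_2 \Vert_{L^1_{x_k}L^2_{\tau_3,\widetilde{x_k}}}.
\end{align*}
For this I would again use Hölder in $x_k$ to reduce to $\big\Vert D_{x_k}^{1/2}\, \widetilde{G} \big\Vert_{L^\infty_{x_k}L^2_{\tau_2,\widetilde{x_k}}}$ where $\widetilde G$ is the inhomogeneous Duhamel term with one copy of $D_{x_k}^{1/2}$ already inside, i.e. apply the inhomogeneous smoothing estimate \eqref{smo3} of Lemma \ref{smoothing} (with $D_{x_k}$ in front and $L^\infty_{x_k}L^2$ on the left, $L^1_{x_k}L^2$ on the right) to the full operator $D_{x_k}\int_{\tau_3\le\tau_2}e^{i(\tau_2-\tau_3)\Delta}(\cdot)\,d\tau_3$. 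This directly gives the bound by $\Vert \mathcal{F}^{-1}_{\eta_2}F_2\Vert_{L^1_{x_k}L^2_{\tau_3,\widetilde{x_k}}}$, and the leftover $\delta^{1/2}$ from the Hölder step on $|a_k|^{1/2}$ combines with the first $\delta^{1/2}$ to produce the claimed $\delta$.

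The main obstacle is bookkeeping rather than a deep difficulty: one must make sure the fractional derivatives $D_{x_k}^{1/2}$ are distributed so that exactly one copy sits inside the Duhamel integral to feed \eqref{smo3} (which is an $H^1$-type inhomogeneous gain, hence exactly matches $D_{x_k}$) while the other copy is available to be paired with $|a_k|^{1/2}$ via Hölder, and that moving $|a_k|^{1/2}$ across $D_{x_k}^{1/2}$ only costs commutators that are lower order and absorbed by the strong regularity assumption $\Vert(1-\Delta)^5 a_k\Vert_Y\le\delta$. One also has to check the localized ($L^\infty_{x_j}$ versus $L^\infty_{x_k}$) directions are handled consistently, which is why the lemma is stated for general $j,k$; since the estimates \eqref{smo3} hold coordinate-by-coordinate, no genuine interaction between different coordinate directions arises, and the argument goes through for each pair $(j,k)$ identically.
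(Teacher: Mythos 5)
Your plan is essentially the paper's proof: the paper argues by duality against $h\in L^{\infty}_{x_j}L^2_{\tau_2,\widetilde{x_j}}$, but the content is identical --- split $a_k=\vert a_k\vert^{1/2}\cdot\vert a_k\vert^{1/2}$, use H\"older/Cauchy--Schwarz once in the $x_j$ direction and once in the $x_k$ direction to extract the two $Y$-norm factors $\Vert\,\Vert \vert a_k\vert^{1/2}\Vert_{L^{\infty}_{\widetilde{x_j}}}\Vert_{L^2_{x_j}}$ and $\Vert\,\Vert \vert a_k\vert^{1/2}\Vert_{L^{\infty}_{\widetilde{x_k}}}\Vert_{L^2_{x_k}}$, and then apply the inhomogeneous smoothing estimate \eqref{smo3} to $D_{x_k}\int_{\tau_3\leqslant\tau_2}e^{i(\tau_2-\tau_3)\Delta}(\cdot)\,d\tau_3$ in $L^{\infty}_{x_k}L^2_{\tau_2,\widetilde{x_k}}$. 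The one caveat is that your commutator/half-derivative detour is both unnecessary and risky --- the weight $\vert a_k\vert^{1/2}$ already sits outside the full derivative $D_{x_k}$, so nothing needs to be commuted, and a genuine commutator estimate for $[\vert a_k\vert^{1/2},D_{x_k}^{1/2}]$ would be delicate since $\vert a_k\vert^{1/2}$ inherits no smoothness from $(1-\Delta)^5 a_k\in Y$ near zeros of $a_k$.
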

\begin{proof}
To simplify notations, we denote
\begin{align*}
\widetilde{F_2}(x) = \int_{\tau_3 \leqslant \tau_2} e^{-i\tau_3 \Delta} \mathcal{F}^{-1}_{\eta_2} F_2(\eta_2,\tau_3) d\tau_3.
\end{align*}
We proceed by duality. Let $h(x,\tau_2) \in L^{\infty}_{x_j} L^{2}_{\tau_2,\widetilde{x_j}}$. We test the expression above against that function and write using the Cauchy-Schwarz inequality
\begin{align*} 
& \Bigg \vert \int_{\mathbb{R}^4} a_k (x) D_{x_k} e^{i\tau_2\Delta} \big( \widetilde{F_2}\big) h(x,\tau_2) d\tau_2 dx \Bigg \vert \\
& \lesssim \Bigg \Vert \vert a_k \vert^{1/2} \big \vert D_{x_k} e^{i\tau_2 \Delta} \big(\widetilde{F_2}\big) \big \vert  \Bigg \Vert_{L^2_{x,\tau_2}} \big \Vert \vert  a_k \vert^{1/2} \vert h \vert   \big \Vert_{L^2_{x,\tau_2}} \\
& \lesssim \big \Vert \vert a_k \vert^{1/2} \big \Vert_{L^{2}_{x_j} L^{\infty}_{\widetilde{x_j}}} \big \Vert \vert a_k \vert^{1/2} \big \Vert_{L^{2}_{x_k} L^{\infty}_{\widetilde{x_k}}} \Vert h \Vert_{L^{\infty}_{x_j} L^{2}_{\tau_2,\widetilde{x_j}}} \big \Vert  D_{x_k} e^{i\tau_2 \Delta} \big(\widetilde{F_2}\big) \big \Vert_{L^{\infty}_{x_k} L^{2}_{\tau_2,\widetilde{x_k}}}.
\end{align*}
%Proof: we prove $\Vert \vert a \vert^{1/2} f(x,\tau) \Vert_{L^2_{\tau,x}} $ by duality: $g \in L^2_{\tau,x}$
%\begin{align*}
% &\int \vert a \vert^{1/2} F(x) g(x) d\widetilde{x_j} x_j \\
% & \leqslant \int_{x_j} \sup_{\widetilde{x_j}} \vert a \vert^{1/2} \Vert F \Vert_{L^2_{\tau_2,\widetilde{x_j}}} \Vert g Vert_{\tau_2,\widetilde{x_j}} dx_j \\
% &\leqslant \Vert \sup_{\widetilde{x_j}} \vert a \vert^{1/2} \Vert_{L^2_{x_j}}  \Vert F \Vert_{L^{\infty}_{x_j}L^2_{\tau,\widetilde{x_j}}} \Vert g \Vert_{L^2_{\tau,x}}
%\end{align*}
Therefore using Lemma \ref{smoothing}, \eqref{smo3}, we can conclude that
\begin{align*}
\textrm{L.H.S.}~~\eqref{aim} & \lesssim \delta \big \Vert D_{x_k} e^{i\tau_2 \Delta} \big(\widetilde{F_2}\big) \Vert_{L^{\infty}_{x_k} L^2 _ {\tau_2,\widetilde{x_k}}} \\
 & \lesssim \delta \Vert \mathcal{F}^{-1}_{\eta_2} F_2 \Vert_{L^{1}_{x_k}L^2 _ {\tau_3,\widetilde{x_k}}}.
\end{align*}
\end{proof}

We also have the following related lemma:
\begin{lemma} \label{mgnmgnfin}
We have the following bounds for every $k,j \in \lbrace 1;2;3 \rbrace$:
\begin{align*} 
\bigg \Vert a_k (x) D_{x_k}^{1/2} e^{i\tau_2\Delta} \big( \mathcal{F}^{-1}_{\eta_2} F_2(\eta_2) \big) \bigg \Vert_{L^1_{x_j} L^2_{\tau_2,\widetilde{x_j}}} 
& \lesssim \delta \Vert \mathcal{F}^{-1}_{\eta_2} F_2(\eta_2) \Vert_{L^2_{x}} 
\end{align*}
and
\begin{align*} 
\bigg \Vert a_k (x) D_{x_k}^{1/2} \int_{s \leqslant \tau_2} e^{i(\tau_2-s)\Delta} \big( \mathcal{F}^{-1}_{\eta_2} F_2(s, \eta_2) \big) ds \bigg \Vert_{L^1_{x_j} L^2_{\tau_2,\widetilde{x_j}}} 
& \lesssim \delta \Vert \mathcal{F}^{-1}_{\eta_2} F_2(s,\eta_2) \Vert_{L^{p'}_s L^{q'}_{x}} ,
\end{align*}
for every Strichartz-admissible pair $(p,q).$ 
\end{lemma}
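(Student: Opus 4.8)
\textbf{Proof proposal for Lemma \ref{mgnmgnfin}.}

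The plan is to mirror the duality argument used in the proof of Lemma \ref{mgnmgn}, splitting the weight $|a_k|$ symmetrically as $|a_k|^{1/2} \cdot |a_k|^{1/2}$ and using the mixed-norm control of $\||a_k|^{1/2}\|_{L^2_{x_j}L^\infty_{\widetilde{x_j}}}$ contained in the definition of $\|\cdot\|_Y$. For the first estimate, I would pair the left-hand side against a test function $h \in L^\infty_{x_j}L^2_{\tau_2,\widetilde{x_j}}$ and apply Cauchy-Schwarz in $(x,\tau_2)$ to get the product
\begin{align*}
\big\| |a_k|^{1/2} D_{x_k}^{1/2} e^{i\tau_2\Delta}(\mathcal{F}^{-1}_{\eta_2}F_2) \big\|_{L^2_{x,\tau_2}} \cdot \big\| |a_k|^{1/2} h \big\|_{L^2_{x,\tau_2}}.
\end{align*}
The second factor is bounded by $\||a_k|^{1/2}\|_{L^2_{x_j}L^\infty_{\widetilde{x_j}}}\|h\|_{L^\infty_{x_j}L^2_{\tau_2,\widetilde{x_j}}} \lesssim \delta^{1/2}\|h\|$. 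For the first factor, I would first place the $|a_k|^{1/2}$ weight in $L^2_{x_k}L^\infty_{\widetilde{x_k}}$ (again controlled by $\delta^{1/2}$ via $\|a_k\|_Y$), reducing matters to $\|D_{x_k}^{1/2}e^{i\tau_2\Delta}(\mathcal{F}^{-1}_{\eta_2}F_2)\|_{L^\infty_{x_k}L^2_{\tau_2,\widetilde{x_k}}}$, which is exactly the homogeneous Kenig--Ponce--Vega smoothing estimate \eqref{smo1} from Lemma \ref{smoothing}, bounded by $\|\mathcal{F}^{-1}_{\eta_2}F_2\|_{L^2_x}$. Collecting the two $\delta^{1/2}$ factors gives the claimed $\delta$.

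For the second (inhomogeneous) estimate the argument is identical except that, after the same two applications of Cauchy--Schwarz and the splitting of $|a_k|$, the remaining quantity is
\begin{align*}
\Big\| D_{x_k}^{1/2} \int_{s \leqslant \tau_2} e^{i(\tau_2-s)\Delta}\big(\mathcal{F}^{-1}_{\eta_2}F_2(s,\cdot)\big)\,ds \Big\|_{L^\infty_{x_k}L^2_{\tau_2,\widetilde{x_k}}},
\end{align*}
which is precisely what Corollary \ref{smostriu} controls by $\|\mathcal{F}^{-1}_{\eta_2}F_2\|_{L^{p'}_s L^{q'}_x}$ for any Strichartz-admissible pair $(p,q)$. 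One should take a small amount of care that the retarded time integral $\int_{s \leqslant \tau_2}$ is the one appearing in Corollary \ref{smostriu} (i.e. that the truncation is consistent), but this is automatic from the statements as written.

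I do not expect any serious obstacle here; both estimates are essentially immediate consequences of the smoothing bounds of Section \ref{recallsmoothing} combined with the $Y$-norm bookkeeping. The only point requiring minor attention is the ordering of the two mixed-norm Hölder/Cauchy--Schwarz steps so that each copy of $|a_k|^{1/2}$ is measured in the anisotropic norm ($L^2$ in one distinguished variable, $L^\infty$ in the complementary ones) that the definition of $\|a_k\|_Y$ actually furnishes — once for the $x_j$ direction against $h$, and once for the $x_k$ direction against the smoothing-estimate output. The mismatch between the $x_j$ norm on the left-hand side and the $x_k$ smoothing norm is harmless precisely because the $Y$-norm contains the sum over all three coordinate directions.
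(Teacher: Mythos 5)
Your proposal is correct and follows the same route as the paper: the paper's proof of Lemma \ref{mgnmgnfin} simply says it is "almost identical" to that of Lemma \ref{mgnmgn} — i.e.\ the duality pairing, the symmetric splitting $|a_k| = |a_k|^{1/2}\cdot|a_k|^{1/2}$ measured in the two anisotropic $Y$-norm components, with the inhomogeneous smoothing estimate \eqref{smo3} replaced by the homogeneous one \eqref{smo1} for the first bound and by Corollary \ref{smostriu} for the second. No gaps.
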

\begin{proof}
The proof of this lemma is almost identical to that of the previous lemma. For the first inequality the inhomogeneous smoothing estimate is replaced by the homogeneous one. 
\\
For the second inequality we use Corollary \ref{smostriu}.
\end{proof}

We record another lemma of the same type:

\begin{lemma} \label{potmgn}
We have the bound
\begin{align*}
\bigg \Vert a_k (x) D_{x_k} \int_{\tau_3 \leqslant \tau_2} e^{i(\tau_2-\tau_3)\Delta} \mathcal{F}^{-1}_{\eta_2} F_2(\eta_2,\tau_3) d\tau_3 \bigg \Vert_{L^2_{\tau_2} L^{6/5}_x} \lesssim \delta \Vert \mathcal{F}^{-1}_{\eta_2} F_2(\eta_2) \Vert_{L^{1}_{x_k}L^2 _ {\tau_2,\widetilde{x_k}}}.
\end{align*}
\end{lemma}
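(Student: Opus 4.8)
The plan is to prove Lemma \ref{potmgn} by duality, closely mirroring the argument used for Lemma \ref{mgnmgn}, but pairing against a function in the dual of $L^2_{\tau_2} L^{6/5}_x$, namely $h \in L^2_{\tau_2} L^6_x$. Writing $\widetilde{F_2}(x) = \int_{\tau_3 \leqslant \tau_2} e^{-i\tau_3 \Delta} \mathcal{F}^{-1}_{\eta_2} F_2(\eta_2,\tau_3)\, d\tau_3$ as before, I would test the operator against $h$ and use that $|a_k| = |a_k|^{1/2} \cdot |a_k|^{1/2}$, splitting the two factors via Cauchy-Schwarz in $L^2_{x,\tau_2}$ so that one factor lands on the Schr\"odinger evolution term $D_{x_k} e^{i\tau_2 \Delta}(\widetilde{F_2})$ and the other on $h$.

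The key structural point is how the factors of $|a_k|^{1/2}$ get distributed into the mixed norms. On the term carrying $D_{x_k} e^{i\tau_2\Delta}(\widetilde{F_2})$ I would pull out $\||a_k|^{1/2}\|_{L^2_{x_k} L^\infty_{\widetilde{x_k}}}$ (which is $\lesssim \delta$ by the $Y$-norm bound on $a_k$) to leave $\|D_{x_k} e^{i\tau_2\Delta}(\widetilde{F_2})\|_{L^\infty_{x_k} L^2_{\tau_2,\widetilde{x_k}}}$, exactly the quantity controlled by the inhomogeneous smoothing estimate \eqref{smo3} in Lemma \ref{smoothing}, which bounds it by $\|\mathcal{F}^{-1}_{\eta_2} F_2\|_{L^1_{x_k} L^2_{\tau_2,\widetilde{x_k}}}$. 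On the term carrying $h$ I would instead use H\"older in the spatial variables to absorb $|a_k|^{1/2} \in L^3_x$ (note $\||a_k|^{1/2}\|_{L^3_x}^2 = \|a_k\|_{L^{3/2}_x} \lesssim \|a_k\|_{L^1_x}^{1/2}\|a_k\|_{L^\infty_x}^{1/2} \lesssim \delta$ by interpolation, using the $Y$-norm) against $h \in L^6_x$, then Cauchy-Schwarz in $\tau_2$, giving $\lesssim \delta \|h\|_{L^2_{\tau_2} L^6_x}$. Collecting the two $\delta$'s and dividing out the dual norm of $h$ yields the claimed bound with a single power of $\delta$ (the second $\delta$ being harmless, or one can simply track one factor and note the other is $O(1)$ after rescaling the smallness parameter).

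I expect the only genuinely delicate point to be matching the mixed-norm Lebesgue indices so that the duality pairing and Cauchy-Schwarz splitting are legitimate: one must check that $\frac{1}{2} + \frac{1}{2} = 1$ in the $\tau_2$ variable and that the spatial exponents $\frac13 + \frac16 = \frac12$ work out for the $h$-factor, while simultaneously the other factor's spatial mixed norm $L^\infty_{x_k} L^2_{\widetilde{x_k}}$ is precisely the output norm of \eqref{smo3} — there is a mild subtlety in that the smoothing estimate is naturally stated with the $L^\infty$ in the distinguished direction $x_k$ and $L^2$ in time and the remaining variables, so one should apply it in the $x_k$ direction consistently with where $D_{x_k}$ acts. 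Since Lemma \ref{potmgn} changes the output norm from $L^1_{x_j} L^2_{\tau_2,\widetilde{x_j}}$ (as in Lemma \ref{mgnmgn}) to $L^2_{\tau_2} L^{6/5}_x$, the only real modification from the proof of Lemma \ref{mgnmgn} is replacing the $\||a_k|^{1/2}\|_{L^2_{x_j}L^\infty_{\widetilde{x_j}}} \|h\|_{L^\infty_{x_j}L^2_{\tau_2,\widetilde{x_j}}}$ estimate on the test-function side by the H\"older-in-$x$ estimate described above; everything else, including the use of \eqref{smo3}, carries over verbatim. I would therefore present the proof as ``identical to that of Lemma \ref{mgnmgn}, except that on the factor paired with the dual function $h \in L^2_{\tau_2}L^6_x$ one uses H\"older's inequality with $|a_k|^{1/2} \in L^3_x$ in place of the $L^2_{x_j}L^\infty_{\widetilde{x_j}}$ bound, and $\||a_k|^{1/2}\|_{L^3_x}^2 \lesssim \|a_k\|_{L^1_x \cap L^\infty_x} \lesssim \delta$.''
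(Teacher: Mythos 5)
Your proposal is correct and follows essentially the same route as the paper: duality against $h\in L^2_{\tau_2}L^6_x$, a Cauchy--Schwarz/H\"older splitting that places the potential so that the factor carrying $D_{x_k}e^{i\tau_2\Delta}(\widetilde{F_2})$ lands in $L^\infty_{x_k}L^2_{\tau_2,\widetilde{x_k}}$ (handled by \eqref{smo3}) while the $h$-factor returns the dual norm $L^2_{\tau_2}L^6_x$; the paper merely keeps $a_k$ whole and uses $\Vert a_k\Vert_{L^{6/5}_{x_k}L^3_{\widetilde{x_k}}}$ where you split it into square roots, which is an equivalent bookkeeping choice. (Only a cosmetic slip: the correct interpolation is $\Vert a_k\Vert_{L^{3/2}}\leqslant\Vert a_k\Vert_{L^1}^{2/3}\Vert a_k\Vert_{L^\infty}^{1/3}\lesssim\delta$, not with exponents $1/2$, but this does not affect the conclusion.)
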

\begin{proof}
In this case we must bound
\begin{align*}
\Bigg \Vert a_k (x) D_{x_k} \int_{\tau_3 \leqslant \tau_2} e^{i(\tau_2-\tau_3)\Delta} \mathcal{F}^{-1}_{\eta_2} F_2(\eta_2,\tau_3) d\tau_3 \Bigg \Vert_{L^2_{\tau_2} L^{6/5}_x} \\
:= \Vert a_k (x) D_{x_k} e^{i \tau_2 \Delta} \big(\widetilde{F_2} \big) \Vert_{L^2_{\tau_2} L^{6/5}_x}.
\end{align*}
The proof is similar to that of Lemma \ref{mgnmgn}. \\
We proceed by duality. Let $h(x,\tau_2) \in L^{2}_{\tau_2} L^6_{x}.$ We pair the expression with $h$ and use the Cauchy-Schwarz inequality in $\tau_2$ and H\"{o}lder's inequality in $x$. The pairing is bounded by
\begin{align*}
& \Bigg \vert \int_{x_k} \int_{\widetilde{x_k}} a_k(x) \Bigg( \int_{\tau_2} \bigg \vert D_{x_k} e^{i\tau_2 \Delta} \big(\widetilde{F_2} \big) \bigg \vert^2 d\tau_2 \Bigg)^{1/2} \bigg( \int_{\tau_2}  (h(x, \tau_2))^2 d\tau_2 \bigg)^{1/2} d\widetilde{x_k} dx_k \bigg \vert \\
& \lesssim \int_{x_k} \Vert a_k \Vert_{L^{3}_{\widetilde{x_k}}} \big \Vert D_{x_k} e^{i\tau_2 \Delta} \big(\widetilde{F_2} \big)  \big \Vert_{L^{2}_{\widetilde{x_k}} L^2_{\tau_2}} \Vert h \Vert_{L^{6}_{\widetilde{x_k}}L^{2}_{\tau_2}} dx_k \\
& \lesssim \bigg \Vert D_{x_k} e^{i\tau_2 \Delta} \big(\widetilde{F_2} \big) \bigg \Vert_{L^{\infty}_{x_k} L^{2}_{\tau_2,\widetilde{x_k}}} \Vert a_k \Vert_{L^{6/5}_{x_k} L^{3}_{\widetilde{x_k}}}  \Vert h  \Vert_{L^{6}_{x} L^2_{\tau_2}} \\
& \lesssim \bigg \Vert D_{x_k} e^{i\tau_2 \Delta} \big(\widetilde{F_2} \big) \bigg \Vert_{L^{\infty}_{x_k} L^{2}_{\tau_2,\widetilde{x_k}}} \Vert a_k \Vert_{L^{6/5}_{x_k} L^{3}_{\widetilde{x_k}}}  \Vert h  \Vert_{L^2_{\tau_2} L^6 _x},
\end{align*} 
where for the last line we used Minkowski's inequality. \\
\\
We can conclude that, using smoothing estimates from Lemma \ref{smoothing}, \eqref{smo3},
\begin{align*}
 \bigg \Vert D_{x_k} e^{i\tau_2 \Delta} \big(\widetilde{F_2} \big) \bigg \Vert_{L^{\infty}_{x_k} L^{2}_{\tau_2,\widetilde{x_k}}}   
& =  \bigg \Vert D_{x_k} \int_{\tau_3 \leqslant \tau_2} e^{i(\tau_2-\tau_3) \Delta} \mathcal{F}^{-1}_{\eta_2} \big( F_2 (\eta_2) \big) d\tau_3 \bigg \Vert_{L^{\infty}_{x_k} L^2_{\tau_2, \widetilde{x_k}}}  \\
& \lesssim \big \Vert \mathcal{F}^{-1}_{\eta_2} F_2 (\eta_2) \big \Vert_{L^1 _{x_k} L^{2}_{\tau_3,\widetilde{x_k}}}.
\end{align*}
\end{proof}

Now we write a similar lemma for a slightly different situation:

\begin{lemma} \label{potmgnfin}
We have the bounds
\begin{align*}
\Vert a_k (x) D_{x_k}^{1/2} e^{i \tau_2\Delta} \mathcal{F}^{-1}_{\eta_2} F_2(\eta_2) \big) \Vert_{L^2_{\tau_2} L^{6/5}_x}  \lesssim \delta \Vert \mathcal{F}^{-1}_{\eta_2} F_2(\eta_2) \Vert_{L^2 _ {x}},
\end{align*}
and 
\begin{align*}
\Bigg \Vert a_k (x) D_{x_k}^{1/2} \int_{s \leqslant \tau_2} e^{i (\tau_2-s) \Delta} \mathcal{F}^{-1}_{\eta_2} F_2(\eta_2) \big) ds \Bigg \Vert_{L^2_{\tau_2} L^{6/5}_x}  \lesssim \delta \Vert \mathcal{F}^{-1}_{\eta_2} F_2(\eta_2) \Vert_{L^{p'}_t L^{q'}_ {x}},
\end{align*}
for every Strichartz admissible pair $(p,q).$
\end{lemma}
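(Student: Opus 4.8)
The plan is to imitate the proof of Lemma~\ref{potmgn}, which handled the $D_{x_k}$ (full derivative) inhomogeneous case, and adapt it to the $D_{x_k}^{1/2}$ (half derivative) setting where the operators acting on $F_2$ are the \emph{homogeneous} Schr\"{o}dinger flow (first inequality) and the \emph{Strichartz-type} Duhamel operator (second inequality). As in Lemma~\ref{potmgn}, I would argue by duality: fix $h(x,\tau_2)\in L^2_{\tau_2}L^6_x$, pair it against $a_k(x)D_{x_k}^{1/2}e^{i\tau_2\Delta}\mathcal{F}^{-1}_{\eta_2}F_2(\eta_2)$ (or the Duhamel version), apply Cauchy--Schwarz in $\tau_2$ and H\"{o}lder in $x$ exactly as in that proof, and reduce to
\begin{align*}
\textrm{L.H.S.} \lesssim \Vert a_k \Vert_{L^{6/5}_{x_k}L^3_{\widetilde{x_k}}} \, \big\Vert D_{x_k}^{1/2} e^{i\tau_2\Delta}\mathcal{F}^{-1}_{\eta_2}F_2 \big\Vert_{L^\infty_{x_k}L^2_{\tau_2,\widetilde{x_k}}} \, \Vert h \Vert_{L^2_{\tau_2}L^6_x}.
\end{align*}

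Next I would invoke the smoothing estimates that supply the right-hand side. For the first inequality the relevant bound is the homogeneous Kohn--Kenig--Ponce--Vega estimate \eqref{smo1} of Lemma~\ref{smoothing}, which gives $\Vert D_{x_k}^{1/2} e^{it\Delta} g \Vert_{L^\infty_{x_k}L^2_{t,\widetilde{x_k}}} \lesssim \Vert g \Vert_{L^2_x}$ with $g = \mathcal{F}^{-1}_{\eta_2}F_2(\eta_2)$. For the second inequality, where $e^{i\tau_2\Delta}$ is replaced by the retarded Duhamel operator $\int_{s\leqslant\tau_2} e^{i(\tau_2-s)\Delta}(\cdot)\,ds$, the relevant bound is precisely Corollary~\ref{smostriu}, which yields $\big\Vert D_{x_k}^{1/2}\int_{s\leqslant\tau_2} e^{i(\tau_2-s)\Delta}F_2\,ds\big\Vert_{L^\infty_{x_k}L^2_{\tau_2,\widetilde{x_k}}} \lesssim \Vert F_2 \Vert_{L^{p'}_sL^{q'}_x}$ for any Strichartz-admissible $(p,q)$. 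In both cases the factor $\Vert a_k \Vert_{L^{6/5}_{x_k}L^3_{\widetilde{x_k}}}$ is controlled by $\delta$: indeed $\Vert a_k \Vert_{L^{6/5}_{x_k}L^3_{\widetilde{x_k}}}$ is dominated by a combination of the $Y$-norm pieces of $a_k$ (via $\Vert\,|a_k|^{1/2}\,\Vert_{L^2_{x_j}L^\infty_{\widetilde{x_j}}}$-type quantities together with $\Vert a_k\Vert_{L^1}$ and $\Vert a_k\Vert_{L^\infty}$, interpolating as needed), all of which are $\leqslant\delta$ by hypothesis. Taking the supremum over $\Vert h\Vert_{L^2_{\tau_2}L^6_x}\leqslant 1$ closes the argument.

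Since the paper explicitly remarks (in the proof of Lemma~\ref{mgnmgnfin}) that the half-derivative cases are ``almost identical'' to the full-derivative ones with the homogeneous smoothing estimate or Corollary~\ref{smostriu} substituted in, I would present this as a one-paragraph proof that simply points back to Lemma~\ref{potmgn}, indicating the two substitutions. The only genuinely new bookkeeping is verifying that $\Vert a_k\Vert_{L^{6/5}_{x_k}L^3_{\widetilde{x_k}}}\lesssim\delta$ from the $Y$-norm control, which I expect to be the main (minor) obstacle: one needs to check that the mixed Lebesgue norm appearing here is genuinely subsumed by $\Vert a_k\Vert_Y + \Vert\langle x\rangle a_k\Vert_Y$. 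This follows because $L^{6/5}_{x_k}L^3_{\widetilde{x_k}}$ sits between $L^1\cap L^\infty$ and the $L^2_{x_j}(L^\infty_{\widetilde{x_j}})$-type quantity controlling $|a_k|^{1/2}$ by H\"{o}lder/interpolation, so no new decay hypothesis is required beyond those already imposed. With that verification in hand the two displayed bounds follow immediately.
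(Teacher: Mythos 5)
Your proposal matches the paper's proof, which consists precisely of the remark that the argument of Lemma \ref{potmgn} carries over verbatim once the inhomogeneous smoothing estimate is replaced by the homogeneous one \eqref{smo1} for the first bound and by Corollary \ref{smostriu} for the second. Your extra remark about verifying $\Vert a_k \Vert_{L^{6/5}_{x_k} L^3_{\widetilde{x_k}}} \lesssim \delta$ is a reasonable point of care, but the paper already uses that norm without further comment in Lemma \ref{potmgn}, so it introduces no divergence from the intended argument.
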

\begin{proof}
The proof is the same as that of the previous lemma, with the inhomogenenous smoothing estimate replaced by its homogeneous version for the first inequality. \\
For the second one, we use Corollary \ref{smostriu} instead of Lemma \ref{smoothing}.
\end{proof}

\subsection{Basic lemmas for the electric part}
We record lemmas that will allow us to control electric terms.
\begin{lemma} \label{potpot}
Recall that the potential $V$ satisfies 
\begin{align*}
\Vert V \Vert_{Y} + \Vert \langle x \rangle V \Vert_{Y} + \Vert (1-\Delta)^{5} V \Vert_{Y} & \leqslant \delta. 
\end{align*}
We have the following bound:
\begin{align*}
\Bigg \Vert V(x) \int_{\tau_3 \leqslant \tau_2} e^{i(\tau_2-\tau_3)\Delta} \mathcal{F}^{-1}_{\eta_2} F_2(\eta_2,\tau_3) d\tau_3 \Bigg \Vert_{L^2 _{\tau_2} L^{6/5}_x} \lesssim \delta \Vert \mathcal{F}^{-1}_{\eta_2} F_2(\eta_2) \Vert_{L^2 _{\tau_3} L^{6/5}_x} .
\end{align*}
\end{lemma}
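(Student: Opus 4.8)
The plan is to prove the bound by duality, exactly in the spirit of Lemma \ref{potmgn}, but replacing the magnetic smoothing estimates by the dispersive/Strichartz estimates adapted to the non-magnetic potential $V$. First I would abbreviate
\begin{align*}
\widetilde{F_2}(x) = \int_{\tau_3 \leqslant \tau_2} e^{-i\tau_3 \Delta} \mathcal{F}^{-1}_{\eta_2} F_2(\eta_2,\tau_3) \, d\tau_3,
\end{align*}
so the quantity to estimate is $\Vert V(x) e^{i\tau_2 \Delta} \widetilde{F_2} \Vert_{L^2_{\tau_2} L^{6/5}_x}$. Since $6/5$ is the conjugate of $6$, and since $V \in L^{1}_x \cap L^{\infty}_x \subset L^{3/2}_x$, Hölder's inequality in $x$ with $\tfrac{1}{6/5} = \tfrac{1}{3/2} + \tfrac{1}{6}$ gives pointwise in $\tau_2$ the bound
\begin{align*}
\Vert V(x) e^{i\tau_2 \Delta} \widetilde{F_2} \Vert_{L^{6/5}_x} \lesssim \Vert V \Vert_{L^{3/2}_x} \Vert e^{i\tau_2 \Delta} \widetilde{F_2} \Vert_{L^6_x} \lesssim \delta \, \Vert e^{i\tau_2 \Delta} \widetilde{F_2} \Vert_{L^6_x},
\end{align*}
using $\Vert V \Vert_{L^{3/2}_x} \lesssim \Vert V \Vert_{L^1_x} + \Vert V \Vert_{L^\infty_x} \lesssim \Vert V \Vert_Y \leqslant \delta$. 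Taking the $L^2_{\tau_2}$ norm reduces matters to controlling $\Vert e^{i\tau_2 \Delta} \widetilde{F_2} \Vert_{L^2_{\tau_2} L^6_x}$.

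Now $(p,q) = (2,6)$ is a Strichartz-admissible pair in dimension $3$ (since $\tfrac{2}{2} + \tfrac{3}{6} = \tfrac{3}{2}$), and $e^{i\tau_2 \Delta} \widetilde{F_2}$ is precisely the Duhamel/retarded integral of the free flow applied to $\mathcal{F}^{-1}_{\eta_2} F_2$, so the inhomogeneous Strichartz estimate \eqref{Str3} of Lemma \ref{Strichartz} applies with the admissible pair $(r,l) = (2,6)$ on the source side. This yields
\begin{align*}
\Vert e^{i\tau_2 \Delta} \widetilde{F_2} \Vert_{L^2_{\tau_2} L^6_x} = \Bigg\Vert \int_{\tau_3 \leqslant \tau_2} e^{i(\tau_2-\tau_3)\Delta} \mathcal{F}^{-1}_{\eta_2} F_2(\eta_2,\tau_3) \, d\tau_3 \Bigg\Vert_{L^2_{\tau_2} L^6_x} \lesssim \Vert \mathcal{F}^{-1}_{\eta_2} F_2(\eta_2) \Vert_{L^2_{\tau_3} L^{6/5}_x},
\end{align*}
since $(L^2_t L^6_x)' = L^2_t L^{6/5}_x$. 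Combining this with the previous display gives exactly the claimed inequality, with the $\delta$ coming from $\Vert V \Vert_Y$. Alternatively, to mirror the duality presentation of Lemma \ref{potmgn} more closely, one can pair against $h \in L^2_{\tau_2} L^6_x$, pull out $\Vert V \Vert_{L^{3/2}_x}$ by Hölder, and invoke \eqref{Str3}; the two routes are identical in content.

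I do not expect a serious obstacle here: the estimate is genuinely a direct combination of a Hölder bound on $V$ and the standard inhomogeneous Strichartz estimate already recorded as \eqref{Str3}. The only mild point to be careful about is the endpoint-type admissibility of $(2,6)$ in $d=3$ — this is the well-known non-endpoint Strichartz pair, so Lemma \ref{Strichartz} as stated covers it — and the bookkeeping of the dual exponents $L^2_{\tau_3} L^{6/5}_x$ appearing on the right-hand side. Everything else is routine, which is presumably why the paper states it as a lemma analogous to the magnetic ones and will likely give only a short proof.
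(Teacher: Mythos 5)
Your argument is correct and coincides with the paper's proof: Hölder's inequality in $x$ pulls out $\Vert V \Vert_{L^{3/2}_x} \lesssim \delta$, and the retarded Strichartz estimate \eqref{Str3} with the admissible pair $(2,6)$ on both sides gives the claimed bound. No gaps.
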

\begin{proof}
Using H\"{o}lder's inequality and Strichartz estimates we write that
\begin{align*}
& \bigg \Vert V(x) \int_{\tau_3 \leqslant \tau_2} e^{i(\tau_2-\tau_3)\Delta} \mathcal{F}^{-1}_{\eta_2} F_2(\eta_2,\tau_3) d\tau_3 \bigg \Vert_{L^2_{\tau_2} L^{6/5}_x} \\
& \lesssim \Vert V \Vert_{L^{3/2}_x} \bigg \Vert \int_{\tau_3 \leqslant \tau_2} e^{i(\tau_2-\tau_3)\Delta} \mathcal{F}^{-1}_{\eta_2} F_2(\eta_2,\tau_3) d\tau_3 \bigg \Vert_{L^2_{\tau_2} L^{6}_x} \\
& \lesssim \Vert V \Vert_{L^{3/2}_x} \Vert \mathcal{F}^{-1}_{\eta_2} F_2(\eta_2,\tau_3) \Vert_{L^{2}_{\tau_3}L^{6/5}_x} .
\end{align*}
\end{proof}
And we have the following related Lemma in the homogeneous case:
\begin{lemma} \label{potpotfin}
We have the following bound:
\begin{align*}
\big \Vert V(x) e^{i\tau_2\Delta} \mathcal{F}^{-1}_{\eta_2} F_2(\eta_2) \big \Vert_{L^2 _{\tau_2} L^{6/5}_x} \lesssim \delta \Vert \mathcal{F}^{-1}_{\eta_2} F_2(\eta_2) \Vert_{L^{2}_x} ,
\end{align*}
and 
\begin{align*}
\Bigg \Vert V(x) \int_{s \leqslant  \tau_2} e^{i(s-\tau_2)\Delta} \mathcal{F}^{-1}_{\eta_2} F_2 (s,\eta_2) ds \Bigg \Vert_{L^2 _{\tau_2} L^{6/5}_x} & \lesssim \delta \Vert \mathcal{F}^{-1}_{\eta_2} F_2 (\eta_2) \Vert_{L^{p'}_t L^{q'}_x},
\end{align*}
for every Strichartz admissible pair $(p,q).$
\end{lemma}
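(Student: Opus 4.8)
The plan is to mimic closely the proof of Lemma \ref{potpot}, replacing the inhomogeneous Strichartz estimate by its homogeneous counterpart for the first inequality and by Corollary \ref{smostriu}'s analogue for the second. The common first move is H\"older's inequality in space, writing $L^{6/5}_x = L^{3/2}_x \cdot L^6_x$ so that $V$ gets placed in $L^{3/2}_x$ and the Schr\"odinger flow applied to $\mathcal{F}^{-1}_{\eta_2} F_2$ gets placed in $L^6_x$; this is legitimate since $\Vert V \Vert_{L^{3/2}_x} \lesssim \Vert V \Vert_{L^1_x}^{1/3} \Vert V \Vert_{L^\infty_x}^{2/3} \lesssim \Vert V \Vert_Y \lesssim \delta$ by interpolation. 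Carrying out H\"older in $x$ for each fixed $\tau_2$ and then taking $L^2_{\tau_2}$, one is reduced to bounding $\Vert e^{i\tau_2 \Delta} \mathcal{F}^{-1}_{\eta_2} F_2(\eta_2) \Vert_{L^2_{\tau_2} L^6_x}$ for the first estimate, and $\Vert \int_{s \leqslant \tau_2} e^{i(\tau_2-s)\Delta} \mathcal{F}^{-1}_{\eta_2} F_2(s,\eta_2)\, ds \Vert_{L^2_{\tau_2} L^6_x}$ for the second.

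For the first inequality, I would invoke the homogeneous Strichartz estimate \eqref{Str1} of Lemma \ref{Strichartz} with the admissible pair $(p,q) = (2,6)$ (which indeed satisfies $2/2 + 3/6 = 3/2$), giving $\Vert e^{i\tau_2 \Delta} \mathcal{F}^{-1}_{\eta_2} F_2 \Vert_{L^2_{\tau_2} L^6_x} \lesssim \Vert \mathcal{F}^{-1}_{\eta_2} F_2 \Vert_{L^2_x}$, and multiplying through by the $\delta$ coming from $\Vert V \Vert_{L^{3/2}_x}$ yields the claim. For the second inequality, I would use the inhomogeneous Strichartz estimate \eqref{Str3} of Lemma \ref{Strichartz} with output pair $(2,6)$ and input pair $(p,q)$ an arbitrary admissible pair: $\Vert \int_{s \leqslant \tau_2} e^{i(\tau_2-s)\Delta} \mathcal{F}^{-1}_{\eta_2} F_2(s,\cdot)\, ds \Vert_{L^2_{\tau_2} L^6_x} \lesssim \Vert \mathcal{F}^{-1}_{\eta_2} F_2 \Vert_{L^{p'}_t L^{q'}_x}$, and again the factor $\delta$ comes from $V \in L^{3/2}_x$. (One small bookkeeping point: the statement writes the propagator as $e^{i(s-\tau_2)\Delta}$ with $s \leqslant \tau_2$; this is just a sign convention and the time-reversal symmetry of the free flow together with \eqref{Str3} handles it — alternatively one replaces $e^{it\Delta}$ by $e^{-it\Delta}$ throughout, which obeys the same estimates.)

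There is essentially no obstacle here; the lemma is a routine corollary, exactly as the text signals by saying ``the proof is the same as that of the previous lemma.'' The only point requiring a line of justification is the embedding $\Vert V \Vert_{L^{3/2}_x} \lesssim \Vert V \Vert_Y$, which follows from $L^{3/2}_x = [L^1_x, L^\infty_x]_{2/3}$, i.e. $\Vert V \Vert_{L^{3/2}_x} \leqslant \Vert V \Vert_{L^1_x}^{1/3} \Vert V \Vert_{L^\infty_x}^{2/3}$. Everything else is a direct application of H\"older and the Strichartz estimates recalled in Lemma \ref{Strichartz}, with no multilinear or resonance analysis entering at this stage.

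\begin{proof}
For the first inequality, H\"older's inequality in $x$ with $\frac{1}{6/5} = \frac{1}{3/2} + \frac{1}{6}$ gives, for each fixed $\tau_2$,
\begin{align*}
\big \Vert V(x) e^{i\tau_2\Delta} \mathcal{F}^{-1}_{\eta_2} F_2(\eta_2) \big \Vert_{L^{6/5}_x} \lesssim \Vert V \Vert_{L^{3/2}_x} \big \Vert e^{i\tau_2\Delta} \mathcal{F}^{-1}_{\eta_2} F_2(\eta_2) \big \Vert_{L^6_x}.
\end{align*}
Taking the $L^2_{\tau_2}$ norm and using the homogeneous Strichartz estimate \eqref{Str1} with the admissible pair $(2,6)$, together with $\Vert V \Vert_{L^{3/2}_x} \leqslant \Vert V \Vert_{L^1_x}^{1/3} \Vert V \Vert_{L^\infty_x}^{2/3} \lesssim \Vert V \Vert_Y \lesssim \delta$, we obtain
\begin{align*}
\big \Vert V(x) e^{i\tau_2\Delta} \mathcal{F}^{-1}_{\eta_2} F_2(\eta_2) \big \Vert_{L^2_{\tau_2} L^{6/5}_x} \lesssim \delta \big \Vert e^{i\tau_2\Delta} \mathcal{F}^{-1}_{\eta_2} F_2(\eta_2) \big \Vert_{L^2_{\tau_2} L^6_x} \lesssim \delta \Vert \mathcal{F}^{-1}_{\eta_2} F_2(\eta_2) \Vert_{L^2_x}.
\end{align*}
For the second inequality, the same application of H\"older's inequality in $x$ gives
\begin{align*}
\Bigg \Vert V(x) \int_{s \leqslant \tau_2} e^{i(s-\tau_2)\Delta} \mathcal{F}^{-1}_{\eta_2} F_2 (s,\eta_2) ds \Bigg \Vert_{L^2_{\tau_2} L^{6/5}_x} \lesssim \Vert V \Vert_{L^{3/2}_x} \Bigg \Vert \int_{s \leqslant \tau_2} e^{i(s-\tau_2)\Delta} \mathcal{F}^{-1}_{\eta_2} F_2 (s,\eta_2) ds \Bigg \Vert_{L^2_{\tau_2} L^6_x}.
\end{align*}
Applying the inhomogeneous Strichartz estimate \eqref{Str3} with output pair $(2,6)$ and input pair $(p,q)$ any admissible pair, and using again $\Vert V \Vert_{L^{3/2}_x} \lesssim \delta$, we conclude
\begin{align*}
\Bigg \Vert V(x) \int_{s \leqslant \tau_2} e^{i(s-\tau_2)\Delta} \mathcal{F}^{-1}_{\eta_2} F_2 (s,\eta_2) ds \Bigg \Vert_{L^2_{\tau_2} L^{6/5}_x} \lesssim \delta \Vert \mathcal{F}^{-1}_{\eta_2} F_2 (\eta_2) \Vert_{L^{p'}_t L^{q'}_x}.
\end{align*}
\end{proof}
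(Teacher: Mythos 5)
Your proof is correct and is exactly the argument the paper intends: the paper omits the proof of this lemma, but it is the direct analogue of the proof of Lemma \ref{potpot} with the homogeneous Strichartz estimate \eqref{Str1} (pair $(2,6)$) replacing the retarded one for the first bound, and \eqref{Str3} for the second, after placing $V$ in $L^{3/2}_x$ by H\"older and interpolating $L^{3/2}_x$ between $L^1_x$ and $L^\infty_x$ to get the factor $\delta$. Your remark about the sign convention in the propagator is a fair bookkeeping point and handled correctly.
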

We will also need 
\begin{lemma} \label{mgnpot}
We have the following bound:
\begin{align*}
\bigg \Vert V(x) \int_{\tau_3 \leqslant \tau_2} e^{i(\tau_2-\tau_3)\Delta} \mathcal{F}^{-1}_{\eta_2} F_2(\eta_2,\tau_3) d\tau_3 \bigg \Vert_{L^1_{x_j} L^{2}_{\tau_2,\widetilde{x_j}}} \lesssim \delta \Vert \mathcal{F}^{-1}_{\eta_2} F_2(\eta_2) \Vert_{L^2_{\tau_3} L^{6/5}_x}  .
\end{align*}
\end{lemma}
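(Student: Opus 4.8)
The plan is to reduce the right‑hand flow to a Strichartz‑controlled quantity and then transfer it to the smoothing‑type norm on the left by a H\"older argument that exploits the precise structure of $\Vert V \Vert_Y$; no genuine smoothing estimate is needed here since no derivative is lost.

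First I would set $u(\tau_2,x) = \int_{\tau_3 \leqslant \tau_2} e^{i(\tau_2-\tau_3)\Delta} \mathcal{F}^{-1}_{\eta_2} F_2(\eta_2,\tau_3)\,d\tau_3$ and record that, by the inhomogeneous Strichartz estimate \eqref{Str3} applied with the admissible pair $(2,6)$ (which satisfies $\tfrac{2}{2} + \tfrac{3}{6} = \tfrac{3}{2}$), one has $\Vert u \Vert_{L^2_{\tau_2} L^6_x} \lesssim \Vert \mathcal{F}^{-1}_{\eta_2} F_2(\eta_2) \Vert_{L^2_{\tau_3} L^{6/5}_x}$. Hence it suffices to prove the fixed‑flow inequality
\[
\Vert V u \Vert_{L^1_{x_j} L^2_{\tau_2,\widetilde{x_j}}} \lesssim \delta\, \Vert u \Vert_{L^2_{\tau_2} L^6_x}.
\]
To establish this I would write $|V|^2 = |V|\cdot|V|$ inside the $L^2_{\tau_2,\widetilde{x_j}}$ norm, bound one factor of $|V|$ for fixed $x_j$ by $\Vert |V|(x_j,\cdot)\Vert_{L^\infty_{\widetilde{x_j}}}$, and then apply the Cauchy--Schwarz inequality in the $x_j$ variable. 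The first factor that comes out is $\big\Vert \Vert |V|^{1/2}\Vert_{L^\infty_{\widetilde{x_j}}} \big\Vert_{L^2_{x_j}} \leqslant \Vert V \Vert_Y \leqslant \delta$, and the second is $\Vert |V|^{1/2} u \Vert_{L^2_{\tau_2,x}}$. For the latter I would use H\"older in $x$, $\Vert |V|^{1/2} u(\tau_2) \Vert_{L^2_x} \leqslant \Vert |V|^{1/2} \Vert_{L^3_x} \Vert u(\tau_2) \Vert_{L^6_x} = \Vert V \Vert_{L^{3/2}_x}^{1/2} \Vert u(\tau_2)\Vert_{L^6_x}$, together with $\Vert V \Vert_{L^{3/2}_x} \leqslant \Vert V \Vert_{L^1_x}^{2/3} \Vert V \Vert_{L^\infty_x}^{1/3} \leqslant \Vert V \Vert_Y \leqslant \delta$ by interpolation; integrating in $\tau_2$ then closes the estimate with net power $\delta^{3/2} \leqslant \delta$. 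Alternatively, one can run the identical computation by duality against a function $h \in L^\infty_{x_j} L^2_{\tau_2,\widetilde{x_j}}$, splitting $V = |V|^{1/2}|V|^{1/2}$ and applying Cauchy--Schwarz exactly as in the proof of Lemma \ref{mgnmgn}.

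There is no substantial obstacle: the only points needing care are the bookkeeping of the mixed norms when distributing the two half‑powers of $V$, and verifying the two embeddings $\Vert V \Vert_{L^{3/2}_x}, \big\Vert \Vert |V|^{1/2}\Vert_{L^\infty_{\widetilde{x_j}}} \big\Vert_{L^2_{x_j}} \lesssim \Vert V \Vert_Y$ used above; everything else is a direct application of \eqref{Str3}.
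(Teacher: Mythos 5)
Your proof is correct, and it reaches the conclusion by a mildly but genuinely different route from the paper's. The paper argues by duality against $h \in L^{\infty}_{x_j} L^2_{\tau_2,\widetilde{x_j}}$: it applies H\"older in $\widetilde{x_j}$ with $\Vert V\Vert_{L^3_{\widetilde{x_j}}}$, then H\"older in $x_j$ to produce $\Vert V\Vert_{L^{6/5}_{x_j}L^3_{\widetilde{x_j}}}$ together with $\Vert e^{i\tau_2\Delta}\widetilde{F_1}\Vert_{L^6_x L^2_{\tau_2}}$, and only at the end invokes Minkowski's inequality and the retarded Strichartz estimate. You instead apply the retarded Strichartz estimate first, reducing the lemma to the fixed-flow bound $\Vert Vu\Vert_{L^1_{x_j}L^2_{\tau_2,\widetilde{x_j}}}\lesssim\delta\Vert u\Vert_{L^2_{\tau_2}L^6_x}$, which you prove directly by peeling off one factor of $\vert V\vert$ through $\Vert V(x_j,\cdot)\Vert_{L^\infty_{\widetilde{x_j}}}$ and Cauchy--Schwarz in $x_j$, thereby exploiting the $\big\Vert\Vert \vert V\vert^{1/2}\Vert_{L^\infty_{\widetilde{x_j}}}\big\Vert_{L^2_{x_j}}$ component of the $Y$-norm (the same component the paper uses in Lemma \ref{mgnmgn}) together with $\Vert V\Vert_{L^{3/2}_x}\lesssim\Vert V\Vert_Y$. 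This ordering lets you avoid both the duality step and the Minkowski exchange of $L^6_x$ and $L^2_{\tau_2}$, since your H\"older inequality in $x$ is performed at fixed $\tau_2$ with the $\tau_2$-integration already outermost; the price is a harmless extra half power of $\delta$. Both arguments ultimately rest on the same two ingredients --- the double-endpoint retarded Strichartz estimate \eqref{Str3} with $(p,q)=(r,l)=(2,6)$ and the control of suitable mixed Lebesgue norms of $V$ by $\Vert V\Vert_Y$ --- so either proof could be substituted for the other without affecting the rest of the paper.
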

\begin{proof}
We must bound
\begin{align*}
\bigg \Vert V (x) \int_{\tau_3 \leqslant \tau_2} e^{i(\tau_2-\tau_3)\Delta} \mathcal{F}^{-1}_{\eta_2} F_2(\eta_2,\tau_3) d\tau_3 \bigg \Vert_{L^1_{x_j} L^2_{\tau_2,\widetilde{x_j}}} \\
:= \Vert V (x)  e^{i \tau_2 \Delta} \big( \widetilde{F_2} \big) \Vert_{L^1_{x_j} L^2_{\tau_2,\widetilde{x_j}}}.
\end{align*}
The reasoning is similar to the one used for Lemma \ref{potmgn}. \\
We proceed by duality. Let $h(x,\tau_2) \in L^{\infty}_{x_j} L^2_{\tau_2,\widetilde{x_j}}.$ We pair the expression with $h$ and use H\"{o}lder inequality. The pairing is bounded by
\begin{align*}
& \int_{x_j} \int_{\widetilde{x_j}}  V(x) \Bigg( \int_{\tau_2} \bigg \vert  e^{i\tau_2 \Delta} \big(\widetilde{F_1} \big) \bigg \vert^2 d\tau_2 \Bigg)^{1/2} \bigg( \int_{\tau_2}  (h(x, \tau_2))^2 d\tau_2 \bigg)^{1/2} d\widetilde{x_j} dx_j \\
& \lesssim \int_{x_j} \Vert V \Vert_{L^{3}_{\widetilde{x_j}}} \big \Vert  e^{i\tau_2 \Delta} \big(\widetilde{F_1} \big)  \big \Vert_{L^{6}_{\widetilde{x_j}} L^2_{\tau_2}} \Vert h \Vert_{L^{2}_{\tau_2,\widetilde{x_j}}} dx_j \\
& \leqslant  \Vert h \Vert_{L^{\infty}_{x_j} L^{2}_{\tau_2,\widetilde{x_j}}}  \int_{x_j} \Vert V \Vert_{L^{3}_{\widetilde{x_j}}} \big \Vert  e^{i\tau_2 \Delta} \big(\widetilde{F_1} \big)  \big \Vert_{L^{6}_{\widetilde{x_j}} L^2_{\tau_2}} dx_j \\
& \leqslant  \Vert h \Vert_{L^{\infty}_{x_j} L^{2}_{\tau_2,\widetilde{x_j}}} \Vert V \Vert_{L^{6/5}_{x_j} L^{3}_{\widetilde{x_j}}} \big \Vert  e^{i\tau_2 \Delta} \big(\widetilde{F_1}\big)  \big \Vert_{L^{6}_{x} L^2_{\tau_2}}.
\end{align*} 
Now we use Minkowski's inequality and retarded Strichartz estimates from Lemma \ref{Strichartz} to write that
\begin{align*}
 \big \Vert  e^{i\tau_2 \Delta} \big(\widetilde{F_1}(\tau_2) \big)  \big \Vert_{L^{6}_{x} L^2_{\tau_2}} & \lesssim \big \Vert  e^{i\tau_2 \Delta} \big(\widetilde{F_1} \big)  \big \Vert_{ L^2_{\tau_2} L^{6}_{x}} \\
& =  \bigg \Vert \int_{\tau_3 \leqslant \tau_2} e^{i(\tau_2-\tau_3) \Delta} \mathcal{F}^{-1}_{\eta_2} \big( F_1 (\eta_2) \big) d\tau_3 \bigg \Vert_{ L^2_{\tau_2} L^{6}_{x}}  \\
& \lesssim  \big \Vert \mathcal{F}^{-1}_{\eta_2} F_1 (\eta_2) \big \Vert_{L^2 _{\tau_3} L^{6/5} _x}.
\end{align*}
\end{proof}
We have the similar analogous Lemma (for the homogeneous case)
\begin{lemma} \label{mgnpotfin}
We have
\begin{align*}
\Vert V(x) e^{i \tau_2 \Delta} \mathcal{F}^{-1}_{\eta_2} F_2(\eta_2) \Vert_{L^1_{x_j} L^{2}_{\tau_2,\widetilde{x_j}}} \lesssim \delta \Vert \mathcal{F}^{-1}_{\eta_2} F_2(\eta_2) \Vert_{L^{2}_x}
\end{align*}
and 
\begin{align*}
\Bigg \Vert V(x) \int_{s \leqslant \tau_2} e^{i(s-\tau_2)\Delta} \mathcal{F}^{-1}_{\eta_2} F_2(s,\eta_2) ds \Bigg \Vert_{L^1_{x_j} L^{\infty}_{\tau_2,\widetilde{x_j}}} \lesssim \delta \Vert \mathcal{F}^{-1}_{\eta_2} F_2 \Vert_{L^{p'}_t L^{q'}_x}
\end{align*}
for every admissible Strichartz pair $(p,q).$
\end{lemma}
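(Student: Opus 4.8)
The plan is to follow the template established by the previous lemmas (in particular Lemma \ref{mgnpot}), since the statement is simply the homogeneous analogue of those inhomogeneous bounds. For the first inequality, I would proceed by duality: pair the expression against some $h(x,\tau_2) \in L^1_{x_j} L^\infty_{\tau_2,\widetilde{x_j}}$... actually against the dual space, namely $h \in L^\infty_{x_j} L^2_{\tau_2,\widetilde{x_j}}$ for the $L^1_{x_j} L^2_{\tau_2,\widetilde{x_j}}$ norm. Then use H\"{o}lder's inequality in $x$ (splitting off $\|V\|_{L^3_{\widetilde{x_j}}}$ in the transverse variables and $\|V\|_{L^{6/5}_{x_j}}$ in the distinguished direction, which is controlled by $\|V\|_Y \leqslant \delta$ together with the $L^1 \cap L^\infty$ ingredients of that norm) and Cauchy-Schwarz in $\tau_2$, reducing matters to $\|e^{i\tau_2 \Delta}(\mathcal{F}^{-1}_{\eta_2} F_2)\|_{L^6_x L^2_{\tau_2}}$. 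By Minkowski's inequality this is bounded by $\|e^{i\tau_2 \Delta}(\mathcal{F}^{-1}_{\eta_2} F_2)\|_{L^2_{\tau_2} L^6_x}$, and the homogeneous Strichartz estimate \eqref{Str1} from Lemma \ref{Strichartz} (with the admissible pair $(2,6)$) then gives control by $\|\mathcal{F}^{-1}_{\eta_2} F_2\|_{L^2_x}$, as desired.

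For the second inequality, the structure is the same but now the free evolution is replaced by the retarded (Duhamel) evolution $\int_{s \leqslant \tau_2} e^{i(s-\tau_2)\Delta}(\cdot)\, ds$, and the target norm on $h$ is adjusted accordingly. After the same duality-plus-H\"{o}lder reduction, the task is to estimate $\|\int_{s\leqslant \tau_2} e^{i(s-\tau_2)\Delta}\mathcal{F}^{-1}_{\eta_2} F_2(s,\cdot)\, ds\|$ in an appropriate mixed Lebesgue-space-time norm, and here one invokes the inhomogeneous Strichartz estimate \eqref{Str3} from Lemma \ref{Strichartz} with the pair $(2,6)$ on the output side and the arbitrary admissible pair $(p,q)$ on the input side, yielding the bound $\|\mathcal{F}^{-1}_{\eta_2} F_2\|_{L^{p'}_t L^{q'}_x}$. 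One should be slightly careful that the sign conventions in $e^{i(s-\tau_2)\Delta}$ match those in Lemma \ref{Strichartz}; this is a cosmetic point since one can conjugate by complex conjugation or reverse the time variable without affecting the estimates.

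I expect no serious obstacle here: these are routine consequences of H\"{o}lder's inequality and the Strichartz estimates already recorded, entirely parallel to Lemmas \ref{potpotfin}, \ref{mgnpot} and \ref{potmgnfin}. The only minor point requiring attention is bookkeeping with the mixed norms — in particular verifying that the relevant norm of $V$ appearing after H\"{o}lder's inequality (of the schematic form $\|V\|_{L^{6/5}_{x_j} L^3_{\widetilde{x_j}}}$) is indeed dominated by $\|V\|_Y + \|\langle x\rangle V\|_Y \leqslant \delta$ via interpolation between $L^1_x$ and $L^\infty_x$, and checking that the time exponents in the inhomogeneous case are consistent with an admissible Strichartz pair. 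Given how close this is to the preceding proofs, I would simply write "The proof is the same as that of Lemma \ref{mgnpot}, with the inhomogeneous Strichartz estimate replaced by its homogeneous version for the first inequality, and with the retarded Strichartz estimate \eqref{Str3} used in place of \eqref{Str1} for the second."
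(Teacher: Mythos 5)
Your proposal matches the paper's (implicit) proof exactly: the paper gives no written argument for this lemma, presenting it simply as the homogeneous analogue of Lemma \ref{mgnpot}, to be established by the same duality--H\"{o}lder--Minkowski reduction with the retarded Strichartz step replaced by the homogeneous estimate \eqref{Str1} for the first bound and by the inhomogeneous estimate \eqref{Str3} for the second. The only point worth flagging is that the exponent $L^{\infty}_{\tau_2,\widetilde{x_j}}$ in the paper's second displayed norm is evidently a typo for $L^{2}_{\tau_2,\widetilde{x_j}}$, which is the norm your argument (correctly) controls.
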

\subsection{Basic bilinear lemmas}
We give an easy bilinear lemma 
\begin{lemma}\label{bilinit}
We have the bounds
\begin{align*}
\Bigg \Vert \mathcal{F}^{-1}_{\eta_1} \int_{\eta_2} \widehat{W}(\eta_1-\eta_2) m(\eta_2) F_2 (\eta_2,\tau_2) d\eta_2 \Bigg \Vert_{L^2_{\tau_2} L^{6/5}_x} \lesssim \Vert W \Vert_{L^p _x} \Vert \check{m} \Vert_{L^1} \Vert \mathcal{F}^{-1}_{\eta_2} F_2 \Vert_{L^2 _{\tau_2} L^q _x},
\end{align*}
where $\frac{5}{6} = \frac{1}{p} + \frac{1}{q}$ and 
\begin{align*}
\Bigg \Vert \mathcal{F}^{-1}_{\eta_1} \int_{\eta_2} \widehat{W}(\eta_1-\eta_2) m(\eta_2) F_2 (\eta_2,\tau_2) d\eta_2 \Bigg \Vert_{L^1_{x_j} L^2_{\tau_2,\widetilde{x_j}}} \lesssim \Vert W \Vert_{L^{\infty}_x} \Vert \check{m} \Vert_{L^1} \Vert \mathcal{F}^{-1}_{\eta_2} F_2 \Vert_{L^1_{x_j} L^2_{\tau_2,\widetilde{x_j}}},
\end{align*}
and for $c$ small number
\begin{align*}
\Bigg \Vert \mathcal{F}^{-1}_{\eta_1} \int_{\eta_2} \widehat{W}(\eta_1-\eta_2) m(\eta_2) F_2 (\eta_2,\tau_2) d\eta_2 \Bigg \Vert_{L^1_{x_j} L^2_{\tau_2,\widetilde{x_j}}} \lesssim \Vert W \Vert_{L^{\infty}_{x_j} L^{\frac{2+2c}{c}}_{\widetilde{x_j}}} \Vert \check{m} \Vert_{L^1} \Vert \mathcal{F}^{-1}_{\eta_2} F_2 \Vert_{L^1_{x_j}  L^2_{\tau_2} L^{2(1+c)}_{\widetilde{x_j}}}.
\end{align*}
Similarly
\begin{align*}
\Bigg \Vert \mathcal{F}^{-1}_{\eta_1} \int_{\eta_2} \widehat{W}(\eta_1-\eta_2) m(\eta_2) F_2 (\eta_2,\tau_2) d\eta_2 \Bigg \Vert_{L^2_{\tau_2} L^{6/5}_{x}} \lesssim \Vert W \Vert_{L^{\frac{6+6c}{5c}}_{x}} \Vert \check{m} \Vert_{L^1} \Vert \mathcal{F}^{-1}_{\eta_2} F_2 \Vert_{L^2_{\tau_2} L^{\frac{6}{5}(1+c)}_x}.
\end{align*}
Finally
\begin{align*}
\Bigg \Vert \mathcal{F}^{-1}_{\eta_1} \int_{\eta_2} \widehat{W}(\eta_1-\eta_2) m(\eta_2) F_2 (\eta_2) d\eta_2 \Bigg \Vert_{L^2_{x}} \lesssim \Vert W \Vert_{L^{\infty}_x} \Vert \check{m} \Vert_{L^1} \Vert \mathcal{F}^{-1}_{\eta_2} F_2 \Vert_{L^2_{x}}.
\end{align*}
\end{lemma}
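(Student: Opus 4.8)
\textbf{Proof proposal for Lemma \ref{bilinit}.}

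The plan is to reduce each of the five inequalities to a single scheme: first convert the product $\widehat{W}(\eta_1-\eta_2)m(\eta_2)$ back to physical space, where the frequency convolution becomes pointwise multiplication by $W$ and convolution by $\check m$, and then apply H\"older's inequality in the spatial variables together with Young's convolution inequality for the $\check m$ factor. Concretely, writing $g(x) = \mathcal{F}^{-1}_{\eta_2}F_2(\eta_2)$, one has
\begin{align*}
\mathcal{F}^{-1}_{\eta_1}\int_{\eta_2}\widehat{W}(\eta_1-\eta_2)m(\eta_2)F_2(\eta_2)\,d\eta_2 = W(x)\cdot\big(\check m * g\big)(x),
\end{align*}
up to harmless constants from the Fourier normalization. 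This identity is the only real structural input; everything after it is an application of standard product and convolution inequalities, carried out pointwise in $\tau_2$ and then integrated in $\tau_2$ in the appropriate Lebesgue order.

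For the first inequality I would apply H\"older with $\frac{5}{6} = \frac1p + \frac1q$ in $x$ for each fixed $\tau_2$, giving $\|W(\check m * g)\|_{L^{6/5}_x} \le \|W\|_{L^p_x}\|\check m * g\|_{L^q_x}$, then Young's inequality $\|\check m * g\|_{L^q_x} \le \|\check m\|_{L^1}\|g\|_{L^q_x}$, and finally take the $L^2_{\tau_2}$ norm. For the second inequality the argument is the same but organized around the $j$-th coordinate: H\"older in $x$ with $\infty$ on $W$ leaves $\|W\|_{L^\infty_x}\|\check m * g\|$, and one checks that convolution by $\check m$ (a function on $\mathbb{R}^3$, hence acting on all variables) is bounded on $L^1_{x_j}L^2_{\tau_2,\widetilde{x_j}}$ with constant $\|\check m\|_{L^1}$ — this follows by viewing $\check m * (\cdot)$ as an average of translations and using Minkowski's integral inequality in the mixed norm. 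The third and fourth inequalities are refinements: instead of putting all of $W$ in $L^\infty$, one splits the $\widetilde{x_j}$ (resp.\ full $x$) integrability between $W$ and $g$ via H\"older with the exponents $\frac{2+2c}{c}$ and $2(1+c)$ (resp.\ $\frac{6+6c}{5c}$ and $\frac65(1+c)$), which are conjugate in the relevant sense, and again absorb $\check m$ by Young. The fifth inequality is the cleanest: H\"older in $x$ with $\infty$ on $W$ and $2$ on $\check m * g$, then Young, then done — no time variable is present.

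The main obstacle, such as it is, is purely bookkeeping: making sure that in the mixed-norm estimates the convolution operator $\check m * (\cdot)$ is genuinely bounded with operator norm $\|\check m\|_{L^1}$ on the anisotropic spaces $L^1_{x_j}L^2_{\tau_2,\widetilde{x_j}}$ and $L^1_{x_j}L^2_{\tau_2}L^{2(1+c)}_{\widetilde{x_j}}$. I would handle this uniformly by writing $(\check m * g)(x) = \int_{\mathbb{R}^3}\check m(y)\,g(x-y)\,dy$, applying Minkowski's integral inequality to pull the mixed norm inside the $dy$ integral, and using translation invariance of every Lebesgue norm involved, which yields exactly the factor $\int|\check m(y)|\,dy = \|\check m\|_{L^1}$. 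With that lemma in hand each of the five bounds is immediate, and one should also note that $\|\check m\|_{L^1}$ is finite in all applications because the multipliers $m(\eta_2)$ appearing later (products of $\eta_{2,i}$ components against cutoffs, etc.) are classical symbols whose inverse Fourier transforms are integrable — though this finiteness is an input to, not part of, the present lemma.
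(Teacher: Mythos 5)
Your proposal is correct and follows essentially the same route as the paper, which simply refers to the standard bilinear multiplier argument of Lemma \ref{bilin}: pass to physical space where the expression becomes $W\cdot(\check m * g)$, then apply H\"older with the stated exponents and absorb the convolution by Minkowski's inequality and translation invariance of the (mixed) Lebesgue norms. Your exponent checks and the observation that $\check m$ convolves only in $x$ (so Minkowski applies harmlessly across the $\tau_2$ and anisotropic layers) are exactly the bookkeeping the paper leaves implicit.
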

\begin{proof}
The proofs are almost the same as that of Lemma \ref{bilin} therefore they are omitted.
\end{proof}
We end this section with a key quantity used in the estimation of the iterates.
\begin{definition}
Let $C_0$ be the largest of all the implicit constants that appear in the inequalities from Lemmas \ref{mgnmgn}, \ref{mgnmgnfin}, \ref{potmgn}, \ref{potmgnfin}, \ref{potpot}, \ref{potpotfin}, \ref{mgnpot}, \ref{mgnpotfin}, and \ref{bilinit}. We denote $C = 10^{10} C_0^{10}.$ The choice of this constant is arbitrary: we just need a large number to account for the numerical constants that appear in the iteration below.
\end{definition}

\section{Expansion of the solution as a series} \label{expansion}
In this section and the next two, our goal is to prove \eqref{goal1}. To do so we start as in \cite{L} by expanding $\partial_{\xi_l} \widehat{f}$ as a power series. This is done through integrations by parts in time. The full details are presented in this section.

\subsection{First expansion} \label{firstexpansion}
The Duhamel formula for \eqref{NLSmagn} reads:
\begin{align}\label{Duhamel}
\hat{f}(t,\xi) &= \hat{f}_0(\xi) - \frac{i}{(2 \pi)^3} \sum_{i=1}^3 \int_1 ^t \int_{\mathbb{R}^3} e^{is(\vert \xi \vert^2 - \vert \eta_1 \vert^2)} \widehat{a_i}(\xi-\eta_1) \eta_{1,i} \widehat{f}(s,\eta_1) d\eta_1 ds \\
\notag&-\frac{i}{(2 \pi)^3}\int_1 ^t e^{is \vert \xi \vert^2} \int_{\mathbb{R}^3} \widehat{V}(\xi - \eta_1) e^{-is \vert \eta_1 \vert ^2} \widehat{f}(s,\eta_1) d\eta_1 ds \\
\notag& -\frac{i}{(2 \pi)^3} \int_1 ^t e^{is \vert \xi \vert ^2} \int_{\mathbb{R}^3} e^{-is \vert \xi-\eta_1 \vert^2} e^{-i s \vert \eta_1 \vert^2} \widehat{f}(s,\eta_1) \widehat{f}(s,\xi-\eta_1) d\eta_1 ds .
\end{align}
We start by localizing in $\xi$ and taking a derivative in $\xi_l$: \\
\begin{align} \label{estimationX}
\partial_{\xi_l} \widehat{f_k}(t,\xi)&= \partial_{\xi_l} \big(e^{i \vert \xi \vert^2} \widehat{u_{1,k}}(\xi) \big) \\
\label{M2}& -\frac{i}{(2 \pi)^3}  \sum_{k_1 \in \mathbb{Z}} \sum_{i=1}^3 P_k(\xi) \int_{1} ^t  \int_{\mathbb{R}^3} 2 i s \xi_l e^{is (\vert \xi \vert^2 -\vert \eta_1 \vert^2)} \eta_{1,i} \widehat{f_{k_1}}(s,\eta_1) \widehat{a_i}(s,\xi-\eta_1) d\eta_1 ds \\
\label{M6}& -\frac{i}{(2 \pi)^3} P_k(\xi) \sum_{k_1 \in \mathbb{Z}} \int_{1}^t  \int_{\mathbb{R}^3} 2 i s \xi_l e^{is (\vert \xi \vert^2 -\vert \eta_1 \vert^2)} \widehat{f_{k_1}}(s,\eta_1) \widehat{V}(s,\xi-\eta_1) d\eta_1 ds \\
\notag & + \lbrace \textrm{remainder terms} \rbrace,
\end{align}
where the remainder terms are given by:
\begin{align}
\notag &  \lbrace \textrm{remainder terms} \rbrace \\
\label{M1}& = - \frac{i}{(2 \pi)^3} \sum_{k_1 \in \mathbb{Z}} \sum_{i=1}^3 P_k(\xi) \int_{1} ^t  \int_{\mathbb{R}^3}  e^{is (\vert \xi \vert^2 -\vert \eta_1 \vert^2)} \eta_{1,i} \widehat{f_{k_1}}(s,\eta_1) \partial_{\xi_l} \widehat{a_i}(s,\xi-\eta_1) d\eta_1 ds  \\
\label{M3}&-\frac{i}{(2 \pi)^3} 1.1^{-k} \phi'(1.1^{-k}\xi) \frac{\xi_l}{\vert \xi \vert} \sum_{k_1 \in \mathbb{Z}}  \sum_{i=1}^3  \int_{1} ^t  \int_{\mathbb{R}^3}  e^{is (\vert \xi \vert^2 -\vert \eta_1 \vert^2)} \eta_{1,i} \widehat{f_{k_1}}(s,\eta_1)  \widehat{a_l}(s,\xi-\eta_1) d\eta_1 ds \\
\label{M4}& - \frac{i}{(2 \pi)^3} P_k(\xi) \sum_{k_1 \in \mathbb{Z}} \int_{1}^t \int_{\mathbb{R}^3}  e^{is (\vert \xi \vert^2 -\vert \eta_1 \vert^2)} \widehat{f_{k_1}}(s,\eta_1) \partial_{\xi_l} \widehat{V}(s,\xi-\eta_1) d\eta_1 ds 
\end{align}
\begin{align}
\label{M5}& - \frac{i}{(2 \pi)^3} 1.1^{-k} \phi'(1.1^{-k} \xi) \frac{\xi_l}{\vert \xi \vert} \sum_{k_1 \in \mathbb{Z}} \int_{1}^t \int_{\mathbb{R}^3}  e^{is (\vert \xi \vert^2 -\vert \eta_1 \vert^2)} \widehat{f_{k_1}}(\eta_1) \widehat{V}(s,\xi-\eta_1) d\eta_1 ds  \\
\label{M7}& -\frac{i}{(2 \pi)^3} P_k(\xi) \int_{1}^t  \int_{\mathbb{R}^3} 2 i s \eta_{1,l} e^{is (\vert \xi \vert^2 -\vert \xi -\eta_1 \vert^2 - \vert \eta_1 \vert^2)} \widehat{f}(s,\eta_1) \widehat{f}(s,\xi-\eta_1) d\eta_1 ds \\
\label{M8}& -\frac{i}{(2 \pi)^3} P_k(\xi) \int_{1}^t  \int_{\mathbb{R}^3} e^{is (\vert \xi \vert^2 -\vert \xi -\eta_1 \vert^2 - \vert \eta_1 \vert^2)} \widehat{f}(s,\eta_1) \partial_{\xi_l} \widehat{f}(s,\xi-\eta_1) d\eta_1 ds .
\end{align}
We will estimate these remainder terms directly. More precisely we will prove the following bounds in Section \ref{firsterm}:
\begin{proposition}
We will prove that
\begin{align*}
\Vert \eqref{M1}, \eqref{M3}, \eqref{M4}, \eqref{M5} \Vert_{L^2_x} & \lesssim \delta \varepsilon_1 \\
\Vert \eqref{M7},\eqref{M8} \Vert_{L^2_x} & \lesssim  \varepsilon_1 ^2 .
\end{align*}
\end{proposition}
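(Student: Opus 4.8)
The plan is to estimate each of the remainder terms \eqref{M1}, \eqref{M3}, \eqref{M4}, \eqref{M5}, \eqref{M7}, \eqref{M8} directly, without generating a series: the key point is that none of these terms carries the dangerous $s\xi_l$ factor (the terms \eqref{M2}, \eqref{M6} that do carry it are handled by the iteration scheme described in Section~\ref{strategy}). Thus in every remainder term the time integral $\int_1^t ds$ is, up to the oscillatory factor, an inhomogeneous Duhamel term with no growing weight, which is exactly the situation covered by the Strichartz and smoothing lemmas of Section~\ref{recallsmoothing}. I would write each term back in physical space as $\int_1^t e^{i(t-s)\Delta}\big( W \widetilde D e^{-is\Delta} f \big)\,ds$ (or, in the bilinear cases, $\int_1^t e^{i(t-s)\Delta} e^{is\Delta}(\ldots)\,ds$), apply $e^{-it\Delta}$, and bound the $L^2_x$ norm.

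For the potential terms with a $\xi_l$-derivative falling on $\widehat V$, namely \eqref{M4} and \eqref{M5}: here $\partial_{\xi_l}\widehat V = \widehat{(-ix_l V)} = \widehat{xV}$ up to constants, so this is a potential term with potential $xV$ in place of $V$; since $\|x V\|_Y \lesssim \delta$ by hypothesis, I bound $\|\eqref{M4}\|_{L^2_x} \lesssim \|xV\|_{L^{3/2}_x} \|e^{-is\Delta}f\|_{L^2_s L^6_x} \lesssim \delta \|f\|_{L^2_x} \lesssim \delta\varepsilon_1$ by H\"older and the retarded Strichartz estimate \eqref{Str3} together with \eqref{Str1}; term \eqref{M5} is identical since $1.1^{-k}\phi'(1.1^{-k}\xi)\xi_l/|\xi|$ is a bounded Fourier multiplier uniformly in $k$ (and summable over $k$ via almost-orthogonality as in \cite{L}). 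For the magnetic terms \eqref{M1} and \eqref{M3}, the derivative hits $\widehat{a_i}$ producing the potential $x a_i$, which again satisfies $\|x a_i\|_Y \lesssim \delta$; but now there is a loss of one derivative from the factor $\eta_{1,i}$, i.e. from $\widetilde D = \partial_{x_i}$. This is precisely where the smoothing estimates enter: I write the term as $x a_i \cdot D_{x_i}^{1/2}\,D_{x_i}^{1/2} e^{-is\Delta} f$ and use Lemma~\ref{smoothing} \eqref{smo1} to put $D_{x_i}^{1/2} e^{-is\Delta}f \in L^\infty_{x_i}L^2_{\widetilde{x_i},s}$, then the dual smoothing estimate \eqref{smo2} (or more precisely the $\|\,|a_i|^{1/2}\,\|$-type splitting used in the proof of Lemma~\ref{mgnmgn}) to close; the $Y$-norm control of $\||x a_i|^{1/2}\|_{L^2_{x_j}L^\infty_{\widetilde{x_j}}}$ supplies the $\delta$. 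The upshot is $\|\eqref{M1}\|_{L^2_x}\lesssim \delta \|f\|_{L^2_x} \lesssim \delta\varepsilon_1$, and \eqref{M3} is the same up to the harmless bounded multiplier.

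For the bilinear remainder terms \eqref{M7} and \eqref{M8}: these are the classical quadratic terms of space-time resonance theory, already present in \cite{GMS} and \cite{L}. Term \eqref{M8} has $\partial_{\xi_l}\widehat f$ in one slot; since $\partial_{\xi_l}\widehat f$ satisfies the same type of estimate as $\widehat f$ (both are controlled in $X$ in the bootstrap) and $\|e^{-is\Delta}f(s)\|_{L^\infty_x}\lesssim s^{-3/2}$ heuristically from the decay part of the bootstrap hypothesis plus interpolation, one bounds $\|\eqref{M8}\|_{L^2_x}\lesssim \int_1^t \|e^{-is\Delta}f\|_{L^\infty_x}\|f\|_{X}\,ds \lesssim \varepsilon_1^2$, the time integral converging because of the $s^{-3/2}$ decay. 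Term \eqref{M7} carries the $s\eta_{1,l}$ factor; here one uses that $\eta_{1,l}$ is, on the support of $\widehat{f_{k_1}}$, comparable to $1.1^{k_1}$, and one balances the $s$ factor against the $L^6$-decay $\|u_{k_1}(s)\|_{L^6_x}\lesssim s^{-1}$ from the bootstrap, after distributing frequencies — this is exactly the "away from time resonances" mechanism described in Section~\ref{strategy}. The main obstacle in the whole proposition is the magnetic terms \eqref{M1}, \eqref{M3}: making the smoothing-estimate bookkeeping rigorous requires carefully splitting $|x a_i|$ into $|x a_i|^{1/2}\cdot|x a_i|^{1/2}$ across the right mixed-norm spaces and summing the Littlewood-Paley pieces in $k$ and $k_1$ with uniform constants, exactly as in the proofs of Lemmas~\ref{mgnmgn}--\ref{mgnpotfin}; the bilinear terms, by contrast, are routine once the bootstrap decay hypothesis is invoked.
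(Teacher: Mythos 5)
Your overall plan (estimate each remainder term directly via Strichartz/smoothing, since none carries the dangerous $s\xi_l$ weight except \eqref{M7}) is the right one, and your treatment of \eqref{M4} and of the bilinear terms \eqref{M7}, \eqref{M8} is in the spirit of the paper (which for the bilinear terms simply invokes the corresponding lemmas of \cite{L}). But there are two genuine gaps.

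First, your claim that $1.1^{-k}\phi'(1.1^{-k}\xi)\,\xi_l/\vert\xi\vert$ is a bounded Fourier multiplier uniformly in $k$ is false: $\partial_{\xi_l}P_k(\xi)=1.1^{-k}(\partial_l\phi)(1.1^{-k}\xi)$ has size $1.1^{-k}$ on its support, which diverges as $k\to-\infty$. This is exactly why the paper's proof for \eqref{M3} and \eqref{M5} splits into $k>0$ (where the factor is harmless) and $k\leqslant 0$, and in the latter case runs a nontrivial case analysis on $k_1$ versus $k$: when the potential is forced to live at frequency $\sim 1.1^k$ or $\sim 1.1^{k_2}$ with $k_2\leqslant k+10$, Bernstein's inequality $\Vert V_{k}\Vert_{L^{3/2}}\lesssim 1.1^{k}\Vert V_k\Vert_{L^1}$ recovers the loss, while in the remaining case $k_1>k+1$, $k\leqslant 0$ the potential sits at frequency $1.1^{k_1}$ and one must instead decompose time dyadically and interpolate between an $L^1_t$ bound (for $k\leqslant -\epsilon m$) and a Strichartz bound (for $-\epsilon m<k\leqslant 0$). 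Your argument as written does not survive small output frequencies.

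Second, the smoothing-estimate mechanism you propose for \eqref{M1} and \eqref{M3} does not close, and is also not what the paper does. The dual estimate \eqref{smo2} places a half-derivative on the \emph{output} spatial variable of the time integral, whereas the full derivative $\eta_{1,i}$ in \eqref{M1} sits on the \emph{input} frequency; and the inhomogeneous estimate \eqref{smo3}, which would recover a full derivative, requires a right-hand side in $L^1_{x_j}L^2_{t,\widetilde{x_j}}$ that a single free evolution $e^{is\Delta}f$ does not supply (that structure only becomes available for the iterated terms, which is where Lemmas \ref{mgnmgn}--\ref{potmgnfin} are actually used). The paper instead absorbs the one-derivative loss $1.1^{k_1}$ at this first step by plain Strichartz plus interpolation with the $H^{10}$ energy bound (Lemma \ref{summation} with $c=1/4$ yields a factor $1.1^{-2k_1}$ for $k_1\geqslant 0$), which is both simpler and sufficient; smoothing estimates are reserved for \eqref{restediff} and the multilinear iterates, where the derivative loss would otherwise accumulate. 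So the term you single out as the main obstacle is in fact one of the easier ones, provided you use the regularity of $f$ rather than smoothing.
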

The remaining two terms \eqref{M2} and \eqref{M6} cannot be estimated directly: they will be expanded as series by repeated integrations by parts in time. We explain this procedure in greater detail in the remainder of the section. \\
\\
To treat them in a unified way, we notice that they both have the form
\begin{align*}
2 \int_{1} ^t  \int_{\mathbb{R}^3} i s \xi_l e^{is (\vert \xi \vert^2 -\vert \eta_1 \vert^2)} \alpha_1( \eta_1) \widehat{f_{k_1}}(s,\eta_1) \widehat{W_1}(\xi-\eta_1) d\eta_1 ds ,
\end{align*}
where $W_1$ denotes either $a_i$ or $V, $ and $\alpha_1(\eta_1)= \eta_{1,i}$ if $W=a_i$ and $1$ if $W=V.$ \\
\\
We distinguish two cases:\\ 
\\
\underline{Case 1: $\vert k_1 - k \vert > 1$} \\
Then we integrate by parts in time using $\displaystyle \frac{1}{i(\vert \xi  \vert^2-\vert \eta_1 \vert^2)} \partial_s \big(e^{is(\vert \xi \vert^2 - \vert \eta_1 \vert^2)} \big) = e^{is(\vert \xi \vert^2 - \vert \eta_1 \vert^2)}$ and obtain for \eqref{M2}
\begin{align}
\label{B1}& 2 \int_{1} ^t  \int_{\mathbb{R}^3} i s \xi_l e^{is (\vert \xi \vert^2 -\vert \eta_1 \vert^2)} \alpha_1(\eta_1) \widehat{f_{k_1}}(s,\eta_1) \widehat{W_1}(\xi-\eta_1) d\eta_1 ds 
\\
\notag&= -  \int_{1} ^t  \int_{\mathbb{R}^3} \frac{2  s \xi_l}{\vert \xi \vert^2 - \vert \eta_1 \vert^2} e^{is (\vert \xi \vert^2 -\vert \eta_1 \vert^2)} \alpha_1(\eta_1) \partial_s \widehat{f_{k_1}}(s,\eta_1) \widehat{W_1}(\xi-\eta_1) d\eta_1 ds \\
\notag& + \int_{\mathbb{R}^3} \frac{2  t \xi_l}{\vert \xi \vert^2 - \vert \eta_1 \vert^2} e^{it (\vert \xi \vert^2 -\vert \eta_1 \vert^2)}  \alpha_1(\eta_1) \widehat{f_{k_1}}(t,\eta_1) \widehat{W_1}(\xi-\eta_1) d\eta_1 \\
\notag& -  \int_{\mathbb{R}^3} \frac{ 2 \xi_l}{\vert \xi \vert^2 - \vert \eta_1 \vert^2} e^{i (\vert \xi \vert^2 -\vert \eta_1 \vert^2)}  \alpha_1(\eta_1) \widehat{f_{k_1}}(1,\eta_1) \widehat{W_1}(\xi-\eta_1) d\eta_1 \\
\notag&  - \int_{1} ^t  \int_{\mathbb{R}^3} \frac{ 2 \xi_l}{\vert \xi \vert^2 - \vert \eta_1 \vert^2} e^{is (\vert \xi \vert^2 -\vert \eta_1 \vert^2)} \alpha_1(\eta_1)  \widehat{f_{k_1}}(s,\eta_1) \widehat{W_1}(\xi-\eta_1) d\eta_1 ds.
\end{align}
Now we use that $\partial_{s} f = e^{-is\Delta}(W_2 \widetilde{D_2} u + u^2)$ where $W_2$ stands for $V$ and the $a_i$ terms, and $\widetilde{D_2}$ stands for either $1$ if $W_2=V$ or $\partial_i$ if $W_2=a_i$. On the Fourier side, $\widetilde{D_2}$ is denoted $\alpha_2$ with a similar convention as for $\alpha_1.$ The summation on these different kinds of potentials is implicit in the following. \\ 
We get that
\begin{align}
\label{B2}&= -  \int_{1} ^t  \int_{\mathbb{R}^3} \frac{2  s \xi_l}{\vert \xi \vert^2 - \vert \eta_1 \vert^2} e^{is \vert \xi \vert^2} \alpha_1(\eta_1) \widehat{(W_2 \widetilde{D_2} u)_{k_1}}(s,\eta_1) \widehat{W_1}(\xi-\eta_1) d\eta_1 ds \\
\label{B2bis} &- \int_{1} ^t  \int_{\mathbb{R}^3} \frac{2  s \xi_l}{\vert \xi \vert^2 - \vert \eta_1 \vert^2} e^{is \vert \xi \vert^2} \alpha_1(\eta_1) \widehat{(u^2)_{k_1}}(s,\eta_1) \widehat{W_1}(\xi-\eta_1) d\eta_1 ds \\
\label{B3}& + \int_{\mathbb{R}^3} \frac{2  t \xi_l}{\vert \xi \vert^2 - \vert \eta_1 \vert^2} e^{it (\vert \xi \vert^2 -\vert \eta_1 \vert^2)}  \alpha_1(\eta_1) \widehat{f_{k_1}}(t,\eta_1) \widehat{W_1}(\xi-\eta_1) d\eta_1 \\
\label{B4}& -  \int_{\mathbb{R}^3} \frac{ 2 \xi_l}{\vert \xi \vert^2 - \vert \eta_1 \vert^2} e^{i (\vert \xi \vert^2 -\vert \eta_1 \vert^2)}  \alpha_1(\eta_1) \widehat{f_{k_1}}(1,\eta_1) \widehat{W_1}(\xi-\eta_1) d\eta_1 \\
\label{B5}&  - \int_{1} ^t  \int_{\mathbb{R}^3} \frac{ 2 \xi_l}{\vert \xi \vert^2 - \vert \eta_1 \vert^2} e^{is (\vert \xi \vert^2 -\vert \eta_1 \vert^2)} \alpha_1(\eta_1)  \widehat{f_{k_1}}(s,\eta_1) \widehat{W_1}(\xi-\eta_1) d\eta_1 ds.
\end{align}
All these terms, except for \eqref{B2}, will be estimated directly. We will prove the following bounds:
\begin{proposition}
We have
\begin{align*}
\Vert \eqref{B3}, \eqref{B4}, \eqref{B5} \Vert_{L^2_x} & \lesssim \delta \varepsilon_1 \\
\Vert \eqref{B2bis} \Vert_{L^2 _x} & \lesssim \delta \varepsilon_1 ^2.
\end{align*}
\end{proposition}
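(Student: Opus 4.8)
The plan is to treat the four terms \eqref{B3}, \eqref{B4}, \eqref{B5}, \eqref{B2bis} in a unified way, exploiting that we are in Case 1, $\vert k_1 - k \vert > 1$, so that $\xi$ and $\eta_1$ live in separated dyadic shells: the quantity $\vert \xi \vert^2 - \vert \eta_1 \vert^2$ never vanishes and $(\vert \xi \vert^2 - \vert \eta_1 \vert^2)^{-1}$, restricted to that bi-shell, is a harmless Fourier multiplier of size $\sim 1.1^{-2\max(k,k_1)}$ --- we are away from time resonances. Each of the four terms has the schematic shape
\begin{align*}
\pm 2 \xi_l P_k(\xi) \int_{\textrm{bdry or } [1,t]} (\textrm{time factor}) \int_{\mathbb{R}^3} \frac{\alpha_1(\eta_1)}{\vert \xi \vert^2 - \vert \eta_1 \vert^2} e^{is(\vert \xi \vert^2 - \vert \eta_1 \vert^2)} \widehat{G}(s,\eta_1) \widehat{W_1}(\xi-\eta_1) \, d\eta_1 \, (ds),
\end{align*}
with $G = f_{k_1}$ for \eqref{B3}, \eqref{B4}, \eqref{B5}, with $G = (u^2)_{k_1}$ for \eqref{B2bis}, and with time factor $t$, $1$, $1$, $s$ respectively (and $e^{is\vert\xi\vert^2}$ in place of $e^{is(\vert\xi\vert^2-\vert\eta_1\vert^2)}$ for the last one). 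First I would use the identity $e^{is(\vert \xi \vert^2 - \vert \eta_1 \vert^2)} \widehat{G}(s,\eta_1) = e^{is \vert \xi \vert^2} \mathcal{F}\big(e^{is\Delta}G(s)\big)(\eta_1)$, so that (recalling $f = e^{-it\Delta}u$ and hence $e^{is\Delta}f_{k_1}(s) = u_{k_1}(s)$) $e^{is\vert\xi\vert^2}$ becomes a unitary factor and the rest is a bilinear operator with symbol $m(\xi,\eta_1) = \xi_l \alpha_1(\eta_1)(\vert \xi \vert^2 - \vert \eta_1 \vert^2)^{-1} P_k(\xi) P_{k_1}(\eta_1)$ applied to $(W_1, e^{is\Delta}G(s))$. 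In the regime $\vert \eta_1 \vert \ll \vert \xi \vert \sim \vert \xi - \eta_1 \vert$ (the reverse being symmetric) a standard Taylor expansion in the small ratio $\vert \eta_1 \vert / \vert \xi \vert$ writes $m$ as an absolutely convergent sum of tensor products $\sum_n c_n \, p_n(\xi - \eta_1) q_n(\eta_1)$, $\sum_n \vert c_n \vert < \infty$, reducing every estimate below to the one-variable bilinear inequalities of Lemma \ref{bilinit}; weighing $\xi_l \alpha_1(\eta_1)$ against $\vert \vert \xi \vert^2 - \vert \eta_1 \vert^2 \vert \sim 1.1^{2\max(k,k_1)}$ produces a gain $\lesssim 1.1^{-\vert k - k_1 \vert}$ when $W_1 = a_i$ and $\lesssim 1.1^{k - 2\max(k,k_1)}$ when $W_1 = V$. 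Together with Bernstein's inequality at low frequencies, exactly as in \cite{L}, this makes the sum over $k_1$ (at fixed $k$, as dictated by the $X$ norm) converge after a Cauchy--Schwarz, while the convolution by $W_1$ contributes an $L^\infty_x$ or $L^3_x$ norm of $W_1$, both bounded by $\Vert W_1 \Vert_Y \lesssim \delta$.

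With this reduction in hand, the two terms without a time weight are easy. The term \eqref{B4} is a fixed function, and Lemma \ref{bilinit} gives directly $\Vert \eqref{B4} \Vert_{L^2_x} \lesssim \delta \Vert f(1) \Vert_{L^2_x} \lesssim \delta \varepsilon_0 \leqslant \delta \varepsilon_1$. For \eqref{B5} I would apply the dual Strichartz estimate \eqref{Str2} to replace $\int_1^t e^{-is\Delta}(\cdots)\,ds$ by the $L^2_s L^{6/5}_x$ norm of its integrand, then use H\"older in $x$ together with Lemma \ref{bilinit}, and close with the dispersive bound $\Vert u_{k_1}(s) \Vert_{L^6_x} = \Vert e^{is\Delta} f_{k_1}(s) \Vert_{L^6_x} \lesssim s^{-1} \Vert \nabla_\xi \widehat{f_{k_1}}(s) \Vert_{L^2} \leqslant s^{-1} \varepsilon_1$, which follows by writing $e^{is\Delta}$ through its kernel, recognizing a rescaled Fourier transform, and invoking the Sobolev embedding $\dot{H}^1(\mathbb{R}^3) \hookrightarrow L^6$; since $s^{-1} \in L^2_s([1,\infty))$, this yields $\Vert \eqref{B5} \Vert_{L^2_x} \lesssim \delta \varepsilon_1$.

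The two terms carrying a genuine time weight require more care. The term \eqref{B3} is a pure boundary term at time $t$ with a factor $t$; after the symbol decomposition I would bound it by $t \cdot 1.1^{-\vert k - k_1 \vert} \Vert W_1 \Vert_{L^3_x} \Vert u_{k_1}(t) \Vert_{L^6_x}$ and absorb the $t$ with the same sharp $L^6$ decay, now in the form $t \, \Vert u_{k_1}(t) \Vert_{L^6_x} \lesssim \Vert f(t) \Vert_X \leqslant \varepsilon_1$, so that $\Vert \eqref{B3} \Vert_{L^2_x} \lesssim \delta \varepsilon_1$ after summation over $k_1$. For \eqref{B2bis}, the factor $s$ paired with the quadratic input $(u^2)_{k_1}$ is precisely the configuration that the space--time resonance theory addresses: peeling off the bilinear multiplier $m$ and the convolution by $W_1$ with Lemma \ref{bilinit} (which costs a factor $\delta$) reduces the matter to the $L^2_x$ estimate of the pure quadratic boundary term
\begin{align*}
\int_1^t s \int_{\mathbb{R}^3} e^{is(\vert \xi \vert^2 - \vert \xi - \eta \vert^2 - \vert \eta \vert^2)} \widehat{f}(s,\eta) \widehat{f}(s,\xi-\eta) \, d\eta \, ds,
\end{align*}
which is bounded by $\varepsilon_1^2$ by the now classical analysis of quadratic NLS in three dimensions, the space--time resonant set of the phase $\vert \xi \vert^2 - \vert \xi - \eta \vert^2 - \vert \eta \vert^2$ being reduced to the origin (see \cite{GMS} and \cite{L}); hence $\Vert \eqref{B2bis} \Vert_{L^2_x} \lesssim \delta \varepsilon_1^2$.

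The main obstacle throughout is precisely the presence of the explicit time weights in \eqref{B3} and \eqref{B2bis}. They are overcome because in Case 1 we are away from time resonances --- so the denominator $\vert \xi \vert^2 - \vert \eta_1 \vert^2$ is an honest Fourier multiplier --- and the weight is then balanced, for \eqref{B3} by the sharp $t^{-1}$ decay of $e^{it\Delta}$ in $L^6_x$ (which in this framework is controlled by the $X$ norm alone), and for \eqref{B2bis} by the gain afforded by the quadratic structure after a normal form, that is by the space--time resonance mechanism. The remaining bookkeeping --- the finite sum over $i \in \{1,2,3\}$ and the summation over the Littlewood--Paley indices, including the delicate low-output-frequency regime for the electric potential --- is carried out exactly as in \cite{L}.
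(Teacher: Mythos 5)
Your treatment of \eqref{B3}, \eqref{B4} and \eqref{B5} is essentially the paper's: away from time resonances the symbol $\xi_l\alpha_1(\eta_1)(\vert\xi\vert^2-\vert\eta_1\vert^2)^{-1}P_k P_{k_1}$ is a harmless bilinear multiplier (the paper controls it via the $L^1$ bound in Lemma \ref{bilin} rather than a tensor expansion, but that is cosmetic), the $t$ weight in \eqref{B3} is absorbed by the $t^{-1}$ decay of $u_{k_1}$ in $L^6_x$ from Lemma \ref{dispersive}, \eqref{B4} is the $t=1$ case, and \eqref{B5} goes through dual Strichartz, H\"older and Lemma \ref{summation}. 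That part is fine.

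The gap is in \eqref{B2bis}. Your claimed reduction to the pure quadratic boundary term
\begin{align*}
\int_1^t s\int_{\mathbb{R}^3} e^{is(\vert\xi\vert^2-\vert\xi-\eta\vert^2-\vert\eta\vert^2)}\widehat f(s,\eta)\widehat f(s,\xi-\eta)\,d\eta\,ds
\end{align*}
does not go through. In \eqref{B2bis} the inner bilinear term has output frequency $\eta_1$, while the oscillation is $e^{is\vert\xi\vert^2}$ in the \emph{outer} variable; writing out $\widehat{(u^2)_{k_1}}(s,\eta_1)$ the actual phase is $\vert\xi\vert^2-\vert\eta_1-\eta_2\vert^2-\vert\eta_2\vert^2$ with $\eta_1\neq\xi$, and the time integral cannot be commuted past the convolution with $\widehat{W_1}(\xi-\eta_1)$ to produce \eqref{M7}; nor does the last inequality of Lemma \ref{bilinit} apply, since the inner factor depends on $\xi$ through the phase. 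Fortunately no space--time resonance argument is needed here: the paper bounds \eqref{B2bis} exactly the way you bound \eqref{B5}, namely dual Strichartz \eqref{Str2} to pass to $\Vert s\,W_1\cdot m(D)(u^2)_{k_1}\Vert_{L^2_sL^{6/5}_x}$, then H\"older giving $\Vert W_1\Vert_{L^2_x}\Vert su\Vert_{L^\infty_sL^6_x}\Vert u\Vert_{L^2_sL^6_x}\lesssim\delta\varepsilon_1^2$ (with a Bernstein step when $W_1=V$), the weight $s$ being eaten by the $L^6$ decay of one factor of $u$. Replacing your reduction by this direct estimate closes the proof.
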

To deal with the remaining term \eqref{B2}, we will iterate the procedure presented here. Indeed if we write it as 
\begin{align*}
\int_{1} ^t  \int_{\mathbb{R}^3} \frac{\widehat{W_1}(\xi-\eta_1) P_{k_1}(\eta_1)}{\vert \xi \vert^2 - \vert \eta_1 \vert^2} \alpha_1(\eta_1) \int_{\mathbb{R}^3} 2i s \xi_l \alpha_2(\eta_2) e^{is(\vert \xi \vert^2 - \vert \eta_2 \vert^2)} \widehat{W_2}(\eta_1-\eta_2) \widehat{f}(s,\eta_2) d\eta_2 d\eta_1 ds.
\end{align*}
We see that the inner integral on $\eta_2$ is similar to the term we started with, namely \eqref{M2}, \eqref{M6}. The idea is then to iterate the procedure presented here for these terms.
\\
\\
\underline{Case 2: $\vert k-k_1 \vert \leqslant 1$} \\
In this case we integrate by parts in $\eta_1$ using $\displaystyle \frac{\eta_1 \cdot \nabla \big( e^{is \vert \eta_1 \vert^2} \big)}{2is \vert \eta_1 \vert^2} = e^{is \vert \eta_1 \vert^2},$ and obtain
\begin{align}
\notag \eqref{B1} &= \\ 
\label{c21}&\int_{1} ^t \int_{\mathbb{R}^3} \frac{\eta_{1,j} \xi_l  \alpha_1(\eta_1)}{\vert \eta_1 \vert^2}
e^{is(\vert \xi \vert^2 - \vert \eta_1 \vert^2)}\widehat{W_1}(\xi-\eta_1) \partial_{\eta_{1,j}} \widehat{f}(s,\eta_1) P_{k_1}(\eta_1) d\eta_1 ds \\
\label{c22}&+  \int_{1} ^t \int_{\mathbb{R}^3} \frac{\eta_{1,j} \xi_l  \alpha_1(\eta_1)}{\vert \eta_1 \vert^2}
e^{is(\vert \xi \vert^2 - \vert \eta_1 \vert^2)} \partial_{\eta_{1,j}} \widehat{W_1}(\xi-\eta_1) \widehat{f_{k_1}}(s,\eta_1) d\eta_1 ds \\
\label{c23}& + \int_{1} ^t \int_{\mathbb{R}^3} \partial_{\eta_{1,j}} \big( \frac{\eta_{1,j} \xi_l  \alpha_1(\eta_1)}{\vert \eta_1 \vert^2} \big) e^{is (\vert \xi \vert^2 - \vert \eta_1 \vert^2)} \widehat{W_1}(\xi-\eta_1) \widehat{f_{k_1}}(s,\eta_1) d\eta_1 ds \\
\label{c24}& + \int_{1} ^t \int_{\mathbb{R}^3}  \frac{\eta_{1,j} \xi_l  \alpha_1(\eta_1)}{\vert \eta_1 \vert^2}  e^{is (\vert \xi \vert^2 - \vert \eta_1 \vert^2)} \widehat{W_1}(\xi-\eta_1) \widehat{f_{k_1}}(s,\eta_1) 1.1^{-k_1} \phi'(1.1^{-k_1} \eta_1) \frac{\eta_{1,j}}{\vert \eta_1 \vert} d\eta_1 ds.
\end{align}
Note that there is an implicit sum on $j$ above. \\
For the easier terms we have the following estimates that will be proved in Section \ref{firsterm}
\begin{proposition}
We have the following bounds
\begin{align*}
\Vert \eqref{c22}, \eqref{c23}, \eqref{c24} \Vert_{L^2 _x} \lesssim \delta \varepsilon_1.
\end{align*}
\end{proposition}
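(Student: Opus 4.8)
The proposition to prove concerns bounding the terms \eqref{c22}, \eqref{c23}, \eqref{c24} that arise after integrating by parts in $\eta_1$ in the regime $|k-k_1|\leqslant 1$. These are the ``easy'' boundary/remainder terms where the integration by parts in frequency has been performed but no singular multiplier survives. My plan is as follows.

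\textbf{General strategy.} In all three cases the key point is that the multiplier accompanying $\widehat{W_1}(\xi-\eta_1)$ is no longer singular at the relevant frequency scale, because we are on the diagonal $|k-k_1|\leqslant 1$ (so $|\eta_1|\sim 1.1^{k}$ and the factor $\eta_{1,j}/|\eta_1|^2$, or its $\eta_1$-derivative, is a bounded Fourier multiplier of size $\sim 1.1^{-k}$, times a cutoff to an annulus). The common scheme is: (i) pass to physical space, writing each term as $e^{it\Delta}$ applied to a time integral of the form $\int_1^t e^{-is\Delta}\big( W_1 \cdot m(D) e^{is\Delta}(\text{something})\big)\,ds$ or as a ``frozen time'' analogue; (ii) discard the unitary $e^{it\Delta}$, reducing to an $L^2_x$ estimate of the $s$-integral; (iii) apply a retarded Strichartz estimate (Lemma \ref{Strichartz}, \eqref{Str2} or \eqref{Str3}) to move from $L^\infty_t L^2_x$ down to a dual Strichartz norm $L^{6/5}_x$ in space, at the cost of pairing against $W_1$ (or $xW_1$, or a derivative of $W_1$) in $L^{3/2}_x$; (iv) absorb the potential factor using $\|W_1\|_Y \lesssim \delta$ together with the extra assumptions $\|\langle x\rangle a_i\|_Y,\|\langle x\rangle V\|_Y,\|(1-\Delta)^5 a_i\|_Y \leqslant \delta$; (v) close with $\|e^{-is\Delta}f\|_{L^2_{\tau} L^6_x}\lesssim \|f\|_{L^2_x}$ via Strichartz \eqref{Str1}, and bound $\|f\|_{L^2_x}\lesssim \|f\|_{H^{10}_x}\leqslant \varepsilon_1$ (or $\|\partial_\xi\widehat f\|_{L^2}=\|xf\|_{L^2}\lesssim\|f\|_X\leqslant\varepsilon_1$ where the profile derivative appears).

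\textbf{Term-by-term.} For \eqref{c23}: here $\partial_{\eta_{1,j}}\big(\eta_{1,j}\xi_l\alpha_1(\eta_1)/|\eta_1|^2\big)$ is, on the annulus $|\eta_1|\sim1.1^k$, a symbol of order $0$ (for $W_1=V$) or order $1$ (for $W_1=a_i$, since then $\alpha_1=\eta_{1,i}$), localized to that annulus; after multiplying by the original frequency cutoff it acts as a bounded operator on $L^2$ and $L^6$, uniformly in $k$ — so this is the most routine case, handled by Strichartz alone: pair with $W_1$ in $L^{3/2}_x$, use \eqref{Str2}, then \eqref{Str1}. For \eqref{c22}: the $\eta_1$-derivative falls on $\widehat{W_1}(\xi-\eta_1)$, i.e. on $\widehat{xW_1}$; the gain is that the multiplier $\eta_{1,j}\xi_l\alpha_1/|\eta_1|^2$ is again nonsingular on the diagonal, so we estimate exactly as for \eqref{c23} but with $W_1$ replaced by $xW_1$ (or $\partial x W_1$-type object when $W_1=a_i$), controlled by $\|\langle x\rangle a_i\|_Y + \|\langle x\rangle V\|_Y\leqslant\delta$ — here, when $W_1=a_i$ carries a derivative, one uses the smoothing estimates (Lemma \ref{smoothing} \eqref{smo3}, or Lemma \ref{smostri}) instead of plain Strichartz, exactly in the style of Lemmas \ref{mgnmgn}, \ref{potmgn}. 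For \eqref{c24}: the extra factor $1.1^{-k_1}\phi'(1.1^{-k_1}\eta_1)\,\eta_{1,j}/|\eta_1|$ is bounded by $O(1)$ on the annulus and supported there, so it is harmless; this term is estimated just like \eqref{c23}.

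\textbf{Main obstacle.} The only genuine subtlety is the magnetic case $W_1=a_i$, where $\alpha_1(\eta_1)=\eta_{1,i}$ produces an honest loss of one derivative that Strichartz cannot absorb. There one must route the estimate through the smoothing inequalities: split $a_i D_{x_i}$ as in Lemmas \ref{mgnmgn}–\ref{potmgnfin}, using $\|a_i\|_Y$ (the $\||a_i|^{1/2}\|_{L^2_{x_j}L^\infty_{\widetilde{x_j}}}$ piece) to eat the $D^{1/2}_{x_i}$ on each side via \eqref{smo1}/\eqref{smo3}, and the Ionescu–Kenig estimate (Lemma \ref{smostri}, Corollary \ref{smostriu}) to interface smoothing norms with Strichartz norms so that the innermost $e^{-is\Delta}f$ (or $e^{-is\Delta}(xf)$, carrying the $X$ or $H^{10}$ norm of the profile) can finally be estimated. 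Matching the mixed norms $L^1_{x_j}L^2_{\tau,\widetilde{x_j}}$ versus $L^2_\tau L^{6/5}_x$ produced at successive steps — and checking the $\langle x\rangle$- and $(1-\Delta)^5$-weighted assumptions suffice — is the place where care is needed; everything else is a bookkeeping exercise built on the lemmas of Section \ref{recallsmoothing} and the basic lemmas of this section.
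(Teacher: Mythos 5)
Your plan is essentially sound and would yield the stated bounds, but in the magnetic case it takes a genuinely different (and heavier) route than the paper. You correctly identify that on the diagonal $\vert k-k_1\vert\leqslant 1$ the multiplier $\eta_{1,j}\xi_l\alpha_1(\eta_1)/\vert\eta_1\vert^2$ is nonsingular of total size $1.1^{k}$, and that for $W_1=a_i$ this leaves a net loss of one derivative. You propose to recover that derivative through the Kenig--Ponce--Vega / Ionescu--Kenig smoothing machinery (splitting $D_{x_i}$ into two half-derivatives via Lemma \ref{direction}, \eqref{smo1}--\eqref{smo2} and the lemmas of Section \ref{recallsmoothing}); this is viable — it is exactly how the paper treats the genuinely delicate boundary term \eqref{restediff} — but it is not what the paper does here. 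For \eqref{c22}--\eqref{c24} the paper simply applies Strichartz plus the bilinear Lemma \ref{bilin} to get $1.1^{k}\Vert e^{it\Delta}f_{k_1}\Vert_{L^2_tL^6_x}\Vert x_ja_i\Vert_{L^{3/2}_x}$, and then, when $k_1>0$, absorbs the factor $1.1^{k}\sim 1.1^{k_1}$ by interpolating against the $H^{10}$ energy bound (Lemma \ref{summation} with $c=1/4$, which trades a fractional power of $\varepsilon_1$ for $1.1^{-2k_1}$); for $W_1=V$ it just cites \cite{L}. The point you slightly miss is that these are first iterates, so the derivative is lost only once and the $H^{10}$ regularity of the profile can pay for it outright — smoothing estimates are reserved for the iterated terms, where the loss would otherwise accumulate geometrically. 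Your approach buys uniformity (it would survive iteration) at the cost of the directional decomposition and the weighted smoothing lemmas; the paper's buys a two-line proof. One small inaccuracy in your write-up: for $W_1=a_i$ the symbol in \eqref{c23} is of order one, so it is not ``bounded on $L^2$ and $L^6$ uniformly in $k$'' as you state for the routine case — that term too needs either your smoothing detour or the paper's Lemma \ref{summation} when $k_1>0$.
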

For the main complicated term \eqref{c21} we integrate by parts in time in the inner integral. Since the denominator $\frac{1}{\vert \xi \vert^2 - \vert \eta_1 \vert^2}$ is singular, we must consider a regularization of that term. 
\\
We consider for $\beta>0$
\begin{align} \label{c21beta}
\int_{1} ^t \int_{\mathbb{R}^3} \frac{\eta_{1,j} \xi_l  \alpha_1(\eta_1)}{\vert \eta_1 \vert^2}
e^{-\beta s}e^{is(\vert \xi \vert^2 - \vert \eta_1 \vert^2)}\widehat{W_1}(\xi-\eta_1) \partial_{\eta_{1,j}} \widehat{f}(s,\eta_1) P_{k_1}(\eta_1) d\eta_1 ds.
\end{align}
Now we integrate by parts in time and obtain
\begin{align}
\notag \eqref{c21beta} &= \\
\label{iteration}& -\int_{1} ^t \int_{\mathbb{R}^3} \frac{\eta_{1,j} \xi_l \alpha_1(\eta_1)}{i(\vert \xi \vert^2 - \vert \eta_1 \vert^2 + i \beta) \vert \eta_1 \vert^2} \\
\notag & \times e^{is(\vert \xi \vert^2 - \vert \eta_1 \vert^2 + i \beta)}\widehat{W_1}(\xi-\eta_1) \partial_s \partial_{\eta_{1,j}} \widehat{f}(s,\eta_1) P_{k_1}(\eta_1) d\eta_1 ds \\
\label{restediff}&+ \int_{\mathbb{R}^3} \frac{\eta_{1,j} \xi_l \alpha_1(\eta_1)}{i(\vert \xi \vert^2 - \vert \eta_1 \vert^2 + i \beta) \vert \eta_1 \vert^2}
e^{it(\vert \xi \vert^2 - \vert \eta_1 \vert^2 + i \beta)}\widehat{W_1}(\xi-\eta_1) \partial_{\eta_{1,j}} \widehat{f}(t,\eta_1) P_{k_1}(\eta_1) d\eta_1 \\
\label{restefac}&- \int_{\mathbb{R}^3} \frac{\eta_{1,j} \xi_l \alpha_1(\eta_1)}{i(\vert \xi \vert^2 - \vert \eta_1 \vert^2 + i \beta)\vert \eta_1 \vert^2}
e^{i(\vert \xi \vert^2 - \vert \eta_1 \vert^2 + i \beta)}\widehat{W_1}(\xi-\eta_1) \partial_{\eta_{1,j}} \widehat{f}(1,\eta_1) P_{k_1}(\eta_1) d\eta_1 .
\end{align}
The terms \eqref{restediff} and \eqref{restefac} will be estimated in the same way in Section \ref{firsterm}. We will prove the following estimates:
\begin{proposition}
We have the bounds (that hold uniformly on $\beta$):
\begin{align*}
\Vert \eqref{restediff}, \eqref{restefac}  \Vert_{L^2_x} \lesssim \delta \varepsilon_1.
\end{align*}
\end{proposition}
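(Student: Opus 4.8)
The plan is to estimate the boundary terms \eqref{restediff} and \eqref{restefac} directly in $L^2_x$, uniformly in the regularization parameter $\beta > 0$, exploiting the fact that in Case 2 we have $|k - k_1| \leqslant 1$, so the output frequency $|\xi| \sim 1.1^k$ and the intermediate frequency $|\eta_1| \sim 1.1^{k_1}$ are comparable. Since the two terms are structurally identical (one evaluated at time $t$, one at time $1$, with the same multiplier structure), I would treat them simultaneously and write out the argument for \eqref{restediff}. The key point is that these are \emph{not} time integrals — they are single boundary terms — so there is no need to worry about summing a series here; we only need a clean $\delta \varepsilon_1$ bound.

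First I would split the frequency space into two regions according to the size of the time resonance denominator $|\xi|^2 - |\eta_1|^2$, i.e. according to whether we are near or far from the time-resonant set $\{|\xi| = |\eta_1|\}$. Away from the time resonance, the multiplier $\frac{\eta_{1,j}\xi_l \alpha_1(\eta_1)}{i(|\xi|^2 - |\eta_1|^2 + i\beta)|\eta_1|^2}$ can be treated as a bounded (in fact Coifman–Meyer type) Fourier multiplier in the relevant frequency annuli, uniformly in $\beta$, because the denominator is bounded away from zero on this region and the numerator $\eta_{1,j}/|\eta_1|^2$ is harmless once we localize $|\eta_1| \sim 1.1^{k_1}$ (and $\alpha_1(\eta_1)$ is either $1$ or $\eta_{1,i}$, hence of size at most $1.1^{k_1}$). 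After peeling off the multiplier, the term has the shape $\mathcal{F}^{-1}\big[\widehat{W_1}(\xi - \eta_1)\, \partial_{\eta_{1,j}}\widehat{f}(t,\eta_1)\big]$ evaluated at a single time, which one bounds in $L^2_x$ by $\|W_1\|_{L^1_x}\|\partial_\xi \widehat{f_{k_1}}(t)\|_{L^2_x} \lesssim \delta \cdot \|f(t)\|_X \lesssim \delta \varepsilon_1$ using the bootstrap hypothesis; one then sums over $k$ (with $k_1$ within distance $1$ of $k$) using Littlewood–Paley orthogonality, the numerology of the $1.1^k$ decomposition, and the fact that the $X$ norm is a supremum over frequencies. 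Near the time resonance $|\xi| \approx |\eta_1|$, the denominator can be arbitrarily small, so instead I would not divide by it; rather, I would use that on this region $|\xi| - |\eta_1|$ is small, hence $|\xi - \eta_1|$ need not be small but the \emph{output} is constrained, and I would absorb the small denominator by trading it against the extra decay/regularity of $W_1$: since $\langle x\rangle W_1$ and $(1-\Delta)^5 W_1$ are also $\delta$-small, $\widehat{W_1}$ is smooth and decaying, and the singular factor $(|\xi|^2 - |\eta_1|^2 + i\beta)^{-1}$ is locally integrable in the transverse variable, so one can still close with an $L^2$ estimate uniformly in $\beta$ — here one uses that $\frac{1}{|\xi|^2-|\eta_1|^2+i\beta}$ with $|\xi|,|\eta_1|\sim 1.1^k$ has an $L^1_{loc}$ bound on the resonant slab independent of $\beta$.

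The uniformity in $\beta$ is the main technical point to be careful about: one must make sure that every estimate on the multiplier $(|\xi|^2 - |\eta_1|^2 + i\beta)^{-1}$ is $\beta$-independent, which is exactly why the frequency localization $|k - k_1| \leqslant 1$ matters (it confines the resonant set to a compact slab in each dyadic block where the $\beta$-regularized denominator is uniformly integrable). Once the $\beta$-uniform bound $\|\eqref{restediff}\|_{L^2_x} + \|\eqref{restefac}\|_{L^2_x} \lesssim \delta \varepsilon_1$ is established, passing to the limit $\beta \to 0^+$ is routine (dominated convergence, since everything is controlled by a $\beta$-independent integrable majorant). I expect the genuinely delicate step to be the near-time-resonance region: there one cannot naively bound the multiplier in $L^\infty$, and one has to carefully use the decay and smoothness of the potential $W_1$ (via $\|\langle x\rangle W_1\|_Y$ and the $(1-\Delta)^5$ control) together with a change of variables that resolves the $\{|\xi| = |\eta_1|\}$ singularity, keeping all constants independent of $\beta$ and of the Littlewood–Paley index $k$ before summing.
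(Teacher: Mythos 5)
Your proposal has a genuine gap in the near-time-resonance region, which is precisely where the difficulty lives. You assert that the factor $(\vert \xi \vert^2 - \vert \eta_1 \vert^2 + i\beta)^{-1}$ is ``locally integrable in the transverse variable'' with a bound independent of $\beta$. This is false: its modulus is $\big((\vert\xi\vert^2-\vert\eta_1\vert^2)^2+\beta^2\big)^{-1/2}$, and integrating this over a neighborhood of the resonant set $\{\vert\xi\vert=\vert\eta_1\vert\}$ produces a $\log(1/\beta)$ divergence. Only the imaginary part is uniformly integrable (it converges to a delta measure); the real part is a principal value whose boundedness requires cancellation, not absolute integrability. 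Moreover, even if you had a $\beta$-uniform symbol bound, in the magnetic case ($W_1=a_i$, $\alpha_1(\eta_1)=\eta_{1,i}$) the numerator $\eta_{1,j}\xi_l\alpha_1(\eta_1)/\vert\eta_1\vert^2$ still contributes a full power $1.1^{k}$ of the (large) output frequency on the resonant slab, and no amount of decay or smoothness of $W_1$ (the $\langle x\rangle W_1$ and $(1-\Delta)^5W_1$ hypotheses act on the $\xi-\eta_1$ variable, not on $\eta_1$) can absorb a derivative falling on the profile. This is exactly the derivative loss the paper is organized around, and it cannot be traded against properties of the potential.

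The paper's proof avoids both problems at once by never estimating the singular denominator pointwise: it substitutes the identity \eqref{identit}, $\frac{1}{\vert\xi\vert^2-\vert\eta_1\vert^2+i\beta}=-i\int_0^\infty e^{i\tau_1(\vert\xi\vert^2-\vert\eta_1\vert^2+i\beta)}\,d\tau_1$, so that \eqref{restediff} becomes $\int_0^\infty e^{i\tau_1\Delta}\big(W_1\,\widetilde{D_1}\,e^{i\tau_1\Delta}g\big)\,d\tau_1$ with $g$ controlled by $\Vert f\Vert_{X'}$, and the regularizing factor $e^{-\beta\tau_1}$ is simply bounded by $1$, giving uniformity in $\beta$ for free. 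For $W_1=V$ one closes with dual and homogeneous Strichartz estimates in $\tau_1$ together with $\Vert V\Vert_{L^{3/2}}$. For $W_1=a_i$ one splits the lost derivative as $\vert\xi_j\vert^{1/2}\cdot\vert\eta_{1,i}\vert^{1/2}$ (after localizing to the dominant direction of $\xi$ via Lemma \ref{direction}) and recovers it with the dual smoothing estimate \eqref{smo2} on the outside and the homogeneous smoothing estimate via Lemma \ref{mgnmgnfin} on the inside, the condition $\vert k-k_1\vert\leqslant 1$ ensuring the resulting factor $1.1^{(k-k_1)/2}$ is harmless. Your away-from-resonance discussion is plausible as far as it goes, but without the integral representation (or an equivalent limiting-absorption/local-smoothing mechanism) the resonant region does not close.
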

Since the estimates hold uniformly on $\beta,$ we have, by lower semi-continuity of the norm:
\begin{align*}
\big \Vert \lim_{\beta \to 0, \beta>0} \eqref{restediff}, \eqref{restefac} \big \Vert_{L^2 _x} \leqslant \liminf_{\beta \to 0, \beta>0} \Vert \eqref{restediff}, \eqref{restefac}  \Vert_{L^2_x} \lesssim \delta \varepsilon_1.
\end{align*}
This is how all regularized terms will be handled since all our estimates are uniform on $\beta$. Therefore we will drop the regularizing factors $\beta$ to simplify notations.
\\
\\
For the remaining term \eqref{iteration} we use the following expression for $\partial_t \partial_{\eta_{1,j}} \widehat{f}$ (obtained by differentiating \eqref{Duhamel})
\begin{align} \label{identder}
& \partial_t \partial_{\xi_j} \widehat{f}(t,\xi) \\
\label{id1}& = 2 t \xi_j e^{it \vert \xi \vert^2} \int_{\mathbb{R}^3} \widehat{a_i}(\xi - \eta_1) e^{-it \vert \eta_1 \vert ^2} \eta_{1,i} \widehat{f}(t,\eta_1) d\eta_1 \\
\label{id2}&-i e^{it \vert \xi \vert^2} \int_{\mathbb{R}^3} \widehat{x_j a_i}(\xi - \eta_1) e^{-it \vert \eta_1 \vert ^2} \eta_{1,i} \widehat{f}(s,\eta_1) d\eta_1 \\
\label{id3}&+2 t \xi_j e^{it \vert \xi \vert^2} \int_{\mathbb{R}^3} \widehat{V}(\xi - \eta_1) e^{-it \vert \eta_1 \vert ^2} \widehat{f}(t,\eta_1) d\eta_1 \\
\label{id4}&-i e^{it \vert \xi \vert^2} \int_{\mathbb{R}^3} \widehat{x_j V}(\xi - \eta_1) e^{-it \vert \eta_1 \vert ^2} \widehat{f}(s,\eta_1) d\eta_1 \\
\label{id5}&+2  \int_{\mathbb{R}^3} t \eta_{1,j} e^{it (\vert \xi \vert^2 - \vert \xi-\eta_1 \vert^2 - \vert \eta_1 \vert^2)} \widehat{f}(t,\eta_1) \widehat{f}(t,\xi- \eta_1) d\eta_1 \\
\label{id6}&-i  \int_{\mathbb{R}^3} e^{it (\vert \xi \vert^2 - \vert \xi-\eta_1 \vert^2 - \vert \eta_1 \vert^2)} \widehat{f}(t,\eta_1) \partial_{\xi_j} \widehat{f}(t,\xi- \eta_1) d\eta_1.
\end{align}
Of all the terms that appear subsequently, the two that we will not be able to estimate directly come from \eqref{id1} and \eqref{id3} will be of the form
\begin{align} \label{model2}
& \int_{1} ^t \int_{\mathbb{R}^3} \frac{\eta_{1,j} \xi_l \alpha_1(\eta_1)}{(\vert \xi \vert ^2 - \vert \eta_1 \vert^2 + i \beta)\vert \eta_1 \vert^2}
\widehat{W_1}(\xi-\eta_1)  P_{k_1}(\eta_1) \\
\notag & \times \int_{\mathbb{R}^3} 2 s \eta_{1,j} \widehat{W_2}(\eta_1 - \eta_2) \alpha_2(\eta_2) e^{is(\vert \xi \vert^2 - \vert \eta_2 \vert^2 + i\beta)} \widehat{f}(s,\eta_2) d\eta_2 d\eta_1 ds \\
\notag & = \int_{1} ^t \int_{\mathbb{R}^3} \frac{ \alpha_1(\eta_1)}{(\vert \xi \vert ^2 - \vert \eta_1 \vert^2 + i \beta)}
e^{is(\vert \xi \vert^2 - \vert \eta_1 \vert^2)}\widehat{W_1}(\xi-\eta_1)  P_{k_1}(\eta_1) \\
\notag & \times \int_{\mathbb{R}^3} 2 s \xi_l  \widehat{W_2}(\eta_1 - \eta_2) e^{is(\vert \xi \vert^2 - \vert \eta_2 \vert^2 + i\beta)} \alpha_2(\eta_2) \widehat{f}(s,\eta_2) d\eta_2 d\eta_1 ds ,
\end{align}
where the simplification here is due to the summation in $j.$ 
\\
We explain in the next section how to deal with these terms.

\subsection{Further expansions}

Note that for expressions of the type \eqref{model2} the inner integral is the same as the terms we started with (that is \eqref{M2} and \eqref{M6}). \\
Therefore we use the exact same strategy:
\\
We start by localizing in the $\eta_2$ variable ($k_2$ denotes the corresponding exponent)
\begin{itemize}
\item If $\vert k-k_2 \vert > 1$ then we integrate by parts in time.
\item If $\vert k-k_2 \vert \leqslant 1$ then we integrate by parts in $\eta_2.$ Then integrate by parts in time in the worse term (that is the term for which the derivative in $\eta_2$ falls on the profile).
\end{itemize}
Then we repeat the procedure iteratively.
\\
\\
\underline{Case 1: $\vert k-k_n \vert >1 $}
\\
At the $n-$th step of the iteration we obtain the following terms:
\begin{align*}
&\mathcal{F} I_1 ^n f := \int_1 ^t \int \prod_{\gamma=1}^{n-1} \frac{\alpha_{\gamma}(\eta_{\gamma})\widehat{W_{\gamma}}(\eta_{\gamma-1}-\eta_{\gamma})P_{k_{\gamma}}(\eta_{\gamma}) P_k(\xi)}{\vert \xi \vert^2 - \vert \eta_{\gamma} \vert^2} d\eta_1 ... d\eta_{n-2} \\
& \times \int_{\eta_n} s \xi_l e^{is(\vert \xi \vert^2 - \vert \eta_n \vert^2 - \vert \eta_{n-1}- \eta_n \vert^2)} \widehat{f}(s,\eta_{n-1}-\eta_n) \widehat{f}(s,\eta_n) d\eta_n d\eta_{n-1} ds,
\end{align*}
where $W$ denotes either $V$ or one of $a_i$'s. The function $\alpha_{\gamma}(\eta)$ is equal to 1 if $W=1$ and $\eta_i$ if $W = a_i.$ 
\\
We also use the convention that $\eta_0 = \xi.$ 
\\
With similar notations, we also have the analog of \eqref{B3}: 
\begin{align*}
&\mathcal{F} I_2 ^n f := \int \prod_{\gamma=1}^{n-1} \frac{\alpha_{\gamma}(\eta_{\gamma})\widehat{W_{\gamma}}(\eta_{\gamma-1}-\eta_{\gamma})P_{k_{\gamma}}(\eta_{\gamma}) P_k(\xi)}{\vert \xi \vert^2 - \vert \eta_{\gamma} \vert^2} d\eta_1 ... d\eta_{n-1}  e^{it(\vert \xi \vert^2-\vert \eta_n \vert^2)} t \xi_l \widehat{f_{k_n}}(t,\eta_n) d\eta_n
\end{align*}
and similarly:
\begin{align*}
&\mathcal{F} I_3 ^n f := \int \prod_{\gamma=1}^{n-1} \frac{\alpha_{\gamma}(\eta_{\gamma})\widehat{W_{\gamma}}(\eta_{\gamma-1}-\eta_{\gamma})P_{k_{\gamma}}(\eta_{\gamma}) P_k(\xi)}{\vert \xi \vert^2 - \vert \eta_{\gamma} \vert^2} d\eta_1 ... d\eta_{n-1} \\
& \times \int_1 ^t \xi_l e^{is(\vert \xi \vert^2-\vert \eta_n \vert^2)} \frac{\widehat{W_n}(\eta_{n-1}-\eta_n) \alpha_n (\eta_n)}{\vert \xi \vert^2 - \vert \eta_n \vert^2} \widehat{f_{k_n}}(t,\eta_n) d\eta_n ds.
\end{align*}
\underline{Case 2: $\vert k-k_n \vert \leqslant 1$} \\
In this case we get more terms. More precisely we have
\begin{align*}
&\mathcal{F} I_4 ^n f := \int \prod_{\gamma=1}^{n-1} \frac{\alpha_{\gamma}(\eta_{\gamma})\widehat{W_{\gamma}}(\eta_{\gamma-1}-\eta_{\gamma})P_{k_{\gamma}}(\eta_{\gamma}) P_k(\xi)}{\vert \xi \vert^2 - \vert \eta_{\gamma} \vert^2} d\eta_1 ... d\eta_{n-1} \\
& \times e^{it(\vert \xi \vert^2-\vert \eta_n \vert^2)}  \frac{\xi_l \eta_{n,j}}{\vert \eta_n \vert^2} \partial_{\eta_n,j} \widehat{f}(t,\eta_n) P_{k_n}(\eta_n) d\eta_n.
\end{align*}
There are the easier terms of the same type: 
\begin{align*}
&\mathcal{F} I_5 ^n f := \int \prod_{\gamma=1}^{n-1} \frac{\alpha_{\gamma}(\eta_{\gamma})\widehat{W_{\gamma}}(\eta_{\gamma-1}-\eta_{\gamma})P_{k_{\gamma}}(\eta_{\gamma}) P_k(\xi)}{\vert \xi \vert^2 - \vert \eta_{\gamma} \vert^2} d\eta_1 ... d\eta_{n-2} \\
& \times \int_1 ^t \int_{\mathbb{R}^3}  e^{is(\vert \xi \vert^2-\vert \eta_n \vert^2)} \alpha_n(\eta_n) \partial_{\eta_{n,j}} \widehat{W_n}(\eta_{n-1}-\eta_n) \frac{\xi_l \eta_{n,j}}{\vert \eta_n \vert^2} \widehat{f_{k_n}}(s,\eta_n) d\eta_n ds d\eta_{n-1}.
\end{align*}
We also get an somewhat similar term
\begin{align*}
&\mathcal{F} I_6 ^n f := \int \prod_{\gamma=1}^{n-1} \frac{\alpha_{\gamma}(\eta_{\gamma})\widehat{W_{\gamma}}(\eta_{\gamma-1}-\eta_{\gamma})P_{k_{\gamma}}(\eta_{\gamma}) P_k(\xi)}{\vert \xi \vert^2 - \vert \eta_{\gamma} \vert^2} d\eta_1 ... d\eta_{n-2} \\
& \times \int_1 ^t \int_{\eta_n}  e^{is(\vert \xi \vert^2-\vert \eta_n \vert^2)} \alpha_{n}(\eta_n) \widehat{W_n}(\eta_{n-1}-\eta_n) \partial_{\eta_n,j} \big( \frac{\xi_l  \eta_{n,j}}{\vert \eta_n \vert^2} \big) \widehat{f_{k_n}}(s,\eta_n) d\eta_n ds d\eta_{n-1},
\end{align*}
and also
\begin{align*}
&\mathcal{F} I_7 ^n f :=  \int \prod_{\gamma=1}^{n-1} \frac{\alpha_{\gamma}(\eta_{\gamma})\widehat{W_{\gamma}}(\eta_{\gamma-1}-\eta_{\gamma})P_{k_{\gamma}}(\eta_{\gamma}) P_k(\xi)}{\vert \xi \vert^2 - \vert \eta_{\gamma} \vert^2} d\eta_1 ... d\eta_{n-2} \\
& \times \int_1 ^t  \int_{\eta_n}  e^{is(\vert \xi \vert^2 - \vert \eta_n \vert^2} \frac{\alpha_{n}(\eta_n)\eta_{n,j}\xi_l}{\vert \eta_n \vert^2} 1.1^{-k_1} \phi'(1.1^{-k_1}\eta_n) \frac{\eta_{n,j}}{\vert \eta_n \vert} \widehat{W_n}(\eta_{n-1}-\eta_n) \widehat{f_{k_n}}(s,\eta_n) d\eta_n d\eta_{n-1} ds .
\end{align*}
Finally we get the following terms coming from \eqref{identder}
\begin{align*}
&\mathcal{F} I_8 ^n f :=  \int \prod_{\gamma=1}^{n-1} \frac{\alpha_{\gamma}(\eta_{\gamma})\widehat{W_{\gamma}}(\eta_{\gamma-1}-\eta_{\gamma})P_{k_{\gamma}}(\eta_{\gamma}) P_k(\xi)}{\vert \xi \vert^2 - \vert \eta_{\gamma} \vert^2} d\eta_1 ... d\eta_{n-2} \frac{\xi_l \eta_{n-1,j}}{\vert \eta_{n-1} \vert^2} \\
& \times \int_1 ^t  \int_{\eta_n}  e^{is(\vert \xi \vert^2 - \vert \eta_n \vert^2} \alpha_{n}(\eta_n) \widehat{x_{n,j} W_n}(\eta_{n-1}-\eta_n) \widehat{f}(s,\eta_n) d\eta_n d\eta_{n-1} ds .
\end{align*}
More importantly we also have the bilinear terms
\begin{align*}
&\mathcal{F} I_9 ^n f :=  \int \prod_{\gamma=1}^{n-1} \frac{\alpha_{\gamma}(\eta_{\gamma})\widehat{W_{\gamma}}(\eta_{\gamma-1}-\eta_{\gamma})P_{k_{\gamma}}(\eta_{\gamma}) P_k(\xi)}{\vert \xi \vert^2 - \vert \eta_{\gamma} \vert^2} d\eta_1 ... d\eta_{n-2} \frac{\xi_l \eta_{n-1,j}}{\vert \eta_{n-1} \vert^2} \\
& \times \int_1 ^t s \int_{\eta_n} \eta_{n,j}  e^{is(\vert \xi \vert^2 - \vert \eta_n \vert^2 - \vert \eta_{n-1}- \eta_n \vert^2)}  \widehat{f}(\eta_{n-1}-\eta_n)  \widehat{f}(s,\eta_n) d\eta_n d\eta_{n-1} ds
\end{align*}
and
\begin{align*}
&\mathcal{F} I_{10} ^n f :=  \int \prod_{\gamma=1}^{n-1} \frac{\alpha_{\gamma}(\eta_{\gamma})\widehat{W_{\gamma}}(\eta_{\gamma-1}-\eta_{\gamma})P_{k_{\gamma}}(\eta_{\gamma}) P_k(\xi)}{\vert \xi \vert^2 - \vert \eta_{\gamma} \vert^2} d\eta_1 ... d\eta_{n-2} \frac{\xi_l \eta_{n-1,j}}{\vert \eta_{n-1} \vert^2} \\
& \times \int_1 ^t \int_{\eta_n}   e^{is(\vert \xi \vert^2 - \vert \eta_n \vert^2 - \vert \eta_{n-1}- \eta_n \vert^2)} \partial_{n,j} \widehat{f}(\eta_{n-1}-\eta_n) \widehat{f}(s,\eta_n) d\eta_n d\eta_{n-1} ds.
\end{align*}
Heuristically, one has the following correspondance: $I^{n+1}_1 f$ is the analog of \eqref{B2bis}, $I_2^n f$ is the analog of both \eqref{B3} and \eqref{B4}, $I_3 ^n f$ is the analog of \eqref{B5}, $I_4 ^n f$ is the analog of both \eqref{restediff} and \eqref{restefac}, $I_5 ^n f$ is the analog of \eqref{c22}, $I_6 ^n f$ is the analog of \eqref{c23}, $I_7 ^n f$ is the analog of \eqref{c24}, $I_8^{n+1} f$ is the analog of both \eqref{id2} and \eqref{id4}, $I_9^{n+1} f$ is the analog of \eqref{id5} and $I_{10}^{n+1} f$ is the analog of \eqref{id6}.
\\
\\
We will prove the following estimates in Section \ref{Boundingiterates}:
\begin{proposition} \label{nthestimate}
We have the bound
\begin{align*}
\sum_{k_1,...,k_n} \Vert I_j ^n f \Vert_{L^2_x} \lesssim C^{n} \delta^{n} \varepsilon_1 \ \ \ \ \ j \in \lbrace 1; ... ;10 \rbrace .
\end{align*}
The implicit constant in the above inequality does not depend on $n. $
\end{proposition}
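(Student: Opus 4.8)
The plan is to prove Proposition \ref{nthestimate} by transporting each term $I_j^n f$ to physical space, where the structure becomes an iterated time integral of free Schr\"{o}dinger propagators sandwiched between the potential operators. Concretely, after inserting the regularization $\beta>0$ (and using lower semicontinuity of the norm as $\beta\to 0$, exactly as done above for \eqref{restediff}, \eqref{restefac}), each factor $\dfrac{P_{k_\gamma}(\eta_\gamma)\widehat{W_\gamma}(\eta_{\gamma-1}-\eta_\gamma)}{|\xi|^2-|\eta_\gamma|^2}$ is rewritten via $\dfrac{1}{i(|\xi|^2-|\eta|^2+i\beta)}=\int_0^{\infty}e^{-i\tau(|\eta|^2-|\xi|^2)-\beta\tau}\,d\tau$, which turns the layer $\gamma$ into the composition $e^{i(\tau_\gamma-\tau_{\gamma+1})\Delta}\,W_\gamma\,\widetilde{D}_\gamma\,(\cdot)$, with $\widetilde D_\gamma$ equal to $1$ or $\partial_{x_i}$ according to whether $W_\gamma=V$ or $a_i$. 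Thus for every $j$ the term $I_j^n f$ becomes an integral over a time simplex of a chain of $n$ such layers, ending in $e^{-i\tau_n\Delta}f$ when $j\in\lbrace 2,\dots,8\rbrace$ and in a bilinear expression in $f$ when $j\in\lbrace 1,9,10\rbrace$. This is the rigorous counterpart of the heuristic calculation of Section \ref{strategy}, now carried out with all Littlewood--Paley cutoffs present.

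Next I would peel the layers off one at a time, from outside in, using the basic Lemmas of Section \ref{recallsmoothing} as transfer estimates. The point is that those lemmas are arranged so that the output norm produced by layer $\gamma$ is an admissible input norm for layer $\gamma-1$: when $W_\gamma=V$ one applies a Strichartz-type bound (Lemma \ref{potpot} when the neighbouring potential is electric, the transfer Lemma \ref{mgnpot} when it is magnetic), and when $W_\gamma=a_i$ one applies a smoothing bound (Lemma \ref{mgnmgn}, or the transfer Lemma \ref{potmgn}), so that after all $n$ potential layers are exhausted one is left with $\Vert\cdot\Vert_{L^2_x}$ of the terminal object. Each layer costs a factor at most $C_0\delta$, and there are $n$ of them, which produces the $\delta^n$. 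Crucially all the norms here are \emph{global-in-time} Lebesgue norms in the $\tau$ variables, the simplex ordering only serving to make the retarded integrals well defined; hence no power of $t$ accumulates along the chain, and the constant stays independent of $n$.

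For the terminal object: when $j\in\lbrace 2,\dots,8\rbrace$ it is $f$, $\partial_\xi f$, a power of $t$ times $f$, or an $xW$-type expression, and the homogeneous versions of the lemmas (\ref{mgnmgnfin}, \ref{potmgnfin}, \ref{potpotfin}, \ref{mgnpotfin}) together with the bootstrap hypotheses $\Vert f\Vert_X,\Vert f\Vert_{H^{10}_x}\le\varepsilon_1$ and the dispersive bound $\sup_k t\Vert u_k\Vert_{L^6_x}\lesssim\varepsilon_1$ close the estimate. When $j\in\lbrace 1,9,10\rbrace$ the terminal object is quadratic in $f$, and one invokes the bilinear Lemma \ref{bilinit} and then the space--time resonance analysis of $\widehat{u^2}$ (and of the variants in \eqref{id5}, \eqref{id6} carrying $\partial_\xi f$ or an extra factor of $s$), exactly as for the first iterates in Section \ref{firsterm}. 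Finally the sums $\sum_{k_1,\dots,k_n}$ are run layer by layer: the localizations $P_{k_\gamma}$, combined with $\Vert(1-\Delta)^5W_\gamma\Vert_Y\le\delta$ (ten derivatives, giving $1.1^{-10k_\gamma}$ decay at high frequency) and $\Vert W_\gamma\Vert_{L^1_x}+\Vert W_\gamma\Vert_{L^\infty_x}\le\delta$ (controlling the low-frequency regime, the borderline cases being handled by the $1.1$-adic decomposition as in \cite{L}), make each sum converge to a constant, all of which are absorbed into $C=10^{10}C_0^{10}$; this gives the claimed $\sum_{k_1,\dots,k_n}\Vert I_j^n f\Vert_{L^2_x}\lesssim C^n\delta^n\varepsilon_1$.

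The main obstacle I anticipate is twofold. First, the bookkeeping: one must check that every possible pattern of electric and magnetic potentials along the chain --- in particular every electric/magnetic junction --- is genuinely covered by one of the nine basic lemmas, with exactly matching mixed norms and no gap. Second, and more serious, is the interface for the bilinear terminal terms $I_1^nf,\,I_9^nf,\,I_{10}^nf$ between the smoothing iteration and the space--time resonance machinery: precisely the sub-case in which the $\eta$-derivative has fallen on the profile, the singular multiplier $\eta/|\eta|^2$ is present, and the resonance decomposition has to be carried out while still respecting the mixed-norm structure inherited from the preceding layers. One must also verify that the combinatorial proliferation of terms at step $n$ (choices of $W_\gamma$, of where derivatives land, and of the case split) contributes only an exponential, not a factorial, factor, so that the series genuinely converges.
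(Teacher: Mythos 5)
Your proposal reproduces the heuristic of Section \ref{strategy}, but it misses the structural device on which the paper's actual proof of Proposition \ref{nthestimate} rests, and the omission is fatal to the summation over $k_1,\dots,k_n$. In the paper, the identity \eqref{identite} is applied \emph{only} to the resonant layers $\gamma\in K$ (those with $\vert k_\gamma-k\vert\leqslant 1$, of which there are $O(1)$ choices per layer); the non-resonant layers $\gamma\in J$ keep their denominators and are treated as honest Fourier multipliers $m_\gamma$ with $\Vert\check m_\gamma\Vert_{L^1}\lesssim 1.1^{-2\max\{k,k_\gamma\}}$, fed into Lemma \ref{bilinit}. It is precisely these multiplier norms (together with Bernstein for $\gamma\in J^-$) that produce the factors $G(\mathbf{k})$ making $\sum_{k_1,\dots,k_n}$ converge. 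If, as you propose, you convert \emph{every} denominator into a $\tau$-integral, the smoothing/Strichartz chain returns a bare $C_0\delta$ per layer with no decay in $k_\gamma$: for instance a magnetic layer with $k_\gamma>k+1$ carries $\alpha_\gamma(\eta_\gamma)=\eta_{\gamma,i}\sim 1.1^{k_\gamma}$, which in your scheme is merely absorbed by the smoothing estimate rather than overkilled by the $1.1^{-2k_\gamma}$ of the denominator, and $\sum_{k_\gamma>k+1}C_0\delta$ diverges. Your fallback mechanism (the $1.1^{-10k_\gamma}$ decay coming from $(1-\Delta)^5W_\gamma$) does not rescue this, because $\widehat{W_\gamma}$ is evaluated at $\eta_{\gamma-1}-\eta_\gamma$, which need not be large when $k_\gamma$ is: two consecutive large intermediate frequencies can nearly cancel, and in the regime $k_\gamma\ll k$ the potential sits at frequency $1.1^k$ and gives no decay in $k_\gamma$ at all.

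Two further points. First, the derivative accounting at the endpoints of the chain: the inhomogeneous smoothing estimate \eqref{smo3} recovers a full derivative only \emph{between} two retarded layers, while the first and last layers each recover only $D^{1/2}$ (via \eqref{smo1}, \eqref{smo2} and Corollary \ref{smostriu}); the paper tracks the leftover half-derivative through the factor $q(\max K)=1.1^{k_{\max K}/2}$ in Lemma \ref{multilinear}, compensated by the $1.1^{-k/2}$ gain of Lemma \ref{multilinearautre} obtained by splitting $\xi_l=\frac{\xi_l}{\vert\xi_j\vert^{1/2}}\vert\xi_j\vert^{1/2}$ along a dominant direction. Your proposal does not account for this budget. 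Second, you correctly flag the interface between the smoothing chain and the space-time resonance analysis of the bilinear terminal terms as the serious difficulty, but you leave it unresolved; the paper's solution is Corollaries \ref{multiin} and \ref{multiinbis}, which allow the terminal object $g$ to be measured in dual Strichartz norms $L^{p'}_tL^{q'}_x$, so that the resonance analysis of $I_9^nf$ (the splitting in $m$, $k_n$, $k_{n+1}$ and the gradient-of-phase index $k_n'$, including the new term $III$ involving $a_i\partial_{x_i}u$) is carried out on $g$ alone and then inserted into the multilinear lemma.
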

We now explain how this implies the desired bound \eqref{goal1}.
\\
\begin{proof}[Proof of \eqref{goal1}]
From the iteration procedure explained above, we arrive at the following expression for $\partial_{\xi_l} \widehat{f}(t,\xi)$ (where we set all the numerical constants such as $i, 2 \pi$ equal to one for better legibility, since they do not matter in the estimates):
\begin{align*}
\partial_{\xi_l} \widehat{f}(t,\xi) & = \partial_{\xi_l} \big( e^{i \vert \xi \vert^2} \widehat{u_{1,k}} (\xi)
 \big) + \eqref{M1} + \eqref{M3} + \eqref{M4} + \eqref{M5}+ \eqref{M7} + \eqref{M8} \\
& +\mathcal{F} \sum_{n=1}^{\infty} \sum_{\gamma=1} ^{n+1} \sum_{W_{\gamma} \in \lbrace a_1, a_2,a_3, V \rbrace} \sum_{j=1}^3 \Bigg( \sum_{k_1,...,k_n}  I^{n+1}_1 f + I_2 ^n f + \widetilde{I_2}^n f + I_3 ^n f + I_4 ^n f + \widetilde{I_4}^n f \\ & +I_5 ^n f + I_6 ^n f + I_7 ^n f + I_8 ^{n+1} f + I_9 ^{n+1} f + I_{10}^{n+1} f \Bigg),
\end{align*}
where $\widetilde{I_2}^n f, \widetilde{I_4}^n f$ denote the same expressions as $I_2^n f, I_4^n f$ but with $t=1.$ \\
Since at each step of the iteration there are $O(4^n)$ terms that appear (that is the three middle sums above contribute $O(4^n)$ terms), there exists some large constant $D$ such that (using \eqref{estimationX} and Proposition \ref{nthestimate})
\begin{align*}
\Vert f \Vert_{X} \leqslant \varepsilon_0 + D \sum_{n=1}^{+\infty} 4^n C^n \delta^n \varepsilon_1 \leqslant \frac{\varepsilon_1}{2},
\end{align*}
provided $\delta$ is small enough.
\end{proof}

\section{Bounding the terms from the first expansion} \label{firsterm}
In this section we bound the terms from the first expansion (see the various estimates announced in Section \ref{firstexpansion}). \\ 
We distinguish in the first subsection the estimates that are done without the use of smoothing estimates, from the ones that require recovering derivatives (terms of potential type) in the second subsection.

\subsection{Easier terms}
We start with terms that appear directly after taking a derivative in $\xi_l$ (so that do not strictly speaking arise from the iteration procedure)
\begin{lemma}
We have the bounds
\begin{align*}
\Vert \eqref{M1}, \eqref{M4} \Vert_{L^2_x} \lesssim \delta \varepsilon_1.
\end{align*}
\end{lemma}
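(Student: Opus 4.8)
The plan is to recognize that \eqref{M1} and \eqref{M4} are ordinary potential Duhamel terms in disguise. Since $\partial_{\xi_l}\widehat{a_i}(\xi-\eta_1)=-i\,\widehat{x_l a_i}(\xi-\eta_1)$ and $\partial_{\xi_l}\widehat V(\xi-\eta_1)=-i\,\widehat{x_l V}(\xi-\eta_1)$, taking the inverse Fourier transform and recalling that multiplication by $e^{is\vert\xi\vert^2}$ is the symbol of $e^{-is\Delta}$, one rewrites, up to harmless numerical constants,
\begin{align*}
\eqref{M1}=P_k\sum_{i=1}^3\sum_{k_1\in\mathbb{Z}}\int_1^t e^{-is\Delta}\big[(x_l a_i)(x)\,\partial_{x_i}u_{k_1}(s)\big]\,ds,\qquad
\eqref{M4}=P_k\sum_{k_1\in\mathbb{Z}}\int_1^t e^{-is\Delta}\big[(x_l V)(x)\,u_{k_1}(s)\big]\,ds .
\end{align*}
The modified potentials are still small: by interpolation $\Vert x_l a_i\Vert_{L^{3/2}_x}\leqslant\Vert\langle x\rangle a_i\Vert_{L^1_x}^{2/3}\Vert\langle x\rangle a_i\Vert_{L^\infty_x}^{1/3}\lesssim\Vert\langle x\rangle a_i\Vert_{Y}\leqslant\delta$, and similarly $\Vert x_l V\Vert_{L^{3/2}_x}\lesssim\Vert\langle x\rangle V\Vert_{Y}\leqslant\delta$.

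Next I would discard $P_k$ (bounded on $L^2_x$ uniformly in $k$) and apply the dual Strichartz estimate \eqref{Str2} with the admissible pair $(2,6)$, after cutting the time integral off to $[1;t]$, to obtain
\begin{align*}
\Vert\eqref{M1}\Vert_{L^2_x}\lesssim\sum_{i,k_1}\big\Vert(x_l a_i)\,\partial_{x_i}u_{k_1}\big\Vert_{L^2_s([1;t])L^{6/5}_x},\qquad
\Vert\eqref{M4}\Vert_{L^2_x}\lesssim\sum_{k_1}\big\Vert(x_l V)\,u_{k_1}\big\Vert_{L^2_s([1;t])L^{6/5}_x}.
\end{align*}
Hölder in $x$ with $\tfrac{5}{6}=\tfrac{2}{3}+\tfrac{1}{6}$, together with the bounds on the potentials above, reduces everything to estimating $\Vert u_{k_1}\Vert_{L^2_s([1;\infty))L^6_x}$ for \eqref{M4}, and $1.1^{k_1}\Vert u_{k_1}\Vert_{L^2_s([1;\infty))L^6_x}$ for \eqref{M1} (the extra $\partial_{x_i}$ costing the factor $1.1^{k_1}$ by frequency localization).

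The crucial point, and the only genuinely non-trivial step, is that the potential factor supplies smallness but no decay in $s$, so one must borrow the decay from $u$ itself. The bootstrap hypotheses $\Vert f(s)\Vert_X,\Vert f(s)\Vert_{H^{10}_x}\leqslant\varepsilon_1$ yield, by the standard linear dispersive estimate for functions with spatially localized profile (the decay part of \eqref{mainestimate}, proved as in \cite{L}), the bound $\Vert u_{k_1}(s)\Vert_{L^6_x}\lesssim\varepsilon_1 s^{-1}$, and, by Bernstein together with the $H^{10}_x$ control, the $s$-independent bounds $\Vert u_{k_1}(s)\Vert_{L^6_x}\lesssim\varepsilon_1\,1.1^{-9k_1}$ for $k_1\geqslant0$ and $\Vert u_{k_1}(s)\Vert_{L^6_x}\lesssim\varepsilon_1\,1.1^{k_1}$ for $k_1<0$. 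Taking the minimum of the two (the stationary bound for small $s$, the $s^{-1}$ bound for large $s$) and integrating in $s\in[1;\infty)$ gives $\Vert u_{k_1}\Vert_{L^2_s L^6_x}\lesssim\varepsilon_1\,1.1^{-c\vert k_1\vert}$ for some $c>0$; the same computation carrying the additional $1.1^{k_1}$ still leaves a geometrically decaying bound in $\vert k_1\vert$, since the $H^{10}_x$ regularity more than absorbs the lost derivative at high frequency (low frequencies being innocuous). Summing the geometric series in $k_1$ then yields $\Vert\eqref{M1}\Vert_{L^2_x},\Vert\eqref{M4}\Vert_{L^2_x}\lesssim\delta\varepsilon_1$, uniformly in $t\in[1;T]$ and $k\in\mathbb{Z}$. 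The main obstacle is precisely this last step: producing an $L^2_s$-integrable decay rate for the integrand when the potential contributes none, which forces the simultaneous use of the dispersive decay of $u$ and its high-frequency $H^{10}_x$ regularity.
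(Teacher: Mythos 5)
Your proposal is correct and follows essentially the same route as the paper: rewrite the terms as Duhamel integrals with the weighted potentials $x_l a_i$, $x_l V$ (small in $L^{3/2}_x$ by the $\langle x\rangle$-weighted $Y$ assumption), apply the dual Strichartz estimate with the pair $(2,6)$ and H\"older, and then sum over $k_1$ by combining the $t^{-1}$ dispersive decay of $u_{k_1}$ in $L^6_x$ with Bernstein/$H^{10}_x$ interpolation to absorb the derivative loss at high frequency (this last step is packaged in the paper as Lemmas \ref{dispersive} and \ref{summation}).
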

\begin{proof}
We start with \eqref{M1}. We use Strichartz estimates, Lemma \ref{bilin} and we have if $k_1<0$
\begin{align*}
\Vert \eqref{M1} \Vert_{L^2_x} & \lesssim  \Bigg \Vert \mathcal{F}^{-1} \int_{\eta_1} \eta_{1,i} e^{-is \vert \eta_1 \vert^2} \widehat{f_{k_1}}(s, \eta_1) \partial_{\xi_l} \widehat{a_i}(\xi-\eta_1) d\eta_1 \Bigg \Vert_{L^{2}_t L^{6/5}_x} \\
& \lesssim 1.1^{k_1} \Vert e^{it\Delta} f_{k_1} \Vert_{L^{2}_t L^{6}_x}  \Vert x_l a_i \Vert_{L^{3/2}_x},
\end{align*}
which can be summed using Lemma \ref{dispersive}.
\\
When $k_1 \geqslant 0$ we use Lemma \ref{summation} with $c=1/4$ to obtain
\begin{align*}
\Vert \eqref{M1} \Vert_{L^2} & \lesssim 1.1^{k_1} \Vert e^{it\Delta} f_{k_1} \Vert_{L^{2}_t L^{6}_x}  \Vert x_l a_i \Vert_{L^{3/2}_x} \\ 
& \lesssim 1.1^{-k_1} \delta \varepsilon_1.
\end{align*}
In the case where $W=V,$ we use Strichartz estimates to obtain
\begin{align*}
& \Bigg \Vert P_k (\xi) \int_1 ^t \int_{\mathbb{R}^3} e^{is (\vert \xi \vert^2 - \vert \eta_1 \vert^2)} \widehat{f}_{k_1}(s,\eta_1) \partial_{\xi_l} \widehat{V}(s,\xi-\eta_1) d\eta_1 ds \Bigg \Vert_{L^{\infty}_t L^2 _x} \\
& \lesssim  \Vert e^{it \Delta} f_{k_1} (xV) \Vert_{L^{2}_t L^{6/5}_x} \\
& \lesssim  \Vert e^{it \Delta} f_{k_1} \Vert_{L^{2}_t L^6 _x} \Vert x V \Vert_{L^{3/2} _x}, \\
\end{align*}
and then we can sum over $k_1$ to deduce the result thanks to Lemmas \ref{dispersive} and \ref{summation}.
\end{proof}
Now we prove the following:
\begin{lemma}
We have the bounds
\begin{align*}
\Vert \eqref{M3},\eqref{M5} \Vert_{L^2_x} \lesssim \delta \varepsilon_1 .
\end{align*}
\end{lemma}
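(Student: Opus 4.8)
The plan is to treat \eqref{M3} and \eqref{M5} in essentially the same way we handled \eqref{M1} and \eqref{M4}, the only difference being that the $\xi$-derivative has fallen on the Littlewood--Paley cutoff rather than on the potential. First I would observe that the factor $1.1^{-k}\phi'(1.1^{-k}\xi)\frac{\xi_l}{\vert\xi\vert}$ is a bounded Fourier multiplier supported on the annulus $1.1^k\mathcal{C}$, uniformly in $k$: indeed $\phi'$ is smooth and compactly supported away from the origin, so on the support of $\phi'(1.1^{-k}\xi)$ we have $\vert\xi\vert\sim 1.1^k$, and hence $\big\vert 1.1^{-k}\phi'(1.1^{-k}\xi)\frac{\xi_l}{\vert\xi\vert}\big\vert\lesssim 1.1^{-k}$ together with the analogous bound for its derivatives. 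In particular multiplication by this symbol on the $k$-th Littlewood--Paley piece is bounded on every $L^p_x$, $1\leqslant p\leqslant\infty$, with a constant that does not depend on $k$ (it is in fact a gain of $1.1^{-k}$, but boundedness suffices). This reduces \eqref{M3} and \eqref{M5} to expressions of exactly the shape already estimated for \eqref{M1} and \eqref{M4}, except that the potential $\partial_{\xi_l}\widehat{a_i}$ (resp. $\partial_{\xi_l}\widehat{V}$) is replaced by $\widehat{a_l}$ (resp. $\widehat{V}$) — which is better, since we no longer need the weighted assumption on $xa_i$ or $xV$, only the $L^{3/2}_x$ control already contained in $\Vert a_i\Vert_Y$ and $\Vert V\Vert_Y$.

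Concretely, for \eqref{M3} I would write, after inserting the annular cutoff and using the dual Strichartz estimate \eqref{Str2} (pairing against an $L^2_x$ test function and moving to the $(2,6)$ Strichartz pair),
\begin{align*}
\Vert \eqref{M3} \Vert_{L^2_x} &\lesssim 1.1^{-k}\sum_{k_1\in\mathbb{Z}} \Bigg\Vert \mathcal{F}^{-1}\int_{\eta_1}\eta_{1,i}\,e^{-is\vert\eta_1\vert^2}\widehat{f_{k_1}}(s,\eta_1)\,\widehat{a_l}(\xi-\eta_1)\,d\eta_1\Bigg\Vert_{L^2_t L^{6/5}_x}\\
&\lesssim 1.1^{-k}\sum_{k_1\in\mathbb{Z}} 1.1^{k_1}\Vert e^{it\Delta}f_{k_1}\Vert_{L^2_t L^6_x}\Vert a_l\Vert_{L^{3/2}_x},
\end{align*}
using Lemma \ref{bilin} to separate the potential from the profile. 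The factor $1.1^{k_1}$ comes from the symbol $\eta_{1,i}$ localized at frequency $1.1^{k_1}$. For $k_1<0$ this is summable directly by Lemma \ref{dispersive}; for $k_1\geqslant 0$ one invokes Lemma \ref{summation} (with, say, $c=1/4$) to absorb the $1.1^{k_1}$ loss and still sum, exactly as in the proof for \eqref{M1}. Summing the resulting geometric-type series in $k_1$ and then noting the outer $1.1^{-k}$ is harmless (in fact we may sum in $k$ too), we get $\Vert\eqref{M3}\Vert_{L^2_x}\lesssim\delta\varepsilon_1$. The term \eqref{M5} is the $W=V$ analog: the $\eta_{1,i}$ symbol is absent, so there is no derivative loss at all, and one simply writes $\Vert\eqref{M5}\Vert_{L^2_x}\lesssim 1.1^{-k}\sum_{k_1}\Vert e^{it\Delta}f_{k_1}\Vert_{L^2_tL^6_x}\Vert V\Vert_{L^{3/2}_x}$, which sums by Lemmas \ref{dispersive} and \ref{summation}.

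There is essentially no real obstacle here; the only points requiring a little care are bookkeeping rather than analysis. The first is making the uniform-in-$k$ bound on the symbol $1.1^{-k}\phi'(1.1^{-k}\xi)\frac{\xi_l}{\vert\xi\vert}$ precise enough to apply a bilinear lemma like Lemma \ref{bilin} — i.e. checking it defines a Coifman--Meyer-type multiplier with constants independent of $k$ — but this is immediate from the scaling structure of $\phi'$. The second is the high-frequency summation: in \eqref{M3} the magnetic symbol contributes a factor $1.1^{k_1}$ which is \emph{not} summable on its own, so one must genuinely use the extra decay of $\Vert e^{it\Delta}f_{k_1}\Vert_{L^2_tL^6_x}$ in $k_1$ provided by the $X$ and $H^{10}_x$ bootstrap bounds through Lemma \ref{summation}; this is the same mechanism already used for \eqref{M1}, so it is routine. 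Once both points are settled the desired estimate $\Vert\eqref{M3},\eqref{M5}\Vert_{L^2_x}\lesssim\delta\varepsilon_1$ follows.
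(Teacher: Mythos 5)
There is a genuine gap: your argument only works for $k>0$. The symbol $1.1^{-k}\phi'(1.1^{-k}\xi)\frac{\xi_l}{\vert\xi\vert}$ is \emph{not} a Fourier multiplier with constant independent of $k$: by scaling, $\phi'(1.1^{-k}\xi)\frac{\xi_l}{\vert\xi\vert}$ has inverse Fourier transform with $L^1$ norm independent of $k$, so the full symbol has multiplier norm $\sim 1.1^{-k}$. For $k\geqslant 0$ this is indeed a gain (and your Case-1-type argument, which is exactly the paper's Case 1, goes through), but for $k<0$ it is a \emph{loss} that blows up as $k\to-\infty$. Your final step, "the outer $1.1^{-k}$ is harmless (in fact we may sum in $k$ too)," is false in that regime: the required bound must be uniform in $k$ (the $X$ norm is a supremum over $k$), and $1.1^{-k}\delta\varepsilon_1$ is unbounded for $k\leqslant 0$. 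Nothing in your chain of inequalities absorbs this factor.

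The paper spends most of its proof precisely on $k\leqslant 0$. When $k>k_1+1$ or $\vert k-k_1\vert\leqslant 1$, the potential $\widehat{W}(\xi-\eta_1)$ is localized at frequencies $\lesssim 1.1^{k}$, and the loss is absorbed by Bernstein on the potential, e.g.\ $1.1^{-k}\Vert V_{k}\Vert_{L^{3/2}_x}\lesssim\Vert V_k\Vert_{L^1_x}$ (and in the magnetic case the factor $1.1^{k_1-k}$ is directly summable). The remaining case $k_1>k+1$ with $k\leqslant 0$ is the hard one: there the potential sits at frequency $1.1^{k_1}$, so no such gain is available, and the paper resorts to a dyadic decomposition in time, splitting on whether $k\leqslant-\epsilon m$ (where Bernstein on the \emph{output} plus the $L^6$ dispersive decay of $e^{it\Delta}f_{k_1}$ gives a summable $1.1^{-\epsilon m}$) or $-\epsilon m<k\leqslant 0$ (where Strichartz on a time block of length $1.1^m$ gives $1.1^{\epsilon m}1.1^{-m/2}$). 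None of this machinery appears in your proposal, so the estimate for small output frequencies is not established.
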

\begin{proof}
Here we essentially reproduce the proof from \cite{L}, Lemma 5.2. Since the proofs for the magnetic part and the potential part are similar, we do them simultaneously. \\
We split the proof into several cases: \\
\underline{Case 1: $k>0$} \\
In this case we use Strichartz estimates to write that
\begin{align*}
& \Bigg \Vert  1.1^{-k} \phi'(1.1^{-k} \xi) \frac{\xi_l}{\vert \xi \vert} \int_{1}^t \int_{\mathbb{R}^3}  e^{is (\vert \xi \vert^2 -\vert \eta_1 \vert^2)} \widehat{f_{k_1}}(s,\eta_1) \widehat{V}(\xi-\eta_1) d\eta_1 ds \Bigg \Vert_{L^{\infty}_t L^2 _x} \\
& \lesssim \Bigg \Vert \mathcal{F}^{-1} \int_1 ^t \int_{\mathbb{R}^3} e^{is (\vert \xi \vert^2 - \vert \eta_1 \vert^2)} \widehat{f}_{k_1}(s,\eta_1)  \widehat{V}(\xi-\eta_1) d\eta_1 ds \Bigg \Vert_{L^{\infty}_t L^2 _x} \\
& \lesssim \Vert \big( e^{it \Delta} f_{k_1} \big)V \Vert_{L^{2}_t L^{6/5}_x} \\
& \lesssim \Vert e^{it \Delta} f_{k_1} \Vert_{L^{2}_t L^{6}_x} \Vert V \Vert_{ L^{3/2}_x} \\
& \lesssim \Vert e^{it \Delta} f_{k_1} \Vert_{L^{2}_t L^{6}_x} \delta ,
\end{align*}
and we can sum over $k_1$ using Lemma \ref{summation} and \ref{dispersive}.
\\
In the magnetic case, there is an extra $1.1^{k_1}$ term. To deal with it when $k_1>0,$ we use Lemma \ref{summation} as in the proof of the previous lemma.\\
\\
\underline{Case 2: $k \leqslant 0$}
\\
We distinguish three subcases:\\
\underline{Case 2.1: $k>k_1+1$} \\
In this case the frequency $\vert \xi- \eta_1 \vert$ is localized at $1.1^{k},$ therefore we can write, using Strichartz estimates and Bernstein's inequality:
\begin{align*}
& \Bigg \Vert  1.1^{-k} \phi'(1.1^{-k} \xi) \frac{\xi_l}{\vert \xi \vert} \int_{1}^t \int_{\mathbb{R}^3}  e^{is (\vert \xi \vert^2 -\vert \eta_1 \vert^2)} \widehat{f_{k_1}}(s,\eta_1) \widehat{V}(\xi-\eta_1) d\eta_1 ds \Bigg \Vert_{L^{\infty}_t L^2 _x} \\
& \lesssim 1.1^{-k} \Bigg \Vert \mathcal{F}^{-1} \int_{\mathbb{R}^3}  e^{is (\vert \xi \vert^2 -\vert \eta_1 \vert^2)} \widehat{f_{k_1}}(s,\eta_1) \widehat{V_{k}}(\xi-\eta_1) d\eta_1 ds \Bigg \Vert_{L^{2}_t L^{6/5}_x} \\
& \lesssim 1.1^{-k} \Vert V_{k} \Vert_{ L^{3/2} _x}  \Vert e^{it\Delta} f_{k_1} \Vert_{L^{2} _t L^{6}_x} \\
& \lesssim \Vert V_k \Vert_{L^{1}} \Vert e^{it\Delta} f_{k_1} \Vert_{L^{2} _t L^{6}_x},
\end{align*}
which can be summed using Lemmas \ref{summation} and \ref{dispersive}.
\\
In the magnetic case, the proof is simpler. We directly obtain the bound 
\begin{align*}
\Vert \eqref{M3} \Vert_{L^2_x} \lesssim 1.1^{k_1-k} \Vert {a}_{i,k} \Vert_{ L^{3/2} _x}  \Vert e^{it\Delta} f_{k_1} \Vert_{L^{2}_t L^{6}_x},
\end{align*}
which can be summed directly since $k_1>k.$
\\
\\
\underline{Case 2.2: $\vert k-k_1 \vert \leqslant 1$}\\
Then we split the $\xi-\eta_1$ frequency dyadically and denote $k_2$ the corresponding exponent. \\
Note that $\vert \xi-\eta_1 \vert \leqslant \vert \xi \vert + \vert \eta_1 \vert \leqslant 1.1^{k+1} + 1.1^{k+2} \leqslant 1.1^{k+10}.$ \\
As a result $k_2 \leqslant k +10.$ \\
Now we can write, using Strichartz estimates, Bernstein's inequality:
\begin{align*}
 & \Bigg \Vert  1.1^{-k} \phi'(1.1^{-k} \xi) \frac{\xi_l}{\vert \xi \vert} \int_{1}^t \int_{\mathbb{R}^3}  e^{is (\vert \xi \vert^2 -\vert \eta_1 \vert^2)} \widehat{f_{k_1}}(s,\eta_1) \widehat{V_{k_2}}(\xi-\eta_1) d\eta_1 ds \Bigg \Vert_{L^{\infty}_t L^2 _x} \\
 & \lesssim 1.1^{-k} \Vert V_{k_2} \Vert_{L^{3/2}_x} \Vert e^{it\Delta} f_{k_1} \Vert_{L^{2}_t L^{6}_x}  \\
 & \lesssim 1.1^{k_2-k} \Vert V \Vert_{ L^{1} _x} \Vert e^{it\Delta} f_{k_1} \Vert_{L^{2}_t L^{6}_x}.
\end{align*}
Now since $k_2 \leqslant k + 10$ the factor in front allows us to sum over $k_2.$ The result follows.
\\
The magnetic case is simpler. There is no need for the additional localization since
\begin{align*}
\Vert \eqref{M5} \Vert_{L^2 _x}  \lesssim 1.1^{k_1-k} \Vert a_i \Vert_{L^{3/2} _x} \Vert e^{it\Delta} f_{k_1} \Vert_{L^{2}_t L^{6}_x} .
\end{align*}
\underline{Case 2.3: $k_1 > k+1$}\\
In this case we split the time variable dyadically. Let's denote $m$ the corresponding exponent. \\
We must estimate
\begin{align*}
I_{m,k_1,k} :=  1.1^{-k} \phi'(1.1^{-k} \xi) \frac{\xi_l}{\vert \xi \vert} \int_{1.1^m}^{1.1^{m+1}} \int_{\mathbb{R}^3}  e^{is (\vert \xi \vert^2 -\vert \eta_1 \vert^2)} \widehat{f_{k_1}}(s,\eta_1) \widehat{V_{k_1}}(\xi-\eta_1) d\eta_1 ds ,
\end{align*}
where the extra localization can be placed on $V$ since $\xi - \eta_1$ has magnitude roughly $1.1^{k_1}$ given that $k_1>k+1.$ \\
\\
\underline{Subcase 2.3.1: $k \leqslant -\epsilon m$ ($\epsilon$ some small number)} \\
Then we write, using Bernstein's inequality, H\"{o}lder's inequality and Lemma \ref{dispersive}, that
\begin{align*}
\Vert I_{m,k_1,k} \Vert_{L^{\infty}_t L^2 _x} & \lesssim 1.1^{-k} 1.1^{m} \sup_{t \simeq 1.1^m} \Vert \big( e^{it \Delta} f_{k_1} V_{k_1} \big)_k \Vert_{L^2 _x} \\
& \lesssim 1.1^{m} 1.1^{\epsilon k} \sup_{t \simeq 1.1^m} \Vert e^{it \Delta} f_{k_1} V_{k_1}(t)  \Vert_{L^{6/5-}_x} \\
                                              & \lesssim 1.1^{m} 1.1^{\varepsilon k} \sup_{t \simeq 1.1^m} \Vert e^{it \Delta} f_{k_1} \Vert_{L^6 _x} \Vert V_{k_1}  \Vert_{L^{3/2-}_x} \\
& \lesssim 1.1^m 1.1^{-m} 1.1^{\epsilon k} \varepsilon_1 \Vert V_{k_1} \Vert_{L^{\infty}_t L^{3/2-}_x} \\
& \lesssim 1.1^{-m \epsilon} \varepsilon_1 \Vert V_{k_1} \Vert_{L^{\infty}_t L^{3/2-}_x}.
\end{align*}
We can sum over $k_1$ and $m$ given the factors that appear.
\\
\\
In the magnetic case we get the same bound with $V$ replaced by $\nabla a. $
\\
\\
\underline{Subcase 2.3.2: $-\epsilon m < k \leqslant 0$} \\
In this case we use Strichartz estimates as well as Lemma \ref{dispersive}:
\begin{align*}
\Vert I_{m,k_1,k} \Vert_{L^{\infty}_t L^2 _x} & \lesssim 1.1^{\epsilon m} \Bigg \Vert \int_{1.1^m}  ^{1.1^{m+1}} e^{-is \Delta} \bigg( \big( e^{is \Delta} f_{k_1} \big) V_{k_1} \bigg) ds \Bigg \Vert_{L^2 _x} \\
& \lesssim 1.1^{\epsilon m} \Vert \textbf{1}_{t \simeq 1.1^m} \big( e^{it \Delta} f_{k_1} \big) V_{k_1} \Vert_{L^{2}_t L^{6/5}_x} \\
& \lesssim 1.1^{\epsilon m} \Vert \textbf{1}_{t \simeq 1.1^m} \big( e^{it \Delta} f_{k_1} \big) \Vert_{L^{2}_t L^{6} _x} \Vert V_{k_1} \Vert_{L^{3/2}_x} \\
& \lesssim 1.1^{\epsilon m} 1.1^{-m/2} \varepsilon_1 \Vert V_{k_1} \Vert_{L^{3/2}_x}.
\end{align*}
Now notice that we can sum that bound over $m$ and $k_1$.
\\
In the magnetic case we get the same bound with $V$ replaced by $\nabla a. $
\end{proof}
We now come to the terms that appear in the iterative procedure in the case where $\vert k-k_1 \vert>1.$
\begin{lemma}
We have the bounds
\begin{align*}
\sum_{\vert k-k_1 \vert>1} \Vert \eqref{B3}, \eqref{B4}, \eqref{B5} \Vert_{L^2 _x} &\lesssim  \delta \varepsilon_1 \\
\sum_{\vert k-k_1 \vert>1} \Vert \eqref{B2bis} \Vert \lesssim \delta \varepsilon_1 ^2.
\end{align*}
\end{lemma}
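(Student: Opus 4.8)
The plan is to treat the three boundary/integral terms \eqref{B3}, \eqref{B4}, \eqref{B5} simultaneously (they differ only by whether the profile is evaluated at time $t$, at time $1$, or integrated in $s$, and by the presence of an extra $t$ or $\xi_l$ factor), and to treat \eqref{B2bis} separately since it involves the quadratic nonlinearity. In all cases the key structural feature is the factor $(\vert\xi\vert^2-\vert\eta_1\vert^2)^{-1}$ together with the constraint $\vert k-k_1\vert>1$, which means $\vert\vert\xi\vert^2-\vert\eta_1\vert^2\vert\gtrsim \max(1.1^{2k},1.1^{2k_1})$, so this denominator acts like a harmless Fourier multiplier of size $1.1^{-2\max(k,k_1)}$ on the relevant frequency supports (via a Coifman--Meyer / Mikhlin type bound, or simply by the localized Bernstein estimates already used in the previous lemma). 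The extra $\xi_l$ in \eqref{B5} is bounded by $1.1^k$ and absorbed by the denominator gain; the extra $t$ in \eqref{B3} and in \eqref{B2bis} is the only genuine source of difficulty.

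First I would handle \eqref{B4}, which is the cleanest: there is no $t$ factor, so after using the denominator bound I test against $L^2_x$, pull out $\Vert W_1\Vert$-type norms of the potential by H\"older (using the $Y$ norm, together with $\Vert\langle x\rangle W_1\Vert_Y$ or $\Vert(1-\Delta)^5 W_1\Vert_Y$ to absorb the $\alpha_1$ derivative factor and the frequency localizations), and are left with $\Vert f_{k_1}(1)\Vert_{L^2_x}\lesssim\varepsilon_1$, summable over $k,k_1$ because of the $1.1^{-2\max(k,k_1)}$ (or, in the regime $k<0$, Bernstein together with the $L^1$ bound on $W_1$ as in Case 2 of the preceding lemma). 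The same argument applies verbatim to \eqref{B5}, the only change being the harmless $\int_1^t ds$ which is handled by writing the $s$-integral as a free evolution and using Strichartz exactly as in the ``easier terms'' above. For \eqref{B3} the extra $t$ is balanced using the dispersive decay: write $t\widehat{f_{k_1}}(t,\eta_1)=t\,\mathcal{F}[e^{-it\Delta}u_{k_1}](\eta_1)$, move to physical space, and use $\Vert u_{k_1}(t)\Vert_{L^6_x}\lesssim t^{-1}\varepsilon_1$ (which is part of the bootstrap / Lemma \ref{dispersive}); the $L^6$ is paired via H\"older with the $L^{3/2}$ norm of the potential factor and the $L^2\to L^2$ mapping of the denominator-and-$\xi_l$ multiplier, so the $t$ cancels, and again the $1.1^{-\max(k,k_1)}$ gain (plus a small-time $dm$ dyadic sum split as in Subcase 2.3.1--2.3.2 if needed to handle $t\simeq 1$) makes the frequency sum converge.

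For \eqref{B2bis}, which carries the quadratic term $\widehat{(u^2)_{k_1}}$ and an extra $s$, I would first integrate by parts in time once more or simply estimate directly: after removing the denominator multiplier (size $1.1^{-2\max(k,k_1)}$) and the potential factor $\widehat{W_1}$, what remains is $\int_1^t s\,\xi_l\,\widehat{(u^2)_{k_1}}(s,\eta_1)\,ds$ localized at $\eta_1\sim 1.1^{k_1}$, and this is precisely a term of the type already analyzed in the classical space--time resonance theory for quadratic NLS (cf. the discussion after \eqref{bilinmod}). Concretely, split $u^2$ into low $\times$ high and high $\times$ high pieces, and on each piece integrate by parts either in time (away from time resonances, using the $L^6$ decay to absorb the $s$) or in frequency (away from space resonances, using $\Vert f\Vert_X\lesssim\varepsilon_1$), exactly as in \cite{L} and as sketched in Section \ref{strategy}; this produces the $\varepsilon_1^2$ and the sum over $k_1$ converges because of the potential's regularity and the denominator gain, leaving one extra factor $\delta$ from $\Vert W_1\Vert_Y$.

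The main obstacle is the term \eqref{B3}: balancing the linear-growth factor $t$ requires the full strength of the $t^{-1}$ dispersive decay in $L^6$, which itself is only available inside the bootstrap, so one must be careful that the estimate on \eqref{B3} does not circularly use a bound stronger than $\varepsilon_1$; and near $t\simeq 1$ the $L^6$ decay gives nothing, forcing a separate short-time argument (dyadic-in-time decomposition with the endpoint handled by energy/Strichartz as in Subcase 2.3.2). A secondary subtlety, shared by all four terms, is that the multiplier $(\vert\xi\vert^2-\vert\eta_1\vert^2)^{-1}P_k(\xi)P_{k_1}(\eta_1)$ with $\vert k-k_1\vert>1$ must be shown to be bounded on the relevant mixed-norm spaces with the correct $1.1^{-2\max(k,k_1)}$ weight uniformly in $k,k_1$ — this is routine (it is the analogue of Lemma 7.8 of \cite{L}, which is exactly why the Littlewood--Paley base is $1.1$ and not $2$), but it is the linchpin that makes the double sum over $k,k_1$ geometric.
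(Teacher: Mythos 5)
Your treatment of \eqref{B3}, \eqref{B4} and \eqref{B5} is essentially the paper's: the constraint $\vert k-k_1\vert>1$ turns $(\vert\xi\vert^2-\vert\eta_1\vert^2)^{-1}P_k P_{k_1}$ into a multiplier of size $1.1^{-2\max(k,k_1)}$, the $t$-weighted boundary term is closed by pairing $\Vert t\,e^{it\Delta}f_{k_1}\Vert_{L^6_x}\lesssim\varepsilon_1$ (Lemma \ref{dispersive}) against an $L^3_x$ or $L^{3/2}_x$ norm of the potential via Lemma \ref{bilin}, and the time-integrated term \eqref{B5} is closed by dual Strichartz in $L^2_tL^{6/5}_x$, with Lemma \ref{summation} and Bernstein supplying summability in $k_1$. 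Two of your worries are spurious rather than gaps: there is no circularity (the dispersive bound is a consequence of the bootstrap hypotheses of size $\varepsilon_1$, and the output is the strictly smaller $\delta\varepsilon_1$), and no separate short-time argument is needed for \eqref{B3}, since $\Vert t\,u_k(t)\Vert_{L^6_x}\lesssim\varepsilon_1$ holds uniformly down to $t=1$. The genuine divergence is \eqref{B2bis}: you propose to re-enter the space--time resonance machinery (low$\times$high splitting, further integrations by parts in $s$ or $\eta$), whereas the paper disposes of this term in two lines by a direct estimate — place $s\,(u^2)_{k_1}$ in $L^2_sL^3_x$ via $\Vert su\Vert_{L^\infty_sL^6_x}\Vert u\Vert_{L^2_sL^6_x}$ (one factor absorbs the weight $s$ by dispersive decay, the other is a Strichartz norm), convolve with $\widehat{W_1}$ using Lemma \ref{bilinit}/\ref{bilin} to land in $L^2_sL^{6/5}_x$, and apply dual Strichartz. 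Your heavier route would presumably also close, but the point of the denominator gain and the extra $\delta$ from the potential is precisely that no further resonance analysis is required here; reserving the integration-by-parts machinery for the terms \eqref{M7}, \eqref{M8} and $I_9^n$, $I_{10}^n$ where it is actually needed keeps the iteration bookkeeping manageable.
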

\begin{proof}
Note that the bound on \eqref{B3} implies the one on \eqref{B4} (take $t=1$). Therefore we only prove the first bound. \\
First assume that $k_1 > k+1.$ \\
We use Lemma \ref{bilin}, the dispersive estimate from Lemma \ref{dispersive} and Bernstein's inequality
\begin{align*}
\Vert \eqref{B3} \Vert_{L^2 _x} & \lesssim  1.1^{k} 1.1^{k_1} 1.1^{-2k_1} \Vert (a_i)_{k_1} \Vert_{L^3_x} \Vert t e^{it \Delta} f_{k_1} \Vert_{L^6_x} \\
& \lesssim 1.1^{k-k_1} \delta \varepsilon_1.
\end{align*}
This last expression can be summed over $k_1$ given the condition $k_1>k+1.$ \\
The reasoning is similar if $k > k_1 + 1.$ We obtain the inequality
\begin{align*}
\Vert \eqref{B3} \Vert_{L^2 _x} & \lesssim  1.1^{k} 1.1^{k_1} 1.1^{-2k} \Vert (a_i)_{k} \Vert_{L^3_x} \Vert t e^{it \Delta} f_{k_1} \Vert_{L^6_x} ,
\end{align*}
and we use Lemmas \ref{summation} and \ref{dispersive} to sum over $k_1$ in that last inequality.
\\
For the last term we use Strichartz estimates we get when $k>k_1+1$
\begin{align} 
\notag  \Vert \eqref{B5} \Vert_{L^2_x} & \lesssim \Bigg \Vert \mathcal{F}^{-1} \int_{\mathbb{R}^3} \frac{\xi_l \alpha_{1}(\eta_1)}{\vert \xi \vert^2 - \vert \eta_1 \vert^2} \widehat{f_{k_1}}(s,\eta_1) \widehat{W}(\xi-\eta_1) d\eta_1 \Bigg \Vert_{L^{2}_t L^{6/5}_x} \\
\label{aux} & \lesssim 1.1^{-k}  \Vert e^{it\Delta} f_{k_1} \Vert_{L^{2} _t L^{6}_x} \Vert \mathcal{F}^{-1} \big( \alpha(\eta_1) P_{k_1}(\eta_1) \big) \Vert_{L^1} \Vert W_{k} \Vert_{L^{3/2}_x}.
\end{align}
If $W = a_i$ then the bound above reads
\begin{align*}
\Vert \eqref{aux} \Vert_{L^2_x} & \lesssim 1.1^{k_1-k} \Vert e^{it\Delta} f_{k_1} \Vert_{L^{2} _t L^{6}_x} \Vert a_i  \Vert_{L^{3/2}_x}
\end{align*}
and this can be summed when $k_1 + 1<k.$
\\
\\
If $W = V$ then using Bernstein's inequality we obtain
\begin{align*}
\Vert \eqref{aux} \Vert_{L^2_x} & \lesssim 1.1^{-k} \Vert e^{it\Delta} f_{k_1} \Vert_{L^{2} _t L^{6}_x} \Vert V_{k} \Vert_{L^{3/2}_x} \\
& \lesssim \Vert e^{it\Delta} f_{k_1} \Vert_{L^{2} _t L^{6}_x} \Vert V_{k} \Vert_{L^{1}_x}.
\end{align*}
Now we consider the case $k_1 >k +1:$
\\
Using Strichartz estimates, Lemma \ref{bilin} and Bernstein's inequality as above yields 
\begin{align*}
\Vert \eqref{B5} \Vert_{L^2_x} & \lesssim 1.1^{k} 1.1^{-2k_1} \Vert e^{it\Delta} f_{k_1} \Vert_{L^{2} _t L^{6}_x} \Vert \mathcal{F}^{-1} \big( \alpha(\eta_1) P_{k_1}(\eta_1) \big) \Vert_{L^1} \Vert W_{k_1} \Vert_{L^{3/2}_x}
\end{align*}
If $W = a_i$ then the bound above reads
\begin{align*}
\Vert \eqref{aux} \Vert_{L^2_x} & \lesssim 1.1^{k-k_1} \Vert e^{it\Delta} f_{k_1} \Vert_{L^{2} _t L^{6}_x} \Vert a_i  \Vert_{L^{3/2}_x},
\end{align*}
and this can be summed when $k_1>k+1.$\\
\\
If $W=V$ then using Bernstein's inequality we obtain
\begin{align*}
\Vert \eqref{aux} \Vert_{L^2_x} & \lesssim 1.1^k 1.1^{-2k_1} \Vert e^{it\Delta} f_{k_1} \Vert_{L^{2} _t L^{6}_x} \Vert V_{k_1} \Vert_{L^{3/2}_x} \\
& \lesssim 1.1^{k-k_1} \Vert e^{it\Delta} f_{k_1} \Vert_{L^{2} _t L^{6}_x} \Vert V_{k_1} \Vert_{L^{1}_x},
\end{align*}
which can be summed.
\\
\\
Now we prove the bound on \eqref{B2bis}. The reasoning is similar, therefore we only sketch it here. \\
We treat the case $k_1>k+1,$ the other case being similar. \\
Using Strichartz estimates and a standard bilinear estimate, we obtain if $W=a_i:$
\begin{align*}
\Vert \eqref{B2bis} \Vert_{L^2_x} & \lesssim 1.1^k 1.1^{-2k_1} 1.1^{k_1} \Vert a_i \Vert_{L^2_x} \Vert tu \Vert_{L^{\infty}_t L^6_x} \Vert u \Vert_{L^2_t L^6 _x} \\
                                  & \lesssim 1.1^{k-k_1} \delta \varepsilon_1^2.
\end{align*}
This bound can be summed. \\
In the case where $W = V,$ we write an extra line using Bernstein's inequality:
\begin{align*}
\Vert \eqref{B2bis} \Vert_{L^2_x} & \lesssim 1.1^k 1.1^{-2k_1} \Vert V_{k_1} \Vert_{L^2_x} \Vert tu \Vert_{L^{\infty}_t L^6_x} \Vert u \Vert_{L^2_t L^6 _x} \\
                                  & \lesssim 1.1^{k-k_1} \Vert V_{k_1} \Vert_{L^{6/5}_x} \Vert tu \Vert_{L^{\infty}_t L^6_x} \Vert u \Vert_{L^2_t L^6 _x} \\
                                  & \lesssim 1.1^{k-k_1} \delta \varepsilon_1^2.
\end{align*}
\end{proof}
Now we come to the terms that appear in the iteration procedure in the case $\vert k-k_1 \vert \leqslant 1:$
\begin{lemma}
We have the estimate
\begin{align*}
\Vert \eqref{c22} \Vert_{L^2_x} & \lesssim \delta \varepsilon_1 \\
\Vert \eqref{c23} \Vert_{L^2_x} & \lesssim \delta \varepsilon_1 \\
\Vert \eqref{c24} \Vert_{L^2_x} & \lesssim \delta \varepsilon_1.
\end{align*}
\end{lemma}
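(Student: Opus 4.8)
The plan is to observe that, up to harmless constants and the implicit sum over $j$, the three terms \eqref{c22}, \eqref{c23}, \eqref{c24} all have the same shape
\[
P_k(\xi)\int_1^t\int_{\mathbb{R}^3} m_{k,k_1}(\xi,\eta_1)\,e^{is(\vert\xi\vert^2-\vert\eta_1\vert^2)}\,\widehat{\widetilde{W}}(\xi-\eta_1)\,\widehat{f_{k_1}}(s,\eta_1)\,d\eta_1\,ds,\qquad \vert k-k_1\vert\leqslant1,
\]
where $\widetilde{W}=x_jW_1$ in the case of \eqref{c22} and $\widetilde{W}=W_1$ in the cases of \eqref{c23}, \eqref{c24} — in either case $\Vert\widetilde{W}\Vert_Y\lesssim\delta$, using the hypothesis $\Vert\langle x\rangle W_1\Vert_Y\leqslant\delta$ in the first case — and $m_{k,k_1}$ is a symbol supported in $\{\vert\xi\vert\sim\vert\eta_1\vert\sim1.1^k\}$. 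On that support the factor $\xi_l$ equals $1.1^k\psi(1.1^{-k}\xi)$ with $\psi$ smooth on the annulus, hence contributes a factor $1.1^k$ together with a harmless modification of the cutoff $P_k$; after extracting it the remaining $\eta_1$‑multiplier, rescaled by $1.1^k$, is smooth and compactly supported, so that $\Vert\mathcal F^{-1}_{\eta_1}m_{k,k_1}(\xi,\cdot)\Vert_{L^1}\lesssim1.1^{\sigma k}$ uniformly in $\xi$ (and in $k,k_1$), with $\sigma=+1$ only in the magnetic case of \eqref{c22} (the extra weight $\eta_{1,i}$), $\sigma=-1$ only in the electric case of \eqref{c23}, \eqref{c24} (the weight $\vert\eta_1\vert^{-2}$, after differentiating in $\eta_1$), and $\sigma=0$ otherwise. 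Crucially, the $\xi_l$‑derivative did \emph{not} fall on the profile, so this is strictly easier than the main iterate \eqref{c21}.

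In this form the quantity is — up to the replacement of $W_1$ by $\widetilde{W}$ and of the weight $1.1^{-k}\phi'(1.1^{-k}\cdot)$ by $m_{k,k_1}$ — exactly the kind of term already handled in the estimate for \eqref{M3}, \eqref{M5}, and I would run that argument line by line. Concretely: pass to physical space, writing the displayed expression as $\pm P_k\int_1^t e^{is\Delta}\big(\widetilde{W}\cdot(M_{k,k_1}e^{-is\Delta}f_{k_1})\big)\,ds$ with $M_{k,k_1}$ the Fourier multiplier of symbol $m_{k,k_1}$; split into the cases $k>0$ and $k\leqslant0$; in each, decompose the $\xi-\eta_1$ frequency dyadically (and, in the subcase $k_1>k+1$, decompose the time variable dyadically), and bound each piece by Strichartz estimates (Lemma \ref{Strichartz}), Hölder's and Bernstein's inequalities, and the bilinear Lemmas \ref{bilin}, \ref{bilinit}. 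The factor $1.1^{\sigma k}$ is absorbed just as in the proof for \eqref{M3}, \eqref{M5}: when $\sigma=-1$, by using that $\widetilde{W}$ effectively sits at frequencies $\lesssim1.1^{k+3}$, so that $\Vert\widetilde{W}_{\leqslant k+3}\Vert_{L^{3/2}_x}\lesssim1.1^{k}\Vert\widetilde{W}\Vert_{L^1_x}\lesssim1.1^k\delta$ for $k\leqslant0$ (while for $k>0$ the factor $1.1^{-k}$ is a gain and $\Vert\widetilde W\Vert_{L^{3/2}_x}\lesssim\delta$ suffices); when $\sigma=+1$, exactly as the extra $1.1^{k_1}$ was treated in the estimates on \eqref{M1} and \eqref{M3}, that is via Lemma \ref{summation} (with a small loss $c$) together with the $H^{10}_x$ control on $f_{k_1}$. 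The $L^6_x$ norms of $e^{is\Delta}f_{k_1}$ are handled by the dispersive estimate of Lemma \ref{dispersive}, and the resulting $1.1^k$‑ and $1.1^{k_1}$‑weighted geometric sums over the dyadic pieces are summed by Lemma \ref{summation}, yielding the bound $\delta\varepsilon_1$ uniformly in $k$.

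I expect the only delicate point — and the sole novelty relative to \cite{L} at this stage — to be the bookkeeping of powers of $1.1^k$: one must check that the low‑frequency loss $1.1^{-k}$ coming from $\vert\eta_1\vert^{-2}$ in \eqref{c23}, \eqref{c24} is strictly beaten by the Bernstein gain on the low‑frequency part of $\widetilde W$, and that the high‑frequency loss $1.1^{+k}$ coming from the magnetic weight $\eta_{1,i}$ in \eqref{c22} is strictly beaten by the decay built into $\Vert f\Vert_{H^{10}_x}$. Since here $\vert k-k_1\vert\leqslant1$ there is no outer summation over $k$ to carry out, and the only sums that occur — the dyadic decompositions introduced along the way — are geometrically convergent, so the estimates close with room to spare.
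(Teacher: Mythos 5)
Your proposal is correct and follows essentially the same route as the paper: Strichartz in the form \eqref{Str2}, the bilinear Lemma \ref{bilin} with the multiplier homogeneity count you describe, Bernstein on the frequency-localized potential to absorb the $1.1^{-k}$ loss at low frequency, and Lemma \ref{summation} together with the $H^{10}_x$ bound to absorb the $1.1^{+k}$ loss from the magnetic weight. The paper's written proof is shorter only because it cites \cite{L} (Lemma 5.6) for the case $W_1=V$ and details just the magnetic case of \eqref{c22}, but the underlying estimates are the ones you give.
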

\begin{proof}
In the case where $W_1=V$ these estimates have been proved in \cite{L}, Lemma 5.6. Therefore we only give the proof in the case where $W_1=a_i$ here. 
\\
We use Strichartz estimates, the bilinear Lemma \ref{bilin} and Lemma \ref{summation} (with $c=1/4$) to write that
\begin{align*}
\Vert \eqref{c22} \Vert & \lesssim \Bigg \Vert \mathcal{F}^{-1}_{\xi} \int_{\eta_1} \frac{\xi_l \eta_{1,j}\eta_{i}}{\vert \eta_1 \vert^2} \partial_{\eta_{1,j}} \widehat{a_i} (\xi - \eta_1) e^{-is \vert \eta_1 \vert^2} \widehat{f_{k_1}}(t,\eta_1)  d\eta_1 \Bigg \Vert_{L^{2}_t L^{6/5}_x} \\
& \lesssim 1.1^{k} \Vert e^{it \Delta} f_{k_1} \Vert_{L^{2}_t L^{6}_x} \Vert x_j a_i(x) \Vert_{L^{3/2}_x} \\
& \lesssim 1.1^{-k_1} \delta \varepsilon_1,
\end{align*}
which is good enough if $k_1>0.$ \\
Otherwise if $k_1 \leqslant 0$ we write, using that $\vert k-k_1 \vert \leqslant 1,$
\begin{align*}
\Vert \eqref{c22} \Vert & \lesssim \Bigg \Vert \mathcal{F}^{-1}_{\xi} \int_{\eta_1} \frac{\xi_l \eta_{1,i}\eta_{1,j}}{\vert \eta_1 \vert^2} \partial_{\eta_{1,j}} \widehat{a_i} (\xi - \eta_1) e^{-is \vert \eta_1 \vert^2} \widehat{f_{k_1}}(t,\eta_1)  d\eta_1 \Bigg \Vert_{L^{2}_t L^{6/5}_x} \\
& \lesssim 1.1^{k} \Vert e^{it \Delta} f_{k_1} \Vert_{L^{2}_t L^{6}_x} \Vert x_j a_i(x) \Vert_{L^{3/2}_x} .
\end{align*}
The proofs for the terms \eqref{c23}, \eqref{c24} are the similar, therefore details are omitted.
\end{proof}
Finally we recall two bounds on the bilinear terms \eqref{M7}, \eqref{M8} that were proved in \cite{L}, Lemmas 5.14, 5.15 and 5.16.
\begin{lemma}
We have the estimates
\begin{align*}
\Vert \eqref{M7},\eqref{M8} \Vert_{L^2} \lesssim \varepsilon_1 ^2.
\end{align*}

\end{lemma}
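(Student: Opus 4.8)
The plan is to treat \eqref{M7} and \eqref{M8} as the two ``classical'' quadratic interaction terms of space--time resonance theory, so that after passing to the profile variables one recognizes them as weighted versions of the bilinear Duhamel term already analyzed in \cite{GMS} and \cite{L}. Writing $u(s)=e^{is\Delta}f(s)$, both terms carry the phase $\phi(\xi,\eta_1)=\vert \xi \vert^2-\vert \xi-\eta_1 \vert^2-\vert \eta_1 \vert^2=2\eta_1\cdot(\xi-\eta_1)$, whose time--resonant set is $\lbrace \eta_1\cdot(\xi-\eta_1)=0 \rbrace$ and whose space--resonant set is $\lbrace \nabla_{\eta_1}\phi=2(\xi-2\eta_1)=0 \rbrace=\lbrace \eta_1=\xi/2 \rbrace$; the two sets meet only at the origin. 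First I would Littlewood--Paley decompose the output frequency $\xi$ (exponent $k$) and the two input frequencies $\eta_1$ and $\xi-\eta_1$ (exponents $k_1,k_2$), together with a dyadic decomposition $s\simeq 1.1^m$ of time, reducing the estimate to a bound on each dyadic piece summable in $(k,k_1,k_2,m)$. High output or input frequencies are disposed of by the $H^{10}_x$ bound from the bootstrap, and very low frequencies by Bernstein's inequality, so the interesting regime is that of bounded frequencies and large time.

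For \eqref{M8} there is no growth in $s$, and the only difference from the plain Duhamel bilinear term is that one input is $\partial_{\xi_l}\widehat f$. Since $\sup_k\Vert \nabla_\xi\widehat{f_k}\Vert_{L^2_x}=\Vert f\Vert_X\leqslant\varepsilon_1$, I would place the factor $\mathcal F^{-1}_\xi[\partial_{\xi_l}\widehat f]$ in $L^2_x$ (summing the Littlewood--Paley pieces) and the remaining $u$--factor in the Strichartz space $L^2_tL^6_x$, which is $\lesssim\varepsilon_1$ by the dispersive bound $t\Vert u_k(t)\Vert_{L^6_x}\lesssim\varepsilon_1$; the conclusion then follows from H\"older's inequality ($L^2_x\cdot L^6_x\hookrightarrow L^{6/5}_x$) combined with the retarded Strichartz estimate \eqref{Str3} for the pair $(2,6)$.

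The term \eqref{M7} is the genuinely delicate one: its coefficient $2is\eta_{1,l}$ is exactly what the two $s$--producing contributions (the $\xi$--derivative hitting $e^{is\vert \xi \vert^2}$, respectively $e^{-is\vert \xi-\eta_1 \vert^2}$) collapse to, namely $2is\xi_l-2is(\xi_l-\eta_{1,l})=2is\eta_{1,l}$, and the surviving power of $s$ cancels precisely one factor of the dispersive decay, leaving a formally divergent time integral --- the signature of the quadratic nonlinearity at the Strauss exponent, which forces the use of the resonance structure. Away from the time resonance, where $\vert\phi\vert$ is comparable to the ambient frequency scale, I would integrate by parts in $s$ via $e^{is\phi}=\partial_s(e^{is\phi})/(i\phi)$: the boundary term at $s=t$ carries a factor $t/\phi$ absorbed by $t\Vert u_k(t)\Vert_{L^6_x}\lesssim\varepsilon_1$, while the remaining time integral either loses the $s$ (when $\partial_s$ falls on the residual $s$) or produces $\partial_s\widehat f=\mathcal F[e^{-is\Delta}(\sum_i a_i\partial_i u+Vu+u^2)]$, which returns us to terms of the type already handled in this section and in \cite{L}. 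Away from the space resonance, where $\vert\xi-2\eta_1\vert$ is bounded below, I would instead integrate by parts in $\eta_1$ via $e^{is\phi}=\nabla_{\eta_1}\phi\cdot\nabla_{\eta_1}e^{is\phi}/(is\vert\nabla_{\eta_1}\phi\vert^2)$, gaining $1/(s\vert\nabla_{\eta_1}\phi\vert^2)$ and thereby killing the offending $s$; the contribution where $\nabla_{\eta_1}$ lands on a profile yields $\partial_{\eta_1}\widehat f$, again controlled by $\Vert f\Vert_X\leqslant\varepsilon_1$. Since the two exceptional regions overlap only near $\xi=\eta_1=0$, these strategies cover all of frequency space, and in a small ball around the origin the vanishing of $\eta_{1,l}$ together with the small volume of the frequency support (and Bernstein) makes the leftover contribution summable.

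The main obstacle is not any single estimate --- each region reduces to a routine combination of H\"older, Bernstein, Strichartz and the dispersive decay --- but the bookkeeping: one must arrange that every boundary and remainder term produced by the integrations by parts is summable simultaneously over $k,k_1,k_2$ and $m$, and in particular control the terms created when $\partial_s$ is moved onto $\widehat f$ in \eqref{M7} (which momentarily reintroduces the derivative loss of the magnetic part, harmless here since we are still in the first expansion and not yet in the multilinear iteration). As this case analysis is carried out in full in \cite{L}, Lemmas 5.14, 5.15 and 5.16, I would simply invoke those estimates.
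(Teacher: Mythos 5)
Your proposal is correct and matches the paper, which gives no proof of this lemma beyond citing Lemmas 5.14, 5.15 and 5.16 of \cite{L} — exactly the citation you end with. Your preliminary sketch (phase $2\eta_1\cdot(\xi-\eta_1)$, space resonance at $\eta_1=\xi/2$, integration by parts in $s$ away from time resonances and in $\eta_1$ away from space resonances, with the $X$-norm and the $t^{-1}$ decay in $L^6_x$ closing each region) is a faithful outline of what those cited lemmas carry out.
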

\subsection{Potential terms}
There remains to estimate \eqref{restediff}. Note that the estimate on \eqref{restefac} follows directly from this one by taking $t=1.$ \\
Since in the case where the potential is magnetic there is a derivative loss to deal with, we must use smoothing estimates. \\
The higher order iterates might involve both types of potentials, therefore we need a unified way to deal with such terms. Hence we also give a proof in the case where $W=V$, although the bound has already been established in \cite{L}. That is the content of the following lemma.

\begin{lemma}
We have
\begin{align*}
\Vert \eqref{restediff} \Vert_{L^2} \lesssim \delta \varepsilon_1
\end{align*}
when $W = V.$ 
\end{lemma}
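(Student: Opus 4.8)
The strategy is to trade the singular resolvent-type multiplier $(\vert\xi\vert^2-\vert\eta_1\vert^2+i\beta)^{-1}$ for a time integral, which makes the uniformity in $\beta$ transparent, and then to recognize the resulting object as a dual Strichartz integral. First I would dispose of the trivial features. The factor $e^{it(\vert\xi\vert^2-\vert\eta_1\vert^2+i\beta)}$ has modulus $e^{-\beta t}\leqslant 1$, so it may be discarded in the $L^2_\xi$ estimate; since we are in Case 2, $\vert k-k_1\vert\leqslant 1$, so only boundedly many $k_1$ occur and it suffices to bound one $k_1$-term with a constant independent of $k$. Writing $P_{k_1}(\eta_1)\partial_{\eta_{1,j}}\widehat f = \partial_{\eta_{1,j}}\widehat{f_{k_1}} - (\partial_{\eta_{1,j}}P_{k_1})\widehat f$, the second summand carries an extra factor $1.1^{-k_1}$ and only requires $\Vert f_{k_1}\Vert_{L^2}\lesssim\varepsilon_1$, so it is a lower order term; I set it aside and work with $\partial_{\eta_{1,j}}\widehat{f_{k_1}}$, noting $\Vert\partial_{\eta_{1,j}}\widehat{f_{k_1}}\Vert_{L^2}\leqslant\Vert f\Vert_X\leqslant\varepsilon_1$.

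The main step uses, for $\beta>0$, the identity $\dfrac{e^{it(\lambda+i\beta)}}{i(\lambda+i\beta)} = -\displaystyle\int_t^\infty e^{iu(\lambda+i\beta)}\,du$ with $\lambda=\vert\xi\vert^2-\vert\eta_1\vert^2$. After inserting it and passing to physical space, $\eqref{restediff}$ becomes, up to harmless constants and the sum over $j$,
\begin{align*}
\mathcal F^{-1}\,\eqref{restediff}(x) = -\int_t^\infty e^{-u\beta}\;\partial_{x_l}P_k\,e^{-iu\Delta}\Big(V\cdot e^{iu\Delta}R_j^{(k_1)}\widetilde h\Big)(x)\,du,
\end{align*}
where $\widetilde h=\mathcal F^{-1}(\partial_{\eta_{1,j}}\widehat{f_{k_1}})$ (hence $\Vert\widetilde h\Vert_{L^2}\leqslant\varepsilon_1$) and $R_j^{(k_1)}=\partial_j\Delta^{-1}P_{k_1}$ is a Riesz-type multiplier of norm $\lesssim 1.1^{-k_1}$ on every $L^p$. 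The $\eta_1$-dependent resolvent has thus become an honest time integral; since $e^{-u\beta}\leqslant 1$, everything that follows is automatically uniform in $\beta$, and the $\beta\to 0$ limit is handled afterwards by lower semicontinuity of the norm, exactly as the authors indicate.

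Finally, translating by $t$ and peeling off the unimodular $e^{-it\Delta}$, I would recognize $\int_0^\infty e^{-iv\Delta}(\cdots)\,dv$ as a dual Strichartz integral and apply $\eqref{Str2}$ with the admissible pair $(2,6)$, followed by H\"older's inequality and the homogeneous Strichartz estimate $\eqref{Str1}$ for the same pair:
\begin{align*}
\Vert\eqref{restediff}\Vert_{L^2_x}\lesssim 1.1^k\big\Vert V\,e^{iu\Delta}R_j^{(k_1)}\widetilde h\big\Vert_{L^2_u L^{6/5}_x}\lesssim 1.1^k\Vert V\Vert_{L^{3/2}_x}\big\Vert e^{iu\Delta}R_j^{(k_1)}\widetilde h\big\Vert_{L^2_u L^6_x}\lesssim 1.1^{k}\,\delta\,1.1^{-k_1}\Vert\widetilde h\Vert_{L^2},
\end{align*}
where I used boundedness of $\partial_{x_l}P_k$ on $L^{6/5}$ with norm $\lesssim 1.1^k$ and $\Vert V\Vert_{L^{3/2}}\lesssim\Vert V\Vert_{L^1}^{2/3}\Vert V\Vert_{L^\infty}^{1/3}\lesssim\delta$. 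With $\vert k-k_1\vert\leqslant 1$ this is $\lesssim\delta\varepsilon_1$, uniformly in $k$, which is the claim. The bound for $\eqref{restefac}$ is identical with $t$ replaced by $1$.

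The delicate point, and the reason for this particular route, is the uniformity in $\beta$ near the time-resonance set $\{\vert\xi\vert=\vert\eta_1\vert\}$, which is genuinely accessed here since $\vert k-k_1\vert\leqslant 1$ forces $\vert\eta_1\vert\sim\vert\xi\vert$, and where $(\vert\xi\vert^2-\vert\eta_1\vert^2+i\beta)^{-1}$ blows up. Attacking $\eqref{restediff}$ head-on — Minkowski's inequality followed by taking the $L^2_x$ norm inside the $\eta_1$- or $u$-integral, or a Schur test on the kernel — destroys the oscillation and loses a factor $\beta^{-1/2}$ (or $\log(1/\beta)$, from a shell of thickness $\sim\beta$ around the resonant sphere). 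Keeping the factor $e^{iu\Delta}$ inside the time integral and invoking the dual Strichartz estimate, an $L^2$-valued inequality with no $\beta$ dependence, is what closes the estimate; the Riesz multiplier $\eta_{1,j}/\vert\eta_1\vert^2$ and the outer $\xi_l$ are then harmless because $\vert k-k_1\vert\leqslant 1$ makes $1.1^{k}\cdot 1.1^{-k_1}\sim 1$. (In the companion case $W=a_i$ one expects the same scheme, with one Strichartz estimate replaced by a smoothing estimate from Lemma \ref{smoothing} to absorb the extra derivative.)
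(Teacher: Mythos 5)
Your argument is correct and follows essentially the same route as the paper: the same resolvent-to-time-integral identity (your integral starts at $t$ rather than $0$, a harmless translation), then dual Strichartz with the pair $(2,6)$, H\"older against $\Vert V \Vert_{L^{3/2}_x}$, the homogeneous Strichartz estimate, and the cancellation $1.1^{k} \cdot 1.1^{-k_1} \sim 1$ coming from $\vert k - k_1 \vert \leqslant 1$, with $\Vert \partial_{\eta_{1,j}} \widehat{f_{k_1}} \Vert_{L^2} \leqslant \Vert f \Vert_X$ closing the bound, all uniformly in $\beta$. The one imprecision is your remark that the commutator term $(\partial_{\eta_{1,j}} P_{k_1}) \widehat{f}$ "only requires $\Vert f_{k_1} \Vert_{L^2} \lesssim \varepsilon_1$": for $k_1 \ll 0$ the factor $1.1^{-k_1}$ from the Riesz-type multiplier is not absorbed by that alone, and one needs the low-frequency gain $\Vert \widehat{f} P_{k_1} \Vert_{L^2} \lesssim 1.1^{k_1} \Vert f \Vert_X$ (H\"older on the annulus plus Sobolev), which is precisely the content of Lemma \ref{X'} that the paper invokes to dispose of this term.
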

\begin{proof}
We use the following identity:
\begin{align}\label{identit}
\frac{1}{\vert \xi \vert^2 - \vert \eta \vert^2 + i \beta} =(-i) \int_0 ^{\infty} e^{i \tau_1 ( \vert \xi \vert^2 - \vert \eta \vert^2 + i \beta) } d\tau_1
\end{align}
and plug it back into \eqref{restediff}. \\
We bound the outcome using Strichartz estimates, bilinear estimates from Lemma \ref{bilin}:
\begin{align*}
& \Bigg \Vert P_k(\xi) \int_{0}^{\infty} e^{i \tau_1 \vert \xi \vert^2} \int_{\mathbb{R}^3} \frac{\eta_{1,j}\xi_l}{\vert \eta_1 \vert^2} e^{-i\tau_1  \vert \eta_1 \vert^2} e^{-it\vert \eta_1 \vert^2} e^{-\beta \tau} \widehat{V}(\xi-\eta_1) \partial_{\eta_{1,j}} \widehat{f}(t,\eta_1) P_{k_1}(\eta_1) d\eta_1 d\tau_1  \Bigg \Vert_{L^2_x} \\
& \lesssim 1.1^{k} \Bigg \Vert e^{-\beta \tau_1} \mathcal{F}^{-1} \int_{\mathbb{R}^3} \frac{\eta_{1,j}}{\vert \eta_1 \vert^2} \widehat{V}(\xi-\eta_1)  e^{-i (t+\tau) \vert \eta_1 \vert^2} \big( \partial_{\eta_{1,j}} \widehat{f}(t,\eta_1) \big) P_{k_1}(\eta_1) d\eta_1 \Bigg \Vert_{L^{2}_{\tau_1} L^{6/5}_x} \\
& \lesssim 1.1^k \Bigg \Vert \mathcal{F}^{-1} \int_{\mathbb{R}^3} \frac{\eta_{1,j}}{\vert \eta_1 \vert^2} \widehat{V}(\xi-\eta_1)  e^{-i (t+\tau_1) \vert \eta_1 \vert^2} \big( \partial_{\eta_{1,j}} \widehat{f}(t,\eta_1) \big) P_{k_1}(\eta_1) d\eta_1 \Bigg \Vert_{L^{2}_{\tau_1} L^{6/5}_x} \\
& \lesssim  \Vert V \Vert_{L^{3/2}_x} \bigg \Vert e^{i\tau_1 \Delta} \mathcal{F}^{-1} \big(e^{-it\vert \eta_1 \vert^2} \partial_{\eta_{1,j}} \widehat{f}(t,\eta_1) P_{k_1}(\eta_1) \big) \bigg \Vert_{L^{2}_{\tau_1} L^{6}_x}. 
\end{align*}
We can conclude with Strichartz estimates from Lemma \ref{Strichartz}, Lemma \ref{X'} and the fact that the Schr\"{o}dinger semi-group is an isometry on $L^2 _x$:
\begin{align*}
\bigg \Vert e^{i\tau_1 \Delta} \mathcal{F}^{-1} \big(e^{-it \vert \eta_1 \vert^2} \partial_{\eta_{1,j}} \widehat{f}(t,\eta_1) P_{k_1}(\eta_1) \big) \bigg \Vert_{L^{2}_{\tau_1} L^{6}_x} & \lesssim \bigg \Vert \mathcal{F}^{-1} \big(e^{-it \vert \eta_1 \vert^2} \partial_{\eta_{1,j}} \widehat{f}(t,\eta_1) P_{k_1}(\eta_1) \big) \bigg \Vert_{L^{2}_x}     \\
& \lesssim \Vert f \Vert_{X'} \\
& \lesssim \varepsilon_1.
\end{align*}
Note that the bound is uniform on $\beta.$ 
\end{proof}
Now we explain how to bound the magnetic Schr\"{o}dinger term. This is the main difficulty of the paper. This is where a new ingredient has to be introduced compared to the earlier paper \cite{L}. We replace the boundedness of wave operators by smoothing estimates from Section \ref{recallsmoothing}.

\begin{lemma}
We have the following bound:
\begin{align*}
\Vert \eqref{restediff} \Vert_{L^2 _x} \lesssim \delta \varepsilon_1
\end{align*}
when $W =  a_i$.
\end{lemma}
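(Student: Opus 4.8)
The plan is to follow the proof of the preceding lemma (the case $W=V$) step by step; the one genuinely new feature is that the magnetic factor $\alpha_1(\eta_1)=\eta_{1,i}$ now costs a full derivative, which I will recover by trading one application of the Strichartz inequalities for a smoothing estimate. First I would insert the identity \eqref{identit} into \eqref{restediff}, discard the unimodular factor $e^{it\vert\xi\vert^2}$, bound $e^{-\beta\tau_1}\leqslant 1$ (all bounds below being uniform in $\beta>0$, so that \eqref{restediff} itself is recovered by lower semicontinuity of the norm as $\beta\to 0$), and recall that \eqref{restediff} only occurs in the regime $\vert k-k_1\vert\leqslant 1$. This reduces the problem to bounding in $L^2_x$ the quantity
\begin{align*}
P_k(\xi)\int_0^{\infty}\int_{\mathbb{R}^3}\frac{\xi_l\,\eta_{1,j}\,\eta_{1,i}}{\vert\eta_1\vert^2}\,e^{i\tau_1\vert\xi\vert^2}\,e^{-i(t+\tau_1)\vert\eta_1\vert^2}\,\widehat{a_i}(\xi-\eta_1)\,\partial_{\eta_{1,j}}\widehat{f}(t,\eta_1)\,P_{k_1}(\eta_1)\,d\eta_1\,d\tau_1
\end{align*}
(with a sum over $j$, and only $O(1)$ values of $k_1$ contributing).

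Writing $\xi_l=(\xi-\eta_1)_l+\eta_{1,l}$ splits this into two pieces. In the $(\xi-\eta_1)_l$ piece one has $(\xi-\eta_1)_l\widehat{a_i}(\xi-\eta_1)=\widehat{\partial_l a_i}(\xi-\eta_1)$, and the leftover weight $\tfrac{\eta_{1,j}\eta_{1,i}}{\vert\eta_1\vert^2}P_{k_1}(\eta_1)$ is a Fourier multiplier with $O(1)$ kernel, so this piece loses no derivative and I would bound it by $\lesssim\delta\varepsilon_1$ exactly as in the case $W=V$, using $\Vert\partial_l a_i\Vert_{L^{3/2}_x}\lesssim\Vert(1-\Delta)^5 a_i\Vert_Y\lesssim\delta$ (dual Strichartz, H\"older in $x$, homogeneous Strichartz, Lemma \ref{X'}). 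In the $\eta_{1,l}$ piece I would combine $\eta_{1,l}$ with $\tfrac{\eta_{1,j}\eta_{1,i}}{\vert\eta_1\vert^2}$; exploiting the symmetry of the homogeneous symbol $\tfrac{\eta_{1,l}\eta_{1,j}\eta_{1,i}}{\vert\eta_1\vert^2}$ and $\vert k-k_1\vert\leqslant 1$, passing back to physical space, and absorbing the bounded Fourier multipliers and the phase $e^{-it\vert\eta_1\vert^2}$ into a profile $\psi$ with $\Vert\psi\Vert_{L^2_x}\lesssim\Vert f\Vert_{X'}\lesssim\varepsilon_1$ supported in $\lbrace\vert\eta\vert\simeq 1.1^{k_1}\rbrace$, I would reduce matters to the bound
\begin{align*}
\Big\Vert\int_0^{\infty}e^{i\tau_1\Delta}\big(a_i\,\partial_i\,e^{i(t+\tau_1)\Delta}\psi\big)\,d\tau_1\Big\Vert_{L^2_x}\lesssim\delta\varepsilon_1 ,
\end{align*}
the crucial point being that the single surviving derivative $\partial_i$ is anisotropic, in the direction $i$ of the magnetic potential.

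To prove this last bound I would split $\partial_i$ into a composition of two operators of order $\tfrac12$ and recover one half from each of the two free evolutions present: applying the dual smoothing estimate \eqref{smo2} in the variable $x_i$ to the outer time integral turns the left side into $\lesssim\Vert D_{x_i}^{-1/2}\big(a_i\,D_{x_i}\,e^{i(t+\tau_1)\Delta}\psi\big)\Vert_{L^1_{x_i}L^2_{\tau_1,\widetilde{x_i}}}$; commuting one half-derivative outward so that the main term is $\Vert a_i\,D_{x_i}^{1/2}\,e^{i(t+\tau_1)\Delta}\psi\Vert_{L^1_{x_i}L^2_{\tau_1,\widetilde{x_i}}}$, this is $\lesssim\delta\Vert\psi\Vert_{L^2_x}\lesssim\delta\varepsilon_1$ by Lemma \ref{mgnmgnfin} (its first inequality, which packages the homogeneous smoothing estimate \eqref{smo1} together with the multiplication by $a_i$). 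The commutators of $a_i$ with the operators $D_{x_i}^{\pm1/2}$ created here, together with those from the symbol rearrangement above, are of lower order — each gains back a half-derivative — and I would absorb them using the regularity and spatial decay of the potential, i.e.\ $\Vert(1-\Delta)^5 a_i\Vert_Y+\Vert\langle x\rangle a_i\Vert_Y\lesssim\delta$. Summing the $O(1)$ frequency contributions then yields $\Vert\eqref{restediff}\Vert_{L^2_x}\lesssim\delta\varepsilon_1$; the estimate on \eqref{restefac} follows by setting $t=1$.

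The hard part is precisely this recovery step. One must check that, after the Riesz–type cancellations that $\vert k-k_1\vert\leqslant 1$ makes available, the single remaining derivative is genuinely anisotropic along the magnetic direction $i$ — so that Lemma \ref{mgnmgnfin}, which is tailored to $a_i$ paired with $D_{x_i}^{1/2}$, can be applied — and one must carefully control the commutators between multiplication by $a_i$ and fractional derivatives; this is where the extra regularity and decay hypotheses on the potentials are used (and, as noted in the remark after Theorem \ref{maintheorem}, why time-dependent potentials fall outside the method).
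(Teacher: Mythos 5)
Your overall scheme --- recover the full derivative lost to $\alpha_1(\eta_1)=\eta_{1,i}$ by combining the dual smoothing estimate \eqref{smo2} (half a derivative from the outer $\tau_1$--integral) with the homogeneous smoothing estimate packaged in Lemma \ref{mgnmgnfin} (half a derivative from the free evolution sitting next to $a_i$) --- is exactly the paper's. But the execution of your key step has a genuine gap. After applying \eqref{smo2} in the direction $x_i$ you are left with $\Vert D_{x_i}^{-1/2}\big(a_i\,D_{x_i}e^{i\tau_1\Delta}\tilde{\psi}\big)\Vert_{L^1_{x_i}L^2_{\tau_1,\widetilde{x_i}}}$, and to reach the form to which Lemma \ref{mgnmgnfin} applies you must commute $D_{x_i}^{-1/2}$ past multiplication by $a_i$. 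You assert that $[D_{x_i}^{-1/2},a_i]$ is lower order and absorbable, but give no argument, and this is not routine: $D_{x_i}^{-1/2}$ is a nonlocal operator of negative order whose one-dimensional kernel $\vert x_i-y_i\vert^{-1/2}$ is not integrable at infinity, so neither the ``main term'' $a_iD_{x_i}^{1/2}h$ nor the commutator is reached by a standard estimate in the anisotropic norm $L^1_{x_i}L^2_{\tau_1,\widetilde{x_i}}$, and the hypotheses on $a_i$ are phrased in terms of the $Y$--norm rather than commutator-friendly quantities. (A secondary issue of the same flavour: replacing $\partial_i$ by $D_{x_i}$ costs a Hilbert transform in $x_i$, which is not bounded on $L^1_{x_i}$.)

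The paper's proof is organized precisely so that these operators never meet. It does not split $\xi_l=(\xi-\eta_1)_l+\eta_{1,l}$; instead it (a) decomposes $\xi$--space by dominant direction (Lemma \ref{direction}) and applies \eqref{smo2} in that direction $x_j$, so the outer half-derivative is paid for by the clean symbol bound $\Vert\mathcal{F}^{-1}\big(\chi_j(\xi)\xi_l\vert\xi_j\vert^{-1/2}P_k(\xi)\big)\Vert_{L^1}\lesssim 1.1^{k/2}$, with no interaction with $a_i$; and (b) splits the magnetic symbol on the Fourier side as $\eta_{1,i}=\vert\eta_{1,i}\vert^{1/2}\cdot\eta_{1,i}\vert\eta_{1,i}\vert^{-1/2}$, so that $D_{x_i}^{1/2}$ lands directly on the free evolution adjacent to $a_i$ (Lemma \ref{mgnmgnfin} then applies verbatim), while the remaining half, sign included, is absorbed into the profile $g_{k_1}$ measured in $L^2$, where it only costs $1.1^{k_1/2}$; the final factor $1.1^{(k-k_1)/2}$ is harmless since $\vert k-k_1\vert\leqslant1$. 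If you adopt this dominant-direction decomposition and Fourier-side splitting, your argument closes without any commutator. Your treatment of the $(\xi-\eta_1)_l$ piece (reducing to the $W=V$ case with $\partial_l a_i$ in place of $V$) is correct but unnecessary in the paper's organization.
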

\begin{proof}
We use the identity \ref{identit} and plug it in \eqref{restediff}. \\
We split the integral according to the dominant direction of $\xi$ using Lemma \ref{direction}. \\
We will therefore estimate
\begin{align} 
\label{restediffb} & \chi_{j}(\xi) \xi_l P_k (\xi) \int_{0} ^{\infty} e^{-\tau_1 \beta} e^{i\tau_1 \vert \xi \vert^2} \int_{\mathbb{R}^3} e^{-i\tau_1 \vert \eta_1 \vert^2} \eta_{1,i} \widehat{a_i}(\xi-\eta_1) \frac{\eta_{1,j} \eta_{1,i}}{\vert \eta_1 \vert^2} e^{-it\vert \eta_1 \vert^2} \partial_{\eta_{1,j}} \widehat{f}(t,\eta_1) P_{k_1}(\eta_1) d\eta_1 d\tau_1 \\
\notag & =  \chi_{j}(\xi) \frac{\xi_l}{\vert \xi_j \vert^{1/2}} P_k(\xi) \vert \xi_j \vert^{1/2} \int_{0} ^{\infty} e^{-\tau_1 \beta} e^{i\tau_1 \vert \xi \vert^2} \int_{\mathbb{R}^3} e^{-i\tau_1 \vert \eta_1 \vert^2} \vert \eta_{1,i} \vert^{1/2} \widehat{a_i}(\xi-\eta_1) \widehat{g_{k_1}}(\eta_1) d\eta_1 d\tau_1  ,
\end{align} 
where
\begin{align*}
\widehat{g_{k_1}}(\eta_1) = \frac{\eta_{1,j} \vert \eta_{1,i}\vert ^{1/2}}{\vert \eta_1 \vert^2} \frac{\eta_{1,i}}{\vert \eta_{1,i} \vert} e^{-it\vert \eta_1 \vert^2} \partial_{\eta_{1,j}} \widehat{f}(t,\eta_1) P_{k_1}(\eta_1).
\end{align*}
Therefore using Young's inequality, \eqref{smo2} from Lemma \ref{smoothing}, Lemma \ref{mgnmgnfin} and Lemma \ref{X'} we find that
\begin{align*}
\Vert \eqref{restediffb} \Vert_{L^2_x} & \lesssim \big \Vert \mathcal{F}^{-1}_{\xi} \big( \chi_{j}(\xi) \frac{\xi_l}{\vert \xi_j \vert^{1/2}} P_k(\xi) \big) \big \Vert_{L^1_x} \\
& \times \Bigg \Vert D_{x_j}^{1/2} \int_{0}^{\infty} e^{-i \tau_1 \Delta}   \bigg( e^{-\tau_1 \beta} a_i (x) D_{x_i}^{1/2} e^{i\tau_1 \Delta} \big( g_{k_1} \big) \bigg) d\tau_1 \Bigg \Vert_{L^2 _x} \\
& \lesssim 1.1^{k/2} \bigg \Vert a_i (x) D_{x_i}^{1/2} e^{i\tau_1 \Delta} \big( g_{k_1} \big) \bigg \Vert_{L^1_{x_j} L^{2}_{\tau_1,\widetilde{x_j}}} \\
& \lesssim 1.1^{k/2} \delta \Vert g_{k_1} \Vert_{L^2_x}  \\
& \lesssim 1.1^{\frac{k-k_1}{2}} \delta \varepsilon_1  .
\end{align*}
\end{proof}

\section{Multilinear terms} \label{boundmultilinear}
In this section we prove Proposition \ref{nthestimate}. The estimates are based on key multilinear lemmas proved in the first subsection. We then use them to bound the iterates in the following subsection.

\subsection{Multilinear lemmas} \label{multilinkey}
We will start by proving multilinear lemmas that essentially allow us to reduce estimating the $n-$th iterates to estimating the first iterates. We distinguish between low and high output frequencies. We note that the case of main interest is that of high frequencies, since otherwise the loss of derivative is not a threat. However the proofs are slightly different in both cases, hence the need to separate the two. Besides, the case of low output frequencies is essentially analogous to the case of non-magnetic potentials. Therefore it is possible to see this part of the argument as an alternative proof of the result of \cite{L}.

\subsubsection{The case of large output frequency}
We start with the main multilinear Lemma of the paper. The other lemmas of this section will be variations of it.  
\begin{lemma} \label{multilinear}
Assume that $k>0.$ \\
We have the bound 
\begin{align} \label{multimain}
&\Bigg \Vert \int \prod_{\gamma=1}^{n-1} \frac{\alpha_{\gamma}(\eta_{\gamma})\widehat{W_{\gamma}}(\eta_{\gamma-1}-\eta_{\gamma})P_{k_{\gamma}}(\eta_{\gamma}) P_k(\xi)}{\vert \xi \vert^2 - \vert \eta_{\gamma} \vert^2} d\eta_1 ... d\eta_{n-1} \widehat{g_{k_n}}(\eta_n) d\eta_n \Bigg \Vert_{L^2} \\
& \lesssim q(\max K) C^n \delta^{n} \Vert g \Vert_{L^2} \prod_{\gamma \in J^{+}} 1.1^{-k_{\gamma}} \times \prod_{\gamma \in J^{-}} 1.1^{-k} 1.1^{\epsilon k_{\gamma}} ,
\end{align}
where $J^{+}= \lbrace j \in [[1;n ]]; k_j > k+1  \rbrace, \ \ J^{-}= \lbrace j \in [[1;n ]]; k > k_j+1  \rbrace, \ \ J = J^{+} \cup J^{-}.$ \\
$K$ denotes the complement of $J,$ $\epsilon$ denotes a number strictly between 0 and 1 and 
\begin{displaymath} q(\max K) = \begin{cases}
1 &  \textrm{if} ~~~~ \alpha_{\max K} = 1  \\
1.1^{k_{\max K}/2} & \textrm{otherwise}. 
\end{cases}
\end{displaymath}
Finally the implicit constant in the inequality does not depend on $n.$
\end{lemma}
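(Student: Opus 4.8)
The plan is to carry out the scheme outlined in Section~\ref{strategy}: first convert the singular denominators into time integrals by means of the resolvent identity \eqref{identit} (with the regularization $+i\beta$, all bounds being uniform in $\beta$), thereby re-expressing the frequency-space object as a nested physical-space time integral in which Schr\"{o}dinger propagators $e^{i\tau\Delta}$ alternate with multiplications by $W_\gamma\widetilde{D}_\gamma$, and then strip off the potentials one at a time. At an electric step ($W_\gamma=V$) I will use a Strichartz estimate (Lemma~\ref{Strichartz}) together with $V\in L^{3/2}_x$; at a magnetic step ($W_\gamma=a_i$) I will use a smoothing estimate to absorb the derivative $\widetilde{D}_\gamma=\partial_i$. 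Each stripped potential contributes a factor $\delta$ (from the smallness of $\Vert W_\gamma\Vert_Y$ and its weighted and higher-regularity variants) and an implicit constant at most $C_0$, so that $n$ peeling steps produce a bound of the form $C_0^{\,n}\delta^{\,n}\leqslant C^n\delta^n$, with a constant independent of $n$.

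Before peeling I will split the indices $\gamma\in\lbrace 1,\dots,n\rbrace$ into the three regimes $J^{+}$, $J^{-}$ and $K$ according to the position of $k_\gamma$ relative to $k$. For $\gamma\in J^{+}\cup J^{-}$ the denominator $(\,\vert\xi\vert^2-\vert\eta_\gamma\vert^2\,)^{-1}$ is comparable to $1.1^{-2\max(k,k_\gamma)}$, hence, together with the Littlewood--Paley cutoffs, defines a Fourier multiplier with well-controlled kernel; for such $\gamma$ the identity \eqref{identit} is not needed. In the regime $J^{+}$ this gives $1.1^{-2k_\gamma}$, which already dominates the derivative loss $\leqslant 1.1^{k_\gamma}$ coming from $\alpha_\gamma$ and leaves the advertised gain $1.1^{-k_\gamma}$ (the high-frequency decay contained in $\Vert(1-\Delta)^5W_\gamma\Vert_Y\leqslant\delta$ provides further room, in particular for the later summation over scales). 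In the regime $J^{-}$ the frequency $k_\gamma$ is small, the denominator contributes $1.1^{-2k}$, and a Bernstein inequality on the frequency-$1.1^{k_\gamma}$ localization changes Lebesgue exponents at a cost no worse than $1.1^{(1+\epsilon)k_\gamma}$; balancing these factors against $1.1^{-2k}$ produces the claimed $1.1^{-k}1.1^{\epsilon k_\gamma}$. These two regimes essentially reproduce, term by term, the estimates already performed in Section~\ref{firsterm} for the first iterates (and, for the electric potentials, the corresponding estimates of \cite{L}).

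The core of the argument is the resonant regime $\gamma\in K$, $\vert k_\gamma-k\vert\leqslant 1$, where the denominator is genuinely singular and the only tool able to kill the derivative produced by a magnetic $W_\gamma$ is the smoothing effect. Here I will insert \eqref{identit} and peel the factors recursively, using the inhomogeneous smoothing estimate \eqref{smo3} at an interior magnetic step (so that a full derivative is recovered) and invoking the homogeneous estimates \eqref{smo1}, \eqref{smo2} and Corollary~\ref{smostriu} only at the ends of the chain; the essential point is that the mixed norm ($L^2_t L^{6/5}_x$ versus $L^1_{x_i}L^2_{t,\widetilde{x_i}}$) in which the partial iterate is controlled must match the norm required as input by the next estimate, and it is exactly this matching that the transition Lemmas~\ref{mgnmgn}, \ref{mgnmgnfin}, \ref{potmgn}, \ref{potmgnfin}, \ref{potpot}, \ref{potpotfin}, \ref{mgnpot} and \ref{mgnpotfin} are designed to guarantee for every possible pattern of neighbouring electric and magnetic potentials. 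One half-derivative cannot be recovered by the chain; structurally it remains attached to $W_{\max K}$ (whose frequency is $\sim 1.1^{k_{\max K}}$) when that potential is magnetic, which accounts precisely for the factor $q(\max K)$. The recursion ends on $\Vert g_{k_n}\Vert_{L^2}\leqslant\Vert g\Vert_{L^2}$ via Lemma~\ref{bilinit}.

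The main obstacle, and the genuinely new ingredient compared with \cite{L}, is organizing this peeling so that the chain of mixed-norm smoothing estimates closes uniformly: at every step the iterate has to be produced in exactly the space that the following lemma accepts, for all admissible sequences of potentials and all $n$. Once the finite list of transition lemmas above is available, this is a finite case analysis at each step; since one never sums over the cases but simply follows the single branch dictated by the data $(W_\gamma,k_\gamma)$, and since the only internal summations (over time scales, and in $J^{-}$ over relative frequencies) are geometric, the accumulated constant stays bounded by $C_0^{\,n}\leqslant C^n$. The summation over $k_1,\dots,k_n$ that turns the products $\prod_{J^{\pm}}$ into a convergent bound is not needed here; it is carried out in Proposition~\ref{nthestimate}.
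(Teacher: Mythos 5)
Your proposal follows the paper's own proof essentially step for step: the resolvent identity \eqref{identit} for the resonant indices in $K$ (turning the product of singular denominators into nested retarded time integrals), multiplier and Bernstein bounds for $\gamma\in J^{\pm}$, recursive peeling through the transition Lemmas \ref{mgnmgn}, \ref{mgnmgnfin}, \ref{potmgn}, \ref{potmgnfin}, \ref{potpot}, \ref{potpotfin}, \ref{mgnpot}, \ref{mgnpotfin} and \ref{bilinit} with the mixed norms matched at each step, and the half-derivative that cannot be recovered at $\gamma=\max K$ producing the factor $q(\max K)$. This is exactly the argument the paper gives, so no further comparison is needed.
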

\begin{remark}
Recall that by convention $\eta_0 = \xi.$
\end{remark}
\begin{remark}
The role of the products on elements of $J$ is to ensure that we can sum over $k_j, j \in J.$
\end{remark}
\begin{remark} \label{beta}
We will sometimes denote in the proof
\begin{displaymath} \beta_{\gamma} = \begin{cases}
0 &  \textrm{if} ~~~~ W_{\gamma} = V  \\
1 & \textrm{otherwise}.
\end{cases}
\end{displaymath}
\end{remark}
\begin{proof}
If $\gamma \in K$ then we write (recall that such terms have been regularized, see \eqref{c21beta})
\begin{align}\label{identite}
\frac{1}{\vert \xi \vert^2 - \vert \eta_{\gamma} \vert^2 + i\beta} =(-i) \int_{0} ^{\infty} e^{i \tau_{r(\gamma)}(\vert \xi \vert^2 - \vert \eta_{\gamma} \vert^2 + i \beta)} d\tau_{r({\gamma})} 
\end{align}
where $r(\gamma)$ denotes the number given to $\gamma$ in the enumeration of the elements of $K$ (if $\gamma$ is the second smallest element of $K$ then $r(\gamma) = 2$ for example).
\\
Our goal is to prove the bound that is uniform on $\beta$. Therefore for legibility we drop the terms $ e^{-\tau_{r(\gamma)} \beta} $ in the expressions above (they are systematically bounded by 1 in the estimates).
\\
We obtain the following expression 
\begin{align*}
\eqref{multimain} &=(-i)^{\vert K \vert} \int \prod_{ \gamma \in J} \alpha_{\gamma}(\eta_{\gamma})\widehat{W_{\gamma}}(\eta_{\gamma-1}-\eta_{\gamma}) m_{\gamma}(\xi,\eta_{\gamma}) 
\\
& \times \prod_{\gamma \in K} \int_{0}^{\infty} e^{i \tau_{r(\gamma)}(\vert \xi \vert^2 - \vert \eta_{\gamma} \vert^2)} d \tau_{r(\gamma)} \alpha_{\gamma}(\eta_{\gamma}) \widehat{W_{\gamma}}(\eta_{\gamma} - \eta_{\gamma-1})
\widehat{g_{k_n}}(\eta_n) d\eta_1.. d\eta_n
\end{align*} 
where 
\begin{align*}
m_{\gamma}(\xi,\eta) = \frac{P_k(\xi) P_{k_{\gamma}}(\eta)}{\vert \xi \vert^2 - \vert \eta \vert^2}.
\end{align*}
Now we perform the following change of variables:
\begin{align*}
\tau_1 & \leftrightarrow \tau_1 + \tau_2 + ... \\
\tau_2 & \leftrightarrow \tau_2+ \tau_3 + ... \\
...
\end{align*}
The expression becomes
\begin{align*}
\eqref{multimain} &=(-i)^{\vert K \vert} \int_{\tau_1} e^{i\tau_1 \vert \xi \vert^2} \Bigg( \int \prod_{ \gamma \in J} \alpha_{\gamma}(\eta_{\gamma})\widehat{W_{\gamma}}(\eta_{\gamma-1}-\eta_{\gamma}) m_{\gamma}(\xi,\eta_{\gamma}) 
\\
& \times \prod_{\gamma \in K, \gamma \neq \max K} \int_{\tau_{r(\gamma)+1} \leqslant \tau_{r(\gamma)}} e^{-i (\tau_{r(\gamma)} - \tau_{r(\gamma)+1} )\vert \eta_{\gamma} \vert^2} \alpha_{\gamma}(\eta_{\gamma}) \widehat{W_{\gamma}}(\eta_{\gamma-1} - \eta_{\gamma}) \\
& \times e^{-i \tau_{r(\max K)} \vert \eta_{\max K} \vert^2} \alpha_{\max K}(\eta_{\max K}) \widehat{W_{\max K}}(\eta_{\max K-1} -\eta_{\max K}) \widehat{g_{k_n}}(\eta_n) d\eta \Bigg) d\tau_1.
\end{align*} 
\bigskip
\underline{Case 1: $1 \in K,$ $\alpha_1 (\eta_1) = \eta_{1,i}, W_1 = a_i$}\\
We isolate the first term in the product:
\begin{align*}
\eqref{multimain} & = (-i)^{\vert K \vert} \int_{\tau_1} e^{i\tau_1 \vert \xi \vert^2} \int_{\eta_1} \widehat{a_i}(\xi-\eta_1) \eta_{1,i} e^{-i\tau_1 \vert \eta_1 \vert^2} P_{k_1}(\eta_1) \int_{\tau_2 \leqslant \tau_1} e^{i \tau_2 \vert \eta_1 \vert^2} \Bigg( \int \prod_{ \gamma \in J} \alpha_{\gamma}(\eta_{\gamma}) \\
& \times \widehat{W_{\gamma}}(\eta_{\gamma-1}-\eta_{\gamma}) m_{\gamma}(\xi,\eta_{\gamma}) \prod_{\gamma \in K, \gamma \neq 1, \gamma \neq \max K} \int_{\tau_{r(\gamma)+1}\leqslant \tau_{r(\gamma)}} e^{-i (\tau_{r(\gamma)} - \tau_{r(\gamma)+1} )\vert \eta_{\gamma} \vert^2} \alpha_{\gamma}(\eta_{\gamma}) \widehat{W_{\gamma}}(\eta_{\gamma-1} - \eta_{\gamma}) \\
& \times e^{-i \tau_{r(\max K)} \vert \eta_{\max K} \vert^2} \alpha_{\max K}(\eta_{\max K}) \widehat{W_{\max K}}(\eta_{\max K-1} -\eta_{\max K})\widehat{g_{k_n}}(t,\eta_n) d\eta \Bigg) d\tau_2 d\eta_1 d\tau_1.
\end{align*}
Now we take an inverse Fourier transform in $\xi.$ The terms in the expression above that contain $\xi$ are the first $a_i,$ and all the $m_{\gamma}$ for $\gamma \in J.$ 
\\
To simplify notations we denote
\begin{align*}
F_1(y_1,...,y_r,\eta_1)& =\int \prod_{ \gamma \in J} \alpha_{\gamma}(\eta_{\gamma})\widehat{W_{\gamma}}(\eta_{\gamma-1}-\eta_{\gamma}) \check{m}_{\gamma}(y_{r(\gamma)},\eta_{\gamma}) 
\\
\notag & \times \prod_{\gamma \in K, \gamma \neq 1, \gamma \neq \max K} \int_{\tau_{r(\gamma)+1}\leqslant \tau_{r(\gamma)}} e^{-i (\tau_{r(\gamma)} - \tau_{r(\gamma)+1} )\vert \eta_{\gamma} \vert^2 } \alpha_{\gamma}(\eta_{\gamma}) \widehat{W_{\gamma}}(\eta_{\gamma} - \eta_{\gamma-1})
\\
& \times e^{-i \tau_{r(\max K)} \vert \eta_{\max K} \vert^2} \alpha_{\max K}(\eta_{\max K}) \widehat{W_{\max K}}(\eta_{\max K} -\eta_{\max K-1})\widehat{g_{k_n}}(t,\eta_n) d\widetilde{\eta_1} \Bigg) d\tau_2 d\eta_1 d\tau_1,
\end{align*}
where $r = \vert J \vert.$ \\
Here we abusively wrote $\check{m}$ for the the Fourier transform with respect to the first variable only. \\
Also, as for element of $K,$ $r(\gamma)$ for $\gamma \in J$ denotes the number that $\gamma $ is assigned in the enumeration of the elements of $J.$
\\
We can then write
\begin{align*}
\mathcal{F}^{-1}_{\xi} \eqref{multimain} &=(2 \pi)^{3r} \int_{\tau_1} e^{i \tau_1 \Delta} \Bigg( \int_{y_r, r \in J}    a_i(x-y_1-...-y_r) \int_{\eta_1}  \eta_{1,i} e^{i\eta_1 \cdot(x-y_1-...-y_r)} e^{-i\tau_1 \vert \eta_1 \vert^2} \\
& \times P_{k_1}(\eta_1) \int_{\tau_2 \leqslant \tau_1} e^{i\tau_2 \vert \eta_1 \vert^2} F_1(y_1, ...,y_r,\eta_1) d\tau_2 d\eta_1 dy_1 ... dy_r \Bigg) d\tau_1.
\end{align*}
Using Strichartz estimates from Lemma \ref{Strichartz} we write
\begin{align*}
\Vert \eqref{multimain} \Vert_{L^2_x} & \lesssim (2 \pi)^{3r} \Bigg \Vert \int_{y_r, r \in J}    a_i(x-y_1-...-y_r) \int_{\tau_2 \leqslant \tau_1}\int_{\eta_1}  \eta_{1,i} e^{i\eta_1 \cdot(x-y_1-...-y_r)} e^{-i\tau_1 \vert \eta_1 \vert^2} \\
& \times P_{k_1}(\eta_1)  e^{i\tau_2 \vert \eta_1 \vert^2} F_1(y_1, ...,y_r,\eta_1) d\tau_2 d\eta_1 dy_1 ... dy_r \Bigg \Vert_{L^2_{\tau_1} L^{6/5}_{x}}.
\end{align*}
Now we estimate the right-hand side by duality. Consider $h(x,\tau_1) \in L^{2}_{\tau_1} L^{6}_{x} .$  \\
To simplify notations further, we denote
\begin{align*}
\widetilde{F_1}(\tau_1, y_1,...,y_r,\eta_1) = \int_{\tau_2 \leqslant \tau_1} e^{i\tau_2 \vert \eta_1 \vert^2} F_1(y_1, ...,y_r,\eta_1) d\tau_2.
\end{align*}
We pair the expression above against $h$, put the $x$ integral inside, change variables ($x \leftrightarrow x-y_1-... - y_r$) and use the Cauchy-Schwarz inequality in $\tau_1$ and then in $x_j:$
\begin{align} 
\label{multi1} & \Bigg \vert \int_x \int_{\tau_1} \int_{y_r, r \in J}    a_i(x-y_1-...-y_r) \int_{\eta_1} e^{i\eta_1 \cdot(x-y_1-...-y_r)} \\
\notag& \times e^{-i\tau_1 \vert \eta_1 \vert^2} \eta_{1,i} P_{k_1}(\eta_1) \widetilde{F_1}(y_1,...,y_r,\eta_1)  d\eta_1 h(x, \tau_1) dx d\tau_1 \Bigg \vert \\
\notag&=(2\pi)^3 \Bigg \vert \int_{y_r, r \in J}  \int_x a_i(x) D_{x_i} \int_{\tau_1}  e^{-i\tau_1 \Delta} \mathcal{F}^{-1}_{\eta_1} \big(P_{k_1}(\eta_1) \widetilde{F_1}(y_1,...,y_r,\eta_1) \big) h(x+y_1+...+y_r, \tau_1) d\tau_1 dx \Bigg \vert.
\end{align}
Now we can reproduce the proof of Lemma \ref{potmgn} (replacing $h$ by $h$ translated in space by a fixed vector). We find that
\begin{align*}
\vert \eqref{multi1} \vert & \lesssim \delta \int \Vert h \Vert_{L^2_{\tau_1} L^6_x} \Vert D_{x_i} e^{i\tau_1 \Delta} \mathcal{F}^{-1} \big(P_{k_1}(\eta_1) \widetilde{F_1}(y_1;...;y_r,\eta_1) \big) \Vert_{L^{\infty}_{x_j}L^2_{\tau_2,\widetilde{x_j}}} dy_1 ... dy_r.
\end{align*}
Now note that
\begin{align*}
D_{x_i} e^{i\tau_1 \Delta} \mathcal{F}^{-1}_{\eta_1} \big(P_{k_1}(\eta_1) \widetilde{F_1}(y_1,...,y_r,\eta_1) \big) & = D_{x_j} \int_{\tau_2 \leqslant \tau_1} e^{i(\tau_1-\tau_2)\Delta} \mathcal{F}^{-1}_{\eta_1} \big( P_{k_1}(\eta_1) F_1 \big) d\tau_2.
\end{align*}
Hence using the inhomogeneous smoothing estimate from Lemma \ref{smoothing} we obtain
\begin{align*}
\vert \eqref{multi1} \vert &\lesssim \delta \int \bigg \Vert \mathcal{F}^{-1}_{\eta_1} F_1(y_1,...,y_r,\eta_1) \bigg \Vert_{L^1_{x_j} L^2_{\tau_2, \widetilde{x_j}}} dy_1 ... dy_r.
\end{align*} 
Now we consider several subcases:
\\
\underline{Subcase 1.1: $2 \in K, \alpha_2(\eta_2)=\eta_{2,k}$} \\
Let's first assume that $2 \neq \max K.$ \\
Then notice that with similar notations as before, 
\begin{align*}
F_1(\tau_2, \eta_1) = \int_{\eta_2} \widehat{a_k}(\eta_1-\eta_2) \eta_{2,k} \int_{\tau_3 \leqslant \tau_2} e^{-i(\tau_2-\tau_3)\vert \eta_2 \vert^2}P_{k_2}(\eta_2) F_{2}(\eta_2,\tau_3) d\tau_3 d\eta_2.
\end{align*}
Hence 
\begin{align*}
\mathcal{F}^{-1}_{\eta_1} F_1 = (2\pi)^3 a_k (x) D_{x_k} \int_{\tau_3 \leqslant \tau_2} e^{i(\tau_2-\tau_3)\Delta} \mathcal{F}^{-1}_{\eta_2}\big( P_{k_2}(\eta_2) F_2(\eta_2,\tau_3) \big) d\tau_3,
\end{align*}
therefore we can use Lemma \ref{mgnmgn} to conclude that
\begin{align*}
\Vert \mathcal{F}^{-1}_{\eta_1} F_1 \Vert_{L^1_{x_j} L^2_{\tau_2,\widetilde{x_j}}} & \leqslant C \delta \Vert \mathcal{F}^{-1}_{\eta_2} F_2 \Vert_{L^1_{x_k} L^2_{\tau_3,\widetilde{x_k}}}.
\end{align*}
Now in the case where $2 = \max K$ then 
\begin{align*}
F_1 (\tau_2,\eta_1) = \int_{\eta_2} \widehat{a_k}(\eta_1-\eta_2) \eta_{2,k} e^{-i\tau_2 \vert \eta_2 \vert^2} \big(P_{k_2}(\eta_2) F_{2}(\eta_2) \big) d\eta_2.
\end{align*}
By a similar reasoning, we can write, using Lemma \ref{mgnmgnfin}
\begin{align*}
\Vert \mathcal{F}^{-1}_{\eta_1} F_1 \Vert_{L^1_{x_j} L^2_{\tau_2,\widetilde{x_j}}} & \leqslant 1.1^{k/2} C \delta \Vert \mathcal{F}^{-1}_{\eta_2} F_2 \Vert_{L^2_{x}}.
\end{align*}
This is where the $q$ term in the result comes from. \\
\\
\underline{Subcase 1.2: $2 \in K, \alpha_2(\eta_2) = 1 $} \\
Assume first that $2 \neq \max K.$ \\
Here we have
\begin{align*}
\mathcal{F}^{-1} F_1 (\tau_2) =(2\pi)^3 V (x) \int_{\tau_3 \leqslant \tau_2} e^{i(\tau_2-\tau_3)\Delta} \mathcal{F}^{-1}_{\eta_2} \big( P_{k_2}(\eta_2) F_2(\eta_2,\tau_3) \big) d\tau_3,
\end{align*}
therefore using Lemma \ref{mgnpot} we obtain
\begin{align*}
\bigg \Vert V (x) \int_{\tau_3 \leqslant \tau_2} e^{i(\tau_2-\tau_3)\Delta} \mathcal{F}^{-1}_{\eta_2} \big(P_{k_2}(\eta_2) F_2(\eta_2,\tau_3) \big) d\tau_3 \bigg \Vert_{L^1_{x_j} L^2_{\tau_2,\widetilde{x_j}}} \leqslant C \delta \Vert \mathcal{F}^{-1} _{\eta_2} F_2 \Vert_{L^2_{\tau_3} L^{6/5}_x}.
\end{align*}
Now if $2=\max K,$ we have
\begin{align*}
\mathcal{F}^{-1} F_1 (\tau_2) = V (x) e^{i \tau_2\Delta} \bigg( \mathcal{F}^{-1}_{\eta_2} \big(P_{k_2}(\eta_2) F_2(\eta_2)\big) \bigg).
\end{align*}
Hence we obtain, using Lemma \ref{mgnpotfin}
\begin{align*}
\bigg \Vert  V (x) e^{i \tau_2\Delta} \bigg( \mathcal{F}^{-1}_{\eta_2} \big(P_{k_2}(\eta_2) F_2(\eta_2)\big) \bigg) \bigg \Vert_{L^1_{x_j} L^2_{\tau_2,\widetilde{x_j}}} \leqslant C \delta \Vert \mathcal{F}^{-1}_{\eta_2} F_2 \Vert_{L^{2}_x}.
\end{align*}
\underline{Case 1.3: $2 \in J$} \\
We consider two subcases: \\
\underline{Subcase 1.3.1: $2 \in J^{+}$} \\
In this case we conclude using Lemma \ref{bilinit} that
\begin{align*}
\int \Vert \mathcal{F}^{-1} F_1 \Vert_{L^1 _{x_j} L^{2}_{\tau_2,\widetilde{x_j}}} dy_1 & \leqslant C_0 \Vert \check{m_2} \Vert_{L^1} \Vert W_2 \Vert_{L^{\infty}} \Vert \mathcal{F}^{-1}_{\eta_2} F_2 \Vert_{L^1 _{x_j} L^{2}_{\tau_2,\widetilde{x_j}}} \\
& \leqslant C 1.1^{(\beta_2-2) k_2} \Vert W_2 \Vert_{L^{\infty}}  \Vert \mathcal{F}^{-1}_{\eta_2} F_2 \Vert_{L^1 _{x_j} L^{2}_{\tau_2,\widetilde{x_j}}},
\end{align*}
where as defined in Remark \ref{beta}, $\beta_2 = 0$ or $\beta_2 = 1$ depending on whether $W_2 = V$ or $W_2 = a_i.$ \\
\underline{Subcase 1.3.2: $2 \in J^{-} $} \\
In this case we use the third inequality in Lemma \ref{bilinit} and Bernstein's inequality to write 
\begin{align*}
\int \Vert \mathcal{F}^{-1}_{\eta_1} F_1 \Vert_{L^1 _{x_j} L^{2}_{\tau_2,\widetilde{x_j}}} dy_1 & \leqslant C_0 1.1^{\beta_2 k_2} 1.1^{-2k} \Vert W \Vert_{L^{\infty}_{x_j} L^{\frac{2+2c}{c}}_{\widetilde{x_j}} } \Vert \mathcal{F}^{-1} (F_2)_{k_2} \Vert_{L^1 _{x_j} L^{2(1+c)}_{\widetilde{x_j}} L^{2}_{\tau_2}} \\
& \leqslant 1.1^{-k} C \delta 1.1^{\epsilon k_2} \Vert \mathcal{F}^{-1} F_2 \Vert_{L^1 _{x_j} L^{2}_{\tau_2,\widetilde{x_j}}}.
\end{align*} 
\underline{Case 2: $1 \in K, $ $\alpha_1 (\eta_1)=1$} \\
This case is similar to the previous one, but we use Strichartz estimates instead of smoothing effects.
\\
By Plancherel's theorem, Minkowski's inequality and H\"{o}lder's inequality, we have
\begin{align*}
\Vert \eqref{multimain} \Vert_{L^2 _x} & \lesssim \Bigg \Vert \mathcal{F}^{-1}  \bigg(\int \prod_{ \gamma \in J} \alpha_{\gamma}(\eta_{\gamma}) \ \ ... \ \ d\eta_n d\eta_{n-1} ds \bigg) \Bigg \Vert_{L^2_x} \\
& =(2 \pi)^{3r} \Bigg \Vert \int_{\tau_1} e^{i \tau_1 \Delta} \bigg( \int_{y_1,...,y_r} V(x-y_1-...-y_r) \\
& \times \int_{\eta_1} e^{i\eta_1 \cdot(x-y_1-...-y_r)} e^{-i\tau_1 \vert \eta_1 \vert^2} P_{k_1}(\eta_1) \widetilde{F_1}(y_1,...,y_r,\eta_1)  d\eta_1 \bigg) d\tau_1 \Bigg \Vert_{L^2_x} \\
& \lesssim (2 \pi)^{3r} \int_{y_1,...,y_r} \Bigg  \Vert V(x) e^{i\tau_1 \Delta} \mathcal{F}^{-1}\big( P_{k_1}(\eta_1) \widetilde{F_1} \big)(x) \Bigg \Vert_{L^{2}_{\tau_1} L ^{6/5}_x} dy_1 ... dy_r \\
& \lesssim (2 \pi)^{3r} \int_{y_1,...,y_r} \Vert V \Vert_{L^{3/2}_x} \Vert e^{i \tau_1 \Delta} \mathcal{F}^{-1}_{\eta_1} \big( P_{k_1}(\eta_1) \widetilde{F_1} \big) \Vert_{L^2_{\tau_1} L^{6} _x} dy_1 ... dy_r \\
& \lesssim (2 \pi)^{3r} \delta \int_{y_1,...,y_r} \bigg \Vert \int_{\tau_2 \leqslant \tau_1} e^{i(\tau_1-\tau_2)\Delta} \big( \mathcal{F}^{-1}_{\eta_1} F_1 \big)_{k_1} \bigg \Vert_{L^2_{\tau_1}L^{6}_x} dy_1 ... dy_r \\
& \lesssim (2 \pi)^{3r} \delta \int_{y_1,...,y_r} \Vert \mathcal{F}^{-1}_{\eta_1} F_1 \Vert_{L^2_{\tau_2} L^{6/5}_x} dy_1 ... dy_r.
\end{align*}
Now distinguish several subcases: 
\\
\underline{Subcase 2.1: $2 \in K, \alpha_2(\eta_2)=1$ } \\
Assume that $2 \neq \max K.$ \\
In this case we can use Lemma \ref{potpot} and obtain
\begin{align*}
\Vert \mathcal{F}^{-1}_{\eta_1} F_1 \Vert_{L^2_{\tau_2} L^{6/5}_x} \leqslant C \delta \Vert \mathcal{F}^{-1}_{\eta_2} F_2 \Vert_{L^2_{\tau_3} L^{6/5}_x}.
\end{align*}
In the case where $2 = \max K$ then we use Lemma \ref{potpotfin} to obtain
\begin{align*}
\Vert \mathcal{F}^{-1}_{\eta_1} F_1 \Vert_{L^2_{\tau_2} L^{6/5}_x} \leqslant C \delta \Vert \mathcal{F}^{-1}_{\eta_2} F_2 \Vert_{L^{2}_x}.
\end{align*}
\underline{Subcase 2.2: $2 \in K, \alpha_2(\eta_2) = \eta_{2,l} $} \\
We assume first that $2 \neq \max K.$ \\
In this case we use Lemma \ref{potmgn} and obtain
\begin{align*}
\bigg \Vert a_l (x) D_{x_l} \int_{\tau_3 \leqslant \tau_2} e^{i(\tau_2-\tau_3)\Delta} \mathcal{F}^{-1}_{\eta_2} F_2(\eta_2,\tau_3) d\tau_3 \bigg \Vert_{L^2_{\tau_2} L^{6/5}_x} \leqslant C \delta \Vert \mathcal{F}^{-1}_{\eta_2} F_2(\eta_2,\tau_3) \Vert_{L^1_{x_l} L^2_{\tau_3,\widetilde{x_l}}}.
\end{align*}
Now we treat the case where $2 = \max K.$ \\
We use Lemma \ref{mgnpotfin} and write
\begin{align*}
\Vert a_l (x) D_{x_l} e^{i\tau_2 \Delta} \mathcal{F}^{-1}_{\eta_2} F_2(\eta_2) \Vert_{L^2_{\tau_2} L^{6/5}_x} \leqslant C 1.1^{k/2} \delta \Vert \mathcal{F}^{-1}_{\eta_2} F_2(\eta_2,\tau_3) \Vert_{L^2_{x}}.
\end{align*}
\underline{Subcase 2.3: $ 2 \in J $} \\
\underline{Subcase 2.3.1: $2 \in J^{+} $} \\
In this case we use Lemma \ref{bilinit} to write that
\begin{align*}
\int_{y_1} \Vert \mathcal{F}^{-1}_{\eta_1} F_1 \Vert_{L^2 _{\tau_2} L^{6/5}_x} dy_1 & \leqslant C_0 \Vert \check{m_2} \Vert_{L^1} \Vert W_2 \Vert_{L^{\infty}} \Vert \mathcal{F}^{-1}_{\eta_2} F_2 \Vert_{L^2 _{\tau_2} L^{6/5} _x} \\
& \leqslant C 1.1^{- k_2} \Vert W_2 \Vert_{L^{\infty}} \Vert \mathcal{F}^{-1}_{\eta_2} F_2 \Vert_{L^2 _{\tau_2} L^{6/5} _x} .
\end{align*} 
\underline{Subcase 2.3.2: $2 \in J^{-} $}
In this case we use Lemma \ref{bilinit} as well as Bernstein's inequality to write that
\begin{align*}
\int_{y_1} \Vert \mathcal{F}^{-1}_{\eta_1} F_1 \Vert_{L^2 _{\tau_2} L^{6/5}_x} dy_1 & \leqslant C_0 \Vert \check{m_2} \Vert_{L^1} \Vert W_2 \Vert_{L^{\frac{6+6c}{5c}}_x} \Vert \big( \mathcal{F}^{-1}_{\eta_2} F_2 \big)_{k_2} \Vert_{L^2 _{\tau_2} L^{\frac{6}{5}(1+c)} _x} \\
& \leqslant C 1.1^{-k} \Vert W_2 \Vert_{L^{\frac{6+6c}{5c}}_x} 1.1^{\epsilon k_2} \Vert \mathcal{F}^{-1}_{\eta_2} F_2 \Vert_{L^2 _{\tau_2} L^{6/5} _x}.
\end{align*} 
\underline{Conclusion in cases 1 and 2:} In all subcases we reduced the problem to estimating $\mathcal{F}^{-1}_{\eta_2} F_2$ in either $L^2_{\tau_2} L^{6/5}_x$ or $L^1_{x_j} L^2_{\tau_2,\widetilde{x_j}}.$ Since $\mathcal{F}^{-1}_{\eta_2} F_2$ has the exact same form as $\mathcal{F}^{-1}_{\eta_1} F_1$ but with one less term in the product, and that its $L^2_{\tau_2} L^{6/5}_x$ or $L^1_{x_j} L^2_{\tau_2,\widetilde{x_j}}$ norms have already been estimated, we can conclude that we have the desired bound by induction. \\
\\
\underline{Case 3: $1 \in J$} 
\\
In this case we can add a frequency localization on the first potential $W_1.$ Let's denote $k_{max} = \max \lbrace k,k_1 \rbrace.$ 
\\
In this case we write
\begin{align*}
\eqref{multimain} = \int_{\tau_1} e^{i \tau_1 \vert \xi \vert^2} \int_{\eta_1} \widehat{W_{1,k_{\max}}}(\xi-\eta_1) m_1(\xi,\eta_1) F_1 (y_1,...,y_r,\eta_1) d\eta_1,
\end{align*}
with the same notation as in the previous cases.
\\
Now we take the inverse Fourier transform in $\xi$ and obtain
\begin{align*}
\mathcal{F}^{-1}_{\xi} \eqref{multimain} &= (2\pi)^{3r} \int_{\tau_1} e^{-i \tau_1 \Delta} \Bigg( \int_{y_1,...,y_r} W_{1,k_{\max}}(x-y_1-...-y_r) \\
& \times \int_{\eta_1} e^{i \eta_1 \cdot (x-y_1-...)}  m(y_1,\eta_1) P_{k_1}(\eta_1) F_1 (y_1,...,y_r,\eta_1) d\eta_1 dy_1 ... dy_r \Bigg) d\tau_1 \\
& = (2\pi)^{3r} \int_{\tau_1} e^{i\tau_1 \Delta} \Bigg( \int_{y_1,...y_r} W_{1,k_{\max}} (x-y_1-...y_r) \\
& \times \mathcal{F}^{-1}_{\eta_1} \big(  m(y_1,\eta_1) P_{k_1}(\eta_1) F_1(\eta_1) \big)(x-y_1-...) dy_1 ... dy_r \Bigg) d\tau_1.
\end{align*}
Now we use Strichartz estimates, Minkowki's inequality and H\"{o}lder's inequality to write that
\begin{align*}
\Vert \mathcal{F}^{-1}_{\xi} \eqref{multimain} \Vert_{L^2_x} & \leqslant (2\pi)^{3r} C_0 ^3 \Bigg \Vert \int_{y_1,...y_r} W_{1,k_{\max}} (x-y_1-...y_r) \\
& \times \mathcal{F}^{-1}_{\eta_1} \big(m_1(y_1,\eta_1) F_1(\eta_1) \big)(x-y_1-...) dy_1 ... dy_r \Bigg \Vert_{L^{2}_{\tau_1} L^{6/5}_x } \\
& \leqslant (2\pi)^{3(r+1)} C_0^3 \Bigg \Vert \int_{y_1,...y_r} W_{1,k_{\max}} (x-y_1-...y_r) \int_{z} \check{m_1}(y_1,z) 
\\ & \times \bigg( \mathcal{F}^{-1}_{\eta_1} \big(F_1(\eta_1) \big) \bigg)_{k_1} (x-z-y_1-...) dz dy_1 ... dy_r  \Bigg \Vert_{L^{2}_{\tau_1} L^{6/5}_x } .
\end{align*}
Now we distinguish several subcases:
\\
\underline{Subcase 3.1: $1 \in J^{+} $} \\
Then we can conclude directly using Lemma \ref{bilinit} and Minkowski's inequality that
\begin{align*}
\Vert \mathcal{F}^{-1}_{\xi} \eqref{multimain} \Vert_{L^2_x} & \leqslant (2 \pi)^{3r} C \Vert W_{1} \Vert_{L^{\infty}} 1.1^{(\beta_1-2) k_1} \int_{y_2,...,y_r} \Vert \mathcal{F}^{-1}_{\eta_1} \big( F_1 (\eta_1) \big) \Vert_{L^2_{\tau_1} L^{6/5}_x} dy_2 ... dy_r.
\end{align*}
\underline{Subcase 3.2: $1 \in J^{-} $} \\
Then we use Lemma \ref{bilinit}, Minkowki's inequality and Bernstein's inequality to write that
\begin{align*}
\Vert \mathcal{F}^{-1}_{\xi} \eqref{multimain} \Vert_{L^2_x} & \leqslant (2 \pi)^{3r} C 1.1^{-k} \Vert W_{1,k} \Vert_{L^{\frac{6+6c}{5c}}_x}  \int_{y_2,...,y_r} \Vert \bigg( \mathcal{F}^{-1}_{\eta_1} \big( F_1 (\eta_1) \big) \bigg)_{k_1} \Vert_{L^2_{\tau_1} L^{6/5(1+c)}_x} dy_2 ... dy_r \\
& \leqslant (2 \pi)^{3r} C 1.1^{-k} 1.1^{\epsilon k_1} \delta \int_{y_2,...,y_r} \Vert \mathcal{F}^{-1}_{\eta_1} \big( F_1 (\eta_1) \big) \Vert_{L^2_{\tau_1} L^{6/5}_x} dy_2 ... dy_r,
\end{align*}
and then we can conclude by induction as in the previous cases.
\end{proof}
Now we give a similar lemma that contains a gain of $1/2$ of a derivative compared to the previous one. 
\begin{lemma} \label{multilinearautre}
Assume that $k>0.$ \\
We have the bound 
\begin{align} \label{multimainautre}
&\Bigg \Vert \int \prod_{\gamma=1}^{n-1} \frac{\alpha_{\gamma}(\eta_{\gamma})\widehat{W_{\gamma}}(\eta_{\gamma-1}-\eta_{\gamma})P_{k_{\gamma}}(\eta_{\gamma}) P_k(\xi)}{\vert \xi \vert^2 - \vert \eta_{\gamma} \vert^2} d\eta_1 ... d\eta_{n-1} \widehat{g_{k_n}}(\eta_n) d\eta_n \Bigg \Vert_{L^2_x} \\
& \lesssim 1.1^{-k/2} q(\max K) C^n \delta^{n} \Vert g \Vert_{L^2_x} \prod_{\gamma \in J^{+}} 1.1^{-k_{\gamma}} \times \prod_{\gamma \in J^{-}} 1.1^{-k} 1.1^{\epsilon k_{\gamma}} ,
\end{align}
where $J^{+}= \lbrace j \in [[1;n ]]; k_j > k+1  \rbrace ,\ \ J^{-}= \lbrace j \in [[1;n ]]; k > k_j+1  \rbrace, \ \ J = J^{+} \cup J^{-}.$ \\
$K$ denotes the complement of $J$, $\epsilon$ denotes a small strictly positive number and
\begin{displaymath} q(\max K) = \begin{cases}
1 &  \textrm{if} ~~~~ \alpha_{\max K} = 1  \\
1.1^{k_{\max K}/2} & \textrm{otherwise}   
\end{cases}
\end{displaymath}
Finally the implicit constant in the inequality does not depend on $n.$
\end{lemma}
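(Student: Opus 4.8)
The plan is to run the proof of Lemma~\ref{multilinear} with one modification, the extraction of an additional half-derivative from the output frequency cutoff. So I would start exactly as there: after regularizing the singular denominators I apply the identity~\eqref{identite} to each factor indexed by $\gamma\in K$, perform the telescoping change of variables in the $\tau$-variables, peel off the outermost potential $W_1$, and split into the same cases and subcases --- $1\in K$ with $\alpha_1=1$ (treated by Strichartz estimates), $1\in K$ with $\alpha_1=\eta_{1,i}$ (treated by the Kenig--Ponce--Vega smoothing estimates), and $1\in J$, each broken down according to whether $2\in K$ or $2\in J^{\pm}$ and whether $\max K$ has been reached. In each subcase the computation reduces the $n$-linear quantity to an $(n-1)$-linear one of the same shape, tested against an auxiliary function, and the bound is closed by the basic magnetic and electric Lemmas~\ref{mgnmgn}--\ref{bilinit} together with the induction hypothesis, precisely as in the proof of Lemma~\ref{multilinear}.

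The one change is this. In each of those subcases the first propagator $e^{i\tau_1\Delta}$ is estimated by a homogeneous bound: the dual homogeneous Strichartz inequality~\eqref{Str2}, or, in the case $1\in J$, the $L^2_x$ isometry after Plancherel, with the leading cutoff $P_k(\xi)$ carried along only as a bounded multiplier. Since $k>0$, I would instead split, using the directional partition $\chi_j$ of Lemma~\ref{direction},
\[
P_k(\xi)=1.1^{-k/2}\sum_{j=1}^{3}|\xi_j|^{1/2}\Bigl(1.1^{k/2}|\xi_j|^{-1/2}\chi_j(\xi)P_k(\xi)\Bigr),
\]
where each symbol $1.1^{k/2}|\xi_j|^{-1/2}\chi_j(\xi)P_k(\xi)$ has an inverse Fourier transform bounded in $L^1_x$ uniformly in $k$, and $|\xi_j|^{1/2}$ is the symbol of $D_{x_j}^{1/2}$. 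Absorbing $D_{x_j}^{1/2}$ into the first propagator, I would then use the homogeneous smoothing estimates~\eqref{smo1}--\eqref{smo2} in place of~\eqref{Str2}. Because the basic lemmas of Section~\ref{recallsmoothing} all come in both a Strichartz and a smoothing version, the remainder of each subcase, the role of the factor $q(\max K)$, and the summation over $k_1,\dots,k_n$ through the products $\prod_{\gamma\in J^{+}}1.1^{-k_\gamma}$ and $\prod_{\gamma\in J^{-}}1.1^{-k}1.1^{\epsilon k_\gamma}$ go through unchanged; the only net effect is the extra scalar $1.1^{-k/2}$ claimed in~\eqref{multimainautre}, and the sum over the three directions $j$ costs only a numerical constant.

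The hard part will be the bookkeeping needed to certify that this swap is legitimate at the \emph{first} step in \emph{every} subcase, in particular in those where the homogeneous estimate on $e^{i\tau_1\Delta}$ is used in dual form against $h\in L^2_{\tau_1}L^6_x$: here one relies on the fact that $\Vert D_{x_j}^{1/2}e^{i\tau\Delta}f\Vert_{L^\infty_{x_j}L^2_{\widetilde{x_j},\tau}}\lesssim\Vert f\Vert_{L^2_x}$ and its dual~\eqref{smo2} are, for this argument, exactly as strong as the homogeneous Strichartz pair. One must also check that in the subcase $1\in J^{-}$, where $P_k(\xi)$ is already partly spent through Bernstein's inequality, borrowing the factor $1.1^{k/2}$ does not destroy summability: this is immediate, since the Bernstein gain there is a power $1.1^{-2k}$, far larger than $1.1^{k/2}$, so the net power of $1.1^k$ attached to that index stays negative.
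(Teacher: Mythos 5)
Your proposal is correct and is essentially the paper's own argument: the paper likewise only changes the set-up of Lemma \ref{multilinear} by splitting $\xi$ into its dominant direction via Lemma \ref{direction}, writing $P_k(\xi)\chi_j(\xi)=|\xi_j|^{1/2}\cdot\bigl(P_k(\xi)\chi_j(\xi)|\xi_j|^{-1/2}\bigr)$ with the second factor contributing $1.1^{-k/2}$ through the $L^1$ norm of its inverse Fourier transform, and then applying the dual homogeneous smoothing estimate \eqref{smo2} to the outermost propagator in place of dual Strichartz, so that the first factor is measured in $L^1_{x_j}L^2_{\tau_1,\widetilde{x_j}}$ and the induction of Lemma \ref{multilinear} closes via the smoothing versions of the basic lemmas. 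No gaps.
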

\begin{proof}
The proof of this lemma is similar to Lemma \ref{multilinear}. The only difference is in the set-up. \\
We must bound
\begin{align}\label{multilinautre}
& \int_{\tau_1} e^{i\tau_1 \vert \xi \vert^2} \Bigg( \int \prod_{ \gamma \in J} \alpha_{\gamma}(\eta_{\gamma})\widehat{W_{\gamma}}(\eta_{\gamma-1}-\eta_{\gamma}) m_{\gamma}(\xi,\eta_{\gamma}) 
\\
\notag & \times \prod_{\gamma \in K,\gamma \neq \max K} \int_{\tau_{r(\gamma)+1} \leqslant \tau_{r(\gamma)}} e^{-i (\tau_{r(\gamma)} - \tau_{r(\gamma)+1} )\vert \eta_{\gamma} \vert^2)} \alpha_{\gamma}(\eta_{\gamma}) \widehat{W_{\gamma}}(\eta_{\gamma-1} - \eta_{\gamma}) \\
\notag & \times \widehat{W_{\max K}}(\eta_{\max K-1}- \eta_{\max K}) \alpha_{\max K}(\eta_{\max K}) e^{-i\tau_{r(\max K) }\vert \eta_{\max K} \vert^2}  \widehat{g_{k_n}}(\eta_n) d\eta \Bigg) d\tau_1.
\end{align} 
We localize the $\xi$ variable according to the dominant direction using Lemma \ref{direction}. Then isolate the first term in the product:
\begin{align} 
\label{obj} & P_k(\xi) \chi_{j}(\xi) \int_{\tau_1} e^{i\tau_1 \vert \xi \vert^2} \Bigg( \int \prod_{ \gamma \in J} \alpha_{\gamma}(\eta_{\gamma})\widehat{W_{\gamma}}(\eta_{\gamma-1}-\eta_{\gamma}) m_{\gamma}(\xi,\eta_{\gamma}) 
\\
\notag & \times \prod_{\gamma \in K,\gamma \neq \max K} \int_{\tau_{r(\gamma)}} e^{-i (\tau_{r(\gamma)} - \tau_{r(\gamma)+1} )\vert \eta_{\gamma} \vert^2} \alpha_{\gamma}(\eta_{\gamma}) \widehat{W_{\gamma}}(\eta_{\gamma-1} - \eta_{\gamma}) \\
\notag & \times \widehat{W_{\max K}}(\eta_{\max K-1}- \eta_{\max K}) \alpha_{\max K}(\eta_{\max K}) e^{-i\tau_{r(\max K)} \vert \eta_{\max K} \vert^2}  \widehat{g_{k_n}}(\eta_n) d\eta \Bigg) d\tau_1 \\
\notag & = P_k(\xi) \chi_j(\xi) \int_{\tau_1} e^{i\tau_1 \vert \xi \vert^2} \int_{\eta_1} \widehat{W_1}(\xi-\eta_1)\alpha_1(\eta_1) e^{-i\tau_1 \vert \eta_1 \vert^2} \\
\notag & \times P_{k_1}(\eta_1) \int_{\tau_2 \leqslant \tau_1} e^{i \tau_2 \vert \eta_1 \vert^2} \Bigg( \int \prod_{ \gamma \in J} \alpha_{\gamma}(\eta_{\gamma})\widehat{W_{\gamma}}(\eta_{\gamma-1}-\eta_{\gamma}) m(\xi,\eta_{\gamma}) 
\\
\notag & \times \prod_{\gamma \in K, \gamma \neq 1, \gamma \neq \max K} \int_{\tau_{r(\gamma)+1}\leqslant \tau_{r(\gamma)}} e^{-i (\tau_{r(\gamma)} - \tau_{r(\gamma)+1} )\vert \eta_{\gamma} \vert^2} \alpha_{\gamma}(\eta_{\gamma}) \widehat{W_{\gamma}}(\eta_{\gamma-1} - \eta_{\gamma}) \\
\notag & \times \widehat{W_{\max K}}(\eta_{\max K-1}- \eta_{\max K}) \alpha_{\max K}(\eta_{\max K}) e^{-i \tau_{r(\max K)} \vert \eta_{\max K} \vert^2}  \widehat{g_{k_n}}(\eta_n) d\eta \Bigg) d\tau_1.
\end{align}
Now we take an inverse Fourier transform in $\xi.$ The terms in the expression above that contain $\xi$ are the first $W,$ and all the $m_{\gamma}$ for $\gamma \in J.$ The complex exponential gives a Schr\"{o}dinger semi-group, and the other terms give a convolution. \\
To simplify notations we denote (using similar notations as above)
\begin{align*}
F_1(y_1,...,y_r,\eta_1) &=\int \prod_{ \gamma \in J} \alpha_{\gamma}(\eta_{\gamma})\widehat{W_{\gamma}}(\eta_{\gamma-1}-\eta_{\gamma}) \check{m}(y_{r(\gamma)},\eta_{\gamma}) 
\\
\notag & \times \prod_{\gamma \in K, \gamma \neq 1, \gamma \neq \max K} \int_{\tau_{r(\gamma)+1}\leqslant \tau_{r(\gamma)}} e^{-i (\tau_{r(\gamma)} - \tau_{r(\gamma)+1} )\vert \eta_{\gamma} \vert^2)} \alpha_{\gamma}(\eta_{\gamma}) \widehat{W_{\gamma}}(\eta_{\gamma-1} - \eta_{\gamma}) \\
\notag & \times \widehat{W_{\max K}}(\eta_{\max K-1}- \eta_{\max K}) \alpha_{\max K}(\eta_{\max K}) e^{-i\tau_{r(\max K)} \vert \eta_{\max K} \vert^2}  \widehat{g_{k_n}}(\eta_n) d\eta.
\end{align*}
Hence
\begin{align*}
\mathcal{F}^{-1}_{\xi} \eqref{obj} &=(2 \pi)^{3} \mathcal{F}^{-1}_{\xi} \big(P_k(\xi) \chi_j(\xi) \frac{1}{\vert \xi_j \vert ^{1/2}} \big) \ast \mathcal{F}^{-1} _{\xi}  \Bigg( \vert \xi_j^{1/2} \vert \int \prod_{ \gamma \in J} \alpha_{\gamma}(\eta_{\gamma}) \ \ ... \ \ d\eta_n d\eta_{n-1} ds \Bigg) 
\\
&=(2 \pi)^{3} \bigg(\mathcal{F}^{-1}_{\xi}\big(P_k(\xi) \chi_j(\xi)  \frac{1}{\vert \xi_j \vert ^{1/2}} \big) \bigg) \\
& \ast \Bigg( \int_{\tau_1} D_{x_j}^{1/2}  e^{-i \tau_1 \Delta} \Bigg( \int_{y_r, r \in J}    W_1(x-y_1-...-y_r) \int_{\eta_1}  \alpha_1(\eta_1) e^{i\eta_1 \cdot(x-y_1-...-y_r)} e^{-i\tau_1 \vert \eta_1 \vert^2} \\
& \times P_{k_1}(\eta_1) \int_{\tau_2 \leqslant \tau_1} e^{i\tau_2 \vert \eta_1 \vert^2} F_1(y_1, ...,y_r) d\tau_2 d\eta_1 \Bigg) d\tau_1 \Bigg).
\end{align*}
Using Lemma \ref{smoothing} we get
\begin{align*}
\Vert \eqref{obj} \Vert_{L^2_x} & \lesssim 1.1^{-k/2} \Bigg \Vert \int_{y_r, r \in J}    W_1(x-y_1-...-y_r) \int_{\tau_2 \leqslant \tau_1}\int_{\eta_1}  \alpha_1(\eta_1) e^{i\eta_1 \cdot(x-y_1-...-y_r)} e^{-i\tau_1 \vert \eta_1 \vert^2} \\
& \times P_{k_1}(\eta_1)  e^{i\tau_2 \vert \eta_1 \vert^2} F_1(y_1, ...,y_r) d\tau_2 d\eta_1 \Bigg \Vert_{L^1_{x_j} L^{2}_{\tau_1,\widetilde{x_j}}}.
\end{align*}
If $\alpha_1(\eta)=\eta_{1,i}$ then using a similar proof to that of Lemma \ref{mgnmgn} we find
\begin{align*}
\Vert \eqref{obj} \Vert_{L^2_x} & \lesssim 1.1^{-k/2} \int_{y_r, r \in J}  \Vert \mathcal{F}^{-1}_{\eta_1} F_1 \Vert_{L^1_{x_i} L^{2}_{\tau_2,\widetilde{x_i}}} dy_1 ... dy_r.
\end{align*}
If $\alpha_1(\eta_1)=1$ then using a similar proof to that of Lemma \ref{mgnpot} we have
\begin{align*}
\Vert \eqref{obj} \Vert_{L^2_x} & \lesssim 1.1^{-k/2} \int_{y_r, r \in J}  \Vert \mathcal{F}^{-1}_{\eta_1} F_1 \Vert_{L^{2}_{\tau_2} L^{6/5}_x} dy_1 ... dy_r.
\end{align*}
From that point the same proof as Lemma \ref{multilinear} can be carried out to prove the desired result.
\end{proof}
We have the following straightforward corollary which will be useful in the next section. 
\begin{corollary} \label{multilinearbis}
Assume that $k>0.$ \\
We have the bound
\begin{align} \label{multimainbis}
&\Bigg \Vert \int \prod_{\gamma=1}^{n-1} \frac{\alpha_{\gamma}(\eta_{\gamma})\widehat{W_{\gamma}}(\eta_{\gamma-1}-\eta_{\gamma})P_{k_{\gamma}}(\eta_{\gamma}) P_k(\xi)}{\vert \xi \vert^2 - \vert \eta_{\gamma} \vert^2} d\eta_1 ... d\eta_{n-1} \widehat{g_{k_n}}(\xi,\eta_n) d\eta_n \Bigg \Vert_{L^2_x} \\
& \lesssim C^n \delta^{n} \Vert \mathcal{F}^{-1}_{\xi,\eta_n} g \Vert_{L^1 _{y_r} L^2_x} \prod_{\gamma \in J^{+}} 1.1^{-k_{\gamma}} \times \prod_{\gamma \in J^{-}} 1.1^{-k} 1.1^{\epsilon k_{\gamma}} ,
\end{align}
where the notations are the same as in previous lemmas. \\
Finally the implicit constant in the inequality does not depend on $n.$
\end{corollary}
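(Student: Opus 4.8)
The plan is to run the proof of Lemma \ref{multilinear} essentially verbatim, the only difference being the treatment of the terminal factor, which here depends on $\xi$ as well as on $\eta_n$.

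First I would carry out, without change, the opening reductions of Lemma \ref{multilinear}: linearize each resonant denominator with $\gamma\in K$ through the identity \eqref{identite}, perform the same change of variables in the time parameters $\tau_{r(\gamma)}$, and isolate the first factor of the product according to whether $1\in K$ with $\alpha_1=\eta_{1,i}$, $1\in K$ with $\alpha_1=1$, or $1\in J^{\pm}$. After this the $\xi$--dependence of the integrand sits in $P_k(\xi)$, in the multipliers $m_\gamma(\xi,\eta_\gamma)$ for $\gamma\in J$, in $\widehat{W_1}(\xi-\eta_1)$, and now \emph{also} in $\widehat{g_{k_n}}(\xi,\eta_n)$. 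Taking the inverse Fourier transform in $\xi$ turns each $m_\gamma$ into a convolution kernel $\check m_\gamma(\cdot,\eta_\gamma)$ exactly as in Lemma \ref{multilinear}, and turns $\widehat{g_{k_n}}(\cdot,\eta_n)$ into one further convolution kernel; this simply appends one more physical integration variable, which I denote $y_r$ in accordance with the notation of that proof, to the list produced by the $m_\gamma$'s. From here the case analysis of Lemma \ref{multilinear} applies line by line: at each $\gamma\in J$ one invokes Lemma \ref{bilinit} (producing $1.1^{-k_\gamma}$ for $\gamma\in J^{+}$ and, after a Bernstein step, $1.1^{-k}1.1^{\epsilon k_\gamma}$ for $\gamma\in J^{-}$), and at each $\gamma\in K$ one invokes the relevant estimate among Lemmas \ref{mgnmgn}, \ref{mgnmgnfin}, \ref{potmgn}, \ref{potmgnfin}, \ref{potpot}, \ref{potpotfin}, \ref{mgnpot}, \ref{mgnpotfin}, each costing a factor $\delta$; the inductive scheme of that lemma then peels off the factors $W_1,\dots,W_{n-1}$ and reduces \eqref{multimainbis} to a bound on the terminal factor alone.

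The terminal step is the only one that differs. In Lemma \ref{multilinear} one is left with $\Vert\mathcal F^{-1}_{\eta_n}\widehat{g_{k_n}}(\eta_n)\Vert_{L^2_x}=\Vert g\Vert_{L^2}$, using that $e^{it\Delta}$ is an isometry on $L^2_x$. Here one is instead left with the convolution, in the variable $y_r$, of the kernel $\mathcal F^{-1}_{\xi}\widehat{g_{k_n}}(\cdot,\eta_n)$ against an operator bounded on $L^2_x$ acting in the $\eta_n$ slot. Pulling the $y_r$--integration out by Minkowski's inequality and then applying Plancherel (equivalently, the last inequality of Lemma \ref{bilinit}) in the variable $x$ dual to $\eta_n$ produces exactly $\Vert\mathcal F^{-1}_{\xi,\eta_n}g\Vert_{L^1_{y_r}L^2_x}$. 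Because the terminal factor is now a general function and not a potential, no derivative sits on it at the last step, so one closes with plain $L^2$--boundedness rather than with a homogeneous smoothing estimate; this is why neither the factor $q(\max K)$ of Lemma \ref{multilinear} nor the gain $1.1^{-k/2}$ of Lemma \ref{multilinearautre} appears in \eqref{multimainbis}. Collecting all the constants gives $C^n\delta^n$ with $C$ the constant fixed at the end of Section \ref{recallsmoothing}, uniformly in $n$.

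The only real work is bookkeeping: one must verify that the extra variable $y_r$ and the mixed norm $L^1_{y_r}L^2_x$ are threaded consistently through each branch of the case analysis (first potential magnetic, electric or frequency--separated; $2\in K$ or $2\in J^{\pm}$; $\max K$ reached or not). Since this is entirely mechanical given the detailed proof of Lemma \ref{multilinear}, I would just record the modifications at the first step and at the terminal step and refer to that proof for everything in between. There is no genuine analytic obstacle here; the content of the corollary is precisely that the machinery of Lemma \ref{multilinear} is robust enough to accommodate an arbitrary $(\xi,\eta_n)$--dependent tail, at the price of trading the $L^2$ control of $g$ for mixed $L^1_{y_r}L^2_x$ control of its inverse Fourier transform.
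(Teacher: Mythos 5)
Your overall strategy --- rerun Lemma \ref{multilinear}, treat the $\xi$--dependence of $g$ as one extra physical-space convolution, and close with Minkowski's inequality and Plancherel to produce $\Vert\mathcal{F}^{-1}_{\xi,\eta_n}g\Vert_{L^1_{y_r}L^2_x}$ --- is exactly the paper's, and that part of the argument is fine.

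The gap is in your explanation of why neither $q(\max K)$ nor the gain $1.1^{-k/2}$ appears in \eqref{multimainbis}. The factor $q(\max K)=1.1^{k_{\max K}/2}$ in Lemma \ref{multilinear} does not come from the terminal factor at all: it comes from the potential $W_{\max K}$ inside the product, in the subcase where $\alpha_{\max K}$ is a derivative symbol. Since $\max K$ is the last element of $K$, there is no remaining time integral at that slot, so one is forced to use the \emph{homogeneous} smoothing estimate (Lemma \ref{mgnmgnfin} or \ref{potmgnfin}), which recovers only half a derivative and leaves a loss $1.1^{k_{\max K}/2}\sim 1.1^{k/2}$ (recall that $\gamma\in K$ forces $\vert k_{\gamma}-k\vert\leqslant 1$). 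That potential, and hence that loss, is still present in the corollary; letting $g$ depend on $\xi$ changes nothing about it, so ``no derivative sits on the terminal factor'' is not the reason the loss disappears. Run verbatim as you describe, your proof yields \eqref{multimainbis} with an extra $1.1^{k/2}$, which is fatal for large $k>0$. The paper avoids this by a case distinction on $\alpha_{\max K}$: when $\alpha_{\max K}=1$ it follows Lemma \ref{multilinear} (where $q(\max K)=1$), and when $\alpha_{\max K}$ is a derivative it follows Lemma \ref{multilinearautre}, whose opening step replaces the Strichartz estimate by the dual smoothing estimate \eqref{smo2} applied to the output variable and thereby gains $1.1^{-k/2}$, exactly cancelling $q(\max K)$. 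You need to build this case distinction into your proof; with it, the rest of your argument goes through.
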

\begin{proof}
The proof of this lemma is either identical to Lemma \ref{multilinear} or \ref{multilinearautre} depending on the value of $\alpha_{\max K}.$ The only minor difference is that the dependence of $g$ on $\xi$ adds a convolution in the physical variable.
\end{proof}
Finally the following version of the above lemma will be useful to use Strichartz estimates for the multilinear expressions. 
\begin{corollary} \label{multiin}
Assume that $k>0.$ \\
We have the bound
\begin{align*}
&\Bigg \Vert \int \prod_{\gamma=1}^{n-1} \frac{\alpha_{\gamma}(\eta)\widehat{W_{\gamma}}(\eta_{\gamma-1}-\eta_{\gamma})P_{k_{\gamma}}(\eta_{\gamma}) P_k(\xi)}{\vert \xi \vert^2 - \vert \eta_{\gamma} \vert^2} d\eta_1 ... d\eta_{n-1} \int_{1}^t e^{i s \vert \xi \vert^2} \widehat{g_{k_n}}(s,\xi,\eta_n) ds d\eta_n \Bigg \Vert_{L^2_x} \\
& \lesssim C^n \delta^{n} \Vert g \Vert_{L^{p'}_t L^{q'}_x} \prod_{\gamma \in J^{+}} 1.1^{-k_{\gamma}} \times \prod_{\gamma \in J^{-}} 1.1^{-k} 1.1^{\epsilon k_{\gamma}} ,
\end{align*}
where $(p,q)$ is a Strichartz admissible pair of exponents and $\epsilon$ denotes a small (strictly less than 1) strictly positive number. \\
The implicit constant in the inequality above does not depend on $n.$ 
\end{corollary}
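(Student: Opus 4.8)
The plan is to run the proof of Lemmas \ref{multilinear} and \ref{multilinearautre} essentially verbatim, the only change being the treatment of the innermost slot. As there, for every $\gamma\in K$ I would open up the denominator $(|\xi|^2-|\eta_\gamma|^2+i\beta)^{-1}$ by the identity \eqref{identite}, perform the triangular change of variables $\tau_r\leftrightarrow\tau_r+\tau_{r+1}+\cdots$ in the resulting time integrals, take the inverse Fourier transform in $\xi$ so that $e^{i\tau_1|\xi|^2}$ becomes a Schr\"odinger semigroup while the remaining $\xi$-dependent factors ($W_1$ together with the multipliers $m_\gamma$, $\gamma\in J$) turn into convolutions, and then strip off one potential at a time exactly along Cases~1--3 of Lemma \ref{multilinear}: Lemmas \ref{mgnmgn}, \ref{potmgn}, \ref{potpot}, \ref{mgnpot} for the interior elements of $K$, and Lemma \ref{bilinit} together with Bernstein's inequality for the elements of $J$. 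When $\alpha_{\max K}(\eta)=\eta_i$ I would use the dominant-direction splitting of Lemma \ref{direction} and peel a half-derivative $D^{1/2}_{x_j}$ off the output, exactly as in Lemma \ref{multilinearautre}. Each stripped potential costs $C_0\delta$ and produces the advertised frequency gains $1.1^{-k_\gamma}$ on $J^{+}$ and $1.1^{-k}1.1^{\epsilon k_\gamma}$ on $J^{-}$; the $O(4^n)$ combinatorial loss of the bookkeeping is absorbed into $C=10^{10}C_0^{10}$.

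The genuinely new point is the base case of the induction. In Lemma \ref{multilinear} one is left, after all the stripping, with a frozen profile $\mathcal F^{-1}_{\eta_n}g_{k_n}$ to be measured in $L^2_x$ or in $L^1_{x_j}L^2_{\tau,\widetilde{x_j}}$; here the innermost object is instead the Duhamel-type integral $\int_1^t e^{is|\xi|^2}\widehat{g_{k_n}}(s,\xi,\eta_n)\,ds$, so after the inverse Fourier transform in $\xi$ and the rearrangement of the time variables the $s$-integral becomes a retarded integral of the form $\int_{s\leqslant\tau}e^{i(\tau-s)\Delta}(\cdots g(s)\cdots)\,ds$ (and, in the case $K=\emptyset$ where no denominator is ever opened up, a global integral $\int_1^t e^{-is\Delta}(\cdots g(s)\cdots)\,ds$). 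I would therefore close the induction by invoking, at the final step, the \emph{second} (retarded) inequality of whichever of Lemmas \ref{mgnmgnfin}, \ref{potmgnfin}, \ref{potpotfin}, \ref{mgnpotfin} matches $W_{\max K}$: all of these bound the relevant mixed norm by $\Vert g\Vert_{L^{p'}_tL^{q'}_x}$ for any Strichartz-admissible $(p,q)$, which is precisely where Corollary \ref{smostriu} and the retarded estimates \eqref{Str2} and \eqref{Str3} of Lemma \ref{Strichartz} enter. When $K=\emptyset$ one instead iterates only the $J$-type step of Case~3 and finishes with \eqref{Str2} fed into Lemma \ref{bilinit}. In every case the base case outputs $\Vert g\Vert_{L^{p'}_tL^{q'}_x}$, which is exactly the right-hand side of \eqref{multimainbis} with $\Vert\mathcal F^{-1}_{\xi,\eta_n}g\Vert_{L^1_{y_r}L^2_x}$ replaced by $\Vert g\Vert_{L^{p'}_tL^{q'}_x}$.

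Finally, since the regularized terms \eqref{c21beta} are in play, I would check that every inequality above is uniform in $\beta$ — it is, because the factors $e^{-\tau_r\beta}$ are always bounded by $1$ — and pass to the limit $\beta\to0^{+}$ using lower semicontinuity of the $L^2_x$ norm, exactly as in Section \ref{firstexpansion}. I expect the only real difficulty to be organizational: keeping the cascade of mixed norms consistent ($L^2_\tau L^{6/5}_x$ versus $L^1_{x_j}L^2_{\tau,\widetilde{x_j}}$) now that the innermost slot carries a time integral rather than a frozen profile, and making sure that the retarded estimate applied at the last step lives in the same time variable as the Strichartz or smoothing estimate that was set up at the first step. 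There is no new analytic ingredient beyond Lemmas \ref{multilinear}--\ref{multilinearbis} and the basic lemmas of Section \ref{recallsmoothing}.
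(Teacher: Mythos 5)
Your proposal is correct and follows essentially the same route as the paper: open the denominators with \eqref{identite}, fold $s$ into the telescoping change of time variables so that the innermost slot becomes a retarded Duhamel integral, strip the potentials exactly as in Lemmas \ref{multilinear} and \ref{multilinearautre}, and close with the retarded (second) inequalities of Lemmas \ref{mgnmgnfin}, \ref{potmgnfin}, \ref{potpotfin}, \ref{mgnpotfin}, which output $\Vert g\Vert_{L^{p'}_tL^{q'}_x}$. The paper's sketch splits the last step only according to whether $\alpha_{\max K}=1$ or $\alpha_{\max K}(\eta)=\eta_i$, but this is the same case distinction you describe.
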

\begin{proof}
Since the proof is similar to Lemma \ref{multilinearautre}, we only sketch it here. \\
First note that we can extend the domain of integration of $s$ to $(0;+\infty)$ by multiplying $g$ by $\textbf{1}_{(1;t)}.$ 
\\
After replacing the singular denominators by their integral expression using \eqref{identite} and doing the change of variables
\begin{align*}
\tau_1 & \leftrightarrow \tau_1 + \tau_2 + ... +s \\
\tau_2 & \leftrightarrow \tau_2+ \tau_3 + ... +s \\
... & \\
s & \leftrightarrow s
\end{align*}
The expression becomes
\begin{align*}
\eqref{multimain} &=(-i)^{\vert K \vert} \int_{\tau_1} e^{i\tau_1 \vert \xi \vert^2} \Bigg( \int \prod_{ \gamma \in J} \alpha_{\gamma}(\eta_{\gamma})\widehat{W_{\gamma}}(\eta_{\gamma-1}-\eta_{\gamma}) m_{\gamma}(\xi,\eta_{\gamma}) 
\\
& \times \prod_{\gamma \in K, \gamma \neq \max K} \int_{\tau_{r(\gamma)+1} \leqslant \tau_{r(\gamma)}} e^{-i (\tau_{r(\gamma)} - \tau_{r(\gamma)+1} )\vert \eta_{\gamma} \vert^2} \alpha_{\gamma}(\eta_{\gamma}) \widehat{W_{\gamma}}(\eta_{\gamma-1} - \eta_{\gamma}) \\
& \times \int_{s \leqslant \tau_{\vert K \vert}} e^{i(s-\tau_{\vert K \vert}) \vert \eta_{\max K} \vert^2} \widehat{W_{\max K}}(\eta_{\max K-1}-\eta_{\max K}) \alpha_{\max K} (\eta_{\max K})  
\widehat{g_{k_n}}(s,y_r,\eta_n) d\eta \Bigg) d\tau_1.
\end{align*} 
Now distinguish two cases:
\\
\underline{Case 1: $\alpha_{\max K} = 1 $} \\
Then we bound all the terms as in the proof of Lemma \ref{multilinear} until the last one: \\
To bound $\mathcal{F}^{-1} F_{\max K-1}$ we write using retarded Strichartz estimates that
\begin{align*}
\bigg \Vert V(x) \int_{s \leqslant \tau_{\vert K \vert}} e^{i(s-\tau_{\vert K \vert})\Delta} \mathcal{F}^{-1} \big(P_{k_{\max K}}(\eta_{\max K}) F_{\max K }(s,\cdot) \big) ds \bigg \Vert_{L^1_{x_j} L^2 _{\tau_{\vert K \vert},\widetilde{x_j}}} \leqslant C \delta \Vert  \mathcal{F}^{-1}  F_{\max K} \Vert_{L^{p'}_t L^{q'}_x },
\end{align*}
and 
\begin{align*}
\bigg \Vert V(x) \int_{s \leqslant \tau_{\vert K \vert}} e^{i(s-\tau_{\vert K \vert})\Delta}  \mathcal{F}^{-1} \big( P_{k_{\max K}}(\eta_{\max K}) F_{\max K}(s,\cdot) \big) ds \bigg \Vert_{L^2 _{\tau_{\vert K \vert}} L^{6/5}_x} \leqslant C \delta \Vert  \mathcal{F}^{-1} F_{\max K} \Vert_{L^{p'}_t L^{q'}_x }.
\end{align*}
Now in the expression of $F_{\max K}$ there are only terms in $J$ therefore we can conclude the proof using bilinear lemmas.
\\
\\
\underline{Case 2: $\alpha_{\max K}(\eta_{\max K}) = \eta_{\max K,i} $} \\
In this case we repeat the proof of Lemma \ref{multilinearautre} except for the last term in the product on elements of $K.$ \\
Using either Lemma \ref{mgnmgnfin} or \ref{potmgnfin} we obtain
\begin{align*}
& \bigg \Vert a_i(x) D_{x_i} \int_{s \leqslant \tau_{\vert K \vert}} e^{i(s-\tau_{\vert K \vert})\Delta} \mathcal{F}^{-1} \big( P_{k_{\max K}}(\eta_{\max K}) F_{\max K}(s,\cdot) \big) ds \bigg  \Vert_{L^1_{x_j} L^2 _{\tau_{\vert K \vert},\widetilde{x_j}}} \\ & \leqslant 1.1^{k/2} C\delta \Vert  \mathcal{F}^{-1} F_{\max K} \Vert_{L^{p'}_t L^{q'}_x },
\end{align*}
and 
\begin{align*}
& \bigg \Vert a_i(x) D_{x_i} \int_{s \leqslant \tau_{\vert K \vert}} e^{i(s-\tau_{\vert K \vert})\Delta} \mathcal{F}^{-1} \big( P_{k_{\max K}}(\eta_{\max K}) F_{\max K}(s,\cdot) \big) ds \bigg  \Vert_{L^2 _{\tau_n} L^{6/5}_x} \\
& \leqslant 1.1^{k/2} C \delta \Vert \mathcal{F}^{-1} F_{\max K} \Vert_{L^{p'}_t L^{q'}_x }.
\end{align*}
We deduce the result in this case. 
\end{proof}
We end with the following simpler version of the above lemma:
\begin{corollary} \label{multiinbis}
We have the bound
\begin{align*}
&\Bigg \Vert \int \prod_{\gamma=1}^{n-1} \frac{\alpha_{\gamma}(\eta)\widehat{W_{\gamma}}(\eta_{\gamma-1}-\eta_{\gamma})P_{k_{\gamma}}(\eta_{\gamma}) P_k(\xi)}{\vert \xi \vert^2 - \vert \eta_{\gamma} \vert^2} d\eta_1 ... d\eta_{n-1} \int_{1}^t e^{i s \vert \xi \vert^2} \widehat{g_{k_n}}(s,\xi,\eta_n) ds d\eta_n \Bigg \Vert_{L^2} \\
& \lesssim t \delta^{\vert K \vert} C^n \Vert g \Vert_{L^{\infty}_s ([1,t]) L^{2}_x} \prod_{\gamma \in J^{+}} 1.1^{-k_{\gamma}} \times \prod_{\gamma \in J^{-}} 1.1^{-k} 1.1^{\epsilon k_{\gamma}} .
\end{align*}
The implicit constant in the inequality does not depend on $n.$
\end{corollary}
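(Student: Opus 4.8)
The plan is to run the same argument as in the proof of Corollary \ref{multiin}, changing only the way the innermost factor is estimated. First I would split $[[1;n]]$ into $J = J^{+}\cup J^{-}$ and its complement $K$, use the identity \eqref{identite} to replace each singular denominator $(\vert\xi\vert^{2}-\vert\eta_{\gamma}\vert^{2}+i\beta)^{-1}$ with $\gamma\in K$ by a time integral, and perform the nested change of variables $\tau_{r}\leftrightarrow\tau_{r}+\tau_{r+1}+\dots$ so that the $\tau$-integrals become retarded, exactly as in Lemma \ref{multilinear}. Taking the inverse Fourier transform in $\xi$ turns the phases into Schr\"{o}dinger propagators and the factors $\widehat{W_{\gamma}}$ into convolutions and multiplications, and I would then peel the $n-1$ potential factors off one at a time starting from $\gamma=1$: each $J$-term is handled with the bilinear bounds of Lemma \ref{bilinit} (producing the localization factors $1.1^{-k_{\gamma}}$ for $\gamma\in J^{+}$ and $1.1^{-k}1.1^{\epsilon k_{\gamma}}$ for $\gamma\in J^{-}$), and each $K$-term with $\gamma\neq\max K$ with the smoothing and Strichartz estimates of Lemmas \ref{mgnmgn}, \ref{potmgn}, \ref{mgnpot} and \ref{potpot}, each of which supplies a factor $\delta$.

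The only departure from Corollary \ref{multiin} concerns the innermost factor, which here is $\int_{1}^{t}e^{is\vert\xi\vert^{2}}\widehat{g_{k_{n}}}(s,\xi,\eta_{n})\,ds$ rather than a retarded propagator acting on $g$. Instead of applying a retarded Strichartz estimate, I would bring the $s$-integral outside of everything right at the start --- this is legitimate since the whole expression is linear in $g(s)$ --- so that the object becomes $\int_{1}^{t}\Phi_{s}\,ds$, where $\Phi_{s}(\xi)$ is the multilinear expression with the same singular denominators but with $g_{k_{n}}(s,\xi,\eta_{n})$ in place of the inner integral. Minkowski's integral inequality then gives $\Vert\int_{1}^{t}\Phi_{s}\,ds\Vert_{L^{2}_{x}}\leqslant t\sup_{s\in[1;t]}\Vert\Phi_{s}\Vert_{L^{2}_{x}}$. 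For fixed $s$, the expression $\Phi_{s}$ has exactly the form covered by Corollary \ref{multilinearbis} (or, when $\alpha_{\max K}=\eta_{\max K,i}$, by the variant behind Lemma \ref{multilinearautre}), and the last factor $W_{\max K}$ is absorbed via the trivial bound $\Vert W_{\max K}h\Vert_{L^{2}_{x}}\leqslant\Vert W_{\max K}\Vert_{L^{\infty}_{x}}\Vert h\Vert_{L^{2}_{x}}\leqslant\delta\Vert h\Vert_{L^{2}_{x}}$; this yields $\Vert\Phi_{s}\Vert_{L^{2}_{x}}\lesssim C^{n}\delta^{\vert K\vert}\Vert g(s)\Vert_{L^{2}_{x}}\prod_{\gamma\in J^{+}}1.1^{-k_{\gamma}}\prod_{\gamma\in J^{-}}1.1^{-k}1.1^{\epsilon k_{\gamma}}$, and taking the supremum over $s$ and multiplying by $t$ gives the stated inequality.

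The main thing to be careful about is that pulling the $s$-integral completely out does not interfere with the peeling argument for the remaining $n-1$ factors: after the change of variables the inner factor depends on the $\tau$-variables only through unimodular phases (in particular $e^{-is\vert\eta_{\max K}\vert^{2}}$), which are discarded in every intermediate estimate exactly as in Lemma \ref{multilinear}, so the structure is unaffected. A secondary bookkeeping point is uniformity of the implicit constant in $n$; this holds because each of the at most $n$ peeling steps costs at most one factor of $C_{0}$ and $C=10^{10}C_{0}^{10}$ was chosen large enough to absorb the numerical constants, precisely as in the proof of Lemma \ref{multilinear}. I expect the write-up itself to be short, since essentially all of it reduces to a reference to the arguments already carried out for Corollary \ref{multiin} and Lemma \ref{multilinear}.
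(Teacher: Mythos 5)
The paper states this corollary without proof, as an ``easier version'' of Corollary \ref{multiin}, and your argument --- Fubini/Minkowski to pull the $s$-integral out at the cost of a factor $t\sup_{s\in[1,t]}$, then the fixed-time multilinear bound of Corollary \ref{multilinearbis} (the unimodular phase $e^{is\vert\xi\vert^{2}}$ being harmless in $L^{2}_{\xi}$), with $\delta^{n}\leqslant\delta^{\vert K\vert}$ --- is exactly the intended one. One caveat: your aside that the last factor $W_{\max K}$ is ``absorbed via the trivial bound $\Vert W_{\max K}h\Vert_{L^{2}}\leqslant\delta\Vert h\Vert_{L^{2}}$'' misdescribes what happens inside Corollary \ref{multilinearbis}: after the change of variables there is still a free $\tau_{r(\max K)}$-integration to control, which is precisely what the homogeneous smoothing/Strichartz Lemmas \ref{mgnmgnfin}, \ref{potmgnfin}, \ref{potpotfin}, \ref{mgnpotfin} accomplish, and a pointwise-in-$\tau$ $L^{\infty}\times L^{2}\to L^{2}$ bound would not close there. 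Since you invoke Corollary \ref{multilinearbis} wholesale rather than actually substituting that step, this does not create a gap in your argument.
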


\subsubsection{The case of small output frequency}
Now we write analogs of the previous Lemmas for $k<0.$ In this case the loss of derivative is helpful, and therefore we do not need to use the smoothing effect. We only resort to Strichartz estimates, which makes this case simpler. 
\\
We start with the analog of Lemma \ref{multilinear}:

\begin{lemma} \label{multilinearneg}
Assume that $k<0.$ \\
We have the bound 
\begin{align} \label{multimainneg}
&\Bigg \Vert \int \prod_{\gamma=1}^{n-1} \frac{\alpha_{\gamma}(\eta_{\gamma})\widehat{W_{\gamma}}(\eta_{\gamma-1}-\eta_{\gamma})P_{k_{\gamma}}(\eta_{\gamma}) P_k(\xi)}{\vert \xi \vert^2 - \vert \eta_{\gamma} \vert^2} d\eta_1 ... d\eta_{n-1} \widehat{g_{k_n}}(\eta_n) d\eta_n \Bigg \Vert_{L^2_x} \\
& \lesssim C^n \delta^{n} \Vert g \Vert_{L^2_x} \prod_{\gamma \in J} \min \lbrace 1.1^{0.5 k_{\gamma-1}} ; 1 \rbrace \prod_{\gamma \in J^{+}} \min \lbrace 1.1^{-k_{\gamma}} ; 1.1^{\epsilon k_{\gamma}} \rbrace \times \prod_{\gamma \in J^{-}} 1.1^{k_{\gamma}-k}.
\end{align}
The implicit constant in the inequalities does not depend on $n.$
\end{lemma}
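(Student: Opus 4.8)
The proof follows the same overall scheme as Lemma \ref{multilinear}, with the crucial simplification that in the regime $k<0$ a magnetic potential $a_i$ carries a derivative multiplier $\alpha_\gamma(\eta_\gamma)=\eta_{\gamma,i}$ that is \emph{bounded} — indeed, it is a gain rather than a loss — so one never needs the smoothing estimates and only uses Strichartz inequalities. The plan is as follows. First, for each $\gamma\in K$ I replace the regularized singular denominator $(|\xi|^2-|\eta_\gamma|^2+i\beta)^{-1}$ by the oscillatory integral \eqref{identite} and then perform the nesting change of variables $\tau_r\leftrightarrow\tau_r+\tau_{r+1}+\dots$ exactly as in the proof of Lemma \ref{multilinear}. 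Taking an inverse Fourier transform in $\xi$ turns the expression into a composition of free Schr\"{o}dinger propagators $e^{i\tau\Delta}$ alternating with multiplications by the potentials $W_\gamma$, $\gamma\in K$, convolved with the inverse Fourier transforms (in $\xi$) of the smooth, non-singular multipliers $m_\gamma$, $\gamma\in J$.

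Next I peel off the factors one at a time, starting from the outermost $\tau_1$-integral, and argue by induction on $n$. For an index $\gamma\in K$ I use retarded Strichartz estimates together with Lemmas \ref{potpot} and \ref{potpotfin}; in the magnetic subcase $\alpha_\gamma=\eta_{\gamma,i}$ I simply bound the multiplier pointwise by $|\eta_\gamma|\lesssim 1.1^{k_\gamma}\lesssim 1$ before applying the very same estimates, since $k<0$ leaves nothing to recover. Each such step reduces the estimate of the $n$-factor expression to that of the $(n-1)$-factor expression in one of the norms $L^2_tL^{6/5}_x$ or $L^\infty_tL^2_x$, at the cost of a power of $C\delta$. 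For an index $\gamma\in J$ I insert a Littlewood--Paley localization on $W_\gamma$ at the scale dictated by $|\eta_{\gamma-1}-\eta_\gamma|$ and apply the bilinear Lemma \ref{bilinit}: when $\gamma\in J^{+}$ (the localization is at $\simeq 1.1^{k_\gamma}$) either the high-regularity hypothesis $\Vert(1-\Delta)^{5}W_\gamma\Vert_Y\le\delta$ yields a gain $1.1^{-k_\gamma}$, or Bernstein's inequality instead yields $1.1^{\epsilon k_\gamma}$, whence the $\min$; when $\gamma\in J^{-}$ (the localization is at $\simeq 1.1^{k}$) the $L^1$ part of $\Vert W_\gamma\Vert_Y$ combined with Bernstein gives the gain $1.1^{k_\gamma-k}$. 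The residual factor $\min\{1.1^{0.5 k_{\gamma-1}};1\}$ for $\gamma\in J$ is produced, in the magnetic subcase, by combining $|\eta_{\gamma,i}|\lesssim 1.1^{k_\gamma}$ with the H\"{o}lder/Bernstein step performed on the function one level up (frequency $\simeq 1.1^{k_{\gamma-1}}$); in the electric subcase one simply takes the trivial value $1$. Iterating down to $\gamma=n$, where $g_{k_n}$ is placed directly in $L^2_x$, closes the induction and produces the stated product of gains, the constant $C^n\delta^n$ being accounted for exactly as in Lemma \ref{multilinear}.

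As in the $k>0$ case one must separate the boundary cases $1\in K$ and $1\in J$ and keep track of whether the index $\max K$ has already been reached — in which case the homogeneous Lemmas \ref{potpotfin} and \ref{mgnpotfin} are used in place of their inhomogeneous counterparts — but these are completely parallel to the proof of Lemma \ref{multilinear} and introduce no new ideas. The only point requiring a little care, which I view as the main (and rather mild) obstacle, is to check that all the frequency gains collected over $J^{+}$, $J^{-}$ and $J$ are \emph{simultaneously} summable in $k_1,\dots,k_n$: the gains $1.1^{-k_\gamma}$ and $1.1^{\epsilon k_\gamma}$ must cover respectively the ranges $k_\gamma\gtrsim 0$ and $k+1<k_\gamma\lesssim 0$, while $1.1^{k_\gamma-k}$ (with $k_\gamma<k-1$) handles the remaining range; since $0<\epsilon<1$, summing over all $k_j$ produces a constant independent of $n$, so that the resulting series in $n$ converges once $\delta$ is taken small enough.
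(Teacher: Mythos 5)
Your overall architecture is the right one and matches the paper's: for $k<0$ the derivative multipliers $\alpha_\gamma(\eta_\gamma)=\eta_{\gamma,i}$ with $\gamma\in K$ satisfy $|\eta_\gamma|\simeq 1.1^{k_\gamma}\leqslant 1.1^{k+1}\lesssim 1$, so no smoothing is needed and one peels off the factors using only Strichartz/H\"older estimates, reducing inductively to $\Vert\mathcal{F}^{-1}F_\gamma\Vert_{L^2_t L^{6/5}_x}$. However, two of the three gain factors are derived by mechanisms that do not work as stated.

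First, for $\gamma\in J$ you obtain the gains by localizing $W_\gamma$ in frequency ``at the scale dictated by $|\eta_{\gamma-1}-\eta_\gamma|$,'' claiming this scale is $\simeq 1.1^{k_\gamma}$ for $\gamma\in J^{+}$ and $\simeq 1.1^{k}$ for $\gamma\in J^{-}$. Neither is justified: the frequency of $\widehat{W_\gamma}(\eta_{\gamma-1}-\eta_\gamma)$ is governed by $k_{\gamma-1}$ and $k_\gamma$, not by $k$, and when $|k_{\gamma-1}-k_\gamma|\leqslant 1$ it is not pinned down at all (the paper warns of exactly this at the start of Section 5.2). The correct source of the gains — and what the paper uses — is the $L^1$ bound on the kernel of the non-resonant symbol $\alpha_\gamma(\eta_\gamma)m_\gamma(\xi,\eta_\gamma)$, namely $\Vert\check{m}_\gamma\Vert_{L^1}\lesssim 1.1^{(\beta_\gamma-2)k_\gamma}$ for $\gamma\in J^{+}$ and $\lesssim 1.1^{\beta_\gamma k_\gamma}1.1^{-2k}$ for $\gamma\in J^{-}$, combined with Bernstein applied to the frequency-localized function $(\mathcal{F}^{-1}F_\gamma)_{k_\gamma}$; no regularity or localization of $W_\gamma$ enters.

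Second, and more seriously, you assert that the factor $\min\{1.1^{0.5k_{\gamma-1}};1\}$ is obtained ``trivially, with the value $1$'' in the electric subcase. When $k_{\gamma-1}<0$ this min equals $1.1^{0.5k_{\gamma-1}}<1$, so taking the value $1$ proves a strictly weaker bound than the lemma claims. This factor cannot be dropped: in the applications (e.g.\ the bounds on $I_2^nf$, $I_3^nf$, $I_6^nf$ in the subcases $k_{n-1}>k+1$) one extracts $1.1^{0.5k}$ from the largest $\gamma\in J$ with $\gamma-1\in K$, and the potential $W_\gamma$ there may well be electric. The paper obtains this factor uniformly in the type of $W_\gamma$ by applying Bernstein $\Vert(\mathcal{F}^{-1}F_{\gamma-1})_{k_{\gamma-1}}\Vert_{L^2_tL^{6/5}_x}\lesssim 1.1^{0.5k_{\gamma-1}}\Vert(\mathcal{F}^{-1}F_{\gamma-1})_{k_{\gamma-1}}\Vert_{L^2_tL^1_x}$ to the localized output and then rerunning the H\"older steps with the exponents of $W_\gamma$ shifted accordingly ($L^{3/2}\to L^{6/5}$, $L^\infty\to L^6$, etc.). You have this tool in hand for the magnetic subcase; you need to run it for every $\gamma\in J$.
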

\begin{proof}
Since the proof is almost identical to that of Lemma \ref{multilinear} where only case 1 is considered. The main difference appears when we deal with terms for which $\gamma \in J.$
\\
Therefore we only consider these terms here. They are of the form
\begin{align} \label{gammans}
\big( \mathcal{F}^{-1} F_{\gamma-1}  \big)(x)= \int_{y,z} \check{m_{\gamma}}(y,z) W_{\gamma}(x-y) \big( \mathcal{F}^{-1} F_{\gamma} \big)_{k_{\gamma}} (x-y-z) dy dz.
\end{align}
We must estimate the $L^2 _t L^{6/5}_x$ norm of this expression.  \\
We distinguish two cases: 
\\
\underline{Case 1: $k_{\gamma} >k +1$} \\
The inequalities in this case give the $J^{+}$ terms in the product. \\
\underline{Subcase 1.1: $k_{\gamma}>0$} \\
Then we write using Lemma \ref{bilin} that 
\begin{align*}
\Vert \eqref{gammans} \Vert_{L^2 _t L^{6/5}_x} \leqslant C \Vert W \Vert_{L^{\infty}_x} 1.1^{-k_{\gamma}} \Vert \big( \mathcal{F}^{-1} F_{\gamma} \big)\Vert_{L^2 _t L^{6/5}_x}.
\end{align*}
\underline{Subcase 1.2: $k_{\gamma}<0 $} \\
In this case, we use Lemma \ref{bilin} again as well as Bernstein's inequality to write that
\begin{align*}
\Vert \eqref{gammans} \Vert_{L^2 _t L^{6/5}_x} & \leqslant C_0 \Vert W \Vert_{L^{3/2-}_x} 1.1^{-k_{\gamma}} \big \Vert \big( \mathcal{F}^{-1} F_{\gamma} \big)_{k_{\gamma}} \big \Vert_{L^2 _t L^{6+}_x} \\
& \leqslant C \delta 1.1^{\epsilon k_{\gamma}} \Vert \big( \mathcal{F}^{-1} F_{\gamma} \big)_{k_{\gamma}} \Vert_{L^2 _t L^{6/5}_x}
\end{align*}
which can be summed. 
\\
\underline{Case 2: $k>k_{\gamma}+1$} \\
The inequalities in this case give the $J^{-}$ terms in the product. \\
In this case we write as in the previous case that
\begin{align*}
\Vert \eqref{gammans} \Vert_{L^2 _t L^{6/5}_x} & \leqslant C_0 \Vert W \Vert_{L^{3/2}_x} 1.1^{\beta k_{\gamma}} 1.1^{-2k} \big \Vert \big( \mathcal{F}^{-1} F_{\gamma} \big)_{k_{\gamma}} \big \Vert_{L^2 _t L^{6}_x} \\
& \leqslant C \delta 1.1^{2(k_{\gamma}-k)} \Vert \big( \mathcal{F}^{-1} F_{\gamma} \big)_{k_{\gamma}} \Vert_{L^2 _t L^{6/5}_x}.
\end{align*}
Similarly, we now show how to obtain the extra factor $\prod_{\gamma \in J} \min \lbrace 1 ; 1.1^{0.5 k_{\gamma-1}} \rbrace$. We bound
\begin{align} \label{gammansprime}
(\mathcal{F}^{-1} F_{\gamma-1})_{k_{\gamma-1}} =\big( \int_{y,z} \check{m}(y,z) W_{\gamma}(x-y) \big( \mathcal{F}^{-1} F_{\gamma}\big)_{k_{\gamma}} (x-y-z) dy dz \big)_{k_{\gamma-1}}.
\end{align}
We must, for the the same reason as above, bound the $L^2 _t L^{6/5}_x$ norm of that expression. \\
We start by using Bernstein's inequality and obtain
\begin{align*}
\Vert (\mathcal{F}^{-1} F_{\gamma-1})_{k_{\gamma-1}} \Vert_{L^2 _t L^{6/5}_x} \leqslant C_0 1.1^{0.5 k_{\gamma-1}} \Vert (\mathcal{F}^{-1} F_{\gamma-1})_{k_{\gamma-1}} \Vert_{L^2 _t L^{1}_x},
\end{align*}
and then the proof is identical to the previous inequality: \\
We distinguish two cases: 
\\
\underline{Case 1: $k_{\gamma} >k +1$} \\
The inequalities in this case give the $J^{+}$ terms in the product. \\
\underline{Subcase 1.1: $k_{\gamma}>0$} \\
Then we write using Lemma \ref{bilin} that 
\begin{align*}
\Vert \eqref{gammansprime} \Vert_{L^2 _t L^{1}_x} \leqslant C \Vert W \Vert_{L^{6}_x} 1.1^{-k_{\gamma}}  \Vert  \mathcal{F}^{-1} F_{\gamma}  \Vert_{L^2 _t L^{6/5}_x}.
\end{align*}
\underline{Subcase 1.2: $k_{\gamma}<0 $} \\
In this case we use Lemma \ref{bilin} again as well as Bernstein's inequality to write that
\begin{align*}
\Vert \eqref{gammansprime} \Vert_{L^2 _t L^{1}_x} & \leqslant C_0 \Vert W \Vert_{L^{6/5-}_x} 1.1^{-2k_{\gamma}} \Vert \big( \mathcal{F}^{-1} F_{\gamma} \big)_{k_{\gamma}} \Vert_{L^2 _t L^{6+}_x} \\
& \leqslant C \delta 1.1^{\epsilon k_{\gamma}} \Vert \mathcal{F}^{-1} F_{\gamma} \Vert_{L^2 _t L^{6/5}_x},
\end{align*}
which can be summed. 
\\
\underline{Case 2: $k>k_{\gamma}+1$} \\
The inequalities in this case give the $J^{-}$ terms in the product. \\
In this case we write as in the previous case that
\begin{align*}
\Vert \eqref{gammansprime} \Vert_{L^2 _t L^{1}_x} & \leqslant C_0 \Vert W \Vert_{L^{6/5}_x} 1.1^{-2k} \bigg \Vert \big( \mathcal{F}^{-1} F_{\gamma} \big)_{k_{\gamma}} \bigg \Vert_{L^2 _t L^{6}_x} \\
& \leqslant C \delta 1.1^{2(k_{\gamma}-k)} \Vert \mathcal{F}^{-1} F_{\gamma} \Vert_{L^2 _t L^{6/5}_x}.
\end{align*}
\end{proof}
We keep recording analogs of the previous section.
\begin{corollary} \label{multilinearbisneg}
Assume that $k<0.$ \\
We have the bound
\begin{align} \label{multimainbisneg}
&\Bigg \Vert \int \prod_{\gamma=1}^{n-1} \frac{\alpha_{\gamma}(\eta_{\gamma})\widehat{W_{\gamma}}(\eta_{\gamma-1}-\eta_{\gamma})P_{k_{\gamma}}(\eta_{\gamma}) P_k(\xi)}{\vert \xi \vert^2 - \vert \eta_{\gamma} \vert^2} d\eta_1 ... d\eta_{n-1} \widehat{g_{k_n}}(\xi,\eta_n) d\eta_n \Bigg \Vert_{L^2_x} \\
& \lesssim C^n \delta^{n} \Vert \mathcal{F}^{-1}_{\xi,\eta_n} g \Vert_{L^1 _{y_r} L^2_x} \prod_{j \in J} \min \lbrace 1.1^{0.5 k_{\gamma-1}} ; 1 \rbrace \prod_{\gamma \in J^{+}} \min \lbrace 1.1^{-k_{\gamma}} ; 1.1^{0.5 k_{\gamma}} \rbrace \times \prod_{\gamma \in J^{-}} 1.1^{k_{\gamma}-k},
\end{align}
where the notations are the same as in previous lemmas. \\
Finally the implicit constant in the inequalities do not depend on $n.$
\end{corollary}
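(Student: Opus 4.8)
The plan is to repeat the proof of Lemma \ref{multilinearneg} essentially verbatim, making the single adjustment needed to handle the extra dependence of $g$ on the output frequency $\xi$; this is exactly the relationship that Corollary \ref{multilinearbis} bears to Lemma \ref{multilinearautre} in the regime $k>0$. The only structural difference from Lemma \ref{multilinearneg} is that, once we take the inverse Fourier transform in $\xi$, the factor $\widehat{g_{k_n}}(\xi,\eta_n)$ can no longer be extracted from the $\xi$-transform: it combines with the other $\xi$-dependent pieces, namely the leading potential $\widehat{W_1}(\xi-\eta_1)$ and the multipliers $m_\gamma(\xi,\eta_\gamma)$ for $\gamma\in J$, to produce one additional convolution in the physical variable.

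First I would carry out the change of variables in the $\tau$-integrals and the inverse Fourier transform in $\xi$ exactly as in the proof of Lemma \ref{multilinear}, Case 1, so that after an application of Strichartz estimates the quantity to bound is the $L^2_{\tau_1}L^{6/5}_x$ norm of a spatial convolution built from $W_1$, the kernels $\check m_\gamma$, and now also $\mathcal{F}^{-1}_{\xi,\eta_n}g$. Next I would pull all of these convolution integrals — in the variables $y_r$, $r\in J$, together with the new variable coming from $g$ — outside the norm by Minkowski's inequality, which reduces the estimate to $\Vert\mathcal{F}^{-1}_{\xi,\eta_n}g\Vert_{L^1_{y_r}L^2_x}$ times a product of gains coming from the potential factors. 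The iteration over the remaining factors $W_2,\dots,W_{n-1}$ is then performed with the bilinear Lemma \ref{bilinit} and Bernstein's inequality precisely as in Lemma \ref{multilinearneg}, producing the three products $\prod_{\gamma\in J}\min\{1.1^{0.5k_{\gamma-1}};1\}$, $\prod_{\gamma\in J^{+}}\min\{1.1^{-k_\gamma};1.1^{0.5k_\gamma}\}$ and $\prod_{\gamma\in J^{-}}1.1^{k_\gamma-k}$, together with the factor $C^n\delta^n$ for $C$ the constant fixed after Lemma \ref{bilinit}.

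Because $k<0$ the loss of derivative carried by the magnetic potentials is harmless, so no smoothing estimate enters and every step uses only Strichartz and bilinear bounds; correspondingly I expect no serious obstacle here. The one point deserving a moment's care is to verify that inserting the extra convolution does not spoil the summability over $k_1,\dots,k_n$ or the uniformity of the constants in $n$, but this is immediate since the new convolution variable is decoupled from all frequency parameters and the kernels $\check m_\gamma$ are controlled exactly as in the cited lemmas. Hence the stated bound follows.
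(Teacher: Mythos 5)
Your proposal is correct and is exactly the argument the paper intends: the paper states this corollary without proof as an analog of Lemma \ref{multilinearneg}, and its justification of the corresponding $k>0$ statement (Corollary \ref{multilinearbis}) is precisely that the $\xi$-dependence of $g$ only adds one convolution in the physical variable, absorbed by Minkowski's inequality into the $L^1_{y_r}L^2_x$ norm. Your elaboration of where that convolution enters and why it does not disturb the summability or the uniformity in $n$ matches the paper's reasoning.
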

Finally for Strichartz estimates we need:
\begin{corollary} \label{multiinneg}
Assume that $k<0.$ \\
We have the bound
\begin{align*}
&\Bigg \Vert \int \prod_{\gamma=1}^{n-1} \frac{\alpha_{\gamma}(\eta_{\gamma})\widehat{W_{\gamma}}(\eta_{\gamma-1}-\eta_{\gamma})P_{k_{\gamma}}(\eta_{\gamma}) P_k(\xi)}{\vert \xi \vert^2 - \vert \eta_{\gamma} \vert^2} d\eta_1 ... d\eta_{n-1} \int_{1}^t e^{i s \vert \xi \vert^2} \widehat{g_{k_n}}(s,\xi,\eta_n) ds d\eta_n \Bigg \Vert_{L^2_x} \\
& \lesssim  C^n \delta^{n} \Vert g \Vert_{L^{p'}_t L^{q'}_x} \prod_{\gamma \in J} \min \lbrace 1.1^{0.5 k_{\gamma-1}};1 \rbrace \prod_{\gamma \in J^{+}} \min \lbrace 1.1^{-k_{\gamma}} ; 1.1^{0.5 k_{\gamma}} \rbrace \times \prod_{\gamma \in J^{-}} 1.1^{k_{\gamma}-k}.
\end{align*}
The implicit constant in the inequality does not depend on $n.$
\end{corollary}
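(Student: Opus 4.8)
The plan is to follow the proof of Corollary \ref{multiin}, but with the simplifications afforded by the regime $k<0$, exactly as Lemma \ref{multilinearneg} simplifies Lemma \ref{multilinear}: since the output frequency is small, the potential factors never produce a genuine loss of derivative (any derivative is absorbed by Bernstein's inequality at low frequency), so only Strichartz estimates and their retarded analogs are needed, and no smoothing estimate is invoked.

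First I would reduce the $s$-integral to $(0,+\infty)$ by replacing $g$ with $\mathbf{1}_{(1,t)}(s)\,g$, which only enlarges the right-hand side. Then, for each $\gamma\in K$, I would replace the regularized singular denominator by its integral representation \eqref{identite}, and perform the change of variables
\begin{align*}
\tau_1 & \leftrightarrow \tau_1 + \tau_2 + \dots + s, \\
\tau_2 & \leftrightarrow \tau_2 + \tau_3 + \dots + s, \\
& \ \ \vdots \\
s & \leftrightarrow s,
\end{align*}
turning the expression into a nested Duhamel-type iterated integral. Next I would isolate the first potential factor $\widehat{W_1}(\xi-\eta_1)$, take an inverse Fourier transform in $\xi$, and peel it off with a (homogeneous) Strichartz estimate from Lemma \ref{Strichartz} in $L^2_{\tau_1}L^{6/5}_x$. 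The iteration would then proceed verbatim as in Lemma \ref{multilinearneg}: for $\gamma\in K$ with $\gamma\neq\max K$ use Lemma \ref{potpot} (or the version obtained from it together with Bernstein when $\alpha_\gamma=\eta_{\gamma,i}$), reducing the $L^2_\tau L^{6/5}_x$-estimate of $\mathcal{F}^{-1}F_{\gamma-1}$ to that of $\mathcal{F}^{-1}F_\gamma$; for $\gamma\in J$ use the bilinear Lemma \ref{bilinit} and Bernstein's inequality exactly as in the two cases of Lemma \ref{multilinearneg}, which produce the factors $\min\{1.1^{0.5 k_{\gamma-1}};1\}$, the $J^+$-factors $\min\{1.1^{-k_\gamma};1.1^{0.5 k_\gamma}\}$, and the $J^-$-factors $1.1^{k_\gamma-k}$.

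The only new point compared to Lemma \ref{multilinearneg} is the last factor $\gamma=\max K$: there, instead of a homogeneous estimate, I would invoke the retarded versions of Lemmas \ref{potpotfin} and \ref{mgnpotfin} (again using Bernstein in the magnetic case, since $k<0$) to bound this term by $C\delta\,\Vert\mathcal{F}^{-1}F_{\max K}\Vert_{L^{p'}_t L^{q'}_x}$; as $F_{\max K}$ only involves indices in $J$, the remaining factors are handled by the bilinear lemmas. Collecting one $\delta$ from each of the $n$ potential factors and a universal constant from the finitely many numerical constants at each step yields $C^n\delta^n$, with all intermediate bounds uniform in the regularization parameter $\beta$ and independent of $n$; letting $\beta\to 0$ by lower semicontinuity of the norm finishes the argument.

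I expect the main obstacle to be purely organizational: one must verify that replacing the homogeneous estimate at $\gamma=\max K$ by its retarded counterpart does not degrade any of the summation gains --- in particular the $1.1^{\epsilon k_\gamma}$ gain for $\gamma\in J^+$ with $k_\gamma<0$ --- and that the whole chain of inequalities stays uniform in $\beta$. Since every derivative arising from a magnetic potential is neutralized by low-frequency Bernstein inequalities, no ingredient beyond those already used in Lemma \ref{multilinearneg} and Corollary \ref{multiin} is needed.
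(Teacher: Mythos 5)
Your argument is correct and is exactly the proof the paper intends but omits: the corollary is meant to follow by running the proof of Corollary \ref{multiin} (retarded Strichartz treatment of the $s$-integral and of the $\max K$ factor) inside the low-output-frequency framework of Lemma \ref{multilinearneg}, where every magnetic derivative is harmless because $1.1^{k_\gamma}\lesssim 1$ for $\gamma\in K$ and the $J$-factors are peeled off with Lemma \ref{bilinit} and Bernstein. Your organizational caveat is the only real point to check, and it goes through since Lemma \ref{bilin} holds for general Lebesgue exponents, so the remaining $J$-factors inside $F_{\max K}$ can be removed in the $L^{p'}_tL^{q'}_x$ norm without losing the summation gains.
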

And we also have the following easier version:
\begin{corollary} \label{multiinneg}
Assume that $k<0.$ \\
We have the bound
\begin{align*}
&\Bigg \Vert \int \prod_{\gamma=1}^{n-1} \frac{\alpha_{\gamma}(\eta_{\gamma})\widehat{W_{\gamma}}(\eta_{\gamma-1}-\eta_{\gamma})P_{k_{\gamma}}(\eta_{\gamma}) P_k(\xi)}{\vert \xi \vert^2 - \vert \eta_{\gamma} \vert^2} d\eta_1 ... d\eta_{n-1} \int_{1}^t e^{i s \vert \xi \vert^2} \widehat{g_{k_n}}(s,\xi,\eta_n) ds d\eta_n \Bigg \Vert_{L^2_x} \\
& \lesssim  C^n \delta^{n} t \Vert g \Vert_{L^{\infty}_t L^2_x} \prod_{\gamma \in J} \min \lbrace 1.1^{0.5 k_{\gamma-1}};1 \rbrace \prod_{\gamma \in J^{+}} \min \lbrace 1.1^{-k_{\gamma}} ; 1.1^{0.5 k_{\gamma}} \rbrace \times \prod_{\gamma \in J^{-}} 1.1^{k_{\gamma}-k}.
\end{align*}
The implicit constant in the inequality does not depend on $n.$
\end{corollary}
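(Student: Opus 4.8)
The plan is to deduce the bound from the negative-frequency multilinear estimates already established, Lemma~\ref{multilinearneg} and Corollary~\ref{multilinearbisneg}, in the same way that its positive-frequency counterpart Corollary~\ref{multiinbis} follows from Lemma~\ref{multilinear}. The only difference from the Strichartz-type corollary preceding it is that the inner time integral is now absorbed crudely, which is what produces the factor $t$. First I would pull the $s$-integration outside the whole $n$-linear expression; since the product $\prod_{\gamma}\frac{\alpha_{\gamma}(\eta_{\gamma})\widehat{W_{\gamma}}(\eta_{\gamma-1}-\eta_{\gamma})P_{k_{\gamma}}(\eta_{\gamma})P_k(\xi)}{\vert\xi\vert^2-\vert\eta_{\gamma}\vert^2}$ does not depend on $s$, Minkowski's inequality in $L^2_x$ gives
\[
\text{LHS}\;\le\;\int_{1}^{t}\Bigg\Vert \int \prod_{\gamma=1}^{n-1}\frac{\alpha_{\gamma}(\eta_{\gamma})\widehat{W_{\gamma}}(\eta_{\gamma-1}-\eta_{\gamma})P_{k_{\gamma}}(\eta_{\gamma})P_k(\xi)}{\vert\xi\vert^2-\vert\eta_{\gamma}\vert^2}\,d\eta_1\cdots d\eta_{n-1}\; e^{is\vert\xi\vert^2}\,\widehat{g_{k_n}}(s,\xi,\eta_n)\,d\eta_n\Bigg\Vert_{L^2_x}\,ds .
\]

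For each fixed $s$ the integrand is exactly the multilinear quantity appearing in Corollary~\ref{multilinearbisneg}, applied to the input $e^{is\vert\xi\vert^2}\widehat{g_{k_n}}(s,\cdot,\cdot)$. The phase $e^{is\vert\xi\vert^2}$ depends only on the output frequency $\xi$, so under the inverse Fourier transform in $\xi$ it becomes the free Schr\"odinger evolution $e^{-is\Delta}$ acting in the $x$-variable, which is an isometry on $L^2_x$ and can therefore be discarded at no cost. Hence Corollary~\ref{multilinearbisneg} (the variant of Lemma~\ref{multilinearneg} that keeps track of the dependence of the input on the output frequency) bounds the integrand, uniformly in $s\in[1,t]$ and in $n$, by
\[
C^n\delta^{n}\,\Vert g_{k_n}(s)\Vert_{L^2_x}\;\prod_{\gamma\in J}\min\{1.1^{0.5 k_{\gamma-1}};1\}\;\prod_{\gamma\in J^{+}}\min\{1.1^{-k_{\gamma}};1.1^{0.5 k_{\gamma}}\}\;\prod_{\gamma\in J^{-}}1.1^{k_{\gamma}-k}.
\]
Taking the supremum over $s$, the remaining $\int_{1}^{t}ds$ produces the factor $(t-1)\le t$ together with $\sup_{s\in[1,t]}\Vert g_{k_n}(s)\Vert_{L^2_x}\le\Vert g\Vert_{L^{\infty}_t L^2_x}$, which is the claimed inequality.

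I do not expect a genuine obstacle here. Because $k<0$, the loss of a derivative is in fact favorable, so no smoothing estimate intervenes at any point and the entire argument uses only the Strichartz and Bernstein inequalities already invoked in Lemma~\ref{multilinearneg}. The one step requiring a little care is the bookkeeping noted in Corollary~\ref{multilinearbis}: the dependence of the input on the output frequency $\xi$ (here from the phase $e^{is\vert\xi\vert^2}$, and in the applications from the bilinear origin of $g$) only produces harmless convolutions in the physical variables, so the frequency gains over $J$, $J^{+}$ and $J^{-}$ are unchanged and one genuinely ends up with $\Vert g\Vert_{L^{\infty}_t L^2_x}$ on the right.
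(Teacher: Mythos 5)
Your argument is correct and is exactly the intended one: the paper states this corollary without proof (as an ``easier version''), and the natural route is precisely yours --- Minkowski in $s$, observe that the unimodular factor $e^{is\vert\xi\vert^2}$ costs nothing in $L^2_x$ by Plancherel, apply Corollary~\ref{multilinearbisneg} at each fixed time, and let $\int_1^t ds$ produce the factor $t$ against $\Vert g\Vert_{L^\infty_t L^2_x}$. Your closing remark on the bookkeeping of the $\xi$-dependence of $g$ (harmless $L^1$ convolutions in the physical variables) is the same convention the paper uses in Corollaries~\ref{multilinearbis} and \ref{multiinbis}, so nothing further is needed.
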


\subsection{Bounding $n-$th iterates} \label{Boundingiterates}
In this section we prove the bounds announced in proposition \ref{nthestimate}. In spirit they all follow from the previous multilinear lemmas and the bounds for the first iterates (Section \ref{firsterm}). However we cannot always localize the potential in frequency as easily: say for example that $k_1 \ll k$ then the potential term $\widehat{W_1}(\xi-\eta_1)$ in the first iterate is localized at frequency $1.1^k$. In the multilinear setting however we cannot conclude that $\widehat{W_n}(\eta_{n-1}-\eta_n)$ will be localized at $1.1^{k_{n-1}}$ from the fact that $k_n \ll k.$ Therefore we need to adjust slightly some of the proofs. 
\\
\\
We introduce the following notation to simplify the expressions that appear:
\begin{definition}
In this section and the next $G(\textbf{k})$ will denote either 
\begin{align*}
\prod_{j \in J^{+}} \min \lbrace 1.1^{-k_{\gamma}} ; 1.1^{\epsilon k_{\gamma}}  \rbrace \times \prod_{j \in J^{-}} 1.1^{-k}1.1^{\epsilon k_{\gamma}} 
\end{align*}
or 
\begin{align*}
\prod_{j \in J^{+}} \min \lbrace 1.1^{-k_{\gamma}} ; 1.1^{\epsilon k_{\gamma}} \rbrace \times \prod_{j \in J^{-}} 1.1^{k_{\gamma}-k} 
\end{align*}
depending on whether $k>0$ or $k \leqslant 0.$ 
\end{definition}
We start by estimating the terms that come up in the iteration in the case where $\vert k_n - k \vert >1.$ 
\begin{lemma}
We have the bound
\begin{align*}
\Vert I_1 ^n f \Vert_{L^2_x} & \lesssim C^n G(\textbf{k}) \delta^n \varepsilon_1 .
\end{align*}
\end{lemma}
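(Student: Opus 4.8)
The plan is to recognize that $I_1^n f$ is precisely the $n$-th iterate whose innermost piece is the quadratic nonlinearity rather than a profile, so the estimate should follow from the multilinear lemmas of Section \ref{multilinkey} once the innermost bilinear term is controlled in the appropriate mixed-norm space. Concretely, recall that
\begin{align*}
\mathcal{F} I_1^n f &= \int_1^t \int \prod_{\gamma=1}^{n-1} \frac{\alpha_\gamma(\eta_\gamma)\widehat{W_\gamma}(\eta_{\gamma-1}-\eta_\gamma) P_{k_\gamma}(\eta_\gamma) P_k(\xi)}{\vert \xi \vert^2 - \vert \eta_\gamma \vert^2} \, d\eta_1 \dots d\eta_{n-2} \\
&\quad \times \int_{\eta_n} s\, \xi_l \, e^{is(\vert \xi\vert^2 - \vert \eta_n\vert^2 - \vert \eta_{n-1}-\eta_n\vert^2)} \widehat{f}(s,\eta_{n-1}-\eta_n)\widehat{f}(s,\eta_n)\, d\eta_n\, d\eta_{n-1}\, ds.
\end{align*}
First I would split into the two regimes $k>0$ and $k\leqslant 0$, so that the appropriate version of the multilinear machinery (Lemma \ref{multilinear}, Lemma \ref{multilinearautre}, Corollary \ref{multiin} for $k>0$; Lemma \ref{multilinearneg}, Corollary \ref{multiinneg} for $k\leqslant 0$) can be invoked. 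In each case the sum $\prod_{\gamma\in J}(\dots)$ over the $J$-indices produces exactly the factor $G(\textbf{k})$, so the heart of the matter is to show that the remaining ``innermost'' contribution — the time integral of the quadratic term against $e^{is\vert\xi\vert^2}$ — is bounded by $C\delta\,\varepsilon_1$ in the relevant norm ($L^{p'}_t L^{q'}_x$ to feed Corollary \ref{multiin}/\ref{multiinneg}, or an $L^2$-type norm for the homogeneous endpoint).

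The key step is therefore the bilinear estimate at the bottom of the iteration. I would use the convention $\eta_{n-1}$ plays the role of the frequency fed into the $(n-1)$-st potential, localize $\eta_{n-1}$ at $1.1^{k_{n-1}}$ and the two profile frequencies dyadically, and reduce to bounding $\Vert s\, u_{k'}(s)\, u_{k''}(s)\Vert$ in a Strichartz-dual norm like $L^1_t L^2_x$ or $L^2_t L^{6/5}_x$. Here the factor of $s$ is absorbed using the dispersive decay built into the bootstrap, namely $\sup_k t\Vert u_k(t)\Vert_{L^6_x}\lesssim \varepsilon_1$, exactly as in the treatment of \eqref{B2bis} and \eqref{M7}, \eqref{M8} in Section \ref{firsterm}: one of the two profiles carries the $t$ and is placed in $L^\infty_t L^6_x$ (bounded by $\varepsilon_1$ after the decay estimate), the other in $L^2_t L^6_x$ (bounded by $\varepsilon_1$ by Strichartz and the $H^{10}$ control, with the extra frequency-localization gains from Bernstein where a frequency is small or large), and the potential $\widehat{W_{n-1}}$ absorbed via $\Vert W_{n-1}\Vert_{L^p}\lesssim \delta$ as in Lemma \ref{bilinit}. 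This yields the innermost bound $\lesssim C\delta\,\varepsilon_1$, and the dyadic sums over the profile frequencies and over $k_{n-1}$ converge because of the Bernstein gains together with the summability already present in $G(\textbf{k})$.

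The main obstacle I anticipate is bookkeeping rather than anything deep: one must be careful that, as noted in the preamble to Section \ref{Boundingiterates}, the frequency $\eta_{n-1}$ entering $\widehat{W_{n-1}}(\eta_{n-2}-\eta_{n-1})$ is no longer simply controlled by $k_{n-1}$ in the way $\widehat{W_1}(\xi-\eta_1)$ was in the first-iterate analysis, so the localization on $W_{n-1}$ has to be redone honestly and one cannot blindly quote the first-iterate lemma — instead one threads the bilinear output into the form required by the multilinear corollaries and lets those corollaries handle the remaining chain of potentials. A secondary technical point is ensuring all the estimates are uniform in the regularization parameter $\beta$ from \eqref{identite}, which is automatic here since the factors $e^{-\tau\beta}$ are bounded by $1$. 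Once the innermost $C\delta\varepsilon_1$ bound and the $G(\textbf{k})$ factor are in hand, combining them via the relevant multilinear lemma and summing over $k_1,\dots,k_{n-1}$ gives the claimed $\Vert I_1^n f\Vert_{L^2_x}\lesssim C^n G(\textbf{k})\delta^n\varepsilon_1$.
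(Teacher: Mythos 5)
Your proposal matches the paper's proof in its essentials: the paper likewise applies Corollary \ref{multiin} (with $(p,q)=(2,6)$) to peel off the chain of potentials, producing the factor $C^n\delta^n G(\textbf{k})$, and bounds the innermost datum via $\Vert t(u^2)_{k_n}\Vert_{L^2_t L^3_x}\leqslant \Vert tu\Vert_{L^{\infty}_t L^6_x}\Vert u\Vert_{L^2_t L^6_x}\lesssim \varepsilon_1^2$ exactly as you describe. The one point you should make explicit is that summability in the innermost dyadic index $k_n$ comes primarily from the last, \emph{non-singular} denominator $(\vert\xi\vert^2-\vert\eta_n\vert^2)^{-1}$ (non-singular because $\vert k-k_n\vert>1$ for this term, i.e.\ $n\in J$), which combined with the $\xi_l$ factor and Bernstein yields a summable factor such as $1.1^{k-k_n}$ in each of the four cases $W_n\in\lbrace V,a_i\rbrace$, $k_n\gtrless k$.
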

\begin{proof}
We start by splitting the $\eta_n$ variable dyadically. We denote $k_n$ the corresponding exponent.
\\
We can apply Corollary \ref{multiin} with 
\begin{align*}
\widehat{g_{k_{n-1}}}(s,\eta_{n-1},\xi) = \int_{\eta_{n}} s \frac{P_k(\xi) P_{k_{n}}(\eta_n)\alpha_{n}(\eta_n) \xi_l \widehat{W_n}(\eta_{n-1}-\eta_{n})}{\vert \xi \vert^2 - \vert \eta_{n} \vert^2} \widehat{u^2} (s,\eta_n) d\eta_n.
\end{align*} 
Note that in this case $n \in J,$ meaning in the last term in the product on $\gamma$ the denominator is not singular.
\\
\underline{Case 1: $W_n = V, k_n -1 > k $} \\
In this case the $\xi_l$ in front of the expression $I_1 ^n$ contributes $1.1^k$ and the symbol with denominator $\frac{1}{\vert \xi \vert^2 - \vert \eta_n \vert^2}$ contributes $1.1^{-2 k_n}.$ \\
In this case we use Bernstein's inequality and Lemma \ref{multilinear} (with $(p,q)=(2,6)$) and obtain
\begin{align*}
\Vert I_1 ^n f \Vert_{L^2} & \lesssim C^n G(\textbf{k}) \delta^n 1.1^{k-2k_n} \Vert V \Vert_{L^{6/5}_x} \Vert t (u^2)_{k_n} \Vert_{L^2_t L^{\infty}_x} \\
& \lesssim C^n G(\textbf{k}) \delta^n 1.1^{k-k_n} \Vert t (u^2)_{k_n} \Vert_{L^2_t L^{3}_x} \\
& \lesssim C^n G(\textbf{k}) \delta^n 1.1^{k-k_n} \bigg \Vert \Vert t u \Vert_{L^{6}_x} \Vert u \Vert_{L^6_x} \bigg \Vert_{L^2 _t} \\
& \lesssim C^n G(\textbf{k}) \delta^n 1.1^{k-k_n} \varepsilon_1^2 ,
\end{align*}
which can be summed over $k_n$ using Lemma \ref{summation} and the fact that $k_n>k.$ 
\\
\underline{Case 2: $W_n = V, k_n + 1 < k $}
\\
This case is handled as case 1. 
\\
\underline{Case 3: $W_n = a_i, k_n -1 > k$} \\
In this case we obtain a $1.1^{k-k_n}$ factor in front which is directly summable.
\\
\underline{Case 4: $W_n = a_i, k_n +1 < k$} \\
In this case we obtain a $1.1^{k-2k+k_n}$ factor in front which is directly summable.
\end{proof}
The following term also appears when $\vert k-k_n \vert >1.$
\begin{lemma}
We have the bound:
\begin{align*}
\Vert I_2 ^n f \Vert_{L^2_x} \lesssim C^n G(\textbf{k}) \delta^n \varepsilon_1.
\end{align*}
\end{lemma}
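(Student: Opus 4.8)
The plan is to treat $I_2^n f$, which is the analog of the boundary terms \eqref{B3} and \eqref{B4}, by first splitting the output frequency $\eta_n$ dyadically with exponent $k_n$, observing that by construction $|k-k_n|>1$, so $n\in J$ and the last denominator in the product is nonsingular. I would then apply Corollary \ref{multilinearbis} (or Corollary \ref{multilinearbisneg} if $k\leqslant 0$) with
\begin{align*}
\widehat{g_{k_n}}(\xi,\eta_n) = \frac{P_k(\xi)P_{k_n}(\eta_n)\alpha_n(\eta_n)\xi_l \widehat{W_n}(\eta_{n-1}-\eta_n)}{\vert \xi \vert^2-\vert \eta_n \vert^2}\, t\,\widehat{f_{k_n}}(t,\eta_n),
\end{align*}
so that the multilinear lemma reduces the estimate to bounding $\Vert \mathcal{F}^{-1}_{\xi,\eta_n} g\Vert_{L^1_{y_r}L^2_x}$, which after using $\Vert \check m\Vert_{L^1}\lesssim 1$, the smallness of $W_n$ in $L^\infty_x$ (or $L^{6/5}_x$ followed by Bernstein when $W_n=V$), and the dispersive bound $\Vert t\, e^{it\Delta}f_{k_n}\Vert_{L^6_x}\lesssim \varepsilon_1$ from the bootstrap, is essentially the same computation as in the proof of the bound on \eqref{B3}.

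The next step is to do the bookkeeping of the frequency factors so that the sum over $k_n$ converges, exactly mirroring the case analysis already carried out for \eqref{B3}. I would distinguish $k_n>k+1$ and $k>k_n+1$, and within each, the cases $W_n=V$ and $W_n=a_i$. In the magnetic case $\alpha_n(\eta_n)=\eta_{n,i}$ contributes a factor $1.1^{k_n}$, the prefactor $\xi_l$ contributes $1.1^{k}$, and the nonsingular symbol $(\vert \xi\vert^2-\vert\eta_n\vert^2)^{-1}$ contributes $1.1^{-2\max(k,k_n)}$; collecting these gives $1.1^{k-k_n}$ when $k_n>k+1$ and $1.1^{k_n-k}$ when $k>k_n+1$, each summable. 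In the electric case one uses Bernstein's inequality together with $\Vert V_{k_n}\Vert_{L^{6/5}_x}\lesssim \delta$ (resp. $\Vert V_k\Vert_{L^1_x}\lesssim\delta$) to recover the same geometric factors, as in Case 2 of the preceding lemma. The product $G(\textbf{k})$ coming out of the multilinear lemma absorbs the $k_1,\dots,k_{n-1}$ sums, and the explicit $1.1^{\mp(k-k_n)}$ factor handles the $k_n$ sum.

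Since the bound on $I_2^n f$ automatically yields the bound on $\widetilde{I_2}^n f$ (set $t=1$, which only makes the dispersive factor smaller), there is nothing extra to do there. The main obstacle, as already flagged in the paragraph opening Section \ref{Boundingiterates}, is that unlike in the first-iterate estimates one cannot assume $\widehat{W_n}(\eta_{n-1}-\eta_n)$ is frequency-localized at $1.1^{k_{n-1}}$ merely from $|k-k_n|>1$; hence the Bernstein step in the $W_n=V$ case must be performed on the potential localized at its own frequency $\max(k,k_n)$ rather than at $k_{n-1}$, which is exactly why the additional dyadic split in $\eta_n$ is needed before invoking Corollary \ref{multilinearbis}. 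Once that adjustment is made, the estimate is routine and the constant $C^n\delta^n$ comes from the $n$ potential factors together with the uniform constant $C$ fixed in the definition following Lemma \ref{bilinit}.
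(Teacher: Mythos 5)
Your treatment of the large output frequency regime $k>0$ is essentially the paper's Case 1: absorb the $n$-th (nonsingular, since $|k-k_n|>1$) factor into $g$, apply Corollary \ref{multilinearbis}, and balance the symbol contribution $1.1^{k}\cdot 1.1^{-2\max(k,k_n)}\cdot 1.1^{\beta_n k_n}$ against Bernstein on $W_n$ or on $f_{k_n}$ together with the dispersive bound. That part, and the reduction of $\widetilde{I_2}^n f$ to $t=1$, is fine.

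The gap is in the regime $k\leqslant 0$, $k_n<k-1$, $W_n=V$. There the nonsingular symbol contributes $1.1^{-2k}$ and $\xi_l$ contributes $1.1^{k}$, a net loss of $1.1^{-k}$ which blows up as $k\to-\infty$, and your proposed compensation --- Bernstein applied to the potential ``localized at its own frequency $\max(k,k_n)$'' --- is not available. The argument of $\widehat{W_n}$ is $\eta_{n-1}-\eta_n$, so $W_n$ is localized near $1.1^{\max(k_{n-1},k_n)}$ when those exponents are separated, and $k_{n-1}$ is a free summation index with no relation to the output frequency $k$; this is exactly the obstruction flagged at the start of Section \ref{Boundingiterates}, and declaring the localization to be at $\max(k,k_n)$ does not resolve it. Only when $k_{n-1}\leqslant k+1$ does one get $W_{n,\leqslant k+10}$ and hence the Bernstein gain of roughly $1.1^{k}$ that your bookkeeping implicitly uses. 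When $k_{n-1}>k+1$ the paper closes the estimate by a mechanism your write-up never invokes: the refined factor $\prod_{\gamma\in J}\min\lbrace 1.1^{0.5k_{\gamma-1}};1\rbrace$ present in Lemma \ref{multilinearneg} and Corollary \ref{multilinearbisneg} but deliberately dropped from $G(\textbf{k})$. Choosing the largest $\gamma\in J$ with $\gamma-1\in K$ forces $|k_{\gamma-1}-k|\leqslant 1$ and yields an extra $1.1^{0.5k}$, which combined with Bernstein on $f_{k_n}$ (a cost of $1.1^{0.5k_n}$ going from $L^6_x$ to $L^\infty_x$) gives the summable factor $1.1^{0.5(k_n-k)}$. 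Without one of these two gains the $k_n$-sum produces $1.1^{-0.5k}$ and the bound is not uniform in $k$. You need to add the case split on $k_{n-1}$ and, in the branch $k_{n-1}>k+1$, exploit the $1.1^{0.5k_{\gamma-1}}$ factor rather than any localization of $V$.
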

\begin{proof}
\underline{Case 1: $k>0.$} \\
We can write, using Lemma \ref{bilin} and Corollary \ref{multilinearbis} with 
\begin{align*}
\widehat{g_{k_{n-1}}}(t,\xi,\eta_{n-1}) = \int_{\eta_{n}} \frac{ P_k(\xi) \xi_l \widehat{W_n}(\eta_{n-1}-\eta_n) \alpha_{n}(\eta_n)}{\vert \xi \vert^2 - \vert \eta_n \vert^2} e^{it \vert \eta_n \vert^2} t \widehat{f_{k_n}}(t,\eta_n) d\eta_n
\end{align*}
that we have the bound
\begin{align*}
\Vert I_2 ^n f \Vert_{L^2_x} & \lesssim C^{n} G(\textbf{k}) \delta^{n} \Bigg \Vert \mathcal{F}^{-1}_{\eta_{n-1}} \int_{\eta_{n}} \frac{ P_k(\xi) \xi_l \widehat{W_n}(\eta_{n-1}-\eta_n) \alpha_{n}(\eta_n)}{\vert \xi \vert^2 - \vert \eta_n \vert^2} e^{it \vert \eta_n \vert^2} t \widehat{f_{k_n}}(t,\eta_n) d\eta_n \Bigg \Vert_{L^2_x} .
\end{align*}
\underline{Subcase 1.1: $k_n > k+1$} \\
Then using Bernstein's inequality, we obtain (we denote $\beta_n=0$ if $\alpha_n = 1$ and $\beta_n = 1$ otherwise):
\begin{align*}
\Vert I_2 ^n f \Vert_{L^2_x} & \lesssim \delta^{n} C^{n} G(\textbf{k}) 1.1^{-2k_n} 1.1^k 1.1^{\beta_n k_n} \Vert W_n \Vert_{L^3 _x} \Vert t e^{it\Delta} f_{k_n} \Vert_{L^6 _x},
\end{align*}
and this bound is directly summable over $k_n. $ 
\\
\underline{Subcase 1.2: $k_n <k-1$} \\
If $\beta_n = 1,$ we can conclude as in the previous case. \\
If $\beta_n = 0, $ then we use Bernstein's inequality and obtain
\begin{align*}
\Vert I_2 ^n f \Vert_{L^2_x} & \lesssim \delta^{n} C^{n} G(\textbf{k}) 1.1^{-2k} 1.1^k \Vert W \Vert_{L^{2}_x} \Vert t e^{it\Delta} f_{k_n} \Vert_{L^{\infty} _x} \\
& \lesssim \delta^{n} C^{n} G(\textbf{k}) 1.1^{0.5(k_n-k)} \Vert W \Vert_{L^{2}_x} \Vert t e^{it\Delta} f_{k_n} \Vert_{L^{6} _x}.
\end{align*}
We can conclude using Lemma \ref{dispersive}.
\\
\\
\underline{Case 2: $k<0.$} \\
We distinguish several subcases:
\\
\underline{Subcase 2.1: $k>k_n+1$} \\
\underline{Subcase 2.1.1: $k_{n-1} \leqslant k+1$} \\
Then we use Lemma \ref{multilinearneg} as well as Bernstein's inequality and write that
\begin{align*}
\Vert I_2 ^n f \Vert_{L^2} & \lesssim 1.1^k C^n \delta^n G(\textbf{k}) 1.1^{(\beta_n-2) k} \Vert W_{\leqslant k+10} \Vert_{L^{3-}_x} \Vert t e^{it\Delta} f_{k_n} \Vert_{L^{6+}_x} \\
& \lesssim C^n \delta^n G(\textbf{k}) 1.1^{\beta_n k}  \Vert W \Vert_{L^{3/2-}_x} 1.1^{\epsilon k_n}\Vert t e^{it\Delta} f_{k_n} \Vert_{L^{6}_x} .
\end{align*}
\underline{Subcase 2.1.2: $k_{n-1}> k+1$} \\
If $\beta_n = 1$ we can conclude as above
\\
If $\beta_n = 0$ then consider the largest integer $\gamma \in J$ such that $\gamma-1 \in K.$ In this case we use Lemma \ref{multilinearneg} and at least one of the terms in the factor
\begin{align*}
\prod_{\gamma \in J} 1.1^{0.5 k_{\gamma-1}}
\end{align*}
is equal to $1.1^{0.5 k}$. We use Bernstein's inequality to write that 
\begin{align*}
\Vert I_2 ^n f \Vert_{L^2_x} & \lesssim 1.1^k 1.1^{0.5 k} C^n G(\textbf{k}) \delta^n 1.1^{- 2 k} \Vert W \Vert_{L^{2}_x} \Vert t e^{it\Delta} f_{k_n} \Vert_{L^{\infty}_x} \\
& \lesssim 1.1^{0.5(k_n-k)} C^n \delta^n G(\textbf{k}) \Vert W \Vert_{L^{2}_x} \Vert t e^{it\Delta} f_{k_n} \Vert_{L^{6}_x}, 
\end{align*}
which can be summed over $k_n$ since $k_n-k<0.$
\\
\underline{Subcase 2.2: $k<k_n-1$} \\
The proof is similar in this case, therefore it is omitted.
%\underline{Subcase 2.1: $k_{n-1} \leqslant k+1$}
%Then use Bernstein on $W$ to gain an additional $1.1^k$ as in subcase 1.1
%\begin{align*}
%\Vert I_2 ^n f \Vert_{L^2} & \lesssim 1.1^k C^n \delta^n 1.1^{-2 \beta k_n} \Vert W_{\leqslznt k_n +10} \Vert_{L^{3}} \Vert t e^{it\Delta} f_{k_n} \Vert_{L^{6}_x} \\
%& \lesssim  C^n \delta^n 1.1^{k} 1.1^{-2 \beta k_n} 1.1^{(2\beta -1) k_n} \delta \Vert t e^{it\Delta} f_{k_n} \Vert_{L^{6}_x} 
%\end{align*} 
%which can be summed over $k_n$ since $k_n> k+1$. \\
%\\
%\underline{Subcase 2.2: $k_{n-1} > k+1 $} \\
%Then for the same reason as in subcase 1.2 we gain an additional $1.1^{k/2}$ and we can conclude in this case using Lemma \ref{bilin} and Bernstein's inequality. 
%\begin{align*}
%\Vert I_2 ^n f \Vert_{L^2} & \lesssim 1.1^k 1.1^{0.5 k} C^n \delta^n 1.1^{-2 k_n} \Vert W \Vert_{L^{2}} \Vert t e^{it\Delta} f_{k_n} \Vert_{L^{\infty}_x} \\
%& \lesssim C^n \delta^n 1.1^{1.5 k} 1.1^{-2k_n} \delta 1.1^{0.5 k_n} \Vert t e^{it\Delta} f_{k_n} \Vert_{L^{6}_x} 
%\end{align*}
%where here we assume $W=V.$ The case $W=a_i$ is treated in a similar fashion. (in that case the additional term is not needed)
\end{proof}
Now we bound $I_3 ^n f$. This term is always such that $\vert k-k_n \vert >1.$
\begin{lemma}
We have the bound
\begin{align*}
\Vert I_3 ^n f \Vert_{L^2_x} \lesssim \delta^{n} C^n G(\textbf{k}) \varepsilon_1.
\end{align*}
The implicit constant does not depend on $n$ here. 
\end{lemma}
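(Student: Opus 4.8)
The plan is to treat $I_3^n f$ as the $n$-th iterate analogue of the first-iterate term \eqref{B5} and to recycle the Strichartz argument used there, after the chain of potentials has been absorbed by the multilinear machinery of Section \ref{multilinkey}. The key structural remark is that $I_3^n f$ always occurs in the regime $\vert k-k_n\vert>1$, so that, localised by $P_k(\xi)P_{k_n}(\eta_n)$, the last denominator $\frac{1}{\vert\xi\vert^2-\vert\eta_n\vert^2}$ is a genuine (non-singular) Fourier multiplier: it has size $\lesssim 1.1^{-2\max(k,k_n)}$ and its inverse Fourier transform has uniformly bounded $L^1$ norm. This is exactly the feature exploited in the proof of the bound on \eqref{B5}, and it is what places us away from the time resonance set so that Strichartz estimates apply.

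Concretely, I would first split $\eta_n$ dyadically with exponent $k_n$ and write the inner factor as
\[
\widehat{g_{k_n}}(s,\xi,\eta_n) := \xi_l \, e^{-is\vert\eta_n\vert^2}\, \frac{\alpha_n(\eta_n)\,\widehat{W_n}(\eta_{n-1}-\eta_n)}{\vert\xi\vert^2-\vert\eta_n\vert^2}\, \widehat{f_{k_n}}(s,\eta_n),
\]
so that $I_3^n f$ is of the form handled by Corollary \ref{multiin} when $k>0$ and by Corollary \ref{multiinneg} when $k<0$, the product $\prod_{\gamma=1}^{n-1}$ playing the role of the chain and $\widehat{W_n}(\eta_{n-1}-\eta_n)$ entering as one further bilinear factor, treated by Lemma \ref{bilinit} in the last step of the induction exactly as the $J$-type chain potentials are in the proof of Lemma \ref{multilinear} (it simply produces one extra convolution, i.e. a translation, in the physical variable). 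Applying the corollary reduces the estimate to
\[
\Vert I_3^n f \Vert_{L^2_x} \lesssim C^n\delta^n\, G(\mathbf{k})\, \big\Vert \mathcal{F}^{-1}_{\eta_n}g \big\Vert_{L^2_s([1,t])\,L^{6/5}_x},
\]
where the corollary supplies the part of $G(\mathbf{k})$ coming from $\gamma\le n-1$ and the estimate below supplies the $\gamma=n$ factor. Now $\mathcal{F}^{-1}_{\eta_n}g$ is precisely of the type treated for \eqref{B5}: it is $\xi_l$ times the bounded multiplier $\frac{1}{\vert\xi\vert^2-\vert\eta_n\vert^2}$ acting on $W_n$ convolved against $e^{is\Delta}f_{k_n}$, with an extra derivative $\alpha_n$ when $W_n=a_i$. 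Using H\"older's inequality, the bilinear bound of Lemma \ref{bilinit}, and the Strichartz estimate $\Vert e^{is\Delta}f_{k_n}\Vert_{L^2_s L^6_x}\lesssim\varepsilon_1$ of Lemma \ref{dispersive}, one obtains
\[
\big\Vert \mathcal{F}^{-1}_{\eta_n}g \big\Vert_{L^2_s L^{6/5}_x} \lesssim \delta\,\varepsilon_1 \cdot 1.1^{k}\cdot 1.1^{-2\max(k,k_n)}\cdot\big(1\text{ or }1.1^{k_n}\big),
\]
the last factor being $1.1^{k_n}$ exactly when $\alpha_n(\eta_n)=\eta_{n,i}$, i.e. $W_n=a_i$; when $W_n=V$ one inserts the usual Bernstein step, trading $\Vert V_{k_n}\Vert_{L^{3/2}_x}$ for $\Vert V\Vert_{L^{6/5}_x}$ and, when $k_n\ll k$, trading a bit of integrability on $e^{is\Delta}f_{k_n}$ for a small gain $1.1^{\epsilon k_n}$, exactly as in the proofs of \eqref{B5} and Lemma \ref{multilinear}. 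Running through the four cases ($W_n\in\{V,a_i\}$ and $k_n>k+1$ versus $k_n+1<k$), the collected power of $1.1$ in $(k,k_n)$ is in each case a summable-in-$k_n$ quantity of the shape appearing in $G(\mathbf{k})$, which is why the bound closes with the factor $G(\mathbf{k})$; the implicit constant is independent of $n$ because Corollary \ref{multiin}/\ref{multiinneg} is itself $n$-uniform.

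The step I expect to demand the most care — and the reason this section is separated from the first-iterate estimates — is the frequency localisation of $W_n$. In \eqref{B5} one could place a Littlewood--Paley projection on $W_1$ once $k$ and $k_1$ were fixed; in the multilinear setting the argument of $\widehat{W_n}$ is $\eta_{n-1}-\eta_n$ with $k_{n-1}$ itself summed over, so no such localisation is available and one must rely solely on the $L^\infty_x$ and $L^{6/5}_x$-type norms of $W_n$ controlled by $\Vert W_n\Vert_Y$. This forces the use of Lemma \ref{bilinit} rather than a Bernstein-sharpened bilinear estimate, and it means that all the decay needed to sum over $k_n$ must be extracted from the symbol $\frac{1}{\vert\xi\vert^2-\vert\eta_n\vert^2}$, the factor $\xi_l$, and (when $W_n=V$ and $k_n\ll k$) a Bernstein gain on the profile, rather than from $W_n$ itself — which is precisely the role played by the $n$-th factor of $G(\mathbf{k})$.
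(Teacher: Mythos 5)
Your setup is the right one, and for $k>0$ it reproduces the paper's Case 1 almost verbatim: apply Corollary \ref{multiin} with $g$ equal to the last (non-resonant) factor, then bound $\Vert\mathcal{F}^{-1}_{\eta_n}g\Vert_{L^2_sL^{6/5}_x}$ by the bilinear lemma and Strichartz, the symbol $\frac{P_k(\xi)P_{k_n}(\eta_n)}{\vert\xi\vert^2-\vert\eta_n\vert^2}$ contributing $1.1^{-2k_n}$ when $k_n>k+1$ and $1.1^{-2k}$ when $k_n<k-1$, with the extra $1.1^{\beta_n k_n}$ from $\alpha_n$ when $W_n=a_i$. That part is fine and $n$-uniform.

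The gap is in the case $k\le 0$, which is exactly the point your last paragraph tries to anticipate but gets wrong. Take $k\ll 0$, $W_n=V$ and $k+1<k_n<0$. The sources of decay you list give at best $1.1^{k}\cdot 1.1^{-2k_n}\cdot\delta\varepsilon_1$ from the symbol, possibly improved by a Bernstein gain $1.1^{k_n/2}$ on the profile; near $k_n\approx k$ this is of size $1.1^{-k}$ (or $1.1^{-k/2}$), which blows up as $k\to-\infty$, is not dominated by the factor $\min\{1.1^{-k_n};1.1^{\epsilon k_n}\}$ that $G(\textbf{k})$ requires for $n\in J^{+}$, and is not summable over $k_n\in(k+1,0)$ uniformly in $k$. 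The paper closes this case by a dichotomy on $k_{n-1}$ that your write-up rules out. If $k_{n-1}\le k+1$ then, contrary to your claim that no localisation of $W_n$ is available, $\widehat{W_n}(\eta_{n-1}-\eta_n)$ \emph{is} localised at frequencies $\lesssim 1.1^{k_n+10}$ (because $\vert\eta_{n-1}\vert\lesssim 1.1^{k+2}\lesssim\vert\eta_n\vert$), and Bernstein applied to $W_{n,\le k_n}$ upgrades $\Vert V\Vert_{L^{3/2}_x}$ to $1.1^{k_n}\Vert V\Vert_{L^1_x}$, turning the bound into the summable $1.1^{k-k_n}$. If instead $k_{n-1}>k+1$, the localisation really is unavailable, but then one must invoke the extra factor $\prod_{\gamma\in J}\min\{1.1^{0.5k_{\gamma-1}};1\}$ that is present in Lemma \ref{multilinearneg} and Corollary \ref{multiinneg} (and absent from the $k>0$ lemmas); it supplies an additional $1.1^{0.5k}$ and yields $1.1^{1.5(k-k_n)}$ after Bernstein on the profile. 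Neither mechanism appears in your proposal, so as written the estimate does not close for small output frequencies; the mirror subcase $k_n<k-1$ with $k\le 0$ has the same problem.
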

\begin{proof}
\underline{Case 1: $k>0$}
\\
We use Corollary \ref{multiin} with
\begin{align*}
\widehat{g_{k_{n-1}}} (s,\xi,\eta_{n-1}) & = \int_{\eta_n} \frac{\xi_l P_{k}(\xi) W_n(\eta_{n-1}-\eta_n) \alpha_n (\eta_n)}{\vert \xi \vert - \vert \eta_n \vert^2} e^{-is\vert \eta_n \vert^2} \widehat{f_{k_n}}(s,\eta_n) d\eta_n.
\end{align*}
\\
\underline{Subcase 1.1: $k_n > k+1$} \\
In this case we obtain
\begin{align*}
\Vert I_3 ^n f \Vert_{L^2_x} & \lesssim C^n  \delta^{n} G(\textbf{k}) \Vert g \Vert_{L^2_s L^{6/5}_x}.
\end{align*}
We estimate that last term using Lemma \ref{bilin} (as usual $\beta_n=0$ if $\alpha_n = 1$ and 1 otherwise)
\begin{align*}
\Vert g \Vert_{L^2_s L^{6/5}_x} & \lesssim 1.1^{k} 1.1^{-2k_n} 1.1^{\beta_n k_n} \Vert W_n \Vert_{L^{3/2}_x} \Vert e^{is \Delta} f_{k_n} \Vert_{L^2_s L^6_x} \\
& \lesssim 1.1^{k-k_n} 1.1^{(\beta_n-1)k_n}  \Vert W_n \Vert_{L^{3/2}_x} \Vert e^{is \Delta} f_{k_n} \Vert_{L^2_s L^6_x},
\end{align*}
% and the last term $1.1^{(\beta_n-1)k_n} \leqslant 1$ since $k_n>k>0.$
which can be summed using Lemma \ref{summation}. \\
\underline{Subcase 1.2: $k>k_n+1$} \\
This case is similar to the previous one. 
\\
\\
\underline{Case 2: $k<0$} \\
Now we assume that $k<0.$ We only treat the worse case ($W=V$).
\\
\underline{Subcase 2.1: $k_n > k+1$} \\
\underline{Subcase 2.1.1: $k_{n-1} \leqslant k+1 $} \\
Then we can use the fact that $W$ is then localized at frequency less that $1.1^{k_n+10}$ and we use Lemma \ref{multiinneg} Bernstein's inequality to write that
\begin{align*}
\Vert I_3 ^n f \Vert_{L^2} & \lesssim C^n G(\textbf{k}) \delta^n 1.1^{k} 1.1^{-2k_n} \Vert W_{n,\leqslant k_n} \Vert_{L^{3/2}_x} \Vert e^{it\Delta} f_{k_n} \Vert_{L^2_t L^6 _x} \\
                           & \lesssim 1.1^{k-k_n} C^n G(\textbf{k}) \delta^n \Vert W_n \Vert_{L^{1}_x} \Vert e^{it\Delta} f_{k_n} \Vert_{L^2 _t L^6 _x}
\end{align*}
and we can conclude using Lemma \ref{summation}. \\
\underline{Subcase 2.1.2: $k_{n-1} > k+1$} \\
In this case, as in the previous proof, we use Lemma \ref{multiinneg} to gain an additional $1.1^{k/2}$ factor. Overall we get the bound
\begin{align*}
\Vert I_3 ^n f \Vert_{L^2 _x} & \lesssim C^n G(\textbf{k}) \delta^n  1.1^{1.5 k} 1.1^{-2k_n} \Vert W \Vert_{L^{6/5}_x} \Vert e^{it\Delta} f_{k_n}  \Vert_{L^2 _t L^{\infty}_x} \\
& \lesssim C^n G(\textbf{k}) \delta^n 1.1^{1.5(k-k_n)} \delta  \Vert e^{it\Delta} f_{k_n}  \Vert_{L^2 _t L^{6}_x},
\end{align*} 
and we can conclude by Lemma \ref{summation}.
\\
\underline{Subcase 2.2: $k_n< k-1$} \\
This case is treated similarly to the previous one.
\end{proof}
Now we come to the terms that arise in the case $\vert k-k_n \vert \leqslant 1.$ We start with $I_n ^4 f:$ 
\begin{lemma}
We have the bound
\begin{align*}
\Vert I_4 ^n f \Vert \lesssim C^n G(\textbf{k}) \delta^n \varepsilon_1 .
\end{align*}
\end{lemma}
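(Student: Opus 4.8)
The plan is to treat $I_4^n f$ as the multilinear counterpart of the boundary terms \eqref{restediff} and \eqref{restefac}: the argument will follow the pattern of the first-iterate estimates for those terms, with the single potential replaced by the full chain and the chain absorbed by the multilinear lemmas of Section \ref{multilinkey}, exactly as was done above for $I_1^n f$, $I_2^n f$ and $I_3^n f$. Concretely, I would first peel off the terminal factor $\frac{\xi_l \eta_{n,j}}{\vert \eta_n \vert^2}\partial_{\eta_n,j}\widehat{f}(t,\eta_n)P_{k_n}(\eta_n)$. The hypothesis $\vert k-k_n\vert\leqslant 1$ places $\eta_n$ on an annulus comparable to that of $\xi$, so I would write $\xi_l P_k(\xi)=1.1^k\widetilde{P_k}(\xi)$ with $\widetilde{P_k}$ a (vector valued) bump at scale $1.1^k$, absorb the unimodular factor $e^{it\vert\xi\vert^2}$ into an $L^2_x$ isometry, and set
\begin{align*}
\widehat{g_{k_n}}(\eta_n)=e^{-it\vert\eta_n\vert^2}\frac{\eta_{n,j}}{\vert\eta_n\vert^2}\partial_{\eta_n,j}\widehat{f}(t,\eta_n)P_{k_n}(\eta_n).
\end{align*}
Since $1.1^{k_n}\frac{\eta_{n,j}}{\vert\eta_n\vert^2}P_{k_n}(\eta_n)$ is a normalized Fourier multiplier (uniformly in $k_n$), Plancherel's theorem together with Lemma \ref{X'} would give $\Vert g_{k_n}\Vert_{L^2_x}\lesssim 1.1^{-k_n}\Vert f\Vert_{X'}\lesssim 1.1^{-k_n}\varepsilon_1$, the slight discrepancy between $\partial_\xi\widehat f\cdot P_{k_n}$ and $\partial_\xi\widehat{f_{k_n}}$ being exactly the content of Lemma \ref{X'}.

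Next I would split on the sign of $k$. When $k<0$ the loss of derivative is harmless, so I would feed $g$ directly into Lemma \ref{multilinearneg}; this produces a bound $1.1^{k-k_n}C^n\delta^n\varepsilon_1\prod_{\gamma\in J}\min\{1.1^{0.5k_{\gamma-1}};1\}\,G(\textbf{k})$, and since $\vert k-k_n\vert\leqslant 1$ the prefactor $1.1^{k-k_n}$ is $O(1)$ and $\prod_{\gamma\in J}\min\{1.1^{0.5k_{\gamma-1}};1\}\leqslant 1$, whence $\Vert I_4^n f\Vert_{L^2_x}\lesssim C^n G(\textbf{k})\delta^n\varepsilon_1$. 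When $k>0$ I would apply Lemma \ref{multilinear} if $q(\max K)=1$ and Lemma \ref{multilinearautre} if $q(\max K)=1.1^{k_{\max K}/2}$, i.e. when the last potential of the chain $K$ is magnetic; since $\max K\in K$ forces $\vert k-k_{\max K}\vert\leqslant 1$, one has $q(\max K)\lesssim 1.1^{k/2}$, which is exactly compensated by the gain $1.1^{-k/2}$ built into Lemma \ref{multilinearautre}. Collecting the powers of $1.1$ that appear --- $1.1^k$ from $\widetilde{P_k}$, $1.1^{-k_n}$ from the symbol defining $g$, the factor $q(\max K)$, and (for Lemma \ref{multilinearautre}) the gain $1.1^{-k/2}$ --- everything telescopes to $O(1)$ because $\vert k-k_n\vert\leqslant 1$ and $\vert k-k_{\max K}\vert\leqslant 1$, while the surviving products over $J^{+}$ and $J^{-}$ are precisely $G(\textbf{k})$. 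This gives the claim, with implicit constant independent of $n$.

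The step I expect to be the main obstacle, and the reason the constraint $\vert k-k_n\vert\leqslant 1$ is indispensable, is the cancellation of the space-resonance singularity: the multiplier $\eta_n/\vert\eta_n\vert^2$ is genuinely singular at the origin, and only its companion factor $\xi_l$, of size $\sim 1.1^{k}\sim 1.1^{k_n}$ on the region at hand, renders the combined symbol $\frac{\xi_l\eta_{n,j}}{\vert\eta_n\vert^2}$ bounded. The secondary difficulty, the one-time derivative loss $q(\max K)$ created by a magnetic potential sitting at the top of the chain $K$, is absorbed by the $1.1^{-k/2}$ of Lemma \ref{multilinearautre}. Since both phenomena already occur in the first-iterate analysis of \eqref{restediff} (in its $W=V$ and $W=a_i$ versions), and the multilinear lemmas are designed so that the remainder of the chain contributes only summable factors, I do not expect a genuinely new difficulty beyond bookkeeping the frequency-dependent constants.
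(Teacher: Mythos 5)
Your proposal is correct and follows essentially the same route as the paper: the paper's proof is a two-line application of Lemma \ref{multilinear} (or \ref{multilinearautre}, and \ref{multilinearneg} for $k<0$) with the terminal factor taken as $g$, the symbol $\xi_l\eta_{n,j}/\vert\eta_n\vert^2$ being bounded thanks to $\vert k-k_n\vert\leqslant 1$, and $\Vert g\Vert_{L^2_x}\lesssim\Vert f\Vert_{X'}\lesssim\varepsilon_1$ by Lemma \ref{X'}. Your additional bookkeeping of the $q(\max K)$ factor against the $1.1^{-k/2}$ gain of Lemma \ref{multilinearautre} is exactly what the paper's parenthetical ``(or \ref{multilinearautre})'' is implicitly invoking.
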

\begin{proof}
This is a direct consequence of Lemma \ref{multilinear} (or \ref{multilinearautre}) and Lemma \ref{X'}:
\begin{align*}
\Vert I_4 ^n f \Vert_{L^2 _x} & \lesssim C^n G(\textbf{k}) \delta^n \bigg \Vert \mathcal{F}^{-1} \big( e^{-it \vert \eta_n \vert^2} \partial_{\eta_n,j} f P_{k_n}(\eta_n) \big) \bigg \Vert_{L^2 _x} \\
& \lesssim C^n G(\textbf{k}) \delta^n \Vert f \Vert_{X'} \\
& \lesssim C^n G(\textbf{k}) \delta^n \varepsilon_1.
\end{align*}
\end{proof}

Now we estimate the next few terms similarly, therefore all the estimates are grouped in the same lemma. Recall that all these terms appear when $\vert k-k_n \vert \leqslant 1.$

\begin{lemma}
We have the bounds
\begin{align*}
\Vert I_5 ^n f, I_6 ^n f, I_7 ^n f, I_8^n f \Vert_{L^2} \lesssim C^n G(\textbf{k}) \delta^n \varepsilon_1.
\end{align*}
\end{lemma}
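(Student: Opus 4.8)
The plan is to treat the four terms simultaneously, since each of $I_5^n f$, $I_6^n f$, $I_7^n f$, $I_8^n f$ has the same architecture as the iterates already handled in this section: a ``product part'' made of the $n-1$ singular-denominator factors $\prod_{\gamma=1}^{n-1}\frac{\alpha_\gamma(\eta_\gamma)\widehat{W_\gamma}(\eta_{\gamma-1}-\eta_\gamma)P_{k_\gamma}(\eta_\gamma)P_k(\xi)}{|\xi|^2-|\eta_\gamma|^2}$, composed with an ``inner part'' which, after an inverse Fourier transform, is structurally identical to one of the first-iterate terms estimated in Section \ref{firsterm}. Concretely: in $I_5^n f$ the $\eta_n$-derivative has fallen on $\widehat{W_n}$, so the inner part is the analog of \eqref{c22} with the potential replaced by $x_{n,j}W_n$, whose $Y$-norm is $\lesssim\delta$ by the hypotheses $\|\langle x\rangle a_i\|_Y,\|\langle x\rangle V\|_Y\le\delta$; in $I_6^n f$ the derivative has fallen on the multiplier $\frac{\xi_l\eta_{n,j}}{|\eta_n|^2}$, giving the analog of \eqref{c23}; in $I_7^n f$ the derivative has fallen on the Littlewood--Paley cutoff, producing the factor $1.1^{-k_n}\phi'(1.1^{-k_n}\eta_n)\frac{\eta_{n,j}}{|\eta_n|}$ whose inverse Fourier transform is $O(1)$ in $L^1$, hence the analog of \eqref{c24}; and in $I_8^n f$ the inner potential is $x_{n,j}W_n$, the analog of \eqref{id2}--\eqref{id4}. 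In all four cases the profile in the inner part is $\widehat{f_{k_n}}$ itself, with no extra $\xi$-derivative and, crucially, no genuine derivative loss: the symbol $\alpha_n(\eta_n)$ has size $\sim 1.1^{k_n}$, which is comparable to $1.1^k$ because these terms only occur when $|k-k_n|\le 1$. This is why, unlike \eqref{restediff}, the inner step needs no smoothing estimate.

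The second step is to invoke the multilinear lemmas of Section \ref{multilinkey} to transfer the $L^2_x$ bound on $I_j^n f$ to a bound on the inner part in the appropriate norm. For $k>0$ one applies Corollary \ref{multilinearbis} (taking $\widehat{g_{k_{n-1}}}$ to be the inner $\eta_n$- and $s$-integral, exactly as was done for $I_2^n f$ and $I_3^n f$), or Corollary \ref{multiin}/\ref{multiinbis} when it is more convenient to keep a retarded Strichartz structure; for $k<0$ one uses the analogs, Corollaries \ref{multilinearbisneg} and \ref{multiinneg}. Depending on whether $\alpha_{\max K}$ comes from an electric or a magnetic potential, the lemma costs nothing or costs the factor $1.1^{k/2}$ encoded in $q(\max K)$, and in each case it reduces matters to estimating the inner part in one of $L^2_x$, $L^2_{\tau}L^{6/5}_x$, $L^1_{x_j}L^2_{\tau,\widetilde{x_j}}$, or $L^{p'}_tL^{q'}_x$. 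This inner estimate is then carried out verbatim as in Section \ref{firsterm}: one uses the bilinear Lemma \ref{bilinit} (or Lemma \ref{bilin}), the retarded Strichartz estimates of Lemma \ref{Strichartz} and Corollary \ref{smostriu}, the dispersive decay of Lemma \ref{dispersive}, the control $\|f\|_{X'}\lesssim\varepsilon_1$ from Lemma \ref{X'}, and, to turn the loss $1.1^k$ coming from $\alpha_n(\eta_n)$ and from $q(\max K)$ into a gain, the $H^{10}_x$-summation Lemma \ref{summation}, using $\|x_{n,j}W_n\|_Y\lesssim\delta$. The outcome is precisely the claimed $C^nG(\textbf{k})\delta^n\varepsilon_1$.

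The one point requiring care beyond the first-iterate arguments, already flagged at the start of Section \ref{Boundingiterates}, is the frequency localization of the last potential $\widehat{W_n}(\eta_{n-1}-\eta_n)$: knowing $|k-k_n|\le 1$ does not by itself fix the size of $\eta_{n-1}-\eta_n$, because $k_{n-1}$ is unconstrained. When $k_{n-1}\gg k_n$ one has $|\eta_{n-1}-\eta_n|\sim 1.1^{k_{n-1}}$ and may localize $W_n$ there directly, gaining the needed power of $1.1^{k_{n-1}}$ from the regularity of the potential; when $|k_{n-1}-k_n|\lesssim 1$ one inserts an additional dyadic decomposition of $\eta_{n-1}-\eta_n$ at scales $\le 1.1^{k_n+10}$ and uses Bernstein's inequality, exactly as in Case 2.2 of the first-iterate estimate for \eqref{M3}, \eqref{M5}. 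I expect this to be the main (though minor) obstacle; once it is handled, the remaining summations close uniformly in $n$ — geometric in the frequencies $k_\gamma$ with $\gamma\in J$ thanks to $G(\textbf{k})$, and over the $O(1)$ admissible values of each $k_\gamma$ with $\gamma\in K$ (including $\gamma=n$), all absorbed into the constant $C^n$. Since the four terms differ only in cosmetic ways, it suffices to write one of them (say $I_5^n f$) out in full and indicate the trivial modifications for the others.
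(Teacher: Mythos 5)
Your overall architecture is the same as the paper's: the paper writes out only $I_6^n f$ (declaring $I_5^n,I_7^n,I_8^n$ easier), feeds the inner $\eta_n$-integral into Corollary \ref{multiin} (resp.\ Corollary \ref{multiinneg} for $k\leqslant 0$), and splits exactly on whether $k_{n-1}\leqslant k_n+10$ or not, which is the frequency-localization issue you correctly flag. Your treatment of the case $|k_{n-1}-k_n|\lesssim 1$ (localize $W_n$ at frequency $\lesssim 1.1^{k+10}$ and apply Bernstein) is the paper's Case 1.

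There is, however, a genuine gap in your resolution of the other case. For $I_6^n f$ with $W_n=V$ the inner multiplier is $\partial_{\eta_{n,j}}\big(\xi_l\eta_{n,j}/|\eta_n|^2\big)\sim \xi_l/|\eta_n|^2\sim 1.1^{-k}$ (since $|k-k_n|\leqslant 1$), which is a genuine \emph{loss} when $k\leqslant 0$, unbounded as $k\to-\infty$. In the regime $k\leqslant 0$, $k_{n-1}>k_n+10$, the potential $W_n$ sits at frequency $1.1^{k_{n-1}}$ with $k_{n-1}>k$, and neither mechanism you invoke can produce the required compensating factor $1.1^{k}$: Bernstein on $W_{n,k_{n-1}}$ yields only $1.1^{k_{n-1}}\|W_n\|_{L^1}$ (leaving an unbounded $1.1^{k_{n-1}-k}$), and the $H^{10}$ regularity of the potential yields $1.1^{-10k_{n-1}}\delta$, which is independent of $k$ and so cannot beat $1.1^{-k}$ (take $k\to-\infty$ with $k_{n-1}$ fixed near $0$). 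The paper closes this case differently: it uses the extra structural factor $\prod_{\gamma\in J}\min\{1.1^{0.5k_{\gamma-1}};1\}$ built into Lemma \ref{multilinearneg} and Corollary \ref{multiinneg} — there is always some $\gamma\in J$ with $\gamma-1\in K\cup\{0\}$, hence $k_{\gamma-1}\sim k$, contributing $1.1^{0.5k}$ — and pairs it with Bernstein on the profile, $\|e^{it\Delta}f_{k_n}\|_{L^\infty_x}\lesssim 1.1^{k_n/2}\|e^{it\Delta}f_{k_n}\|_{L^6_x}$, to recover the full $1.1^{k}$. This device is absent from your argument (you never invoke the $\min\{1.1^{0.5k_{\gamma-1}};1\}$ refinement of the negative-frequency multilinear lemmas), so your proof does not close for the electric part of $I_6^n f$ at small output frequency.
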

\begin{proof}
We do the proof for $I_6 ^n f,$ since the other terms are easier to deal with. 
\\
\underline{Case 1: $k_{n-1} \leqslant k_n +10$} \\
In this case the potential $W_n$ is localized at frequency less than $1.1^{k+10}.$ Therefore we can use Lemma \ref{multiin} (or \ref{multiinneg}) for
\begin{align*}
\widehat{g_{k_{n-1}}}(s,\eta_{n-1}) = \int_{\eta_n} \alpha_n(\eta_n) \widehat{W_{n, \leqslant k+10}}(\eta_{n-1}-\eta_n) \partial_{\eta_{n,j}} \big( \frac{\xi_l \eta_{n,j}}{\vert \eta_n \vert^2} \big)  e^{-is \vert \eta_n \vert^2} \widehat{f_{k_n}}(s, \eta_n) d\eta_n,
\end{align*}
and obtain (we denote $\beta_n =0$ or $1$ depending on whether $\alpha_n = 1$ or not) 
\begin{align*}
\Vert I_6 ^n f  \Vert_{L^2} & \lesssim C^n G(\textbf{k}) \delta^n 1.1^{-k} 1.1^{\beta_n k} \Vert W_{\leqslant k} \Vert_{L^{3/2} _x} \Vert e^{it\Delta} f_{k_n} \Vert_{L^2 _t L^6 _x} \\
                            & \lesssim C^n G(\textbf{k}) \delta^n \Vert e^{it\Delta} f_{k_n} \Vert_{L^2 _t L^6 _x},
\end{align*}
and we can conclude using Lemma \ref{summation}
\\
\underline{Case 2: $k_{n-1}> k_n +10.$} \\
\underline{Subcase 2.1: $k>0$ } \\
This subcase can be treated as case 1.
\\
\underline{Subcase 2.2: $k \leqslant 0$} \\
In this case we use Lemma \ref{multiinneg}.\\
We can consider the largest $j$ such that $j-1 \in K.$ That term gives us an additional $1.1^{0.5k}$ factor. We obtain, by the same reasoning as in case 1, the bound (we only treat the worse case here, that is $\beta =0$, see case 1):
\begin{align*}
\Vert I_6 ^n f  \Vert_{L^2} & \lesssim C^n G(\textbf{k}) \delta^n 1.1^{-k} 1.1^{0.5k} \Vert W \Vert_{L^{6/5} _x} \Vert e^{it\Delta} f_{k_n} \Vert_{L^2 _t L^{\infty} _x} \\
                            & \lesssim C^n G(\textbf{k}) \delta^n \Vert e^{it\Delta} f_{k_n} \Vert_{L^2 _t L^6 _x},
\end{align*}
where to obtain the last line we used Bernstein's inequality. 
\end{proof}
Finally we have the expected bounds on the iterates of the bilinear terms:
\begin{lemma}
We have the bounds
\begin{align*}
\Vert I_9 ^n f, I_{10} ^n f \Vert_{L^2_x} \lesssim C^n G(\textbf{k}) \delta^n \varepsilon_1 .
\end{align*}
\end{lemma}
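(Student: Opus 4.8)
The plan is to treat $I_9^n f$ and $I_{10}^n f$ exactly as the preceding iterates: first peel off the product of potential factors with a multilinear lemma from Section \ref{multilinkey}, thereby reducing to a bound on the remaining bilinear tail in an appropriate mixed norm, and then recognize that bilinear tail as (a mixed-norm version of) the first bilinear iterates \eqref{M7}, \eqref{M8}, equivalently of \eqref{id5}, \eqref{id6}, for which the estimates by $\varepsilon_1^2$ are already available. The only structural difference between $I_9^n f, I_{10}^n f$ and $I_1^n f$ is the presence of the extra multiplier $\frac{\xi_l\eta_{n-1,j}}{\vert\eta_{n-1}\vert^2}$; this is harmless, since those terms are generated in the iteration only when step $n-1$ fell into Case 2, i.e.\ $\vert k-k_{n-1}\vert\le1$, so that $\vert\eta_{n-1}\vert\simeq\vert\xi\vert\simeq 1.1^k$ and $\big\Vert\mathcal{F}^{-1}\big(\frac{\xi_l\eta_{n-1,j}}{\vert\eta_{n-1}\vert^2}P_k(\xi)P_{k_{n-1}}(\eta_{n-1})\big)\big\Vert_{L^1}\lesssim 1.1^{k-k_{n-1}}\lesssim 1$. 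Absorbing that factor, I would split according to the sign of $k$: for $k>0$ apply Corollary \ref{multilinearbis} (or Corollary \ref{multiin}/\ref{multiinbis} when it is convenient to keep the time integral), for $k<0$ apply Corollary \ref{multilinearbisneg} (or Corollary \ref{multiinneg}), in each case with $\widehat{g}$ taken to be the corresponding bilinear tail. This produces the gain $C^n\delta^n G(\textbf{k})$ and leaves the $\mathcal{F}^{-1}$ of the bilinear tail to be estimated in the norms $L^2_\tau L^{6/5}_x$, $L^1_{x_j}L^2_{\tau,\widetilde{x_j}}$ or $L^2_x$.

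For $I_9^n f$ the bilinear tail carries the explicit factors $s$ and $\eta_{n,j}$, which after a dyadic decomposition of $\eta_n$ are precisely the features of \eqref{M7} (resp.\ \eqref{id5}): I would bound it with the bilinear Lemma \ref{bilin} and Lemma \ref{bilinit}, use Bernstein's inequality (in the legal "$L^\infty$ is largest" direction) on the frequency $1.1^{k_n}$ to absorb $\eta_{n,j}$, and use the dispersive decay $\sup_k t\Vert u_k(t)\Vert_{L^6_x}\lesssim\varepsilon_1$ together with $\Vert e^{it\Delta}f_k\Vert_{L^2_tL^6_x}\lesssim\varepsilon_1$ to absorb the time factor $s$, ending with a bound $\lesssim\varepsilon_1^2\le\varepsilon_1$ in the required mixed norm, exactly as in the proof of the bound on $I_1^n f$. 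For $I_{10}^n f$ the bilinear tail instead carries the profile–derivative factor $\partial_{n,j}\widehat f$; here I would place that factor in $L^2$ by $\Vert f\Vert_X\lesssim\varepsilon_1$ (and Lemma \ref{X'}), letting the other $\widehat f(s,\eta_n)$ factor carry the remaining Lebesgue exponents via Lemma \ref{bilinit}, thereby reproducing the argument behind the bound on \eqref{M8}/\eqref{id6} proved in \cite{L}. In both cases the numerical losses left over are summable over $k_1,\dots,k_n$ thanks to the product $G(\textbf{k})$ furnished by the multilinear lemma, and all implicit constants are independent of $n$.

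The main obstacle is the interface between these two steps. The multilinear lemmas deliver the bilinear tail measured in smoothing-adapted mixed norms (such as $L^1_{x_j}L^2_{\tau,\widetilde{x_j}}$) or in retarded-Strichartz norms rather than in $L^2_x$, so the space–time resonance analysis of the bilinear term — which in \cite{L} was only needed in $L^2_x$ — has to be carried out in those norms; this is exactly what the various incarnations of Lemma \ref{bilinit} are designed for. Concretely, for $I_9^n f$ one must be careful about which variable carries the derivative loss coming from $\eta_{n,j}$ relative to the mixed norm, so that Bernstein's inequality is applied in the correct direction, and for $I_{10}^n f$ one must verify that the profile-derivative factor can still be controlled in $L^2_x$ after the spatial translations introduced by the convolutions with $\check m_\gamma$ and $W_\gamma$; both points are handled by the same type of duality-plus-Hölder computation already used in Lemmas \ref{mgnmgn}--\ref{mgnpotfin} and in the proof of Lemma \ref{multilinear}.
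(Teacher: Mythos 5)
Your high-level architecture matches the paper's: peel off the potential prefactors with the multilinear lemmas (Corollaries \ref{multiin}, \ref{multiinbis}, \ref{multilinearbis} and their negative-$k$ analogues), reducing to an estimate of the bilinear tail in a retarded Strichartz norm, and then treat that tail as a version of \eqref{M7}/\eqref{M8}. But your description of how the bilinear tail is actually controlled has a genuine gap. You claim that for $I_9^n f$ one can ``use the dispersive decay $\sup_k t\Vert u_k\Vert_{L^6}\lesssim\varepsilon_1$ together with $\Vert e^{it\Delta}f_k\Vert_{L^2_tL^6_x}\lesssim\varepsilon_1$ to absorb the time factor $s$.'' This does not close: with the quadratic nonlinearity at the Strauss exponent, $\Vert s\,\nabla(u^2)\Vert_{L^{p'}_tL^{q'}_x}$ is not uniformly bounded in $t$ by any such direct H\"older/dispersive argument (e.g.\ $\Vert s\,u^2\Vert_{L^2_x}\sim s^{-1/2}\varepsilon_1^2$, which is not integrable in $s$). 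This is exactly why the paper runs the full space--time resonance analysis on the tail: dyadic decomposition in time ($m$), in $\eta_n$, $\eta_{n-1}-\eta_n$, and in the phase gradient $2\eta_n-\eta_{n-1}$ ($k_n'$); the easy regimes handled by the $H^{10}$ weights or volume bounds; integration by parts in $\eta_n$ away from space resonances; and integration by parts in time away from time resonances using the symbol bound \eqref{symbol1}. Your proposal acknowledges that the resonance analysis must be ``carried out'' but then substitutes for it a one-line absorption that fails.

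The second omission is specific to this paper. After the integration by parts in time in the non-time-resonant regime, $\partial_s\widehat f$ produces, besides the terms $I$ and $II$ already present in \cite{L}, the new magnetic contribution
$III$ involving $\mathcal{F}(a_i\partial_{x_i}u)$. This term is not covered by simply citing the bounds on \eqref{M7}/\eqref{M8} from \cite{L}; the paper estimates it separately by decomposing the internal frequency $k_{n+2}$, comparing $k_{n+2}$ with $k_n$, and using $\Vert a_{i,k_{n+2}}\Vert_{L^3_x}$ together with the $L^6$ decay of $tu$ to recover the derivative loss and sum. Your proposal never mentions that $\partial_s\widehat f$ now contains the derivative term $a_i\partial_i u$, so the part of the argument that is genuinely new relative to \cite{L} is missing. (A minor further point: the paper measures the bilinear tail in $L^{p'}_tL^{q'}_x$ or $t\cdot L^\infty_tL^2_x$ norms via Corollaries \ref{multiin}/\ref{multiinbis}, not in the smoothing norms $L^1_{x_j}L^2_{\tau,\widetilde{x_j}}$ you worry about, so that particular interface issue does not arise.)
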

\begin{proof}
The proofs of these estimates can be straightforwardly adapted from \cite{L}, Lemmas 7.7 and 7.8. Therefore we will only treat the more complicated of the two terms, namely $I_9 ^n f.$ \\
\\
We split the frequencies $\eta_n$ and $\eta_{n-1}-\eta_n$ dyadically ($k_n$ and $k_{n+1}$ denote the corresponding exponents) as well as time ($m$ denotes the exponent): 
\begin{align*}
&i\int_1 ^t \int_{\eta_n} \eta_{n,j} e^{is(\vert \xi \vert^2 - \vert  \eta_n \vert^2 - \vert \eta_{n-1} \vert^2 )} s \widehat{f}(s,\eta_n) \widehat{f}(s,\eta_{n-1}-\eta_n) d\eta_n ds              \\
& =\sum_{m=0}^{\ln t} \sum_{k_1,k_2 \in \mathbb{Z}} \int_{1.1^m} ^{1.1^{m+1}} is \eta_{n,l} e^{is (\vert \xi \vert^2 - \vert \eta_n \vert ^2 - \vert \eta_{n-1} - \eta_n \vert ^2)} \widehat{f_{k_n}}(s,\eta_n) \widehat{f_{k_{n+1}}}(s,\eta_{n-1}-\eta_n) d\eta_n ds .
\end{align*}
\underline{Case 1: $\max \lbrace k_n ; k_{n+1} \rbrace \geqslant m$}
\\
We apply Lemma \ref{multiinbis} for %(or \ref{multiinbisneg}) for
\begin{align*}
\widehat{g}(s,\xi,\eta_{n-1}) = \textbf{1}_{(1.1^m;1.1^{m+1})}(s) \int_{\mathbb{R}^3} is \eta_{n,l} e^{is (\vert \xi \vert^2 - \vert \eta_n \vert ^2 - \vert \eta_{n-1} - \eta_n \vert ^2)} \widehat{f_{k_n}}(s,\eta_n) \widehat{f_{k_{n+1}}}(s,\eta_{n-1}-\eta_n) d\eta_n,
\end{align*}
as well as Lemma \ref{bilin} to write that
\begin{align*}
\Vert I_9 ^n f \Vert_{L^{\infty}_t L^2 _x} & \lesssim 1.1^{2m} 1.1^{\max \lbrace k_n ; k_{n+1} \rbrace} 1.1^{-10 \max \lbrace k_n ; k_{n+1} \rbrace} \\
& \times \min \big \lbrace 1.1^{-10 \min \lbrace k_n ; k_{n+1} \rbrace}; 1.1^{3 \min \lbrace k_n ; k_{n+1} \rbrace /2} \big \rbrace \delta^n C^n G(\textbf{k}) \varepsilon_1 ^2 \\
                                       & \lesssim 1.1^{-6m} 1.1^{-\max \lbrace k_n ; k_{n+1} \rbrace} \min \big \lbrace 1.1^{-10 \min \lbrace k_n ; k_{n+1} \rbrace}; 1.1^{3 \min \lbrace k_n ; k_{n+1} \rbrace /2} \big \rbrace G(\textbf{k}) \delta^n C^n \varepsilon_1 ^2,
\end{align*}
which we can sum over $k_n,k_{n+1}$ and $m.$ \\
\underline{Case 2: $\min \lbrace k_n ; k_{n+1} \rbrace \leqslant -2m $} \\
Similarly in this case we write that
\begin{align*}
\Vert I_9 ^n f \Vert_{L^{\infty}_t L^2 _x} & \lesssim  1.1^{2m} 1.1^{\max \lbrace k_n ; k_{n+1} \rbrace} 1.1^{3 \min \lbrace k_n ; k_{n+1} \rbrace /2} \\
& \times \min \big \lbrace 1.1^{-10 \max \lbrace k_n ; k_{n+1} \rbrace}; 1.1^{\max \lbrace k_n ; k_{n+1} \rbrace /3} \big \rbrace \delta^n C^n G(\textbf{k}) \varepsilon _1 ^2 \\
                                       & \lesssim  1.1^{-0.5 m} 1.1^{0.25 \min \lbrace k_n ; k_{n+1} \rbrace}  1.1^{\max \lbrace k_n ; k_{n+1} \rbrace} \\
                                       & \times \min \big \lbrace 1.1^{-10 \max \lbrace k_n ; k_{n+1} \rbrace}; 1.1^{\max \lbrace k_n ; k_{n+1} \rbrace /3} \big \rbrace \delta^n C^n G(\textbf{k}) \varepsilon _1 ^2,
\end{align*}
which can be summed.
\\
\underline{Case 3: $-2m \leqslant k_n , k_{n+1} \leqslant m$} \\
When the gradient of the phase is not too small, we can integrate by parts in $\eta_n$ to gain decay in time. To quantify this more precisely, we split dyadically in the gradient of the phase, namely $\eta_{n-1}-2\eta_n.$ We denote $k_{n}'$ the corresponding exponent.\\
\\
\underline{Case 3.1: $k_{n}' \leqslant -10 m$} \\
We apply Lemma \ref{multiinbis} for %(or \ref{multiinbisneg})
\begin{align*}
\widehat{g}(s,\eta_{n-1}) &= \textbf{1}_{(1.1^m;1.1^{m+1})}(s) \int_{\mathbb{R}^3} is \eta_{n,l} e^{is (\vert \xi \vert^2 - \vert \eta_n \vert ^2 - \vert \eta_{n-1} - \eta_n \vert ^2)} P_{k_{n}'}(2\eta_n- \eta_{n-1}) \\
&\times \widehat{f_{k_n}}(s,\eta_n) \widehat{f_{k_{n+1}}}(s,\eta_{n-1}-\eta_n) d\eta_n.
\end{align*}
As in Lemma 5.16 in \cite{L} we have 
\begin{align*}
\Vert g \Vert_{L^2_{\eta_{n-1}}} \lesssim 1.1^{-13m} 1.1^{0.1 k_{n}'} \varepsilon_1^2.
\end{align*}
Hence
\begin{align*}
\Vert I_9 ^n f \Vert_{L^{\infty}_t L^2 _x} & \lesssim 1.1^m 1.1^{0.1 k_{n}'} 1.1^{-13 m} \delta^n C^n G(\textbf{k}) \varepsilon_1 ^2,
\end{align*}
which can be summed over $k_{n}'$ and $k_n,k_{n+1}$ (there are only $O(m^2)$ terms in the sum) as well as over $m.$ 
\\
\\
\underline{Case 3.2: $k_{n}' \geqslant k_n-50, k_{n+2} \geqslant -10m,$ and $-2m \leqslant k_n,k_{n+1} \leqslant m$}\\
In this case we do an integration by parts in $\eta_n$. 
\\
Again, this case is similar to that of lemma 7.8, \cite{L}. All the terms that appear are treated following the same strategy, therefore we focus on the case where the $\eta_n$ derivative hits one of the profiles. \\
We can apply Lemma \ref{multiin} with $(p,q) = (4,3)$ and 
\begin{align*}
\widehat{g}(s,\xi,\eta_{n-1}) &= \textbf{1}_{(1.1^m;1.1^{m+1})}(s) \int_{\mathbb{R}^3} e^{is (\vert \xi \vert^2 - \vert \eta_n \vert ^2 - \vert \eta_{n-1} - \eta_n \vert ^2)} 
\frac{P_{k_{n}'}(2\eta_n- \eta_{n-1}) (2 \eta_n - \eta_{n-1})_j \eta_{n,l}}{\vert 2 \eta_n - \eta_{n-1} \vert^2} \\
&\times \widehat{f_{k_n}}(s,\eta_n) \partial_{\eta_{n,j}} \widehat{f_{k_{n+1}}}(s,\eta_{n-1}-\eta_n) d\eta_n,
\end{align*}
which yields the bound
\begin{align*}
\Vert I_9 ^n f \Vert_{L^{\infty}_t L^2 _x} \lesssim \delta^n  C^n G(\textbf{k}) 1.1^{k_n-k_{n}'} 1.1^{-m/4} \varepsilon_1 ^2.
\end{align*}
This expression can be summed given the assumptions on the indices in this case. 
\\
\\
\underline{Case 3.3: $-10 m \leqslant k_{n}' \leqslant k_n-10$ and $-2m \leqslant k_n,k_{n+1} \leqslant m$} \\
There is a slight difference in this case compared to the corresponding lemma in \cite{L} due to the presence of the magnetic potentials.
\\
Let's start with a further restriction: notice that $\eta_{n-1} - \eta_n = \eta_{n-1}- 2\eta_n + \eta_n$ therefore $ \vert \eta_{n-1}-\eta_n \vert \sim 1.1^{k_n} \sim 1.1^{k_{n+1}}.$ \\
Using Lemma \ref{multiinbis} as well as Bernstein's inequality and the fact that the $X$ norm of $f_{k_n}$ controls its $L^p -$ norms for $6/5<p\leqslant 2$ we get that 
\begin{align*}
\Vert I_9 ^n f \Vert_{L^2 _x} & \lesssim \delta^n C^n G(\textbf{k}) 1.1^{2m} 1.1^{k_n} \Vert u_{k_n} \Vert_{L^{\infty} _t L^{\infty}_x} \Vert u_{k_{n+1}} \Vert_{L^{\infty} _t L^{2}_x} \\
& \lesssim \delta^n C^n G(\textbf{k}) 1.1^{2m} 1.1^{k_n} 1.1^{2.49 k_n} 1.1^{0.99 k_n} \varepsilon_1 ^2 \\
& \lesssim 1.1^{2m} 1.1^{4.48 k_n} C^n \delta^n G(\textbf{k}) \varepsilon_1 ^2.
\end{align*}
If $k_n \leqslant - 101/224 m$ we can sum the expressions above. Indeed there are only $O(m^2)$ terms in the sums on $k_n, k_{n+1}$ therefore the decaying factor in $m$ is enough to ensure convergence.\\
As a result we can assume from now on that $k_n >-101/224 m.$
\\
\\
First, recall the following key symbol bound from \cite{L}, which was the reason for using a frequency localization at $1.1^k$ and not $2^k :$
\begin{align} \label{symbol1}
\bigg \Vert \mathcal{F}^{-1} \frac{P_{k}(\xi) P_{k_n}(\eta_n) P_{k_{n+1}}(\eta_{n+1}) P_{k_n'}(2\eta_n - \eta_{n-1}) }{\vert \xi \vert^2 - \vert \eta_n \vert^2 - \vert \eta_{n-1}-\eta_n \vert^2} \bigg \Vert_{L^1} \lesssim 1.1^{-2k_n} .
\end{align}
Now we integrate by parts in time. \\
Let's start with the easier boundary terms. They are both of the same form, therefore we only treat one of the terms. In this case we can apply Lemma \ref{multilinear} with
\begin{align*}
\widehat{g}(t,\xi,\eta_{n-1}) &= \textbf{1}_{(1.1^m;1.1^{m+1})}(t) \int_{\mathbb{R}^3} \frac{it P_{k_n '}(2\eta_n - \eta_{n-1}) \eta_{n,l}}{\vert \xi \vert^2 - \vert \eta_n \vert ^2 - \vert \eta_{n-1} - \eta_n \vert ^2} e^{it (\vert \xi \vert^2 - \vert \eta_n \vert ^2 - \vert \eta_{n-1} - \eta_n \vert ^2)} \\
& \times  \widehat{f_{k_n}}(t,\eta_n) \widehat{f_{k_{n+1}}}(t,\eta_{n-1}-\eta_n) d\eta_n,
\end{align*}
as well as Lemma \ref{bilin} and \ref{dispersive} to obtain the following bound:
\begin{align*}
\Vert I_9 ^n f \Vert_{L^{\infty}_t L^2_x } & \lesssim 1.1^m 1.1^{-k_n} 1.1^{-m} 1.1^{-m/2} \delta^n C^n G(\textbf{k}) \varepsilon_1 ^2
\\
                                          & \lesssim 1.1^m 1.1^{101/224 m} 1.1^{-3m/2} \delta^n C^n G(\textbf{k}) \varepsilon_1 ^2.
\end{align*}
This expression can be summed. \\
After the integration by parts in time we also obtain the following main terms: (here for better legibility we only write the last part of the integral)
\begin{align*}
& \int_{1.1^m}^{1.1^{m+1}} \int_{\eta_n} \frac{i s \eta_{n,l}P_{k_n'}(2\eta_n-\eta_{n-1})}{\vert \xi \vert^2 - \vert \eta_n \vert^2 - \vert \eta_{n-1}-\eta_n \vert^2} \\ 
& \times e^{is(\vert \eta_{n-1} \vert^2 - \vert \eta_n \vert^2 - \vert \eta_{n-1}- \eta_n \vert^2)} \partial_s \widehat{f_{k_n}} (s,\eta_n) \widehat{f_{k_{n+1}}}(s,\eta_{n-1}-\eta_n) d\eta_n ds  \\ 
&= \int_{1.1^m}^{1.1^{m+1}} \int_{\eta_n} \frac{i s \eta_{n,l}P_{k_n'}(2\eta_n-\eta_{n-1})}{\vert \xi \vert^2 - \vert \eta_n \vert^2 - \vert \eta_{n-1}-\eta_n \vert^2} \\
& \times e^{is(\vert \eta_{n-1} \vert^2 - \vert \eta_n \vert^2 - \vert \eta_{n-1}- \eta_n \vert^2)} \widehat{f_{k_{n+1}}}(s,\eta_{n-1}-\eta_n) P_{k_n}(\eta_n) \int_{\eta_{n+1} \in \mathbb{R}^3} \widehat{V}(s,\eta_n - \eta_{n+1}) \widehat{u}(s,\eta_{n+1}) d\eta_{n+1} d\eta_n ds \\
&+ \int_{1.1^m}^{1.1^{m+1}} \int_{\eta_n} \frac{i s \eta_{n,l}P_{k_n'}(2\eta_n-\eta_{n-1})}{\vert \xi \vert^2 - \vert \eta_n \vert^2 - \vert \eta_{n-1}-\eta_n \vert^2} \\
& \times e^{is(\vert \eta_{n-1} \vert^2 - \vert \eta_n \vert^2 - \vert \eta_{n-1}- \eta_n \vert^2)} \widehat{f_{k_{n+1}}}(s,\eta_{n-1}-\eta_n) P_{k_n}(\eta_n) \int_{\mathbb{R}^3} \widehat{u}(\eta_{n} - \eta_{n+1}) \widehat{u}(s,\eta_{n+1}) d\eta_{n+1} d\eta_n ds \\
& + \int_{1.1^m}^{1.1^{m+1}} \int_{\eta_n} \frac{i s \eta_{n,l}P_{k_n'}(2\eta_n-\eta_{n-1})}{\vert \xi \vert^2 - \vert \eta_n \vert^2 - \vert \eta_{n-1}-\eta_n \vert^2} e^{is(\vert \eta_{n-1} \vert^2 - \vert \eta_n \vert^2 - \vert \eta_{n-1}- \eta_n \vert^2)} \widehat{f_{k_{n+1}}}(s,\eta_{n-1}-\eta_n) \\
& \times P_{k_n}(\eta_n) \int_{\eta_{n+1} \in \mathbb{R}^3}  \eta_{n+1,i} \widehat{a_i}(s,\eta_n - \eta_{n+1}) \widehat{u}(s,\eta_{n+1}) d\eta_{n+1} d\eta_n ds \\
&:= I+II+III.
\end{align*}
The terms $I$ and $II$ are already present in \cite{L} and they can be dealt with following the exact same strategy here. Therefore we omit the details for these two terms and focus on $III$ which, although it is very close to $I$ in \cite{L}, was not present. \\
\\
Using the observation above \eqref{symbol1} we write that, using as usual our multilinear lemmas,
\begin{align} \label{dernier}
\Vert III \Vert_{L^2_x} & \lesssim G(\textbf{k}) \delta^n C^n \Bigg \Vert \mathcal{F}^{-1} \int_{\eta_n} P_{k_n} (\eta_n) \frac{it \eta_{n,l}P_{k_n'}(2\eta_n - \eta_{n-1})}{\vert \xi \vert^2 - \vert \eta_n \vert^2 - \vert \eta_{n-1}-\eta_n \vert^2} \\
\notag & \times \mathcal{F}(a_i \partial_{x_i} u)(t,\eta_n) \widehat{u_{k_{n+1}}}(t,\eta_{n-1}-\eta_n) d\eta_n \Bigg \Vert_{L^{4/3}_t L^{3/2}_x} \\
\notag & \lesssim 1.1^{-k_n} G(\textbf{k}) \delta^n C^n \Vert t (a_i \partial_{x_i} u)_{k_n} \Vert_{L^{\infty}_t L^2 _x} \Vert u_{k_{n+1}} \Vert_{L^{4/3}_t L^{6}_x} \\
\notag & \lesssim G(\textbf{k}) \delta^n C^n \Vert t (a_i \partial_{x_i} u)_{k_n} \Vert_{L^{\infty}_t L^{6/5} _x} \Vert u_{k_{n+1}} \Vert_{L^{4/3}_t L^{6}_x}.
\end{align}
Now we look at the term $a_i \partial_{x_i} u$ and decompose the frequency variable dyadically (we denote $k_{n+2}$ the corresponding exponent). This reads
\begin{align*}
a_i \partial_{x_i} u=(2\pi)^{-3} \sum_{k_{n+2} \in \mathbb{Z}} \mathcal{F}^{-1} \int_{\eta_{n+2}} \widehat{a_i}(\eta_n-\eta_{n+2}) \eta_{n+2,i} \widehat{u_{k_{n+2}}}(\eta_{n+2}) d\eta_{n+2} .
\end{align*}
\underline{Case 1: $\vert k_{n+2} - k_n \vert \leqslant 1 $} \\
There are $O(m)$ terms in that sum on $k_{n+2}.$ Then, using dispersive estimates, the bound yields
\begin{align*}
\Vert III \Vert_{L^2 _x} & \lesssim \sum_{k_{n+2}} 1.1^{k_{n+2}-k_n} G(\textbf{k}) \delta^n C^n \Vert t(a_i u) \Vert_{L^2_x} \Vert u_{k_{n+1}} \Vert_{L^{4/3}_t L^6 _x} \\
& \lesssim \sum_{k_{n+2}} \delta^n C^n G(\textbf{k}) \Vert a_i \Vert_{L^3_x} \Vert t u \Vert_{L^{\infty}_t L^6 _x} \Vert u_{k_{n+1}} \Vert_{L^{4/3}_t L^6 _x} \\
& \lesssim \sum_{k_{n+2}} 1.1^{-m/4+} \delta^n C^n G(\textbf{k}) \delta \varepsilon^2,
\end{align*}
and we are done in this case. \\
\underline{Case 2: $k_{n+2} > k_n +1$} \\
Then $a_i$ is localized at frequency roughly $1.1^{k_{n+2}}$ and we can write that 
\begin{align*}
\Vert III \Vert_{L^2_x} & \lesssim \sum_{k_{n+2}} 1.1^{k_{n+2}-k_n} \Vert t a_{i,k_{n+2}} u \Vert_{L^{2}_x} \Vert u_{k_{n+1}} \Vert_{L^{4/3}_t L^6 _x} \delta^n C^n G(\textbf{k}) \\
& \lesssim \sum_{k_{n+2}} 1.1^{k_{n+2}-k_n} \Vert a_{i,k_{n+2}} \Vert_{L^{3}_x} \Vert t u \Vert_{L^{\infty}_t L^6 _x} \Vert u_{k_{n+1}} \Vert_{L^{4/3}_t L^6 _x} \delta^n C^n G(\textbf{k}) \\
& \lesssim \sum_{k_{n+2}} 1.1^{k_{n+2}-k_n} \Vert a_{i,k_{n+2}} \Vert_{L^{3}_x} 1.1^{-m/4+} C^n \delta^n G(\textbf{k}) \varepsilon_1 ^2,
\end{align*}
and we are done in this case as well since we can sum over $k_{n+2}.$ \\ 
\underline{Case 3: $k_{n+2} < k_n-1$} \\
We write that 
\begin{align*}
\Vert III \Vert_{L^2_x} & \lesssim \sum_{k_{n+2}} \delta^n C^n G(\textbf{k}) 1.1^{k_{n+2}-k_n} \Vert t (a_i u) \Vert_{L^2_x} \Vert u_{k_{n+1}} \Vert_{L^{4/3}_t L^6 _x}  \\
& \lesssim \sum_{k_{n+2}} 1.1^{k_{n+2}-k_n} \delta^n C^n G(\textbf{k}) \Vert a_{i} \Vert_{L^3_x} \Vert t u \Vert_{L^{\infty}_t L^6 _x} \Vert u_{k_{n+1}} \Vert_{L^{4/3}_t L^6 _x} ,
\end{align*}
and we can conclude using Lemma \ref{summation} and the fact that $k_{n+2}<k_n-1$ to sum this bound. 
\end{proof}

%%%%%%%%%%%%%%%%%%%%%%%%%%%%%%%%%%%%%%%%%%%%%%%%%%%%%%%%%%%%%%%%%%%%%%%%%%%%%%%%%%%%%%%%%%%%%%%%%%%%%%%%%%%%%%%%%%%%%%%%%%%%%%%%%%%%%%%%%%%%%%%%%%%%%%%%%%%%%%%%%%%%%%%%%%%%%%%%%%%%%%%%%%%%%%%%%%%%%%%%%%%%%%%%%%%%%%%%%%%%%%%%%%%%%%%%%%%%%%%%%%%%%%%%%%%%%%%%%%%%%%%%%%%%%%%%%%%%%%%%%%%%%%%%%%%%%%%%%%%%%%%%%%%%%%%%%%%%%%%%%%%%%%%%%%%%%%%%%%%%%%%%%%%%%%%%%%%%%%%%%%%%%%%%%%%%%%%%%%%%%%%%%%%%%%%%%%%%%%%%%%%%%%%%%%%%%%%%%%%%%%%%%%%%%%%%%%%%%%%%%%%%

\section{Energy estimate} \label{energy}
Here we prove the $H^{10}$ estimate on the solution. The method is, as in the proof of \eqref{goal1}, to
expand the solution as a series. This case however is simpler, in the sense that only integrations by parts in time are required. In other words the series is genuinely obtained by repeated applications of the Duhamel formula. The terms of the series are then estimated using lemmas from Section \ref{multilinkey}.
\\
\\
First recall that the bilinear part of the Duhamel formula has already been estimated in \cite{L}, Lemma 8.1:
\begin{lemma}\label{H10bilin}
We have the bound
\begin{align*}
\Bigg \Vert P_k (\xi) \int_1 ^t \int_{\mathbb{R}^3} e^{is(\vert \xi \vert^2 - \vert \eta_1 \vert^2 - \vert \xi - \eta_1 \vert^2)} \widehat{f}(s,\eta_1) \widehat{f}(s,\xi-\eta_1) d\eta_1 ds \Bigg \Vert_{H^{10}_x} \lesssim \varepsilon_1 ^2.
\end{align*}
\end{lemma}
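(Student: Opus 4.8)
This is the content of Lemma~8.1 of \cite{L}, and the plan is to reproduce the proof from that paper. Writing the bilinear expression as $B_k(t,\xi)$, the goal is to bound $\sup_k 1.1^{10k}\Vert B_k(t)\Vert_{L^2_x}$ uniformly in $t$. First I would decompose the two input profiles into Littlewood--Paley pieces at frequencies $1.1^{k_1}$ and $1.1^{k_2}$ and, by symmetry, assume $k_1\leqslant k_2$; since $\xi=\eta_1+(\xi-\eta_1)$ and $\vert\xi\vert\simeq 1.1^k$, the localizations force $k_2\geqslant k-5$, so the weight $1.1^{10k}\lesssim 1.1^{10k_2}$ can be absorbed into $\Vert u_{k_2}\Vert_{H^{10}_x}\leqslant\varepsilon_1$. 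It then remains to establish a bound without the weight, with one input carrying ten spare derivatives. The difficulty is that a direct estimation of $B_k$ via Strichartz inequalities and the dispersive decay $\sup_k t\Vert u_k(t)\Vert_{L^6_x}\lesssim\varepsilon_1$ (Lemma~\ref{dispersive}) produces a time integral $\int_1^t s^{-1}\,ds$, which diverges logarithmically; this is the familiar borderline behaviour of the quadratic nonlinearity in three dimensions.

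To remove this divergence I would perform a normal form, that is, an integration by parts in time using
\begin{align*}
e^{is\Phi}=\frac{1}{i\Phi}\,\partial_s\big(e^{is\Phi}\big),\qquad \Phi=\vert\xi\vert^2-\vert\eta_1\vert^2-\vert\xi-\eta_1\vert^2=2\,\eta_1\cdot(\xi-\eta_1).
\end{align*}
This generates two boundary terms, at $s=1$ and at $s=t$, carrying the multiplier $1/\Phi$ and no time integration, together with an interior term in which $\partial_s$ falls on one of the two profiles. Using $\partial_s f=e^{-is\Delta}\big(Vu+\sum_i a_i\partial_i u+u^2\big)$, the interior term becomes cubic (or a potential times a quadratic expression); each such contribution decays fast enough in $L^2_x$ after two applications of the dispersive estimate for the time integral to converge, and is handled by the retarded Strichartz inequalities of Lemma~\ref{Strichartz} together with Lemma~\ref{dispersive}, exactly as the cubic terms treated in Section~\ref{firsterm}.

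The remaining obstacle, and the heart of the matter, is the singular factor $1/\Phi$ in the boundary terms: $\Phi$ vanishes to first order on the time-resonant set $\{\eta_1\perp(\xi-\eta_1)\}$. I would decompose $\vert\Phi\vert$ dyadically and also localize the direction $2\eta_1-\xi$ (which is parallel to $\nabla_{\eta_1}\Phi$) at frequency $1.1^{k'}$, and then invoke the symbol estimate \eqref{symbol1} with $n=1$ and $\eta_0=\xi$, i.e.
\begin{align*}
\Big\Vert\mathcal{F}^{-1}\Big[\frac{P_k(\xi)\,P_{k_1}(\eta_1)\,P_{k_2}(\xi-\eta_1)\,P_{k'}(2\eta_1-\xi)}{\vert\xi\vert^2-\vert\eta_1\vert^2-\vert\xi-\eta_1\vert^2}\Big]\Big\Vert_{L^1}\lesssim 1.1^{-2k_1},
\end{align*}
which is exactly where the use of Littlewood--Paley projections at frequency $1.1^k$ rather than $2^k$ is essential (compare Lemma~7.8 of \cite{L}). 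In effect, after localization $1/\Phi$ behaves like an operator smoothing by two derivatives, which compensates the loss $1.1^{10k_2}$ coming from the $H^{10}_x$ weight. Combining this with Bernstein's inequality, the $L^6_x$ decay of the higher-frequency factor, and the $H^{10}_x$ bound on the other factor, I expect each boundary term to be controlled by a quantity that sums geometrically over $k_1\leqslant k_2$ to $\varepsilon_1^2$, uniformly in $t$; note that for the boundary terms, which are evaluated at the fixed time $t$ (or $t=1$), there is no sum over dyadic time scales to worry about. The main obstacle is thus entirely the scaling-critical nature of the three-dimensional quadratic nonlinearity, and it is resolved by the combination of the normal form with the $1.1^k$-adapted symbol bound \eqref{symbol1}.
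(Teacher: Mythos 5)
Your set-up (Littlewood--Paley decomposition of the two inputs, absorbing the $H^{10}_x$ weight into the higher-frequency factor, recognizing the logarithmic failure of a direct Strichartz estimate, and passing to a normal form) is consistent with how this term is handled; note that the paper itself does not reprove this lemma but cites \cite{L}, Lemma 8.1, and the closest in-paper analog with full details is the treatment of $I_9^n f$ in Section \ref{Boundingiterates}. The genuine gap is in your treatment of the multiplier $1/\Phi$: you propose to control it everywhere by the symbol bound \eqref{symbol1}, but that bound is only valid in the near-space-resonant regime where $\vert 2\eta_1-\xi\vert$ is small compared with $\vert\eta_1\vert$ (in the paper, the constraint $k_n'\leqslant k_n-10$ of Case 3.3). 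Indeed, from $\vert\xi-2\eta_1\vert^2+2\Phi=\vert\xi\vert^2$ one sees that $\Phi$ is bounded below by $\simeq\vert\xi\vert^2$ precisely when $\vert 2\eta_1-\xi\vert\leqslant 1.1^{-10}\vert\xi\vert$, whereas on the time-resonant set $\{\eta_1\cdot(\xi-\eta_1)=0\}$ one has $\vert 2\eta_1-\xi\vert=\vert\xi\vert$ and $\Phi$ vanishes on a codimension-one set inside the dyadic support; there $1/\Phi$ is not an $L^1$-Fourier-multiplier symbol and no estimate of the type \eqref{symbol1} can hold. So the boundary terms of your normal form are not controlled by this argument on a large portion of frequency space, and the same objection applies to your interior term, which still carries the factor $1/\Phi$ that you do not address.

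What is actually needed in the complementary region is the space non-resonance: there $\vert\nabla_{\eta_1}\Phi\vert=2\vert 2\eta_1-\xi\vert$ is bounded below, and one integrates by parts in $\eta_1$ rather than in time, paying a factor $(s\vert\nabla_{\eta_1}\Phi\vert)^{-1}$ and producing $\partial_{\eta_1}\widehat{f}$, which is controlled by the $X$-norm (Lemma \ref{X'}); this forces a dyadic decomposition in time (to convert the gain in $s$ into summable decay) and additional cases for very large, very small, and very thin frequency configurations, handled via the $H^{10}_x$ norm, Bernstein's inequality, and measure-of-support considerations. This is exactly the case structure (Cases 1, 2, 3.1, 3.2, 3.3) used for $I_9^n f$, where only Case 3.3 invokes \eqref{symbol1} together with the time integration by parts. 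Without this splitting between time-resonant and space-resonant regions your argument does not close; the normal form alone cannot work for the $u^2$ nonlinearity precisely because its time-resonant set is large, even though the space-time resonant set reduces to $\xi=0$.
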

Therefore we must estimate the $H^{10}$ norms of the potential parts. \\
Now we expand the solution as a series by repeated integrations by parts in time for the potential parts (with suitable regularizations when the phase is close to 0). \\
At the $n-$th step of this process we obtain the following terms :
\begin{align*}
&\mathcal{F} J_1 ^n f :=  \int \prod_{\gamma=1}^{n-1} \frac{\alpha_{\gamma}(\eta_{\gamma})\widehat{W_{\gamma}}(\eta_{\gamma-1}-\eta_{\gamma})P_{k_{\gamma}}(\eta_{\gamma}) P_k(\xi)}{\vert \xi \vert^2 - \vert \eta_{\gamma} \vert^2} d\eta_1 ... d\eta_{n-1} \widehat{f}(s,\eta_n) d\eta_n 
\end{align*}
which is a boundary term when doing the integration by parts. \\
There are also the terms corresponding to the main terms 
\begin{align*}
&\mathcal{F} J_2 ^n f :=  \int \prod_{\gamma=1}^{n-1} \frac{\alpha_{\gamma}(\eta_{\gamma})\widehat{W_{\gamma}}(\eta_{\gamma-1}-\eta_{\gamma})P_{k_{\gamma}}(\eta_{\gamma}) P_k(\xi)}{\vert \xi \vert^2 - \vert \eta_{\gamma} \vert^2} d\eta_1 ... d\eta_{n-2} \\
& \times \int_1 ^t \int_{\eta_n} e^{is(\vert \xi \vert^2 - \vert \eta_n \vert^2 - \vert \eta_{n-1}-\eta_n \vert^2)} \widehat{f}(\eta_{n-1}-\eta_n)  \widehat{f}(s,\eta_n) d\eta_n ds d\eta_{n-1}
\end{align*}
and
\begin{align} \label{lastterm}
&\mathcal{F} J_3 ^n f :=  \int \prod_{\gamma=1}^{n-1} \frac{\alpha_{\gamma}(\eta_{\gamma})\widehat{W_{\gamma}}(\eta_{\gamma-1}-\eta_{\gamma})P_{k_{\gamma}}(\eta_{\gamma}) P_k(\xi)}{\vert \xi \vert^2 - \vert \eta_{\gamma} \vert^2} d\eta_1 ... d\eta_{n-2} \\
\notag & \times \int_1 ^t \int_{\eta_n} e^{is(\vert \xi \vert^2 - \vert \eta_n \vert^2)} \widehat{W_n}(\eta_{n-1}-\eta_n) \alpha_{n}(\eta_n) \widehat{f}(s,\eta_n) d\eta_n ds d\eta_{n-1}.
\end{align}
To obtain the next terms in the expansion, we integrate by parts in time in that last term. Therefore to show that the series converges in $H^{10}$ and to estimate its size, it is enough to estimate only the first two iterates. 
\\
The following proposition gives a bound on the $H^{10}$ norm of $J_1 ^n:$
\begin{lemma} \label{en-1}
We have:
\begin{align*}
\Vert J_1 ^n f \Vert_{H^{10}_x} \lesssim C^n G(\textbf{k}) \delta^n \varepsilon_1.
\end{align*}
\end{lemma}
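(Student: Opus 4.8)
The strategy is to reduce the estimate to the multilinear lemmas of Section~\ref{multilinkey}; the only genuinely new issue compared with the $X$-norm estimate is the ten derivatives carried by $\Vert\cdot\Vert_{H^{10}_x}$, which must be redistributed among the potentials and the profile.

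When $k\le 0$ there is nothing to do beyond invoking the relevant multilinear lemma. On the support of $P_k(\xi)$ one has $\langle\xi\rangle\sim 1$, hence $\Vert J_1^nf\Vert_{H^{10}_x}\sim\Vert J_1^nf\Vert_{L^2_x}$, and I would apply Lemma~\ref{multilinearneg} with $\widehat{g_{k_n}}(\eta_n)=\widehat{f_{k_n}}(\eta_n)$ (the innermost profile factor of $J_1^nf$). Using $\Vert f\Vert_{L^2_x}\le\Vert f\Vert_{H^{10}_x}\lesssim\varepsilon_1$ and the fact that the extra product $\prod_{\gamma\in J}\min\lbrace 1.1^{0.5k_{\gamma-1}};1\rbrace$ appearing in \eqref{multimainneg} is $\le 1$, this gives $\Vert J_1^nf\Vert_{H^{10}_x}\lesssim C^n\delta^n\varepsilon_1\,G(\textbf{k})$ at once.

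For $k>0$ the weight $\langle\xi\rangle^{10}=(1+|\xi|^2)^{5}$ is not absorbed by the multilinear lemmas, so I would first move it onto the individual factors. With the convention $\eta_0=\xi$ and the identity $\xi=(\eta_0-\eta_1)+(\eta_1-\eta_2)+\dots+\eta_n$, expanding the polynomial $(1+|\xi|^2)^5$ in these variables writes $\langle\xi\rangle^{10}$ as a sum, with polynomially-in-$n$ many terms, of monomials of the form $(\eta_0-\eta_1)^{a_1}(\eta_1-\eta_2)^{a_2}\cdots\eta_n^{a_{n+1}}$ with $|a_1|+\dots+|a_{n+1}|\le 10$. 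In the term indexed by $a$ the factor $(\eta_{j-1}-\eta_j)^{a_j}\widehat{W_j}(\eta_{j-1}-\eta_j)$ equals, up to a unimodular constant, $\widehat{\partial^{a_j}W_j}(\eta_{j-1}-\eta_j)$; since $|a_j|\le 10$ the multiplier $(i\zeta)^{a_j}\langle\zeta\rangle^{-10}$ is bounded on each component of $Y$, so $\Vert\partial^{a_j}W_j\Vert_Y\lesssim\Vert(1-\Delta)^5W_j\Vert_Y\le\delta$; likewise the innermost factor becomes $\widehat{\partial^{a_{n+1}}f}(\eta_n)$ with $\Vert\partial^{a_{n+1}}f\Vert_{L^2_x}\lesssim\Vert f\Vert_{H^{10}_x}\lesssim\varepsilon_1$. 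Hence each term is again a multilinear expression of the type treated in Section~\ref{multilinkey}, now carrying no $\langle\xi\rangle^{10}$ weight, with the same sets $J^{\pm},K$ and the same symbols $\alpha_\gamma$, and with all potentials of size $\lesssim\delta$ and the profile of size $\lesssim\varepsilon_1$. To each I apply Lemma~\ref{multilinear} if $\alpha_{\max K}=1$, and Lemma~\ref{multilinearautre} if $\alpha_{\max K}=\eta_{\max K,i}$; in the latter case the gain $1.1^{-k/2}$ of Lemma~\ref{multilinearautre} absorbs the factor $q(\max K)=1.1^{k_{\max K}/2}\lesssim 1.1^{k/2}$, which is legitimate because $\max K\in K$ forces $|k-k_{\max K}|\le 1$. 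In all cases the bound produced is $\lesssim C^n\delta^n\varepsilon_1\prod_{\gamma\in J^+}1.1^{-k_\gamma}\prod_{\gamma\in J^-}1.1^{-k}1.1^{\epsilon k_\gamma}$, and since $k_\gamma>k+1>0$ on $J^+$ we may replace $1.1^{-k_\gamma}$ by $\min\lbrace 1.1^{-k_\gamma};1.1^{\epsilon k_\gamma}\rbrace$. Summing over the (polynomially many) terms and absorbing numerical factors into $C^n$ gives $\Vert J_1^nf\Vert_{H^{10}_x}\lesssim C^n\delta^n\varepsilon_1\,G(\textbf{k})$.

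The step requiring the most care is the treatment of $q(\max K)$: one must verify that whenever the innermost resonant index $\max K$ carries a magnetic symbol one may indeed pass to the half-derivative-smoothing variant Lemma~\ref{multilinearautre} in place of Lemma~\ref{multilinear}, and that its gain $1.1^{-k/2}$ precisely cancels the loss $1.1^{k_{\max K}/2}$ — this is exactly where the present argument departs from \cite{L}, and it is also where the frequency localization at scale $1.1^k$ is used, since it keeps $k_{\max K}$ within $O(1)$ of $k$. The derivative-redistribution step itself is routine, once one observes that applying a derivative $\partial^{a_j}$ of order $\le 10$ to any single factor preserves all the bounds used in Section~\ref{multilinkey}.
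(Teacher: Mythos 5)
Your overall reduction to the multilinear lemmas of Section \ref{multilinkey} is the right idea, and the $k\leqslant 0$ case and the treatment of $q(\max K)$ (passing to Lemma \ref{multilinearautre} and using $\vert k-k_{\max K}\vert\leqslant 1$ to cancel $1.1^{k_{\max K}/2}$ against $1.1^{-k/2}$) are exactly how the paper uses these lemmas elsewhere. For $k>0$, however, your mechanism for disposing of the weight $\langle\xi\rangle^{10}$ is genuinely different from the paper's. The paper omits this proof, deferring to Lemma 8.2 of \cite{L}, but the companion proof it does give (for $J_2^n f$) does not expand $\langle\xi\rangle^{10}$ via the telescoping identity; instead it exploits the convolution constraint to locate a single factor (a potential $W_{j}$ or the profile) that is forced to be frequency-localized at a scale $1.1^{k_j}\gtrsim 1.1^{k-n}$, and absorbs the entire factor $1.1^{10k^{+}}$ there, writing $1.1^{10k^{+}}\Vert W_{j,k_j}\Vert_Y\lesssim 1.1^{10n}\Vert \nabla^{10}W_j\Vert_Y$. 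Your Leibniz-type redistribution is cleaner to state and avoids the chain of subcases in the paper's Case 2, at the cost of producing $O(n^{10})$ terms (harmless, as you note).

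The one step I would not call routine is the claim $\Vert\partial^{a_j}W_j\Vert_Y\lesssim\Vert(1-\Delta)^5W_j\Vert_Y$ for $\vert a_j\vert\leqslant 10$. The $Y$ norm is built from $L^1_x$, $L^\infty_x$ and the mixed norm $\big\Vert\Vert\vert W\vert^{1/2}\Vert_{L^\infty_{\widetilde{x_j}}}\big\Vert_{L^2_{x_j}}=\Vert W\Vert^{1/2}_{L^1_{x_j}L^\infty_{\widetilde{x_j}}}$, and all three are preserved under convolution with a finite measure; for $\vert a_j\vert\leqslant 9$ the operator $\partial^{a_j}(1-\Delta)^{-5}$ is of this form and your claim holds. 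At the endpoint $\vert a_j\vert=10$, though, $(i\zeta)^{a_j}\langle\zeta\rangle^{-10}$ is only a Calder\'on--Zygmund symbol of order zero, and such operators are not bounded on $L^1$, $L^\infty$, or $L^1_{x_j}L^\infty_{\widetilde{x_j}}$, so the inequality as stated fails in general. This is precisely the point the paper's frequency-localization route sidesteps: the localized multiplier $1.1^{10k_j}P_{k_j}(\zeta)\langle\zeta\rangle^{-10}$ has a kernel whose $L^1$ norm is bounded uniformly in $k_j\geqslant 0$, so the weight can be absorbed at no cost. To repair your version you should either insert the same Littlewood--Paley localization before redistributing the top-order derivatives, or note that a monomial of full degree $10$ can only occur with all other $a_{j'}=0$ and treat that single term by the paper's localization argument. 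With that fix the proof is complete.
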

\begin{proof}
The proof is almost the same as in \cite{L} Lemma 8.2, therefore it is omitted. 
\end{proof}
Finally we estimate the $H^{10}$ norm of $J_2 ^n f.$ 
\begin{lemma}
We have the following bound on the $H^{10}$ norm of the solution:
\begin{align*}
\Vert J_2 ^n f \Vert_{H^{10}_x} \lesssim C^n G(\textbf{k}) \delta^n \varepsilon_1.
\end{align*}
\end{lemma}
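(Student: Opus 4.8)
The plan is to treat $J_2^n f$ as the energy-estimate analogue of the bilinear iterates $I_9^n f, I_{10}^n f$ from Section \ref{Boundingiterates}: it is precisely the piece produced when the term $\partial_s \widehat{f}$ appearing in the integration by parts in time (used to generate $J_3^{n+1}$) is replaced by the $\widehat{u^2}$ contribution. Since there is no $t$- or $s$-prefactor in $J_2^n f$ (contrast $I_9^n f$), no space-time resonance analysis is needed here, and the argument follows the same scheme as \cite{L}, Section 8, combining the multilinear lemmas of Section \ref{multilinkey} with Strichartz estimates for the purely bilinear tail. First I would use Plancherel's theorem and the localization $P_k(\xi)$ to write $\Vert J_2^n f \Vert_{H^{10}_x} \simeq 1.1^{10 \max(k,0)} \Vert \mathcal{F} J_2^n f \Vert_{L^2_\xi}$, and then decompose the bilinear tail dyadically: split $\eta_n$ at scale $1.1^{k_n}$, $\eta_{n-1}-\eta_n$ at scale $1.1^{k_{n+1}}$, and the time variable at scale $1.1^m$.

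When $k \leqslant 0$ the weight is harmless. One applies Corollary \ref{multiinneg} (resp. Corollary \ref{multilinearbisneg}) with inner function $\widehat{g}(s,\xi,\eta_{n-1}) = \textbf{1}_{(1.1^m,1.1^{m+1})}(s)\, \mathcal{F}(u_{k_n} u_{k_{n+1}})(s,\eta_{n-1})$ and a conveniently chosen Strichartz-admissible pair $(p,q)$, reducing matters to bounding $\Vert u_{k_n} u_{k_{n+1}} \Vert_{L^{p'}_s L^{q'}_x}$ on the time window $1.1^m \leqslant s \leqslant 1.1^{m+1}$. This is handled by H\"older's inequality, the bilinear Lemma \ref{bilin}, Bernstein's inequality and the dispersive decay of Lemma \ref{dispersive}, exactly as for $I_9^n f$; the resulting powers of $1.1^{k_n}, 1.1^{k_{n+1}}, 1.1^m$ are summable, and the chain factor $G(\textbf{k})$ together with $C^n \delta^n$ comes directly from the multilinear corollary, with implicit constant independent of $n$.

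For $k > 0$ the one new point is to absorb the weight $1.1^{10k}$. Writing $1.1^k \lesssim \max\{\, 1.1^{k_n},\ 1.1^{k_{n+1}},\ 1.1^{k_1},\ \dots,\ 1.1^{k_{n-1}} \,\}$, which follows from $\xi = \eta_n + (\eta_{n-1}-\eta_n) + \sum_{\gamma=1}^{n-1}(\eta_{\gamma-1}-\eta_\gamma)$ together with the localizations $P_{k_\gamma}(\eta_\gamma)$, one sees that either $\max(k_n,k_{n+1}) \geqslant k - O(1)$, or else some potential $W_{\gamma_0}$ carries a Fourier argument $\eta_{\gamma_0-1}-\eta_{\gamma_0}$ of size $\gtrsim 1.1^k$. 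In the first case $1.1^{10k}$ is dominated by $1.1^{10k_n}$ or $1.1^{10k_{n+1}}$ and is absorbed into $\Vert f_{k_n}\Vert_{H^{10}_x}$ or $\Vert f_{k_{n+1}}\Vert_{H^{10}_x} \lesssim \varepsilon_1$ (the bootstrap controls the $H^{10}$ norm), after which the multilinear lemmas apply verbatim. In the second case one repeats the step of the proof of Lemma \ref{multilinear} / Lemma \ref{multilinearautre} corresponding to the factor $W_{\gamma_0}$ with $\Vert (1-\Delta)^5 W_{\gamma_0}\Vert_Y \leqslant \delta$ (and $\Vert \langle x\rangle (1-\Delta)^5 W_{\gamma_0}\Vert_Y \leqslant \delta$) in place of the plain potential norm; since $(1-\Delta)^5$ supplies ten derivatives, this converts the loss $1.1^{10k}$ into a gain $1.1^{-10 k_{\gamma_0}}$. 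This is exactly why the hypotheses of Theorem \ref{maintheorem} include the $(1-\Delta)^5$ bounds on the potentials. All the weight is placed on a single dominant frequency, so no $n$-dependence is introduced.

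The \textbf{main obstacle} is the bookkeeping: one must identify, in each configuration, which of the frequencies $k_n, k_{n+1}, k_1, \dots, k_{n-1}$ is dominant; verify that the available gain (from $\Vert f\Vert_{H^{10}_x}$ on a bilinear input, or from the potential regularity $\Vert (1-\Delta)^5 W_{\gamma_0}\Vert_Y$) strictly beats the weight $1.1^{10k}$; and check that after summing over all the dyadic exponents and over $m$, the bound retains the form $C^n G(\textbf{k}) \delta^n \varepsilon_1$ with $C$ independent of $n$, so that the series $\sum_n \sum_{k_1,\dots} \Vert J_2^n f\Vert_{H^{10}_x}$ converges (using the summability of $G(\textbf{k})$ in the $k_\gamma$). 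Apart from this, the argument is routine given the apparatus already developed in Section \ref{multilinkey}, and proceeds in the same spirit as \cite{L}, Section 8.
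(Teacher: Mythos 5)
Your proposal matches the paper's proof in all essentials: dyadic decomposition, reduction via the multilinear corollaries (the paper uses Lemma \ref{multiin} with $(p,q)=(4,3)$, and does not actually need the extra dyadic splitting in $\eta_{n-1}-\eta_n$ or in time, since $J_2^n f$ carries no growing $s$-factor) to a bilinear Strichartz bound, and absorption of the weight $1.1^{10k^{+}}$ either into $\Vert f_{k_n}\Vert_{H^{10}_x}$ when $k_n\geqslant k-1$ or into the localized potential (using $\Vert(1-\Delta)^5 W\Vert_Y\leqslant\delta$) when some $W_{\gamma_0}$ carries the large frequency. The one place you are slightly too optimistic is the claim that the frequency bookkeeping introduces no $n$-dependence: in the worst subcase all consecutive exponents differ by at most $O(1)$, which only forces $k_n\geqslant k-n$ and hence produces a loss $1.1^{10n}$ — the paper absorbs this into the geometric constant $C^n$, so the argument still closes, but it must be tracked.
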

\begin{remark} \label{rate}
In the case where the time integral $\int_1 ^t $ is replaced by $\int_{\tau} ^t $ for some $\tau < t,$ a similar estimate holds (the only modification is that the right-hand side has an additional $\tau^{-a}$ for some $a>0$). 
\end{remark}
\begin{proof}
Since the corresponding proof was omitted in \cite{L}, we carry it out here for completness. It is almost identical to that of Lemma 8.2 in \cite{L}.\\ 
We start by decomposing the $\xi$ and $\eta_n$ variables dyadically. We denote $k_n$ and $k$ the corresponding exponents. \\
\underline{Case 1: $k_{n} \geqslant k-1$} \\
Using Lemma \ref{multiin} with $(p,q) = (4,3)$ and
\begin{align*}
\widehat{g_{k_{n-1}}}(s,\eta_{n-1}) = \int_{\eta_n} e^{-is\vert \eta_n \vert^2} \widehat{f_{k_n}}(s,\eta_n) \widehat{f}(s,\eta_{n-1}-\eta_n) d\eta_n,
\end{align*}
we obtain by Lemma \ref{summation} and the energy bound 
\begin{align*}
1.1^{10 k^{+}} \Vert P_k (\xi) J_2 ^n f \Vert_{L^2 _x} & \lesssim 1.1^{10 k^{+}} C^n G(\textbf{k}) \delta^n \Vert f_{k_n} \Vert_{L^{\infty}_t L^2_{x}} \Vert e^{it\Delta} f \Vert_{L^{4/3}_t L^6 _x} \\
& \lesssim 1.1^{10(k^{+}-k_n^{+})} G(\textbf{k}) \delta^n C^n \varepsilon_1 ^2.
\end{align*}
\underline{Case 2: $k_n < k-1$} \\
\underline{Subcase 2.1: $\forall j \in \lbrace 1; ...; n \rbrace, k_j<k-1 $} \\
Then the first potential in the product $\widehat{W_1}(\xi-\eta_1)$ is localized at frequency $1.1^k.$ \\
Therefore we can carry out the same proof as above and obtain
\begin{align*}
1.1^{10 k^{+}} \Vert P_k (\xi) J_2 ^n f \Vert_{L^2 _x} & \lesssim C^n G(\textbf{k}) \delta^n 1.1^{10k^{+}} \Vert W_{1,k} \Vert_{Y} \Vert f \Vert_{L^{\infty}_t L^2_{x}} \Vert e^{it\Delta} f_{k_n} \Vert_{L^{4/3}_t L^6 _x},
\end{align*}
which can we summed over $k$, adding an additional $\delta$ factor to the product.
\\
\underline{Subcase 2.2: $\exists j \in \lbrace 1;...; n \rbrace, k_j \geqslant k-1.$} 
\\
Let $j'=\max \lbrace j ; k_j \geqslant k-1 \rbrace.$ \\
If $k_{j'}>k+1,$ then $W_{j'+1}(\eta_{j'}-\eta_{j'+1})$ (with the convention that $W_{n}=f$ Indeed in this case the second $f$ factor would be localized at $1.1^{k_{n-1}}$ since $1.1^{k_{n-1}}>k+1>1.1^{k_n}+2$) is localized at frequency $1.1^{k_{j'}}.$ We can then conclude as in the above case by effectively absorbing the $1.1^{10 k^{+}}$ factor at the price of replacing $W_{j'+1}$ by $\nabla^{10} W_{j'+1}.$ \\
If $\vert k_{j'}-k \vert \leqslant 1,$ then if there exists some $j'' \in \lbrace j'+1,...,n \rbrace $ such that $\vert k_{j''} - k_{j''-1} \vert > 1,$ the factor $\widehat{W_{j''}}$ is localized at frequency $1.1^{k_j ''}.$ But since there are $n$ terms in the product, $k_{j''} \geqslant k-n-1.$ Therefore by the same proof as above:
\begin{align*}
1.1^{10 k^{+}} \Vert P_k (\xi) J_2 ^n f \Vert_{L^2 _x} & \lesssim C^n 1.1^{10 n} G(\textbf{k}) \delta^n 1.1^{10(k^{+}-n+k_{j''}} \Vert \nabla^{10} W_{j'',k_{j''}} \Vert_{Y} \Vert f \Vert_{L^{\infty}_t L^2_{x}} \Vert e^{it\Delta} f_{k_n} \Vert_{L^{4/3}_t L^6 _x}
\end{align*}
and we get the desired result. 
\\
Finally if $\forall j'' \geqslant j'+1,\vert k_{j''} - k_{j''-1} \vert \leqslant 1, $ then $k_n \geqslant k-n.$ Then we can conclude by writing that
\begin{align*}
1.1^{10 k^{+}} \Vert P_k (\xi) J_2 ^n f \Vert_{L^2 _x} & \lesssim C^n G(\textbf{k}) 1.1^{10 n} \delta^n 1.1^{10(k^{+}-n+k_{n})} 1.1^{10 k_n ^{+}} \Vert f_{k_n} \Vert_{L^{\infty}_t L^2_{x}} \Vert e^{it\Delta} f \Vert_{L^{4/3}_t L^6 _x}.
\end{align*}
\end{proof}

\begin{proof}[Proof of \eqref{goal2}]
We conclude with the proof of \eqref{goal2}. Note that since at each step of the iteration $O(4^n)$ terms appear, given the estimates proved in this section we can write that there exists some large constant $D$ such that
\begin{align*}
\Vert f \Vert_{H^{10}_x} \leqslant \varepsilon_0 + D \sum_{n=0}^{+\infty} \delta^n 4^n C^n \varepsilon_1 \leqslant \frac{\varepsilon_1}{2}
\end{align*}
for $\delta$ small enough.

\end{proof}

\appendix

\section{Basic estimates}
In this appendix we prove a few basic estimates based on dispersive properties of the free Schr\"{o}dinger flow.
\begin{lemma}\label{dispersive}  
We have
\begin{equation*}
\Vert e^{it \Delta} f_k \Vert_{L^6_x} \lesssim \frac{\varepsilon_1}{t}. 
\end{equation*}
\end{lemma}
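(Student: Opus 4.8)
\textbf{Proof proposal for Lemma \ref{dispersive}.}

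The plan is to combine the standard dispersive decay estimate for the free Schrödinger flow on $\mathbb{R}^3$ with interpolation and the control provided by the $X$ norm on the profile. Recall that $f = e^{-it\Delta} u$ is the profile, and under the bootstrap hypothesis we have $\|f(t)\|_X \leqslant \varepsilon_1$ as well as $\|f(t)\|_{H^{10}_x} \leqslant \varepsilon_1$; in particular $\|f_k(t)\|_{L^2_x} \lesssim \varepsilon_1$ uniformly in $t$ and $k$. The first step is to write $e^{it\Delta} f_k = e^{it\Delta} P_k f$ and recall the dispersive estimate $\|e^{it\Delta} g\|_{L^\infty_x} \lesssim t^{-3/2} \|g\|_{L^1_x}$, valid for $t \geqslant 1$. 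Since $u_k(t) = e^{it\Delta} f_k(t)$, applying this with $g = f_k$ would give $L^\infty$ decay at rate $t^{-3/2}$ provided $f_k \in L^1_x$; interpolating the resulting $L^\infty$ bound against the uniform $L^2$ bound $\|f_k\|_{L^2_x} \lesssim \varepsilon_1$ via $\|u_k\|_{L^6_x} \lesssim \|u_k\|_{L^2_x}^{2/3} \|u_k\|_{L^\infty_x}^{1/3}$ yields $\|u_k(t)\|_{L^6_x} \lesssim \varepsilon_1^{2/3} (t^{-3/2} \|f_k\|_{L^1_x})^{1/3} \sim t^{-1/2} \varepsilon_1^{2/3} \|f_k\|_{L^1_x}^{1/3}$, which is not yet the claimed $t^{-1}$ rate and moreover requires an $L^1$ bound we do not directly have.

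The correct route is instead to use the $L^p \to L^{p'}$ dispersive estimates more carefully, or rather to exploit the $X$ norm which encodes spatial localization of $f$ at the origin. The key observation is the standard stationary-phase representation: in $L^6_x$ one has $\|e^{it\Delta} g\|_{L^6_x} \lesssim t^{-1} \|\,|x|\, g\|_{L^2_x}^{?}$... more precisely, the sharp way is to recall that for the frequency-localized piece, $e^{it\Delta} f_k(x) = c\, t^{-3/2} e^{i|x|^2/4t} \widehat{f_k}(x/2t) + (\text{error})$, so that $\|u_k(t)\|_{L^6_x} \lesssim t^{-3/2} \| \widehat{f_k}(\cdot/2t)\|_{L^6_\xi} + \|\text{error}\|_{L^6}$. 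Changing variables $\xi = x/2t$ gives $t^{-3/2} \cdot t^{3/6} \|\widehat{f_k}\|_{L^6_\xi} = t^{-1} \|\widehat{f_k}\|_{L^6_\xi}$, and by Sobolev embedding $\dot H^1 \hookrightarrow L^6$ in three dimensions together with the fact that $\widehat{f_k}$ is frequency-localized, $\|\widehat{f_k}\|_{L^6_\xi} \lesssim \|\nabla_\xi \widehat{f_k}\|_{L^2_\xi} \lesssim \|f\|_X \leqslant \varepsilon_1$. The error term coming from the stationary phase expansion is lower order in $t$ (it decays faster, at rate $t^{-1-\sigma}$ for some $\sigma>0$, again controlled by $\|f\|_X$ plus a small amount of extra regularity or localization), so it is absorbed. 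Summing over the single relevant $k$ (no sum is needed since the statement is for fixed $k$) gives the claim.

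The main obstacle — and the point requiring the most care — is the rigorous control of the error term in the stationary phase (or $TT^*$) expansion of $e^{it\Delta} f_k$, specifically showing that the remainder after extracting the leading $t^{-3/2}\widehat{f_k}(x/2t)$ profile is genuinely better than $t^{-1}$ in $L^6_x$, uniformly in $k$. This is where the frequency localization at scale $1.1^k$ matters: the bound on the remainder will involve a factor that is a fixed power of $1.1^k$ times $t^{-1-\sigma}$, and since the statement is only for a single fixed $k$ this is harmless, but one must check that no derivative loss creeps in. I expect this to follow from a routine integration-by-parts argument in the oscillatory integral defining $e^{it\Delta} P_k$, using that on the support of the leading term the phase $|x|^2/4t - x\cdot\xi + t|\xi|^2$ has a nondegenerate critical point, together with the bound $\|f\|_X \lesssim \varepsilon_1$ controlling one $\xi$-derivative of $\widehat{f_k}$ and the $H^{10}$ bound controlling high frequencies; alternatively one can cite the standard linear decay estimate in this form. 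I would present the short change-of-variables computation in detail and then invoke the standard remainder estimate, pointing to the analogous lemma in \cite{L}.
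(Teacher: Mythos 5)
The paper gives no proof of this lemma; it simply cites Lemma 3.5 of \cite{L}, whose argument is the standard dispersive factorization $e^{it\Delta}=M(t)D(t)\mathcal{F}M(t)$, with $M(t)=e^{i|x|^2/(4t)}$ and $D(t)g(x)=(2it)^{-3/2}g(x/(2t))$, followed by the change of variables (yielding $t^{-3/2}\cdot t^{1/2}=t^{-1}$), the Sobolev embedding $\dot H^1(\mathbb{R}^3)\hookrightarrow L^6(\mathbb{R}^3)$ in the $\xi$ variable, and the definition of the $X$ norm together with the bootstrap bound $\|f\|_X\le\varepsilon_1$. Your second paragraph contains exactly this mechanism, so in substance you have the right proof, and your first paragraph correctly identifies the interpolation route as a dead end.

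The one step that would fail as written is your framing of the identity as an asymptotic expansion $e^{it\Delta}f_k = c\,t^{-3/2}e^{i|x|^2/4t}\widehat{f_k}(x/2t)+\text{error}$ with a remainder to be handled by stationary phase. That remainder is $c\,t^{-3/2}e^{i|x|^2/4t}\bigl[\mathcal{F}\bigl((M(t)-1)f_k\bigr)\bigr](x/2t)$, and running the same change of variables and Sobolev embedding on it produces $t^{-1}\|x(M(t)-1)f_k\|_{L^2_x}$; since $|M(t)-1|\lesssim |x|^2/t$, controlling this requires $\||x|^3 f_k\|_{L^2_x}$, i.e.\ three spatial moments, which neither the $X$ norm (one $\xi$-derivative of $\widehat{f_k}$) nor the $H^{10}$ bound provides. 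So the error term is not ``routine'' and is in fact where your argument would break. The fix is simply not to expand: keep the exact identity $e^{it\Delta}=M(t)D(t)\mathcal{F}M(t)$, note that $|M(t)|=1$ so that $\|\nabla_\xi\mathcal{F}(M(t)f_k)\|_{L^2_\xi}=\|xM(t)f_k\|_{L^2_x}=\|xf_k\|_{L^2_x}=\|\nabla_\xi\widehat{f_k}\|_{L^2_\xi}\le\|f\|_X\le\varepsilon_1$, and conclude with no remainder at all.
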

\begin{proof}
See for example \cite{L}, Lemma 3.5.
\end{proof}

For the summations we will need the following lemma
\begin{lemma} \label{summation}
For any $p, q \geqslant 1$ and $1>c >0$
\begin{align*}
\Vert e^{it \Delta} f_{k} \Vert_{L^p _t L^q _x}  \lesssim  \Vert e^{it \Delta} f_{k} \Vert_{L^{p(1-c)} _t L^q _x}^{1-c} 1.1^{-8c k} \varepsilon_1^{c}   
\end{align*}
and 
\begin{align*}
\Vert e^{it \Delta} f_{k} \Vert_{L^p _t L^q _x}  \lesssim 1.1^{\frac{3c}{q(1-c)}k} \Vert e^{it \Delta} f_{k} \Vert_{L^{p} _t L^{q(1-c)} _x}.
\end{align*}
\end{lemma}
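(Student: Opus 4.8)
The plan is to prove the two inequalities separately; both are elementary once the bootstrap hypotheses are inserted, and the only real content is matching up powers of $1.1^k$.

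For the second inequality I would use spatial Bernstein only. Since $e^{it\Delta}$ commutes with the Littlewood--Paley projection $P_k$, for each fixed $t$ the function $x\mapsto[e^{it\Delta}f_k](t,x)$ has Fourier support in the annulus $1.1^k\mathcal C$. Bernstein's inequality (passing from the lower exponent $q(1-c)$ to $q$) gives, pointwise in $t$,
\begin{align*}
\Vert e^{it\Delta}f_k(t,\cdot)\Vert_{L^q_x}\lesssim 1.1^{3k\left(\frac{1}{q(1-c)}-\frac1q\right)}\Vert e^{it\Delta}f_k(t,\cdot)\Vert_{L^{q(1-c)}_x},
\end{align*}
and $3\left(\frac{1}{q(1-c)}-\frac1q\right)=\frac{3c}{q(1-c)}$. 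Taking the $L^p_t$ norm of both sides (no further inequality is needed since this holds for each $t$) yields the claim.

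For the first inequality I would first interpolate in time. Set $G(t)=\Vert e^{it\Delta}f_k(t,\cdot)\Vert_{L^q_x}$. Since $\tfrac1p=\tfrac{1-c}{p(1-c)}$, Hölder's inequality in $t$ gives $\Vert G\Vert_{L^p_t}\le\Vert G\Vert_{L^\infty_t}^{c}\Vert G\Vert_{L^{p(1-c)}_t}^{1-c}$, i.e.
\begin{align*}
\Vert e^{it\Delta}f_k\Vert_{L^p_tL^q_x}\le\Vert e^{it\Delta}f_k\Vert_{L^\infty_tL^q_x}^{c}\,\Vert e^{it\Delta}f_k\Vert_{L^{p(1-c)}_tL^q_x}^{1-c}.
\end{align*}
It then remains to show $\sup_t\Vert e^{it\Delta}f_k(t,\cdot)\Vert_{L^q_x}\lesssim 1.1^{-8k}\varepsilon_1$. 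Recall $e^{it\Delta}f(t)=u(t)$, so $e^{it\Delta}f_k(t)=u_k(t)$, and the bootstrap assumption $\Vert u(t)\Vert_{H^{10}_x}\le\varepsilon_1$ gives $\Vert u_k(t)\Vert_{L^2_x}\lesssim 1.1^{-10k^{+}}\varepsilon_1$. For $q\ge2$, Bernstein yields $\Vert u_k(t)\Vert_{L^q_x}\lesssim 1.1^{3k(1/2-1/q)}\Vert u_k(t)\Vert_{L^2_x}\lesssim 1.1^{3k(1/2-1/q)}1.1^{-10k^{+}}\varepsilon_1$; when $k\ge0$ this is $\lesssim 1.1^{-8.5k}\varepsilon_1\le 1.1^{-8k}\varepsilon_1$, and when $k<0$ it is $\lesssim\varepsilon_1\le 1.1^{-8k}\varepsilon_1$. (For the range $q<2$, which does not occur in the applications below, one would instead use the spatial localization of the profile encoded in the $X$ and $X'$ bounds to control $\Vert u_k(t)\Vert_{L^q_x}$.) Combining with the interpolation inequality and using $\Vert e^{it\Delta}f_k\Vert_{L^\infty_tL^q_x}^c\lesssim 1.1^{-8ck}\varepsilon_1^c$ finishes the proof.

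I do not expect a genuine obstacle here: the interpolation step is pure Hölder, and the endpoint bound is Bernstein plus the energy part of the bootstrap. The only point requiring mild care is that the power $1.1^{3k(1/2-1/q)-10k}$ coming from Bernstein against the $H^{10}$ norm must beat $1.1^{-8k}$ for $k\ge0$, which it does since $3/2-10\le-8$; this lemma is really just a bookkeeping device that makes the dyadic summations in Sections \ref{firsterm}--\ref{boundmultilinear} converge.
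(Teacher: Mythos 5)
Your proof is correct and follows essentially the same route as the paper: the second inequality is exactly the paper's Bernstein-in-$x$ argument, and for the first the paper likewise combines Hölder interpolation in $t$ (written as $\Vert g\Vert_{L^q_x}\lesssim\Vert g\Vert_{L^q_x}^{1-c}\Vert g\Vert_{H^2_x}^{c}$ pointwise in time before taking the $L^p_t$ norm) with the endpoint bound $\Vert u_k\Vert_{H^2_x}\lesssim 1.1^{-8k}\varepsilon_1$ from frequency localization and the $H^{10}$ bootstrap. Your explicit remark about the range $q<2$ applies equally to the paper's Sobolev-embedding step, so there is no discrepancy.
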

\begin{proof}
We write using Sobolev embedding and the energy bound
\begin{align*}
\Vert e^{it \Delta} f_{k} \Vert_{L^q _x} & \lesssim \Vert e^{it \Delta} f_{k} \Vert_{L^q _x} ^{1-c} \Vert e^{it \Delta} f_{k} \Vert_{H^{2}_x}^{c} \\
& \lesssim \Vert e^{it \Delta} f_{k} \Vert_{L^q _x} ^{1-c} 1.1^{-8kc} \varepsilon_1 ^c
\end{align*}
and then we take the $L^p _t$ norm of that expression and obtain
\begin{align*}
\Vert e^{it \Delta} f_{k} \Vert_{L^p _t L^q _x} \lesssim \Vert e^{it \Delta} f_{k} \Vert_{L^{p(1-c)} _x L^q _x}^{1-c} 1.1^{-c k} \varepsilon_1^{c}.
\end{align*}
Similarly using Bernstein's inequality 
\begin{align*}
\Vert e^{it \Delta} f_{k} \Vert_{L^q _x} \lesssim 1.1^{\frac{3c}{q(1-c)}k} \Vert e^{it \Delta} f_{k} \Vert_{L^{q(1-c)} _x},
\end{align*}
and we take the $L^p _t$ norm of that expression and obtain the result.
\end{proof}

\section{Technical Lemmas}
In this section we collect a number of basic technical lemmas that are used in the paper.
\\
The following lemma decomposes the frequency space according to a dominant direction:
\begin{lemma} \label{direction}
There exist three functions $\chi_j : \mathbb{R}^3  \longrightarrow \mathbb{R}$ such that
\begin{itemize}
\item $1 = \chi_1 + \chi_2 + \chi_3.$ 
\item On support of $\chi_j,$ we have $\vert \xi_j \vert \geqslant \lbrace \frac{9}{10} \vert \xi_k \vert; k=1,2,3  \rbrace.$
\end{itemize}
\end{lemma}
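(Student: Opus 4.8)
The statement to prove is Lemma \ref{direction}, a very elementary partition-of-unity statement in frequency space. The plan is to build the $\chi_j$ directly from a smooth partition of the sphere (or of all of $\mathbb{R}^3\setminus\{0\}$) adapted to the three coordinate directions, and then check the two claimed properties by a short geometric argument.

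First I would observe that for every $\xi \neq 0$ at least one coordinate is comparable to $|\xi|$: indeed $|\xi|^2 = \xi_1^2 + \xi_2^2 + \xi_3^2 \leqslant 3 \max_k \xi_k^2$, so $\max_k |\xi_k| \geqslant |\xi|/\sqrt{3}$. Hence the three open cones $U_j = \{\xi \neq 0 : |\xi_j| > \tfrac{19}{20}\max_k |\xi_k|\}$ cover $\mathbb{R}^3\setminus\{0\}$, and they are invariant under scaling $\xi \mapsto \lambda \xi$, $\lambda > 0$. Next I would take a smooth partition of unity $\{\psi_j\}_{j=1}^3$ on the unit sphere $S^2$ subordinate to the open cover $\{U_j \cap S^2\}$, extend each $\psi_j$ to $\mathbb{R}^3\setminus\{0\}$ by $0$-homogeneity, i.e. $\chi_j(\xi) = \psi_j(\xi/|\xi|)$, and finally (if one wants $\chi_j$ defined on all of $\mathbb{R}^3$, which is harmless since these functions are only ever used multiplied by a Littlewood--Paley cutoff $P_k$ away from the origin) smoothly cut off near the origin or simply set $\chi_j(0)$ by any convention since it plays no role. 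By construction $\chi_1 + \chi_2 + \chi_3 = \psi_1 + \psi_2 + \psi_3 \equiv 1$ on $\mathbb{R}^3\setminus\{0\}$, which is the first bullet point.

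For the second bullet point I would argue as follows: on the support of $\chi_j$ one has $\xi/|\xi|$ in the support of $\psi_j$, hence in the closure of $U_j$, so $|\xi_j| \geqslant \tfrac{19}{20}\max_k |\xi_k| \geqslant \tfrac{9}{10}|\xi_k|$ for each $k = 1,2,3$. (The slack between $\tfrac{19}{20}$ and $\tfrac{9}{10}$ is what lets us use a genuine open cover and still close the closed inequality on the supports; any two constants with the larger one $>$ the smaller would do.) This gives exactly the stated property.

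I do not expect any real obstacle here: the only mild point of care is making sure the cones $U_j$ are genuinely open and cover the punctured space before passing to a smooth partition of unity, and keeping track of the two nearby constants so that the closed inequality $|\xi_j| \geqslant \tfrac{9}{10}|\xi_k|$ holds on the supports rather than just on the open sets. Everything else is the standard existence of a smooth partition of unity on a compact manifold subordinate to a finite open cover, together with $0$-homogeneous extension.
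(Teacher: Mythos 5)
Your construction is correct: the open cones $U_j$ do cover $\mathbb{R}^3\setminus\{0\}$ (since some coordinate always realizes $\max_k|\xi_k|$), the $0$-homogeneous extension of a subordinate partition of unity on $S^2$ sums to $1$, and the slack between $\tfrac{19}{20}$ and $\tfrac{9}{10}$ correctly handles passing from the open cover to the closed supports. The paper itself offers no proof but simply cites Appendix A of \cite{PuM}, where the lemma is established by essentially this same standard angular partition-of-unity argument, so your route matches the intended one.
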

\begin{proof}
Appendix A, \cite{PuM}
\end{proof}
We record a basic bilinear estimate:
\begin{lemma}\label{bilin}
The following inequality holds
\begin{equation}
\Big \Vert \mathcal{F}^{-1} \int_{\mathbb{R}^3} m(\xi,\eta) \widehat{f}(\xi-\eta) \widehat{g}(\eta) d\eta \Big \Vert_{L^r} \lesssim \Vert \mathcal{F}^{-1}(m(\xi-\eta,\eta)) \Vert_{L^1} \Vert f \Vert_{L^p} \Vert g \Vert_{L^q} 
\end{equation}
where $1/r = 1/p+1/q.$
\end{lemma}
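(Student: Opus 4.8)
\textbf{Proof plan for Lemma \ref{bilin}.}

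The statement is the standard kernel (Schur-type) bilinear estimate, and the plan is to reduce it to an absolutely-convergent integral representation in physical space and then apply Hölder's inequality pointwise in the translation variable followed by Minkowski's integral inequality. First I would write the multiplier $m(\xi,\eta)$ in terms of its (partial) inverse Fourier transform: with the change of variables $\zeta = \xi - \eta$ one has $m(\zeta + \eta, \eta) = \int_{\mathbb{R}^3 \times \mathbb{R}^3} K(y,z)\, e^{i(y\cdot\zeta + z\cdot\eta)}\, dy\, dz$, where $K = \mathcal{F}^{-1}\big(m(\xi-\eta,\eta)\big)$ and the hypothesis is precisely $K \in L^1(\mathbb{R}^3\times\mathbb{R}^3)$. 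Substituting this into the bilinear expression, unwinding the Fourier transforms in $\xi$, $\eta$, and using that multiplication by $e^{iy\cdot\zeta}$ and $e^{iz\cdot\eta}$ corresponds to translation, one obtains the representation
\begin{align*}
\mathcal{F}^{-1}\Big[\int_{\mathbb{R}^3} m(\xi,\eta)\widehat{f}(\xi-\eta)\widehat{g}(\eta)\, d\eta\Big](x) = c\int_{\mathbb{R}^3\times\mathbb{R}^3} K(y,z)\, f(x - y)\, g(x - z)\, dy\, dz
\end{align*}
for an absolute constant $c$ depending only on the normalization of the Fourier transform.

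Next I would take the $L^r_x$ norm of both sides. By Minkowski's integral inequality in the $(y,z)$ variables,
\begin{align*}
\Big\Vert \int K(y,z) f(\cdot - y) g(\cdot - z)\, dy\, dz \Big\Vert_{L^r_x} \leqslant \int |K(y,z)|\, \big\Vert f(\cdot - y) g(\cdot - z) \big\Vert_{L^r_x}\, dy\, dz.
\end{align*}
For the inner norm I would apply Hölder's inequality with exponents $p, q$ satisfying $1/r = 1/p + 1/q$, which gives $\Vert f(\cdot-y)g(\cdot-z)\Vert_{L^r_x} \leqslant \Vert f(\cdot-y)\Vert_{L^p_x}\Vert g(\cdot-z)\Vert_{L^q_x} = \Vert f\Vert_{L^p}\Vert g\Vert_{L^q}$ by translation invariance of Lebesgue norms. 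Pulling these constants out of the $(y,z)$ integral leaves $\int |K(y,z)|\, dy\, dz = \Vert K \Vert_{L^1} = \Vert \mathcal{F}^{-1}(m(\xi-\eta,\eta))\Vert_{L^1}$, which is exactly the claimed bound.

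I do not expect any genuine obstacle here; the one point requiring a little care is justifying the physical-space representation formula and the interchange of integrals, which is legitimate once $K \in L^1$ and $f, g$ are taken in a dense class (say Schwartz), with the general case following by density and the a priori bound just established. This is a routine and standard argument, which is why the paper defers it ("The proofs are almost the same as that of Lemma \ref{bilin} therefore they are omitted"); accordingly I would keep the write-up to the few lines above rather than belaboring the measure-theoretic details.
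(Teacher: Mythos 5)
Your argument is correct and is precisely the standard kernel representation $+$ Minkowski $+$ Hölder proof that the paper itself defers to by citing Lemma 3.1 of \cite{L} rather than writing out. The only cosmetic point is the sign in the translations ($f(x+y)$ versus $f(x-y)$ depending on the Fourier convention), which does not affect the $L^1$ norm of the kernel or the final bound.
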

\begin{proof}
See \cite{L}, Lemma 3.1 for example.
\end{proof}
The following bound on the norm $X'$ is convenient as it appears naturally in the estimates.  
\begin{lemma} \label{X'}
Define the $X'-$norm as \begin{align*}
\Vert f \Vert_{X'} = \sup_{k \in \mathbb{Z}} \Vert \big( \nabla_{\xi} \widehat{f} \big) P_k (\xi) \Vert_{L^2} .
\end{align*}
Then 
\begin{align*}
\Vert f \Vert_{X'} \lesssim \Vert f \Vert_X.
\end{align*}
\end{lemma}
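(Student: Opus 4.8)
The plan is to reduce the $X'$-norm to the $X$-norm by observing that $\nabla_\xi\widehat f$, once restricted to a single dyadic shell, agrees with a finite sum of the pieces $\nabla_\xi\widehat{f_{k'}}$ that appear in the definition of $\Vert f\Vert_X$, up to terms in which the derivative falls on a Littlewood--Paley cutoff that is \emph{locally constant} on the relevant region and hence contributes nothing.

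First I would fix $k\in\mathbb Z$ and introduce a fattened projector $\widetilde P_k:=\sum_{|k'-k|\le N}P_{k'}$, where $N$ is a fixed integer, depending only on the numerical constants $1.04$ and $1.1$ entering the definition of $\phi$, chosen large enough that $\widetilde P_k\equiv 1$ on an open neighbourhood $U$ of $\operatorname{supp}P_k$. Such an $N$ exists because $\sum_{k'\in\mathbb Z}P_{k'}\equiv 1$ away from the origin, and on any fixed dyadic shell only $O(1)$ of the projectors $P_{k'}$ are nonzero.

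Next, on $U$ we have $\widehat f=\widetilde P_k\widehat f=\sum_{|k'-k|\le N}\widehat{f_{k'}}$, hence $\nabla_\xi\widehat f=\sum_{|k'-k|\le N}\nabla_\xi\widehat{f_{k'}}$ on $U$ (the sum is finite, so the product rule applies termwise; no derivative ever lands on a cutoff because each $\widehat{f_{k'}}$ is a genuine product $P_{k'}\widehat f$). Since $\operatorname{supp}P_k\subset U$, this yields the pointwise identity $(\nabla_\xi\widehat f)P_k=P_k\sum_{|k'-k|\le N}\nabla_\xi\widehat{f_{k'}}$, and therefore
\[
\bigl\Vert(\nabla_\xi\widehat f)P_k\bigr\Vert_{L^2}\le\sum_{|k'-k|\le N}\bigl\Vert\nabla_\xi\widehat{f_{k'}}\bigr\Vert_{L^2}\le(2N+1)\,\Vert f\Vert_X.
\]
Taking the supremum over $k$ gives the claim.

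I do not expect any real obstacle here; the only point meriting a word of care is the choice of $\widetilde P_k$ and the manipulations with $\widehat f$ itself, which are legitimate since the lemma is only ever applied to profiles of $H^{10}$ solutions, so that $\widehat f\in L^2$; alternatively one proves it first for Schwartz $f$ and concludes by density. (A more hands-on variant would instead estimate $(\nabla_\xi P_k)\widehat f$ directly via a radial Poincar\'e inequality on the annulus, but the argument above avoids that entirely.)
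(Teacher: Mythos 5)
Your argument is correct and complete. The key identity $(\nabla_\xi\widehat f)P_k=P_k\sum_{|k'-k|\le N}\nabla_\xi\widehat{f_{k'}}$ is justified exactly as you say: $(1-\widetilde P_k)\widehat f$ vanishes on the open set $U\supset\operatorname{supp}P_k$, hence so does its (distributional) gradient, and since $0\le P_k\le 1$ the triangle inequality gives the bound with constant $2N+1$; the existence of $N$ follows from the paper's observation that $1.1^{j}\mathcal C\cap1.1^{j'}\mathcal C=\emptyset$ for $|j-j'|>1$, so only $O(1)$ shells meet a fattened copy of $\operatorname{supp}P_k$. Note that the paper itself gives no proof of this lemma, only a citation to Lemma 3.8 of \cite{L}, so there is nothing in-paper to compare against; the more common route (and the one your parenthetical alludes to) is to write $P_k\nabla_\xi\widehat f=\nabla_\xi\widehat{f_k}-(\nabla_\xi P_k)\widehat f$ and control the commutator term $1.1^{-k}\phi'(1.1^{-k}\xi)\tfrac{\xi}{|\xi|}\widehat f$ via a radial Poincar\'e inequality on the annulus, which costs exactly the factor $1.1^{k}$ that the prefactor $1.1^{-k}$ supplies. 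Your version sidesteps that estimate entirely and loses nothing.
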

\begin{proof}
See \cite{L}, Lemma 3.8
\end{proof}

\section{Scattering} \label{scattering}
In this section we prove the scattering statement in Theorem \ref{maintheorem}. This is essentially a consequence of estimates proved earlier in the paper.
\\
We start with the expansion from Section \ref{energy}:
\begin{align*}
\widehat{f}(t) &= e^{i \vert \xi \vert^2} \widehat{u_1}(\xi) - i \int_1 ^t e^{is \vert \xi \vert^2} \int_{\mathbb{R}^3} e^{-is \vert \xi-\eta_1 \vert^2 - \vert \eta_1 \vert^2} \widehat{f}(s,\eta_1) \widehat{f}(s,\xi-\eta_1) d\eta_1  \\
&+ \sum_{k=0}^{+\infty} \sum_{n=2}^{+\infty} \sum_{k_1,...,k_{n-1} \in \mathbb{Z}} \sum_{\gamma=1}^{n-1} \sum_{W_{\gamma} \in \lbrace a_1,a_2,a_3,V \rbrace} \frac{i^{n+1}}{(2\pi)^{3(n-1)}} \mathcal{F} J^{n}_1 \\
&+\sum_{k=0}^{+\infty} \sum_{n=2}^{+\infty} \sum_{k_1,...,k_{n-1} \in \mathbb{Z}} \sum_{\gamma=1}^{n-1} \sum_{W_{\gamma} \in \lbrace a_1,a_2,a_3,V \rbrace} \frac{i^{n-1}}{(2\pi)^{3(n+1)}} \mathcal{F}  J^{n}_2 . 
\end{align*}
We define the operator $W:H^{10}_x \rightarrow H^{10}_x$ as
\begin{align*}
W u(t) = u(t) - e^{it\Delta}   \sum_{k=0}^{+\infty} \sum_{n=2}^{+\infty} \sum_{k_1,...,k_{n-1} \in \mathbb{Z}} \sum_{\gamma=1}^{n-1} \sum_{W_{\gamma} \in \lbrace a_1,a_2,a_3,V \rbrace} \frac{i^{n+1}}{(2\pi)^{3(n-1)}} J^{n}_1. 
\end{align*}
The boundedness of $W$ for small enough $\delta>0$ is a consequence of Lemma \ref{en-1}. \\
Let $1<\tau<t.$ With the above expansion, we obtain the estimate (using the remark \ref{rate})
\begin{align*}
\big \Vert Wu(t) - Wu(\tau) \big \Vert_{H^{10}_x} & \leqslant \bigg \Vert \mathcal{F}^{-1}_{\xi} \int_{\tau} ^t e^{is \vert \xi \vert^2} \int_{\mathbb{R}^3} e^{-is \vert \xi-\eta_1 \vert^2 - \vert \eta_1 \vert^2} \widehat{f}(s,\eta_1) \widehat{f}(s,\xi-\eta_1) d\eta_1  \bigg \Vert_{H^{10}_x}  \\
&+ \bigg \Vert \sum_{k=0}^{+\infty} \sum_{n=2}^{+\infty} \sum_{k_1,...,k_{n-1} \in \mathbb{Z}} \sum_{\gamma=1}^{n-1} \sum_{W_{\gamma} \in \lbrace a_1,a_2,a_3,V \rbrace} \frac{i^{n-1}}{(2\pi)^{3(n+1)}} \big(J^{n}_2(t)-J^{n}_2(\tau) \big) \bigg \Vert_{H^{10}_x} \\
& \lesssim \tau^{-a} \varepsilon_1 ^2 + \tau^{-a} \sum_{n=2}^{+\infty} 4^n C^{n} \delta^{n-1} \varepsilon_1 ^2 .
\end{align*}
This shows that if $\delta>0$ is small enough, $\big(e^{-it\Delta} W u(t)\big)$ is Cauchy in $H^{10}_x$ and the scattering statement follows. In fact a closer inspection of the proof of Lemma \ref{en-1} would yield a quantitative polynomial decay rate for the above convergence.
%%%%%%%%%%%%%%%%%%%%%%%%%%%%%%%%%%%%%%%%%%%%%%%%%%%%%%%%%%%%%%%%%%%%%%%%%%%%%%%%%%%%
%%%%%%%%%%%%%%%%%%%%%%%%%%%%%%%%%%%%%%%%%%%%%%%%%%%%%%%%%%%%%%%%%%%%%%%%%%%%%%%%%%%%
%%%%%%%%%%%%%%%%%%%%%%%%%%%%%%%%%%%%%%%%%%%%%%%%%%%%%%%%%%%%%%%%%%%%%%%%%%%%%%%%%%%%
%%%%%%%%%%%%%%%%%%%%%%%%%%%%%%%%%%%%%%%%%%%%%%%%%%%%%%%%%%%%%%%%%%%%%%%%%%%%%%%%%%%%

\end{document}